\documentclass[11pt,reqno]{amsart}
\usepackage[T1]{fontenc}
\usepackage{amsmath,amssymb,amsthm,mathrsfs}
\usepackage{longtable,array,needspace}
\usepackage[margin=1.1in]{geometry}
\usepackage[hidelinks]{hyperref}
\usepackage{tikz}

\newtheorem{theorem}{Theorem}[section]
\newtheorem{proposition}[theorem]{Proposition}
\newtheorem{lemma}[theorem]{Lemma}
\newtheorem{corollary}[theorem]{Corollary}
\theoremstyle{definition}
\newtheorem{definition}[theorem]{Definition}
\newtheorem{example}[theorem]{Example}

\newtheorem{hypothesis}[theorem]{Hypothesis}
\theoremstyle{remark}
\newtheorem{remark}[theorem]{Remark}

\newcommand{\CC}{\mathbb C}
\newcommand{\ZZ}{\mathbb Z}
\newcommand{\FT}{\mathcal F}
\newcommand{\mic}{\mathrm{mic}}
\newcommand{\Art}{\mathrm{Art}}

\DeclareMathOperator{\Span}{span}
\DeclareMathOperator{\Ad}{Ad}
\DeclareMathOperator{\IC}{IC}
\DeclareMathOperator{\Irr}{Irr}
\DeclareMathOperator{\Lie}{Lie}
\DeclareMathOperator{\Perv}{Perv}
\DeclareMathOperator{\rank}{rank}
\newcommand{\SL}{\mathrm{SL}}
\newcommand{\Gd}{G^\vee}
\newcommand{\gd}{\mathfrak g^\vee}
\newcommand{\hd}{h^\vee}
\numberwithin{equation}{section}

\title[Microlocal Arthur packets]{Microlocal Arthur packets as a quasi-canonical basis of stable distributions}

\author{Dan Ciubotaru}
 \address[D. Ciubotaru]{Mathematical Institute, University of Oxford, Oxford OX2 6GG, UK}
        \email{dan.ciubotaru@maths.ox.ac.uk}
        
        \author
{Emile Okada}
        \address[E. Okada]{Department of Mathematics, Nanyang Technological University, Singapore 639798}
        \email{emile.okada@ntu.edu.sg}

\date{}

\begin{document}
\begin{abstract}
We show that microlocal Arthur packets can be reconstructed from an otherwise underdetermined system of linear equations arising from Kashiwara’s local index theorem. The key is a geometric rigidity property of Arthur orbits which, together with Fourier symmetry, uniquely determines the microlocal multiplicities defining these packets.
This realizes the associated Arthur distribution as the unique
normalized element in the intersection of two filtered spaces of
standard stable distributions related by Fourier transform.
Under standard enhanced local Langlands hypotheses and Aubert
compatibility, this gives an alternative representation-theoretic
definition of microlocal Arthur packets for general connected reductive
$p$-adic groups that does not use twisted endoscopy and ensures
stability through the stability of tempered packet distributions.
In the unipotent setting, we use this characterization to prove agreement
between endoscopic and microlocal Arthur packets for split $\mathrm{SO}_{2n+1}$
when the residue characteristic is at least $6n+4$.
We further use this characterization to compute all unipotent
Arthur packets with unramified real infinitesimal character for inner-to-split
adjoint exceptional $p$-adic groups.
In this exceptional setting, we prove the microlocal weak Arthur
packet conjecture and give counterexamples
to the microlocal analogues of the least- and greatest-parameter conjectures.
\end{abstract}
\maketitle

\section{Introduction}
\label{sec:introduction}

Arthur's conjectures organize the shape of the discrete automorphic
spectrum of a reductive group in terms of families of representations, {\it Arthur packets}, 
satisfying certain prescribed properties: stability, endoscopic transfer, unitarity \cite{Arthur1989,Arthur2013}. There are two main approaches to the construction of Arthur packets: via endoscopy, in the work of Arthur, and via microlocal geometry on the space of Langlands parameters, as pioneered by Adams--Barbasch--Vogan \cite{ABV} for real reductive groups, and then generalized for groups over local fields by Vogan \cite{Vog}. When comparing these two definitions one has to relate the stability of 
character distributions to the microlocal properties of equivariant perverse sheaves on varieties of Langlands parameters.

This leads one to the natural question of whether the Arthur packets admit a unique characterization. This is the first main result of our paper. We show that the distribution attached to a microlocal Arthur
packet is determined up to scalar multiples by two support conditions related
by Fourier transform inside the span of stable standard-packet distributions. This characterization is inspired and resembles Lusztig's construction of canonical bases. The geometric fact that underlies this interpretation is a rigidity theorem for Arthur orbits in the context of the prehomogeneous vector spaces that appear in the Langlands correspondence for $p$-adic groups.   We then combine this rigidity with Kashiwara's index theorem to obtain a linear algebra argument for constructing the Arthur microlocal
multiplicities just from intersection cohomology information. To exemplify the applications, we then show that in the in Lusztig's unipotent
category, this unique characterization is sufficient to show that the microlocal
and endoscopic Arthur packets for split odd orthogonal groups are the same.
Moreover,
\begin{itemize}
    \item we classify the Arthur parameters with real infinitesimal parameter, \item describe the weak Arthur packets, in the sense of \cite{CMBOweak},  using Lusztig's geometry of special pieces, 
    \item determine explicitly the Arthur packets (with real infinitesimal character) for exceptional $p$-adic groups.  
\end{itemize}
As an interesting corollary of the latter classification, we notice that certain extremality conjectures for Arthur packets can fail for exceptional groups.

\medskip

Let us explain our main geometric setting.
Let $\Gd$ be a connected complex reductive group (the Langlads dual group), with Lie algebra
$\gd$. Fix an $\mathfrak{sl}_2$-triple
$(e^\vee,\hd,f^\vee)$. Set 
$$
 M=Z_{\Gd}(\hd),\qquad
 V=\gd(2)=\{x^\vee\in\gd:[\hd,x^\vee]=2x^\vee\}.
$$
The group $M$ acts with finitely many orbits on $V$. An orbit
$C=Me_1^\vee$ is called an \emph{Arthur orbit} if there are two commuting $\mathfrak{sl}_2$-triples
$(e_i^\vee,h_i^\vee,f_i^\vee)$, $i=1,2$, with
$h_1^\vee+h_2^\vee=\hd$. These triples determine an Arthur pair
$(Me_1^\vee,Me_2^\vee)$. Conormal or Piasetsky duality defines an involution $p$ on $M\backslash V$ via a precise identification of $V$ and $V^*=\mathfrak g^\vee(-2)$. 
On Arthur orbits, this involution exchanges the two factors of an Arthur pair 
\cite[Lemma~3.6.1]{CFMMX}. We order the orbits by closure:
$C\leq D$ means $C\subseteq\overline D$. In general, $p$ is not order-reservising.
Our first main theorem precisely concerns the effect of $p$ on the closure order for an Arthur orbit.

\begin{theorem}[Rigidity of Arthur orbits]
Let $C$ be an Arthur orbit. For every $M$-orbit $D\subset V$,
$$
 C\leq D,\qquad p(C)\leq p(D)
 \quad\Longrightarrow\quad C=D.
$$
\end{theorem}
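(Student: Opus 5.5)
The plan is to pass to the conormal variety $\Lambda=\bigcup_{O}T^*_OV\subset V\times V^*$, where $V^*=\gd(-2)$ and $T^*_OV=\{(x,\xi):x\in O,\ [x,\xi]=0\}$. Fix the Arthur pair coming from the commuting triples $(e_1^\vee,h_1^\vee,f_1^\vee)$ and $(e_2^\vee,h_2^\vee,f_2^\vee)$. Under the identification of $V^*$ with $\gd(-2)$, the covector attached to $p(C)$ is $f_2^\vee$, and $\xi_0:=(e_1^\vee,f_2^\vee)$ lies in $T^*_CV$ because $[e_1^\vee,f_2^\vee]=0$. I will show that the hypotheses force $\xi_0\in\overline{T^*_DV}$. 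I will then show that $\xi_0$ lies in no closure $\overline{T^*_DV}$ with $D\neq C$. This second fact is the microlocal ``regularity'' of Arthur pairs, and together the two facts give $C=D$.

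\emph{Step 1 (gradings).} Use the bigrading of $\gd$ by $(\operatorname{ad}h_1^\vee,\operatorname{ad}h_2^\vee)$, writing $V=\bigoplus_k V_{k,2-k}$. By $\mathfrak{sl}_2$-theory for the second triple, the centralizer $Z_V(f_2^\vee)$ sits in $h_2^\vee$-weights $\le 0$, hence in $V_{\ge 2}:=\bigoplus_{k\ge2}V_{k,2-k}$. Next, $e_1^\vee$ is in general position in $V_{2,0}$ for $Z_{\Gd}(h_1^\vee,h_2^\vee)$, and $[\,\mathfrak m\cap\gd_{>0}^{h_1},e_1^\vee]$ fills $V_{>2}$. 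Together these give $M\cdot V_{\ge 2}\subseteq\overline C$, and therefore $Z_V(f_2^\vee)\subseteq\overline C$. Dually, $Z_{V^*}(e_1^\vee)\subseteq\overline{p(C)}$.

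\emph{Step 2 (degeneration into the fibre over $f_2^\vee$).} The projection of $\overline{T^*_DV}$ to $V^*$ has closure $\overline{p(D)}$, and this contains $f_2^\vee$ by hypothesis. Hence the fibre $F_D$ of $\overline{T^*_DV}$ over $f_2^\vee$ is nonempty. It lies in $\overline D\cap Z_V(f_2^\vee)$, which is contained in $\overline C$ by Step 1. Since also $\overline C\subseteq\overline D$ by hypothesis, I want a point of $F_D$ that lies in $C$ itself; $Z_{M}(f_2^\vee)$-equivariance and closedness would then give $e_1^\vee\in F_D$, i.e. $\xi_0\in\overline{T^*_DV}$. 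I would argue this by dimension. $\overline{T^*_DV}$ is irreducible Lagrangian of dimension $\dim V$, so $\dim F_D\ge \dim V-\dim p(D)$, and every component of $F_D$ has at least this dimension. On the other hand, the part of $F_D$ lying over orbits $O\subsetneq\overline C$ is controlled by $T^*_OV$ and has too small a dimension. I would make this precise using the contracting $\mathbb G_m$-action through $h_1^\vee$ on $Z_V(f_2^\vee)$, which preserves $F_D$ and contracts it toward $V_{2,0}$, where $e_1^\vee$ is generic. Running the symmetric argument over $e_1^\vee$ is available as a consistency check.

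\emph{Step 3 (regularity).} It remains to see that $\xi_0$ lies only in $\overline{T^*_CV}$; this is the statement that the Arthur covector is microlocally regular. I would prove it by a tangent-space count: show that the tangent space to $\Lambda$ at $\xi_0$ has dimension $\dim V$. This reduces to checking $[\mathfrak m,e_1^\vee]+Z_V(f_2^\vee)=V$ and $\dim[\mathfrak m,e_1^\vee]+\dim Z_V(f_2^\vee)=\dim V$ transversally. Both should follow from the $\mathfrak{sl}_2\times\mathfrak{sl}_2$ decomposition of $\gd$ under the commuting triples. Granting this, $\xi_0$ is a smooth point of $\Lambda$, so it lies on a unique irreducible component, which must be $\overline{T^*_CV}$; hence $D=C$. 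I expect Step 2 to be the main obstacle, specifically ensuring that the degeneration lands on $e_1^\vee$ rather than on a smaller orbit in $\overline C$. If the dimension count there is insufficient, I would instead argue via the generic point of $T^*_DV$, using the Fourier symmetry $p$ exchanging $e_1^\vee$ and $f_2^\vee$ to apply the same argument with the roles of $V$ and $V^*$ swapped.
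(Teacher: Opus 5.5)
\textbf{The gap is Step 2.} Once Step 3 is known, the claim ``$C\le D$ and $p(C)\le p(D)$ imply $(e_1^\vee,f_2^\vee)\in\overline{T^*_DV}$'' is equivalent to the theorem. Your dimension count cannot prove it.

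\begin{itemize}
\item $F_D$ is a closed cone in $Z_V(f_2^\vee)$, stable under $Z_M(f_2^\vee)$. So it always contains $0$, and its nonemptiness says nothing.
\item The orbit $Z_M(f_2^\vee)e_1^\vee$ is open in $Z_V(f_2^\vee)$, which is irreducible of dimension $\dim V-\dim p(C)$. Hence $e_1^\vee\in F_D$ forces $F_D=Z_V(f_2^\vee)$, so you need a component of dimension $\dim V-\dim p(C)$.
\item Semicontinuity only gives components of dimension $\ge\dim V-\dim p(D)$. The complement of the open orbit has dimension up to $\dim V-\dim p(C)-1$.
\item Whenever $\dim p(D)>\dim p(C)$ --- exactly the case you must exclude --- these bounds allow $F_D$ to lie entirely in that complement. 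There is no contradiction.
\item A point $x\in F_D\cap C$ would not suffice either. Closedness gives only $\overline{Z_M(f_2^\vee)x}\subseteq F_D$, and this contains $e_1^\vee$ only if $x$ is already in the open orbit.
\item The rescaled $h_1^\vee$-contraction shows only that $F_D$ contains the $V_{2,0}$-components of its own points, not that any of them is generic.
\item Swapping $V$ and $V^*$ reproduces the same count.
\end{itemize}
The underlying problem is that the hypotheses constrain the two projections of $\overline{T^*_DV}$ separately. They say nothing about its fibre over $f_2^\vee$.

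\textbf{Step 3's conclusion is true but its stated reduction is wrong.} Your own Step 1 gives $Z_V(f_2^\vee)\subseteq V_{\ge2}\subseteq[\mathfrak m,e_1^\vee]$, so the sum is never direct unless $C=0$. For the open orbit, $f_2^\vee=0$ and both spaces equal $V$.

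The correct computation: the Zariski tangent space of $\{[x,y]=0\}$ at $\xi_0$ is the kernel of $(u,v)\mapsto[u,f_2^\vee]+[e_1^\vee,v]\in\mathfrak m$. Its image is the $B$-orthogonal of $Z_{\mathfrak m}(e_1^\vee,f_2^\vee)$. So the right condition is $\dim\mathfrak m-\dim Z_{\mathfrak m}(e_1^\vee,f_2^\vee)=\dim V$, i.e.\ $\dim M\xi_0=\dim V$. This does hold: on each block $L_a\otimes L_b$ with $a\equiv b\pmod 2$ both sides contribute $\min(a,b)-\delta_{ab}$, and the other blocks contribute nothing.

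\textbf{What is missing is a quantitative invariant} that separates $D$ from $C$ without locating $\overline{T^*_DV}$. The paper uses $\nu(x)=\|h_x^\vee\|^2$, which has three properties:
\begin{enumerate}
\item It increases strictly along $C<D$, by Lemma~\ref{lem:strict-saturation} and \eqref{eq:strict-neutral-norm}.
\item It satisfies $\nu(x)+\nu(y)\le\|\hd\|^2$ on all of $\Lambda$. To see this, take a homogeneous triple for $x$ and set $z^\vee=\hd-h_x^\vee$. Then $y$ lies in the eigenspaces of $-z^\vee/2$ with eigenvalues $\ge1$. Apply \eqref{eq:GS-bound} and use $B(h_x^\vee,z^\vee)=0$.
\item It attains equality on Arthur pairs, since $B(h_1^\vee,h_2^\vee)=0$.
\end{enumerate}
Then $C<D$ and $p(C)\le p(D)$ give $\|\hd\|^2<\nu(D)+\nu(p(D))\le\|\hd\|^2$, a contradiction.
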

The proof uses the lengths of neutral elements of homogeneous Lie triples attached to $C$ and $D$.

\medskip

 Write
$a=(C_a,\rho_a)$ for an {\it enhanced orbit}, where $\rho_a$ is an
irreducible representation of the stabilizer component group,
and let
$$
 P_a=\IC(\overline{C_a},\mathcal L_{\rho_a})
$$
be the corresponding simple equivariant perverse sheaf.
For an orbit $C$, denote by $m_C(P_a)\in \mathbb Z_{\ge 0}$ the multiplicity of its
conormal bundle in the characteristic cycle of $P_a$. Under a local Langlands correspondence, the microlocal packet
with support $C$ is
$$
 \Pi_C^\mic=\{\pi_a:m_C(P_a)>0\}.
$$
Kashiwara's local index formula \cite{Kas73,KS} relates these
multiplicities to IC stalk Euler characteristics and local Euler
obstructions. Fourier symmetry imposes additional linear
relations. We show that the resulting reconstruction system
determines every row indexed by an Arthur orbit uniquely.
Consequently, the IC stalk data, the orbit closure order, and the
actions of conormal duality and Fourier transform determine all
Arthur microlocal multiplicities.

The above geometric rigidity has an intrinsic formulation in terms of
packet distributions. For the representation theoretic
interpretation, assume the properties of the Langlands 
correspondence in Hypothesis~\ref{hyp:enhanced-llc}. Let $S_D$
be the Langlands standard packet sum attached to an orbit $D$. Let $T_C$ be the normalized microlocal
virtual character of
\eqref{eq:normalized-microlocal-distribution}. These distributions
are considered across pure inner forms. Set
$$
 U_C=\Span_{\CC}\{S_D:D\geq C\},\qquad
 U_{>C}=\Span_{\CC}\{S_D:D>C\},
$$
and let $\FT$ denote the induced Fourier involution on the
character space of the $p$-adic group and its pure inner forms.

\begin{theorem}[Characterization of Arthur distributions]
For every Arthur orbit $C$,
$$
 U_C\cap\FT^{-1}U_{p(C)}=\CC T_C.
$$
The distribution $T_C$ is the unique element of this intersection
satisfying
$$
 T_C\equiv S_C\pmod{U_{>C}}.
$$
\end{theorem}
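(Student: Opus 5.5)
Pass to the geometric side through Hypothesis~\ref{hyp:enhanced-llc}: the standard packet sums correspond (summed over pure inner forms) to the classes of $M$-equivariant sheaves. Here $S_D$ corresponds to the constant sheaf (or the sum over enhancements) on the orbit $D$, and $T_C$ corresponds to the functional $P\mapsto m_C(P)$, normalized so that its $S_C$-coefficient is $1$. The argument then has three steps: show $T_C$ lies in the intersection, reduce the intersection to a triangular system, and invoke the rigidity theorem to kill everything but the leading term.

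\textbf{Step 1: $T_C$ lies in the intersection.} By Kashiwara's local index formula, $m_C(P)$ is a combination of stalk Euler characteristics of $P$ on orbits $D\ge C$, weighted by local Euler obstructions. Hence $T_C$, written in the standard basis $\{S_D\}$, is supported on $D\ge C$ and has leading coefficient $1$ at $D=C$. This gives $T_C\in U_C$ and $T_C\equiv S_C \pmod{U_{>C}}$.

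Fourier transform $\FT$ on the character side corresponds, through Aubert compatibility, to the Fourier--Sato transform on $V\leftrightarrow V^*$. Fourier--Sato preserves characteristic cycles up to the identification of $T^*_CV$ with $T^*_{p(C)}V^*$. Therefore $\FT T_C$ is, up to a sign, the normalized microlocal functional $T_{p(C)}$. Applying the same index-formula argument to it gives $\FT T_C\in U_{p(C)}$.

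\textbf{Step 2: triangularity.} Let $X\in U_C\cap\FT^{-1}U_{p(C)}$ and write $X=\sum_{D\ge C}x_D S_D$. The condition $\FT X\in U_{p(C)}$ means that the coefficients of $\FT X$ in the standard basis vanish outside $\{p(D')\ge p(C)\}$.

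Use the basis of microlocal functionals $\{T_D\}$. By Step 1 it is unitriangular with respect to $\{S_D\}$ for the closure order. Its image $\{\FT T_D\}=\{\pm T_{p(D)}\}$ is unitriangular on the dual side. Expand $X=\sum_D y_D T_D$; unitriangularity gives $y_D\ne0\Rightarrow D\ge C$. Applying $\FT$ gives $\FT X=\sum_D \pm y_D T_{p(D)}$, and unitriangularity on the dual side then gives $y_D\ne0\Rightarrow p(D)\ge p(C)$.

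\textbf{Step 3: rigidity.} By the rigidity theorem for the Arthur orbit $C$, the two conditions $D\ge C$ and $p(D)\ge p(C)$ together force $D=C$. So $X=y_CT_C$, which proves $U_C\cap\FT^{-1}U_{p(C)}=\CC T_C$. The normalization $T_C\equiv S_C$ forces $y_C=1$, which gives uniqueness.

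\textbf{Expected obstacle.} The delicate point is Step 1's Fourier claim. I must verify that the Fourier involution on distributions across pure inner forms really matches Fourier--Sato, including the sign and normalization conventions for microlocal multiplicities. Enhancements also complicate matters: several $\rho$ sit over each orbit, so "triangular" must be read blockwise. I would first treat the functional summed over enhancements, as in \eqref{eq:normalized-microlocal-distribution}, and check that the standard sums $S_D$ are exactly the blocks involved. If the blocks are larger than one-dimensional, rigidity still isolates the $C$-block. One then needs that this block, intersected with the two conditions, is one-dimensional; this should follow because stability picks out the trivial-character combination.
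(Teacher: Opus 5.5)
Your argument is correct and is essentially the paper's proof: the local index formula makes $\{T_D\}$ unitriangular against $\{S_D\}$, and $m_{p(C)}(\FT P)=m_C(P)$ gives $\FT T_D=\pm T_{p(D)}$. Together these give $U_C\cap\FT^{-1}U_B=\bigoplus_{D\geq C,\,p(D)\geq B}\CC T_D$, and rigidity then leaves only $D=C$. Your two worries are unnecessary. The character-side $\FT$ is defined by transporting the geometric Fourier transform through $\mathcal J_\lambda$, so no Aubert compatibility is needed. Each $S_D$ is a single vector per orbit (the $\dim\rho_b$-weighted sum over enhancements), so there are no larger blocks and stability plays no role here.
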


The compatibility between the geometric Fourier transform and the Aubert--Zelevinsky involution gives a representation theoretic
interpretation of the two support conditions. More precisely, write $\widehat\pi$
for the irreducible Aubert dual of $\pi$, and $C(\pi)$ for the
orbit of its Langlands parameter. Suppose $\psi: \SL_2\times \SL_2\to\Gd$ is an unramified Arthur parameter with real infinitesimal parameter (equivalently, $\psi$ is trivial on the Weil group).  If $C=C_\psi$ and $\psi^t$
is obtained by exchanging the two $\SL_2$ factors of $\psi$, the
conditions on a constituent $\pi$ of the Arthur packet of $\psi$ become
$$
 C_\psi\leq C(\pi),\qquad
 C_{\psi^t}\leq C(\widehat\pi).
$$
Within the character space spanned by the Langlands standard representations, these bounds together with the 
normalization characterize the Arthur distribution.

\medskip

We verify the hypotheses required for the comparison between endoscopic and microlocal packets for the representations in Lusztig's unipotent category 
when $G$ is the split form of $\mathrm{SO}_{2n+1}(F)$, where the local
field $F$ has characteristic zero, and its residual characteristic
is at least $6n+4$. For every unramified Arthur parameter $\psi$,
we obtain
$$
 \Pi_\psi^\mic(G(F))=\Pi_\psi^\Art(G(F)).
$$
The proof uses Lusztig's unipotent correspondence
\cite{Lus95,Lus02}, Waldspurger's normalization and endoscopic
identities \cite{WaldI,WaldII}, and the Fourier/Aubert-Zelevinsky comparison
of Antor--Okada \cite{AO}. The closure bounds of
Hazeltine--Liu--Lo--Zhang \cite{HLLZ} and packetwise Aubert
duality \cite{Xu2017} show that the endoscopic Arthur distribution
satisfies the uniqueness conditions in our theorem.

\medskip

In order to classify the Arthur pairs we use a centralizer
criterion. Given $e_1^\vee\in V$, choose a homogeneous triple
$(e_1^\vee,h_1^\vee,f_1^\vee)$ and let $\mathfrak s_1$ be the $\mathfrak{sl}_2$-algebra spanned by it. The criterion is that  $Me_1^\vee$ is an Arthur orbit precisely when
$$
 \hd-h_1^\vee
 \quad\text{is a Dynkin element in}\quad
 Z_{\gd}(\mathfrak s_1).
$$
This is independent of the fixed homogeneous triple.
For classical groups, the criterion leads to an explicit condition on the
multiplicity spaces in the defining representation, equivalently
, to a classification by $\SL_2\times\SL_2$-modules. For 
exceptional groups, we combine the criterion with the methods of Vinberg and de Graaf
\cite{Vinberg1987,dGtheta}. We obtain a complete classification
for all $157$ Dynkin gradings in exceptional simple Lie algebras, together with the explicit 
Piasetsky involution. 

A further consequence of our classification concerns the special
pieces. For a grading given by a neutral element, let
$\mathcal O^\vee=\Gd e^\vee$ be the saturation of the open orbit
in $V$. Let $d$ denote Lusztig--Spaltenstein duality. The
\emph{relative special piece} of $\mathcal O^\vee$ consists of
the nilpotent orbits $\mathcal U^\vee\leq\mathcal O^\vee$ for
which $d(\mathcal U^\vee)=d(\mathcal O^\vee)$. Its graded orbits
form the set
$$
 \mathscr C_{\mathrm{rel}}
 =\{C\in M\backslash V:
               d(\Gd C)=d(\mathcal O^\vee)\}.
$$

\begin{theorem}[Graded orbits in relative special pieces]
Every orbit in $\mathscr C_{\mathrm{rel}}$ is Arthur, and the
saturation map
$$
 C\longmapsto\Gd C
$$
is injective on $\mathscr C_{\mathrm{rel}}$. It is also surjective
onto the ambient orbits in the relative special piece in
classical types, and for exceptional types when $\mathcal O^\vee$
is even.
\end{theorem}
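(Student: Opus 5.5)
The plan has three parts: show every $C\in\mathscr C_{\mathrm{rel}}$ is Arthur via the centralizer criterion, deduce injectivity of saturation from the rigidity theorem, and check surjectivity type by type.

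\emph{Arthur property.} Let $C=Me_1^\vee\in\mathscr C_{\mathrm{rel}}$ with homogeneous triple $(e_1^\vee,h_1^\vee,f_1^\vee)$ and $\mathfrak s_1$ its span. By the criterion stated above, it suffices to show that $\hd-h_1^\vee$ is a Dynkin element in $Z_{\gd}(\mathfrak s_1)$. The element $\hd-h_1^\vee$ lies in $Z_{\gd}(\mathfrak s_1)$ and is semisimple, so the content is that it is the neutral element of an $\mathfrak{sl}_2$-triple $(e_2^\vee,h_2^\vee,f_2^\vee)$ inside the centralizer.

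I would produce $e_2^\vee$ from the Piasetsky involution. Take a generic element of the conormal fibre of $C$. Under the identification $V^*=\gd(-2)$ its image is $\gd(2)$-type data commuting with $e_1^\vee$, and it lies in the open orbit of the $Z_M(e_1^\vee)$-action. The special-piece condition $d(\Gd C)=d(\mathcal O^\vee)$ should force $Z_{\gd}(\mathfrak s_1)$, graded by $\hd-h_1^\vee$, to have an even and distinguished-type open orbit whose Dynkin element is exactly $\hd-h_1^\vee$. The mechanism is Lusztig's description of special pieces: $\mathcal U^\vee$ lies in the special piece of $\mathcal O^\vee$ exactly when the reductive centralizer data is controlled by the canonical quotient. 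In classical types this becomes a statement about multiplicity spaces, namely that the $\SL_2\times\SL_2$-module decomposition pairs up cleanly. In exceptional types I would instead verify it directly against the classification tables of the $157$ Dynkin gradings.

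\emph{Injectivity.} Suppose $C,C'\in\mathscr C_{\mathrm{rel}}$ have the same saturation. Both are Arthur. The dimension of a graded orbit equals half the dimension of its saturation when that saturation meets the open orbit compatibly, so $C$ and $C'$ have equal dimension. I would then show $C\le C'$ together with $p(C)\le p(C')$, where the second inequality should come from $d(\Gd p(C))$ being determined by $d(\Gd C)$. The rigidity theorem then gives $C=C'$. The delicate point is comparing $C$ and $C'$ at all. I would try to show that the graded orbits in a relative special piece are linearly ordered via their saturations, since saturation is order preserving and special pieces have a unique maximal element.

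\emph{Surjectivity.} In classical types, given $\mathcal U^\vee$ in the relative special piece, I would construct an explicit $\SL_2\times\SL_2$-module restricting to $\hd$ on the diagonal. This uses the partition combinatorics of Spaltenstein duality. For exceptional types with $\mathcal O^\vee$ even, I would read surjectivity off the tables.

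I expect the main obstacle to be the Arthur property: linking Lusztig--Spaltenstein duality to the Dynkin condition in the centralizer without a case check. Failing a conceptual argument, I would reduce to distinguished $\mathcal O^\vee$ by Levi induction and then check the finitely many cases.
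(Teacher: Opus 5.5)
Your proposal has a genuine gap, and it is sharpest in the injectivity step.

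\textbf{Injectivity.} By Lemma~\ref{lem:strict-saturation}, $C<C'$ forces $\mathcal O_C^\vee<\mathcal O_{C'}^\vee$. So two distinct $H$-orbits with the same saturation are \emph{never} comparable, and proving $C\le C'$ is already equivalent to proving $C=C'$. The rigidity theorem constrains only comparable pairs, so it contributes nothing, and you give no mechanism for the comparison.

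Your auxiliary claims are also false:
\begin{itemize}
\item Graded orbits need not have half the dimension of their saturation. For the $F_4(a_3)$ grading, $\dim C_{12}=\dim V=12$, while $\dim F_4(a_3)=40$.
\item Graded orbits over a special piece are not linearly ordered. In Section~\ref{subsec:example-f4}, $C_9$ and $C_{10}$ lie over $\widetilde A_2+A_1$ and $B_2$ and are incomparable.
\end{itemize}

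More fundamentally, injectivity cannot follow from the Arthur property plus general principles. Take $\mathrm{GL}_8$ with diagonal partition $(3,2^2,1)$. By Proposition~\ref{prop:classical-completeness}, the arrays $n_{2,1}=2,\ n_{1,3}=n_{1,1}=1$ and $n_{2,2}=1,\ n_{1,2}=2$ give two distinct Arthur orbits. Both have saturation $(2^2,1^4)$. So the special-piece hypothesis must enter the uniqueness argument in an essential, combinatorial way.

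Your comparison idea does work in one place, the top orbit. Every orbit lies in $\overline{C_{\mathrm{op}}}$, so Lemma~\ref{lem:strict-saturation} gives $\mathcal O^\vee\cap V=C_{\mathrm{op}}$. This is exactly how the paper handles type $A$, where special pieces are singletons.

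\textbf{Arthur property.} The step ``the special-piece condition should force $\hd-h_1^\vee$ to be the Dynkin element of an even open orbit of $\mathfrak c_1$'' is the whole content of the claim, and you leave it as a heuristic. A generic conormal vector does not produce a triple with neutral element $\hd-h_1^\vee$ unless $C$ is already Arthur. A Levi reduction also needs its own argument, because $\gd(2)$ and the relative special piece are not determined by the Levi in which $\mathcal O^\vee$ is distinguished.

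The paper does not attempt a uniform argument either. It proceeds type by type:
\begin{itemize}
\item In types $B$ and $C$ it cites Gurevich--Okada \cite[Proposition~1.3.1]{GO}. This supplies an Arthur lift of each orbit in the relative special piece and its uniqueness, and together these give all three assertions.
\item In type $D$ it adapts \cite[Lemma~3.4.1, Theorem~3.4.2]{GO}. It corrects $\mathrm O(W)$- to $\mathrm{SO}(W)$-conjugacy by a sign on an odd-dimensional summand $L_a\boxtimes L_1$, and treats very even $\mu$ separately via Lemma~\ref{lem:strict-saturation}.
\item In exceptional types everything, including injectivity, is a computer check over all neutral gradings. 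Surjectivity for even $\mathcal O^\vee$ comes from comparison with \cite{JLSYspecial}.
\end{itemize}

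Your fallback to the tables matches the exceptional part. In classical types, however, you need to supply or cite this explicit lifting-and-uniqueness input in place of the rigidity argument.
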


In types $B$ and $C$, the lifting and uniqueness
statements follow from Gurevich--Okada \cite{GO}. In the argument, We use the special piece combinatorics used from \cite{JLSYspecial}. The results for exceptional groups 
follow from the complete graded orbit classification.

For unipotent representations in the sense of Lusztig, the wavefront set formula of
\cite{CMBOWFII} connects this special piece geometry to weak Arthur packets.
At the neutral infinitesimal character attached to
$\mathcal O^\vee$, let $\Pi_{\mathcal O^\vee}^{\mathrm{weak}}$
consist of the representations whose geometric wavefront set is
$\{d(\mathcal O^\vee)\}$. Under the correspondence and compatibility
hypotheses of Section~\ref{sec:weak}, for groups that are inner to split
(and sufficiently large residual
characteristic), we prove
$$
 \Pi_{\mathcal O^\vee}^{\mathrm{weak}}
   =\bigcup_{C\in\mathscr C_{\mathrm{rel}}}\Pi_{p(C)}^\mic.
$$
Every packet in this union is  a microlocal Arthur
packet. When $\mathcal O^\vee$ is even, we obtain an explicit
parameterization of the individual representations by ambient
nilpotent orbits in its special piece and irreducible
representations of their stabilizer component groups.
These results relate the weak packet decompositions of
\cite{LiuLoWeak,GO} to the geometry of special pieces
\cite{AcharSage,JLSYspecial}.

\medskip

The linear reconstruction theorem can be used to explicitly determine the microlocal Arthur packets for the exceptional groups.
Using the IC polynomials computed in \cite{CiubotaruOkadaKL},
we give explicit orbit, local-system, and packet tables.
These computations also reveal counterexamples to certain conjectured 
extremality properties of Arthur packets. The microlocal analogue of
\cite[Conjecture~1.5]{HLLZ} would require the Arthur parameters
whose packets contain a fixed representation to have a least
and a greatest element in the closure order. For the
$F_4(a_3)$ grading, we exhibit a representation for which this
set has two minimal elements. For the $E_8(a_7)$ grading,
we exhibit one for which it has two maximal elements.

\

The paper is organized as follows.
Sections~\ref{sec:setup} and~\ref{sec:arthur-infinitesimal}
establish the parameter geometry and its reduction to graded
Lie algebras. Section~\ref{sec:conormal-bounds} proves rigidity,
and Section~\ref{sec:reconstruction} develops reconstruction and
the characterization of packet distributions.
Section~\ref{sec:comparison} treats the unipotent correspondence
and the odd orthogonal comparison.
Section~\ref{sec:classification} classifies Arthur pairs.
Section~\ref{sec:weak} studies special pieces and weak packets,
and Section~\ref{sec:examples} contains the explicit computations
and extremality counterexamples.

\section{Parameters, graded geometry, and microlocal packets}
\label{sec:setup}

\subsection{Preliminaries}
\label{subsec:groups}
Let $F$ be a nonarchimedean local field of characteristic zero, with
residue field of cardinality $q$, Weil group $W_F$, and inertia subgroup
$I_F$. Write $|\cdot|:W_F\longrightarrow q^{\ZZ}$ for the norm character.
Choose $w_q\in W_F$ with $|w_q|=q$.

Let $G$ be a quasi-split connected reductive group over $F$, and let
$\Gd$ be its complex Langlands dual group, with Lie algebra $\gd$.
Write ${}^LG=\Gd\rtimes W_F$ for its Weil-form $L$-group,
with the Weil action preserving a chosen pinning.
We retain the isogeny type of $\Gd$:
although a central isogeny preserves the adjoint orbit geometry, it can
change the equivariant local systems and their representation-theoretic
interpretation. We denote by $G(F)$ the group of $F$-points and by
$G^\delta(F)$ that of a pure inner twist $\delta\in H^1(F,G)$;
see \cite[Definition~2.6 and Proposition~2.7(c)]{Vog}.

The superscript $\vee$ marks the complex dual side: for example,
$e^\vee,h^\vee,f^\vee\in\gd$ and
$\mathcal O^\vee\subset\gd$.
Once the ambient graded space $V$ is fixed, we write $C\subset V$ for
its orbits, omitting the superscript on $C$ and $V$. For a vector space
$V$, we will use $V^*$ to mean the linear dual.

Write
\begin{equation}
 E=\begin{pmatrix}0&1\\0&0\end{pmatrix},\qquad
 H_0=\begin{pmatrix}1&0\\0&-1\end{pmatrix},\qquad
 F=\begin{pmatrix}0&0\\1&0\end{pmatrix}
\end{equation}
for the standard $\mathfrak{sl}_2$-triple.

In the $\SL_2$ formulation of the Langlands parameters, write
\begin{equation}
 \phi:W_F\times\SL_2(\CC)\longrightarrow{}^LG
\end{equation}
for a Langlands homomorphism: continuous on $W_F$, algebraic on
$\SL_2$, having semisimple Weil image, and with the standard
admissibility conditions. Its composition with ${}^LG\to W_F$ is
the projection. Parameters are considered up to $\Gd$-conjugacy;
we use actual homomorphisms when fixing an infinitesimal parameter.
Vogan uses the equivalent Weil--Deligne formulation in
\cite[Definition~4.2]{Vog}. Relevance to a particular inner form is
imposed when passing to representations of that form. Put
\begin{equation}
 a_w=\begin{pmatrix}|w|^{1/2}&0\\0&|w|^{-1/2}\end{pmatrix}.
\end{equation}
By definition, the infinitesimal parameter associated to $\phi$ is
\begin{equation}\label{eq:infinitesimal-parameter}
 \lambda_\phi(w)=\phi(w,a_w).
\end{equation}
Following \cite[(4.4)(a)]{Vog},
an infinitesimal parameter is a continuous homomorphism
$\lambda:W_F\longrightarrow{}^LG$ which is a section of
${}^LG\to W_F$ and has semisimple image. Its $\Gd$-conjugacy class
is the infinitesimal character in the sense of
\cite[Definition~1.10]{Vog}.

Fix an infinitesimal parameter $\lambda$ for the remainder of this section.

\subsection{The parameter variety at an infinitesimal parameter}
\label{subsec:parameter-variety}

Parameter varieties have their origins in Lusztig's study of square-integrable
representations \cite{Lusztig1983}. We use Vogan's
formulation at an infinitesimal parameter \cite[(4.4)(c)--(f)]{Vog}.
Set
\begin{equation}
 H_\lambda=Z_{\Gd}(\lambda(W_F)),\qquad
 K_\lambda=Z_{\Gd}(\lambda(I_F)),\qquad
 \mathfrak k_\lambda=\Lie(K_\lambda).
\end{equation}
Put
\begin{equation}\label{eq:parameter-variety}
 V_\lambda^\pm
 =\{x^\vee\in\gd:\Ad(\lambda(w))x^\vee=|w|^{\pm1}x^\vee
                         \text{ for all }w\in W_F\}.
\end{equation}
Write $V_\lambda=V_\lambda^+$. When $\lambda$ is clear from context,
we drop the subscript and write $V^\pm=V_\lambda^\pm$ and
$V=V_\lambda$; we also write $H=H_\lambda$.
Thus
\begin{equation}
 V_\lambda^\pm
 =\{x^\vee\in\mathfrak k_\lambda:
                  \Ad(\lambda(w_q))x^\vee=q^{\pm1}x^\vee\}.
\end{equation}

We recall some standard facts about these objects. The identity components of $H_\lambda$ and $K_\lambda$ are reductive.
The group $H_\lambda$ acts on $V_\lambda$ by conjugation, with finitely
many orbits and a unique open orbit. 
The assignment
\begin{equation}
 \phi\longmapsto x_\phi
 :=d(\phi|_{\SL_2})(E)
\end{equation}
induces a bijection between the $H_\lambda$-conjugacy classes of
parameters satisfying $\lambda_\phi=\lambda$ and the $H_\lambda$-orbits
in $V_\lambda$. We write $C_\phi=H_\lambda x_\phi$.

\subsection{Arthur pairs}
\label{subsec:arthur-pairs}

An Arthur parameter \cite[\S6]{Arthur1989} is an admissible homomorphism
\begin{equation}
 \psi:W_F\times\SL_2(\CC)_D\times\SL_2(\CC)_A
       \longrightarrow{}^LG
\end{equation}
whose Weil part is bounded and whose restrictions to the two
$\SL_2$ factors are algebraic. 
Define
\begin{equation}\label{eq:arthur-langlands-transposition}
 \phi_\psi(w,x)=\psi(w,x,a_w),\qquad
 \psi^t(w,x,y)=\psi(w,y,x).
\end{equation}

For an Arthur parameter $\psi$, the infinitesimal parameters satisfy
\begin{equation}
 \lambda_\psi(w):=\lambda_{\phi_\psi}(w)
   =\psi(w,a_w,a_w)=\lambda_{\psi^t}(w).
\end{equation}
For $\lambda_\psi=\lambda$, the two factors define commuting triples
$(e_i^\vee,h_i^\vee,f_i^\vee)$, $i=1,2$, with
\begin{equation}
 e_i^\vee\in V_\lambda,\qquad
 h_i^\vee\in\Lie(H_\lambda),\qquad
 f_i^\vee\in V_\lambda^-.
\end{equation}
In particular,
\begin{equation}\label{eq:arthur-base}
 C_\psi:=C_{\phi_\psi}=H e_1^\vee,\qquad
 C_{\psi^t}=H e_2^\vee.
\end{equation}

\begin{definition}\label{def:arthur-pair}
An ordered pair $(C_1,C_2)$ of $H$-orbits in $V$ is an
\emph{Arthur pair at $\lambda$} if
$(C_1,C_2)=(C_\psi,C_{\psi^t})$ for an Arthur parameter
$\psi$ with $\lambda_\psi=\lambda$.
An orbit occurring as the first member is an \emph{Arthur orbit}.
\end{definition}

\subsection{Orbits, conormal bundles, and Piasetski duality}
\label{subsec:conormals}

For $H$-orbits $C,D\subset V$, set
\begin{equation}\label{eq:closure-order}
 C\leq D\quad\Longleftrightarrow\quad C\subseteq\overline D.
\end{equation}
Write $\mathcal O_C^\vee=\Gd\cdot C$ for the ambient nilpotent orbit.

Let $B$ be the Killing form on $[\gd,\gd]$. It is invariant
under $\Gd$ and the pinned Weil action
\cite[Proposition~1.119]{Knapp}, and is positive definite on
the real coroot span.
It induces a perfect
$H$-invariant pairing between $V_\lambda$ and $V_\lambda^-$.
Using $B$, we identify $V^-$ with $V^*$ and $T^*V$ with $V\times V^-$ by
\begin{equation}
 V^-\xrightarrow{\sim}V^*,\quad y^\vee\longmapsto B(-,y^\vee),
 \qquad (x,B(-,y))\in T^*V\longmapsto(x,y)\in V\times V^-.
\end{equation}

As in \cite[\S6.3]{CFMMX}, for $x^\vee\in C$, the conormal fibre is
\begin{equation}\label{eq:conormal-fibre}
 (T_C^*V)_{x^\vee}
 =\{y^\vee\in V^-:[x^\vee,y^\vee]=0\}.
\end{equation}

We have $T_{x^\vee}C=[\Lie(H),x^\vee]$ and
$[x^\vee,y^\vee]\in\Lie(H)$; invariance and nondegeneracy of the
form give the assertion. More explicitly, for $a\in\Lie(H)$,
\begin{equation}
 B([a,x^\vee],y^\vee)=B(a,[x^\vee,y^\vee]).
\end{equation}

Put $\Lambda_C=\overline{T_C^*V}$ and
\begin{equation}
 \Lambda=\{(x^\vee,y^\vee)\in V\times V^-:
                  [x^\vee,y^\vee]=0\}
         =\bigcup_{C\in H\backslash V}\Lambda_C.
\end{equation}
If $H$ is disconnected, $[\Lambda_C]$ denotes the sum of the
fundamental cycles of its irreducible components.

\begin{definition}\label{def:piasetski}
Following \cite{Piasetski1975}, the \emph{Piasetski dual} of $C$
is the unique dense $H$-orbit $C^*\subset V^-$ in the second
projection of $\Lambda_C$.
\end{definition}

By \cite[Lemma~6.4.1]{CFMMX}, the cotangent flip $(x^\vee,y^\vee)\mapsto(y^\vee,-x^\vee)$
identifies $\Lambda_C$ with $\Lambda_{C^*}$. Thus duality gives a
bijection between the orbit sets in $V$ and $V^-$, also when $H$
is disconnected.

\subsection{Enhancements and microlocal packet data}
\label{subsec:microlocal-data}
Set
\begin{equation}
 A_H(x^\vee)=\pi_0 Z_H(x^\vee).
\end{equation}
The irreducible $H$-equivariant local systems on $C$ correspond to
$\rho\in\Irr A_H(x^\vee)$. Write $\mathcal L_\rho$ for the
local system and use the perverse normalization
\begin{equation}\label{eq:ic-normalization}
 P_{C,\rho}=\IC(\overline C,\mathcal L_\rho),\qquad
 P_{C,\rho}|_C=\mathcal L_\rho[\dim C].
\end{equation}
These are the simple objects of the category $\Perv_H(V)$ of $H$-equivariant perverse sheaves on $V$ \cite[Theorem~8.6]{Vog}.

For $P\in\Perv_H(V)$ its characteristic cycle has the form
\begin{equation}\label{eq:characteristic-cycle}
 \operatorname{CC}(P)=\sum_{C\in H\backslash V}
                  m_C(P)[\Lambda_C],\qquad m_C(P)\in\ZZ_{\geq0}.
\end{equation}
In particular, 
\begin{equation}\label{eq:microlocal-diagonal}
 m_C(P_{C,\rho})=\rank\mathcal L_\rho=\dim\rho,
 \qquad m_C(P_{D,\sigma})=0\quad\text{unless }C\leq D.
\end{equation}
The coefficient $m_C(P)$  is the dimension of a generic normal Morse group, or
equivalently the generic rank of normalized microlocal vanishing cycles
along the conormal component. These are notoriously difficult to compute, but we will only need their formal properties.  
At the sheaf level define
\begin{equation}\label{eq:sheaf-packet}
 \mathscr P_C^{\mic}
 =\{P\in\Irr\Perv_H(V):m_C(P)>0\}.
\end{equation}
Write
\begin{equation}
 \mathcal I_\lambda
 =\{(C,\rho):C\in H\backslash V,\
        \rho\in\Irr A_H(x^\vee),\ x^\vee\in C\},
\end{equation}
where a representative $x^\vee$ is chosen in each orbit.
For $a=(C_a,\rho_a)\in\mathcal I_\lambda$, write
$P_a=P_{C_a,\rho_a}$ and $d_a=\dim C_a$.

An enhanced Langlands parameter is a pair $(\phi,\rho)$ with
\begin{equation}
 \rho\in\Irr A_{H_{\lambda_\phi}}(x_\phi),\qquad
 x_\phi=d(\phi|_{\SL_2})(E),
\end{equation}
considered up to $\Gd$-conjugacy. This agrees with the usual enhancement
by the component group of the parameter centralizer, via the canonical
isomorphism of \cite[Lemma~4.6.1]{CFMMX}.
Write $\Phi^{\mathrm{enh}}(G)$ for the enhanced parameter classes.
Those with infinitesimal character $[\lambda]$ correspond bijectively
to $\mathcal I_\lambda$ via $[\phi,\rho]\mapsto(C_\phi,\rho)$,
using representatives with $\lambda_\phi=\lambda$.

Fix a Whittaker datum for $G$. Given an enhanced local Langlands correspondence
\begin{equation}
 \bigsqcup_{\delta\in H^1(F,G)}\Irr(G^\delta(F))
 \longleftrightarrow\Phi^{\mathrm{enh}}(G),
 \qquad \pi\longleftrightarrow[\phi_\pi,\rho_\pi],
\end{equation}
write $\pi(\phi,\rho)$ for its inverse and
$\Pi_\phi(G^\delta(F))=\{\pi\in\Irr(G^\delta(F)):[\phi_\pi]=[\phi]\}$.
For $a=(C_a,\rho_a)\in\mathcal I_\lambda$, denote the corresponding
representation by $\pi_a$, its pure inner twist by $\delta_a$, and
its Langlands standard module by $I_a$. Put
\begin{equation}\label{eq:signed-restriction-multiplicities}
 s_{ba}=\sum_k(-1)^k
 \bigl[\mathcal H^k(P_a|_{C_b}):\mathcal L_{\rho_b}\bigr],
\end{equation}
where the bracket denotes multiplicity of an $H$-equivariant local system.
Let $\kappa_\delta$ be the Kottwitz character of
$\pi_0(Z(\Gd)^{W_F})$ attached to $\delta$; see
\cite[Proposition~4.16 and (4.17)]{Vog}.

Throughout the paper, assume a bijection of the above form for $G$
and each standard Levi subgroup, normalized by the chosen Whittaker
datum and its restrictions. 
For $G$ and each standard Levi subgroup, require the following
properties.
\begin{hypothesis}[Enhanced local Langlands correspondence]
\label{hyp:enhanced-llc}
$\hphantom{ }$
\begin{enumerate}
\item \emph{Inner forms.} For $\pi\in\Irr(G^\delta(F))$,
\begin{equation}
 \rho_\pi|_{\pi_0(Z(\Gd)^{W_F})}
 =\kappa_\delta\,\mathrm{id}_{\rho_\pi},
\end{equation}
where restriction uses the natural map to
$A_{H_{\lambda_{\phi_\pi}}}(x_{\phi_\pi})$.

\item \emph{Kazhdan--Lusztig multiplicities.}
For $a,b\in\mathcal I_\lambda$ with $\delta_a=\delta_b$,
\begin{equation}\label{eq:llc-kl-hypothesis}
 [I_b:\pi_a]=(-1)^{d_a}s_{ba}.
\end{equation}

\item \emph{Temperedness and stability.}
The representation $\pi$ is tempered if and only if
$\phi_\pi|_{W_F}$ is bounded. For every tempered parameter $\phi$ and pure inner form $G^\delta$,
\begin{equation}\label{eq:tempered-packet-stability}
 S\Theta_\phi^\delta=\sum_{\pi\in\Pi_\phi(G^\delta(F))}
                  \dim(\rho_\pi)\Theta_\pi
\end{equation}
is stable on $G^\delta(F)$.

\item \emph{Langlands classification.}
Let $\phi=\iota_L\circ(\phi_L\otimes z)$ have Langlands data
$(L,\phi_L,z)$: $\phi_L$ is tempered and $z$ is a positive real
unramified central Weil parameter, strictly positive for the standard
parabolic $P$ with Levi $L$.
Here $\iota_L:{}^LL\hookrightarrow{}^LG$ is the standard embedding.
For each pure inner form $G^\delta$ to which $\phi$ is relevant,
let $P_\delta$ have Levi $L_\delta$, the corresponding pure inner form
of $L$. Require
\begin{equation}\label{eq:llc-langlands-classification}
 \pi_{G^\delta}(\phi,\rho)
 =J_{P_\delta}^{G^\delta}
       \bigl(\pi_{L_\delta}(\phi_L,\rho_L)\otimes\nu_\delta\bigr),
\end{equation}
where $\nu_\delta$ is the character of $L_\delta(F)$ corresponding to
$z$ under the unramified character LLC, and $J$ denotes the unique
irreducible quotient of normalized
parabolic induction. The canonical identification of
parameter component groups identifies the enhancements on
$G^\delta$ with those on $L_\delta$.
\end{enumerate}
\end{hypothesis}
Under Hypothesis~\ref{hyp:enhanced-llc}, the infinitesimal character
of $\pi$ is $[\lambda_{\phi_\pi}]$. Write
$\Irr_\lambda(G^\delta(F))$ for the representations with character
$[\lambda]$. Restricting the correspondence gives
\begin{equation}
 \bigsqcup_{\delta\in H^1(F,G)}\Irr_\lambda(G^\delta(F))
 \longleftrightarrow\mathcal I_\lambda.
\end{equation}

The sheaf packet \eqref{eq:sheaf-packet} corresponds to the
\emph{microlocal packet}
\begin{equation}
 \Pi_C^{\mic}=\{\pi_a:a\in \mathcal I_\lambda, \ m_C(P_a)>0\},\qquad
 \Pi_C^{\mic}(G^\delta(F))
 =\Pi_C^{\mic}\cap\Irr_\lambda(G^\delta(F)).
\end{equation}
For an Arthur parameter $\psi$ at $\lambda$, put
\begin{equation}\label{eq:microlocal-packet}
 \Pi_\psi^{\mic}=\Pi_{C_\psi}^{\mic};
\end{equation}
see \cite[\S8, discussion of (8.16)]{Vog}.

\begin{definition}[Normalized microlocal virtual character]
\label{def:normalized-microlocal-character}
For an $H$-orbit $C\subset V$, define
\begin{equation}\label{eq:normalized-microlocal-distribution}
 T_C=\sum_{a\in\mathcal I_\lambda}
 e(\delta_a)(-1)^{d_a-\dim C}m_C(P_a)\Theta_{\pi_a},
\end{equation}
where $e(\delta_a)\in\{1,-1\}$ is the Kottwitz sign.
We use the normalization of \cite[(8.16)(b)]{Vog} and regard $T_C$
as an element of the direct sum of character spaces over pure inner forms.
\end{definition}

\subsection{Fourier transform}
\label{subsec:general-fourier}

We normalize Fourier transform so that it is exact for the
perverse $t$-structure. Fourier transform is then an equivalence
\begin{equation}
 \mathfrak F_{V_\lambda}:\Perv_{H_\lambda}(V_\lambda)
       \longrightarrow\Perv_{H_\lambda}(V_\lambda^-).
\end{equation}

Fourier transform permutes the
simple objects and satisfies
\begin{equation}
 m_{C^*}(\mathfrak F_{V_\lambda}P)=m_C(P).
\end{equation}

\section{Infinitesimal parameters of Arthur type}
\label{sec:arthur-infinitesimal}

\subsection{The bounded Weil part}
\label{subsec:arthur-type-specialization}

An infinitesimal parameter $\lambda$ is of \emph{Arthur type} if
$\lambda=\lambda_{\psi_0}$ for an Arthur parameter $\psi_0$.
Choose such a $\psi_0$ and define
\begin{equation}
 \tau(w)=\psi_0(w,1,1),\qquad
 \varphi_\Delta(g)=\psi_0(1,g,g),\qquad
 (e^\vee,\hd,f^\vee)=d\varphi_\Delta(E,H_0,F).
\end{equation}
For $t>0$, write
\begin{equation}
 t^{\hd/2}=\varphi_\Delta
       \begin{pmatrix}t^{1/2}&0\\0&t^{-1/2}\end{pmatrix}.
\end{equation}

Throughout this section, $\lambda$ is of Arthur type.

We use the elliptic--hyperbolic decomposition of
\cite[\S\S5.2--5.3 and proof of Proposition~6.6.1]{CFMMX}.
The homomorphism $\tau$ is bounded and centralizes $\varphi_\Delta$.
Moreover,
\begin{equation}\label{eq:diagonal-infinitesimal}
 \lambda(w)=\tau(w)|w|^{\hd/2}.
\end{equation}
The pair $(\tau,\hd)$ depends only on the fixed representative $\lambda$.
In particular, every Arthur parameter $\psi$ with $\lambda_\psi=\lambda$
has $\psi(w,1,1)=\tau(w)$.

The product decomposition follows from the definition of $\lambda_\psi$.
In the elliptic--hyperbolic decomposition of $\lambda(w_q)$,
\begin{equation}
 \lambda(w_q)_{\mathrm{ell}}=\tau(w_q),\qquad
 \lambda(w_q)_{\mathrm{hyp}}=q^{\hd/2},
\end{equation}
and then
$\tau(w)=\lambda(w)|w|^{-\hd/2}$ for every $w$.

Set $\phi_\Delta(w,g)=\tau(w)\varphi_\Delta(g)$.
The parameter $\phi_\Delta$ is tempered, with
$\lambda_{\phi_\Delta}=\lambda$ and $\phi_\Delta|_{W_F}=\tau$.

\subsection{Inertia, Frobenius, and the bounded centralizer}
\label{subsec:bounded-centralizer}
Following \cite[\S\S5.2--5.3]{CFMMX}, set
\begin{equation}
 J=Z_{\Gd}(\tau(W_F)),\qquad \mathfrak j=\Lie(J).
\end{equation}
Since $\lambda|_{I_F}=\tau|_{I_F}$ and $W_F$ is generated by
$I_F$ and $w_q$,
\begin{equation}
 J=K_\lambda^{\Ad(\tau(w_q))}.
\end{equation}
The group $J$ may be disconnected, but $J^\circ$ is reductive:
the Zariski closure of $\Ad(\tau(W_F))$ is linearly reductive,
since $\tau$ is bounded, so \cite[A.8.12]{CGP} applies.
The image of $\varphi_\Delta$ is connected and centralizes $\tau(W_F)$,
hence lies in $J^\circ$. Thus $(e^\vee,\hd,f^\vee)$ is an
$\mathfrak{sl}_2$-triple in $\mathfrak j$.

Define
\begin{equation}
 \mathfrak j(r)=\{x^\vee\in\mathfrak j:[\hd,x^\vee]=r x^\vee\}.
\end{equation}
By \cite[Lemma~5.3.1 and Remark~5.3.2]{CFMMX},
\begin{equation}
 \begin{gathered}
 H=H_\lambda=Z_J(\hd),\qquad H^\circ=Z_{J^\circ}(\hd),\\
 \Lie(H)=\mathfrak j(0),\qquad V_\lambda^\pm=\mathfrak j(\pm2).
 \end{gathered}
\end{equation}
The orbit $H^\circ e^\vee=He^\vee$ is open in $V$,
since $[\mathfrak j(0),e^\vee]=\mathfrak j(2)$.
An $\mathfrak{sl}_2$-triple in $\mathfrak j$ is \emph{homogeneous}
if its entries lie in $\mathfrak j(2),\mathfrak j(0),\mathfrak j(-2)$,
respectively; we also allow the zero triple.

Set
\begin{equation}
 n_\Delta=\varphi_\Delta
 \begin{pmatrix}0&1\\-1&0\end{pmatrix}\in J^\circ,
 \qquad \alpha=\operatorname{Int}(n_\Delta)|_H.
\end{equation}
The identity $\Ad(n_\Delta)\hd=-\hd$ shows that $n_\Delta$
normalizes $H$ and exchanges $V^+$ and $V^-$. Moreover,
$n_\Delta^2=\varphi_\Delta(-I)\in Z(H)$ acts trivially on
$V^+\oplus V^-$. Hence $\alpha^2=1$ and the maps
\begin{equation}
 \iota_\Delta^\pm=\Ad(n_\Delta)|_{V^\pm}:
 V^\pm\xrightarrow{\sim}V^\mp
\end{equation}
are inverse and satisfy
\begin{equation}
 \iota_\Delta^\pm(\Ad(m)v)
 =\Ad(\alpha(m))\iota_\Delta^\pm(v)
 \qquad(m\in H,\ v\in V^\pm).
\end{equation}

\subsection{Saturation of graded orbits}
\label{sec:rigidity}
The grading is on $\mathfrak j$, while
$\mathcal O_C^\vee=\Gd\cdot C$ still denotes saturation in the
original dual Lie algebra $\gd$.

\begin{lemma}\label{lem:strict-saturation}
If $C<D$ are $H$-orbits in $V=\mathfrak j(2)$, then
$\mathcal O_C^\vee<\mathcal O_D^\vee$.
\end{lemma}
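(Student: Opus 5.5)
The plan is to prove that saturation is strictly monotone on graded orbits by showing two things: $\mathcal O_C^\vee\le\mathcal O_D^\vee$, and that equality forces $C=D$.

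\emph{Weak inequality.} Since $C\subseteq\overline D$ and $\Gd$ acts continuously, we have $\Gd C\subseteq \Gd\overline D\subseteq\overline{\Gd D}$. Hence $\mathcal O_C^\vee\le\mathcal O_D^\vee$.

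\emph{Strictness.} Suppose instead that $\mathcal O_C^\vee=\mathcal O_D^\vee=:\mathcal O^\vee$. We use the standard fact, due to Kostant and Kawanaka for graded Lie algebras, that the intersection of an ambient nilpotent orbit with a graded piece is a finite union of orbits of the degree-zero group, each of which is open in the intersection. The key step is to pin this down for $J$ acting on $\mathfrak j(2)$. We take $x^\vee\in C$ and a homogeneous triple $(x^\vee,h_x,f_x)$ in $\mathfrak j$. The tangent space to $\Gd x^\vee\cap\mathfrak j(2)$ at $x^\vee$ lies inside $[\gd,x^\vee]\cap\mathfrak j(2)$. We then compare this with $T_{x^\vee}C=[\mathfrak j(0),x^\vee]$:
\[
[\gd,x^\vee]\cap\mathfrak j(2)=[\mathfrak j(0),x^\vee].
\]
To prove this equality, first intersect with $\mathfrak j$. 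Because $\tau(W_F)$ centralizes $x^\vee$, the equality $[\gd,x^\vee]\cap\mathfrak j=[\mathfrak j,x^\vee]$ follows by averaging over the reductive closure of $\Ad\tau(W_F)$, that is, by projecting onto invariants. This projection commutes with $\operatorname{ad}x^\vee$. Next take the $\hd$-weight-$2$ component. Since $\operatorname{ad}x^\vee$ raises the $\hd$-degree by $2$, this yields $[\mathfrak j,x^\vee]\cap\mathfrak j(2)=[\mathfrak j(0),x^\vee]$.

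It follows that every $H$-orbit in $\mathcal O^\vee\cap V$ is open in $\mathcal O^\vee\cap V$. Here we use that $\mathcal O^\vee\cap V$ is smooth, since it is the fixed locus of a reductive group acting on the smooth variety $\mathcal O^\vee$: the relevant reductive group is generated by the closure of $\Ad\tau(W_F)$ and the torus $t^{\hd}$, suitably twisted. Alternatively, one can just use tangent-space dimension counts. Since $H$ has finitely many orbits on $V$, the orbits in $\mathcal O^\vee\cap V$ are exactly the connected components of this intersection, and they have equal dimension. Then $C\subseteq\overline D\cap\mathcal O^\vee$ together with $\dim C<\dim D$ contradicts the fact that $D$ is open and closed in $\mathcal O^\vee\cap V$: the closure of $D$ within $\mathcal O^\vee\cap V$ is $D$ itself.

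\emph{Main obstacle.} The hardest step is the tangent-space identity together with the smoothness and fixed-point description of $\mathcal O^\vee\cap V$. The action here is twisted by a possibly infinite bounded group $\tau(W_F)$, so we need the Zariski closure of its adjoint image to be linearly reductive. This is exactly the fact already invoked via \cite[A.8.12]{CGP}.
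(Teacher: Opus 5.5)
Your proof is correct and is essentially the paper's argument: it uses the same identity $[\gd,x^\vee]\cap V=[\mathfrak j(0),x^\vee]$, where your Reynolds projection onto $\tau(W_F)$-invariants plays the role of the paper's $J$-stable complement $\gd=\mathfrak j\oplus U$; then the openness and closedness of graded orbits in $\mathcal O^\vee\cap V$, which the paper quotes from Vinberg rather than re-deriving via smoothness or a tangent-space count; and finally $C\subseteq\overline D\cap\mathcal O^\vee\cap V=D$. Two side remarks need correcting, though neither is used in your final contradiction, which only needs $D$ to be closed in $\mathcal O^\vee\cap V$: drop the aside that the orbits in $\mathcal O^\vee\cap V$ have equal dimension, which is false (in the $E_6$ example, $C_3,C_4,C_5$ all saturate to $2A_1$ but have dimensions $8,8,9$), and note that for disconnected $H$ the connected components are the $H^\circ$-orbits, not the $H$-orbits.
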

\begin{proof}
Since $J^\circ$ is reductive, complete reducibility gives a
$J$-stable complement $\gd=\mathfrak j\oplus U$
\cite[Corollary~22.43]{MilneAG}. For $x^\vee\in V$, we have
$[U,x^\vee]\subseteq U$ and
$[\mathfrak j(r),x^\vee]\subseteq\mathfrak j(r+2)$. Hence
\begin{equation}
 [\gd,x^\vee]\cap V
 =[\mathfrak j,x^\vee]\cap\mathfrak j(2)
 =[\mathfrak j(0),x^\vee]
 =[\Lie(H),x^\vee].
\end{equation}
By \cite[\S2.3, Lemma]{Vinberg1976}, each $H^\circ$-orbit is closed
in its ambient orbit intersected with $V$. The same holds for an
$H$-orbit, a finite union of $H^\circ$-orbits. Thus, if
$\mathcal O_C^\vee=\mathcal O_D^\vee=\mathcal O$, then
\begin{equation}
 C\subseteq\overline D\cap\mathcal O\cap V=D,
\end{equation}
contrary to $C\ne D$. Since saturation preserves closure inclusion,
$\mathcal O_C^\vee<\mathcal O_D^\vee$.
\end{proof}

\subsection{Reduction to unipotent geometry}
\label{subsec:unipotent-reduction}
Removing the bounded Weil part $\tau$ reduces the geometry at $\lambda$
to that of an unramified hyperbolic parameter for $J\times W_F$. Set
\begin{equation}
 {}^LJ_{\mathrm{geom}}=J\times W_F,\qquad
 \lambda_J(w)=(|w|^{\hd/2},w),
\end{equation}
and define
\begin{equation}
 r_\tau:{}^LJ_{\mathrm{geom}}\longrightarrow{}^LG,
 \qquad (j,w)\longmapsto j\tau(w).
\end{equation}
Since $\tau(W_F)$ centralizes $J$, this is an injective homomorphism
with $r_\tau\circ\lambda_J=\lambda$. We do not assume that
${}^LJ_{\mathrm{geom}}$ is the $L$-group of a reductive group over $F$.

The following is the reduction of
\cite[Remark~5.3.2 and proof of Proposition~6.6.1]{CFMMX}.
\begin{proposition}[Unipotent reduction]
\label{prop:unipotent-reduction}
The parameter $\lambda_J$ has centralizer $H$ in $J$ and weight spaces
$V^\pm=\mathfrak j(\pm2)$. Its equivariant geometry is therefore the
same as that of $\lambda$: the acting group, stabilizers, enhanced orbit
labels, conormal closures, intersection cohomology complexes,
microlocal multiplicities, and Fourier transform are unchanged.

Composition with $r_\tau$ gives an $H$-equivariant bijection
\begin{equation}
 \{\psi_J:\lambda_{\psi_J}=\lambda_J,\
                 \psi_J(w,1,1)=(1,w)\}
 \xrightarrow{\sim}
 \{\psi:\lambda_\psi=\lambda\}
\end{equation}
between Arthur parameters for $J\times W_F$ and ${}^LG$,
preserving transposition and Arthur pairs. Explicitly, it sends
\begin{equation}
 \psi_J(w,g_1,g_2)=(\Psi(g_1,g_2),w)
 \quad\longmapsto\quad
 \psi(w,g_1,g_2)=\tau(w)\Psi(g_1,g_2).
\end{equation}

The reduction also identifies enhanced Langlands parameter classes
at $\lambda_J$ and $\lambda$ via their common labels
$(C,\rho)\in\mathcal I_\lambda$.
\end{proposition}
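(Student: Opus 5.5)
Throughout, I would use two facts about $\lambda_J$: its image lies in $J^\circ\times W_F$ with Weil component the identity map, and its $J$-component $|w|^{\hd/2}$ is the hyperbolic element built from $\varphi_\Delta$.

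The first step is to compute the objects attached to $\lambda_J$. Since $\lambda_J$ is unramified, $\lambda_J(I_F)=\{1\}\times I_F$ acts trivially on $\mathfrak j$. Hence the analogue of $K_\lambda$ is all of $J$. The Frobenius $w_q$ acts by $\Ad(q^{\hd/2})$, so its centralizer in $J$ is $Z_J(q^{\hd/2})$. I would check that this equals $Z_J(\hd)$: since $\hd$ is semisimple with integer eigenvalues, an element commuting with $q^{\hd/2}$ preserves its eigenspaces, which are those of $\hd$. By \S\ref{subsec:bounded-centralizer} this is $H$. Likewise the $q^{\pm1}$-eigenspaces of $\Ad(q^{\hd/2})$ on $\mathfrak j$ are $\mathfrak j(\pm2)=V^\pm$.

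All the geometric objects listed in the proposition are defined purely in terms of the pair $(H,V^+)$. They also depend on the pairing with $V^-$; for this I would use the restriction of $B$ to $\mathfrak j$, which is the same form used at $\lambda$. It follows that these objects are literally identical at $\lambda_J$ and at $\lambda$. This covers the acting group, the stabilizers $A_H$, the labels in $\mathcal I_\lambda$, the conormal varieties $\Lambda_C$, the IC complexes, the characteristic cycles, and the Fourier transform. The identification of enhanced parameter classes then follows from the bijections with $\mathcal I$ recalled in \S\ref{subsec:microlocal-data}.

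The second step is to construct the bijection of Arthur parameters.
\begin{itemize}
\item \emph{Forward direction.} Given $\psi_J$ with $\psi_J(w,1,1)=(1,w)$ and $\lambda_{\psi_J}=\lambda_J$, the $J$-component $\Psi$ is an algebraic homomorphism $\SL_2\times\SL_2\to J$ that does not depend on $w$. Set $\psi=\tau\cdot\Psi$. This is a homomorphism because $\tau(W_F)$ centralizes $J$. It is admissible, its Weil part $\tau$ is bounded, and $\lambda_\psi(w)=\tau(w)\Psi(a_w,a_w)=\tau(w)|w|^{\hd/2}=\lambda(w)$ by \eqref{eq:diagonal-infinitesimal}.
\item \emph{Surjectivity.} Given $\psi$ with $\lambda_\psi=\lambda$, \S\ref{subsec:arthur-type-specialization} gives $\psi(w,1,1)=\tau(w)$. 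The $\SL_2\times\SL_2$ part commutes with the Weil part, so its image lies in $J$. The remaining point is $\lambda_{\psi_J}=\lambda_J$, i.e. $\Psi(a_w,a_w)=|w|^{\hd/2}$. This follows because $\Psi(a_w,a_w)=\tau(w)^{-1}\lambda(w)=|w|^{\hd/2}$.
\item \emph{Injectivity and equivariance.} Injectivity is clear. $H$-equivariance holds because $H\subset J$ commutes with $\tau$.
\end{itemize}
Both transposition and the differentials $e_i^\vee=d\Psi_i(E)$ are computed from $\Psi$ alone. Hence $C_\psi=C_{\psi_J}$ and $C_{\psi^t}=C_{\psi_J^t}$, and Arthur pairs are preserved.

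I expect the main subtlety to be the second point of the first step, the identification $Z_J(q^{\hd/2})=Z_J(\hd)$ when $J$ is disconnected. For this I would invoke the statement $H=Z_J(\hd)$ from \cite[Lemma~5.3.1]{CFMMX} directly, rather than reprove it. The rest is bookkeeping.
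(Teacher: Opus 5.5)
Your argument is correct, and it takes a more self-contained route than the paper. The paper gives no independent proof: it attributes the proposition to \cite[Remark~5.3.2 and proof of Proposition~6.6.1]{CFMMX}. It adds only the explicit parameters $\phi_{J,x}(w,g)=(|w|^{\hd/2}\gamma(a_w)^{-1}\gamma(g),w)$ and $\phi_x=r_\tau\circ\phi_{J,x}$, built from a homogeneous $\gamma\colon\SL_2\to J^\circ$ supplied by Vinberg's theorem.

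You instead verify everything directly from the facts stated in \S\S\ref{subsec:arthur-type-specialization}--\ref{subsec:bounded-centralizer}: $\lambda=\tau\,|\cdot|^{\hd/2}$, $\psi(w,1,1)=\tau(w)$, the fact that $\tau(W_F)$ centralizes $J$, and $H=Z_J(\hd)$. This makes the Arthur-parameter bijection and its compatibility with transposition transparent. The only imported inputs are the CFMMX facts the paper already records before the proposition.

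Two small adjustments are needed.

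\begin{enumerate}
\item The step you flag as the main subtlety is already handled by your own eigenspace argument, even for disconnected $J$. Add one line: if $g\in J$ commutes with $q^{\hd/2}$, then $\Ad(g)\hd-\hd$ acts by zero on $\mathfrak j$ and lies in $[\mathfrak j,\mathfrak j]$, so it vanishes. Your proposed fallback does not work as a substitute. The cited statement $H=Z_J(\hd)$ concerns $H_\lambda$, not $Z_J(q^{\hd/2})$, so it does not by itself give $H_{\lambda_J}=H$.

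\item For the last assertion, the orbit--parameter bijection you invoke is recalled in \S\ref{subsec:parameter-variety} only for ${}^LG$. The group ${}^LJ_{\mathrm{geom}}$ is explicitly not assumed to be an $L$-group, so the bijection at $\lambda_J$ must be supplied. The paper's formula does exactly this. Homogeneity makes $|w|^{\hd/2}\gamma(a_w)^{-1}=|w|^{(\hd-h_x^\vee)/2}$ commute with $\gamma(\SL_2)$, which gives existence. Lemma~\ref{lem:homogeneous-triple} gives uniqueness up to $H$, after checking that the $\SL_2$-part of any parameter at $\lambda_J$ is homogeneous. The same formula shows the identification is realized by composition with $r_\tau$, since $x_{r_\tau\circ\phi_J}=x_{\phi_J}$.
\end{enumerate}

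Routing the Langlands-parameter identification through labels rather than through parameters is the right choice. Unlike the Arthur case, a Langlands parameter at $\lambda$ generally has $\phi(w,1)\ne\tau(w)$. Showing directly that it factors through $r_\tau$ would require the elliptic--hyperbolic decomposition again, to place $\phi(1,\SL_2)$ inside $J$.
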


Explicitly, for $x^\vee\in C$, choose an algebraic homomorphism
$\gamma:\SL_2\to J^\circ$ such that $d\gamma(E)=x^\vee$ and
$d\gamma(E,H_0,F)$ is homogeneous; such a choice exists by
\cite[Theorem~1]{Vinberg1979}. Corresponding parameters are
\begin{equation}
 \phi_{J,x}(w,g)=
 (|w|^{\hd/2}\gamma(a_w)^{-1}\gamma(g),w),\qquad
 \phi_x=r_\tau\circ\phi_{J,x},
\end{equation}
with the same enhancement group $\pi_0Z_H(x^\vee)$.

\subsection{The diagonal fibre and commuting triples}
\label{subsec:diagonal-fibre}
Let $\Delta(g)=(g,g)$. The diagonal fibre is
\begin{equation}\label{eq:diagonal-fibre}
 \mathscr A_{\tau,\varphi_\Delta}
 =\{\Psi\in\operatorname{Hom}_{\mathrm{alg}}
 (\SL_2\times\SL_2,J^\circ):
 \Psi\circ\Delta\sim_J\varphi_\Delta\}/J.
\end{equation}
By Proposition~\ref{prop:unipotent-reduction}, it parametrizes
$H$-conjugacy classes of Arthur parameters at $\lambda$.
Equivalently, an Arthur pair is given by two commuting homogeneous
triples $(e_i^\vee,h_i^\vee,f_i^\vee)$ satisfying
\begin{equation}
 h_1^\vee+h_2^\vee=\hd,\qquad C_i=He_i^\vee.
\end{equation}
Indeed, the corresponding algebraic homomorphisms
$\gamma_i:\SL_2\to J^\circ$ define
\begin{equation}
 \psi(w,g_1,g_2)=\tau(w)\gamma_1(g_1)\gamma_2(g_2),
\end{equation}
and the equality of neutral elements gives $\lambda_\psi=\lambda$.
Conversely, the two $\SL_2$-factors of $\psi$ give these triples.

Representatives may be chosen so that, in addition,
\begin{equation}\label{eq:triple-sums}
 e_1^\vee+e_2^\vee=e^\vee,\qquad
 f_1^\vee+f_2^\vee=f^\vee.
\end{equation}
Thus we may fix
$\Psi\circ\Delta=\varphi_\Delta$, leaving conjugation by
$Z_J(\varphi_\Delta)$:
\begin{equation}
 \mathscr A_{\tau,\varphi_\Delta}
 \simeq
 \{\Psi\in\operatorname{Hom}_{\mathrm{alg}}(\SL_2\times\SL_2,J^\circ):
 \Psi\circ\Delta=\varphi_\Delta\}/Z_J(\varphi_\Delta).
\end{equation}

\subsection{Graded duality and Arthur partners}
\label{subsec:weyl-arthur}
Recall that $H=Z_J(\hd)$ and $V^\pm=\mathfrak j(\pm2)$.
The map $\iota_\Delta^-=\Ad(n_\Delta)|_{V^-}$ identifies $V^-$ with $V$.
Composing Piasetski duality with $\iota_\Delta^-$ defines
\begin{equation}\label{eq:partner-map}
 p:H\backslash V\longrightarrow H\backslash V,\qquad
 p(C)=\iota_\Delta^-(C^*).
\end{equation}
Since $\Ad(n_\Delta)$ preserves $B$, the cotangent lift is
\begin{equation}
 V^-\times V^+\longrightarrow V^+\times V^-,\qquad
 (y,x)\longmapsto(\Ad(n_\Delta)y,\Ad(n_\Delta)x).
\end{equation}
It carries conormal closures to conormal closures. Since both the
cotangent flip and Weyl transport square to the identity on orbit sets
and commute there, $p^2=1$.

\begin{lemma}[Arthur partners]\label{lem:cunningham}
For an Arthur parameter $\psi$ at $\lambda$ with triples
$(e_i^\vee,h_i^\vee,f_i^\vee)$,
\begin{equation}
 C_\psi^*=Hf_2^\vee,\qquad p(C_\psi)=C_{\psi^t}.
\end{equation}
\end{lemma}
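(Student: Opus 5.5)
Fix representatives as in \eqref{eq:triple-sums}: two commuting homogeneous triples $(e_i^\vee,h_i^\vee,f_i^\vee)$ with $h_1^\vee+h_2^\vee=\hd$, $e_1^\vee+e_2^\vee=e^\vee$, $f_1^\vee+f_2^\vee=f^\vee$. The plan has two parts. First, show that $Hf_2^\vee$ is the dense orbit in the second projection of $\Lambda_{C_\psi}$. Second, transport this orbit by $\iota_\Delta^-$ and identify it with $He_2^\vee=C_{\psi^t}$.

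For the first part, we have $f_2^\vee\in V^-$ and $[e_1^\vee,f_2^\vee]=0$, since the triples commute. By \eqref{eq:conormal-fibre}, $f_2^\vee$ lies in the conormal fibre at $e_1^\vee$, so $Hf_2^\vee\subseteq\mathrm{pr}_2(\Lambda_{C_\psi})$. For density it suffices to show that
\begin{equation*}
 [\mathfrak j(0),f_2^\vee]\supseteq\{y\in\mathfrak j(-2):[e_1^\vee,y]=0\}=Z_{\mathfrak j(-2)}(e_1^\vee).
\end{equation*}
Then the orbit $Hf_2^\vee$ is open in the conormal fibre over $e_1^\vee$, hence dense in the irreducible components of $\Lambda_{C_\psi}$ after $H$-saturation.

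To prove this inclusion, decompose $\mathfrak j$ under the $\mathfrak{sl}_2\times\mathfrak{sl}_2$ generated by the two triples as $\bigoplus V_{m}\boxtimes V_{n}$. The grading by $\hd$ is the sum of the $h_1^\vee$- and $h_2^\vee$-weights. An element $y\in\mathfrak j(-2)$ killed by $e_1^\vee$ has components that are highest weight for the first factor, with weights $(a,b)$, $a\geq0$, $a+b=-2$. Thus $b\le -2$, so $b$ is not a highest $h_2^\vee$-weight, and each such weight vector is $[f_2^\vee,z]$ with $z$ of weight $(a,b+2)$. This $z$ lies in $\mathfrak j(0)$ and is still $e_1^\vee$-highest, so $y\in[\mathfrak j(0),f_2^\vee]$. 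This gives $C_\psi^*=Hf_2^\vee$. A symmetric count can cross-check it: $\dim\Lambda_C=\dim V$ forces $\dim Hf_2^\vee$ to equal $\dim V-\dim C_\psi+\dim Z_{V^-}(e_1^\vee)$ minus the generic fibre dimension.

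For the second part, note that $p(C_\psi)=\iota_\Delta^-(Hf_2^\vee)=H\,\Ad(n_\Delta)f_2^\vee$, using $\alpha(H)=H$. We need $\Ad(n_\Delta)f_2^\vee\in He_2^\vee$. Since $n_\Delta=\varphi_\Delta(w)$ with $w$ the Weyl element of $\SL_2$, and $\varphi_\Delta=\Psi\circ\Delta$, we get $n_\Delta=\gamma_1(w)\gamma_2(w)$. Here $\gamma_1(w)$ centralizes $f_2^\vee$, and $\Ad(\gamma_2(w))f_2^\vee=-e_2^\vee$ by the standard $\SL_2$ computation. So $\Ad(n_\Delta)f_2^\vee=-e_2^\vee$. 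Finally, $-e_2^\vee=\Ad(\exp(\pi i\, h_2^\vee/2))\,e_2^\vee$ up to sign conventions, and $\gamma_2(\mathrm{diag}(i,-i))$ centralizes $\hd$ because it commutes with $h_1^\vee$ and $h_2^\vee$. Hence it lies in $H$, and $-e_2^\vee\in He_2^\vee=C_{\psi^t}$.

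I expect the main obstacle to be the density step, specifically the weight bookkeeping. I must check that the $\mathfrak{sl}_2$-lowering argument stays inside $\mathfrak j$, which holds because $\mathfrak j$ is stable under both triples, and that the irreducible components of $\Lambda_{C_\psi}$ are handled correctly when $H$ is disconnected. The latter is harmless, since $H$ permutes the components transitively.
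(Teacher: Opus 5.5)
Your proof is correct. The second half follows the paper's own argument: you get $\Ad(n_\Delta)f_2^\vee=-e_2^\vee$ from $n_\Delta=\gamma_1(w)\gamma_2(w)$ and absorb the sign into $H$. You use $\gamma_2(\mathrm{diag}(i,-i))$ where the paper uses the grading-torus element, which makes no difference.

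For the first equality $C_\psi^*=Hf_2^\vee$ the routes differ. The paper cites \cite[Propositions~6.6.1 and 6.5.1, Lemma~6.4.2]{CFMMX}: after reduction to the hyperbolic case, $(e_1^\vee,f_2^\vee)$ is a regular conormal vector, and the regular part is then identified through the dual orbit. You instead prove it directly by $\mathfrak{sl}_2\times\mathfrak{sl}_2$ weight bookkeeping inside $\mathfrak j$. Your argument is self-contained and needs no separate reduction step, since everything already lives in $\mathfrak j$. The cited route is shorter and produces the regularity statement used elsewhere in the microlocal theory, though this lemma does not need it.

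Two points of precision. First, the inclusion you display, $Z_{\mathfrak j(-2)}(e_1^\vee)\subseteq[\mathfrak j(0),f_2^\vee]$, would not by itself give density. A linear subspace through a point can lie in the tangent space of that point's orbit and still meet a larger orbit densely; rank-one versus rank-two symmetric $2\times2$ matrices give an example. What you need, and what your argument actually proves because each $z$ is still killed by $e_1^\vee$, is
$Z_{\mathfrak j(-2)}(e_1^\vee)=[\mathfrak z_{\mathfrak j(0)}(e_1^\vee),f_2^\vee]$.
This makes the $Z_H(e_1^\vee)$-orbit of $f_2^\vee$ open in the conormal fibre, so you should state it that way. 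Second, your dimension cross-check is garbled. Since $\dim V-\dim C_\psi=\dim Z_{V^-}(e_1^\vee)$, your expression counts the fibre twice; the correct count is $\dim Hf_2^\vee=\dim V$ minus the generic fibre dimension of $\mathrm{pr}_2$ on $\Lambda_{C_\psi}$. Nothing in the proof depends on it.
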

\begin{proof}
The first equality follows from
\cite[Propositions~6.6.1 and 6.5.1, Lemma~6.4.2]{CFMMX}.
Choose representatives satisfying \eqref{eq:triple-sums}. Then
$\Ad(n_\Delta)f_2^\vee=-e_2^\vee$, so
$p(C_\psi)=H(-e_2^\vee)=He_2^\vee=C_{\psi^t}$;
negation preserves $H$-orbits by the grading torus action.
\end{proof}

\subsection{Fourier transform and full equivariance}
Transport along $\iota_\Delta^-$ gives the equivalence
\begin{equation}
 \FT=(\iota_\Delta^-,\alpha)_*\circ\mathfrak F_V:
 \Perv_H(V)\longrightarrow\Perv_H(V).
\end{equation}
Here $(\iota_\Delta^-,\alpha)_*$ denotes direct image with the
$H$-action transported by $\alpha$: $m\in H$ acts on the target
through the action of $\alpha^{-1}(m)$ on the source.

The identification $\iota_\Delta^-$ makes $\FT$ the Fourier transform
for the pairing
\begin{equation}
 Q(x,z)=B(x,\Ad(n_\Delta)z)\qquad(x,z\in V).
\end{equation}
This pairing is nondegenerate and symmetric, since $\Ad(n_\Delta)$
preserves $B$ and squares to the identity on $V^+\oplus V^-$.
Fourier inversion therefore gives $\FT^2\simeq(-1_V)^*$;
the equivariant structures agree because $\alpha^2=1$.
The element $c=\varphi_\Delta(\operatorname{diag}(i,-i))\in Z(H)$
acts on $V=\mathfrak j(2)$ by $i^2=-1$. The $H$-equivariant
structure of each sheaf identifies its pullback by this action with
itself. Hence
\begin{equation}
 \FT^2\simeq(-1_V)^*\simeq\mathrm{id}.
\end{equation}

On characteristic cycles, $\mathfrak F_V$ sends $[\Lambda_C]$ to
$[\Lambda_{C^*}]$, and transport by $\iota_\Delta^-$ sends this to
$[\Lambda_{p(C)}]$. Thus
\begin{equation}\label{eq:fourier-multiplicities}
 m_{p(C)}(\FT P)=m_C(P).
\end{equation}

Finally, Fourier transform preserves the action of
$Z(\Gd)^{W_F}$, which acts trivially on $V^\pm$. Transport by
$\iota_\Delta^-$ replaces the action of $z$ by that of
$\alpha^{-1}(z)$. Since $\alpha$ is conjugation by $n_\Delta\in\Gd$,
we have $\alpha(z)=z$ for $z\in Z(\Gd)^{W_F}$.
Thus $\FT$ preserves enhancement central characters and, by the
inner-form condition in Hypothesis~\ref{hyp:enhanced-llc}, the
corresponding pure inner forms.

\section{Conormal bounds and rigidity}
\label{sec:conormal-bounds}

We continue with an Arthur-type infinitesimal parameter $\lambda$,
$H=Z_J(\hd)$, and $V^\pm=\mathfrak j(\pm2)$.
All norms and nilpotent saturations in this section are taken in the
original dual Lie algebra $\gd$.

\subsection{Neutral-element norms}
For a semisimple element $z^\vee$ conjugate into a real coroot span
of $[\gd,\gd]$, write $\|z^\vee\|^2=B(z^\vee,z^\vee)$.
For a nilpotent element $x^\vee\in\gd$, put
\begin{equation}\label{eq:neutral-norm}
 \nu(x^\vee)=\|h_x^\vee\|^2,
 \qquad (x^\vee,h_x^\vee,f_x^\vee)\text{ an }\mathfrak{sl}_2
 \text{-triple},\qquad \nu(0)=0.
\end{equation}
Conjugacy of triples makes $\nu$ well-defined and constant on
$\Gd$-orbits. We write $\nu(C)$ for its value on an $H$-orbit
in either $V^+$ or $V^-$. Since $\iota_\Delta^-$ is ambient conjugation,
\begin{equation}\label{eq:partner-norm}
 \nu(p(C))=\nu(C^*).
\end{equation}

We use two consequences of \cite[Proposition~3.2 and Corollary~3.3]{GS}.
First, for ambient nilpotent orbits,
\begin{equation}\label{eq:strict-neutral-norm}
 \mathcal O_1^\vee<\mathcal O_2^\vee
 \quad\Longrightarrow\quad
 \nu(\mathcal O_1^\vee)<\nu(\mathcal O_2^\vee).
\end{equation}
Second, if $z^\vee$ lies in a real coroot span and
$\gd(z^\vee;r)$ denotes its $r$-eigenspace, then
\begin{equation}\label{eq:GS-bound}
 x^\vee\in\bigoplus_{r\geq1}\gd(z^\vee;r)
 \quad\Longrightarrow\quad
 \nu(x^\vee)\leq4\|z^\vee\|^2.
\end{equation}

By Lemma~\ref{lem:strict-saturation} and
\eqref{eq:strict-neutral-norm},
\begin{equation}\label{eq:graded-strict-norm}
 C<D\quad\Longrightarrow\quad\nu(C)<\nu(D).
\end{equation}

\subsection{The conormal norm bound}
\begin{theorem}\label{thm:conormal-norm}
For every $(x^\vee,y^\vee)\in\Lambda$,
\begin{equation}\label{eq:conormal-norm-bound}
 \nu(x^\vee)+\nu(y^\vee)\leq\|\hd\|^2.
\end{equation}
Consequently, for every $H$-orbit $C\subset V$,
\begin{equation}\label{eq:orbit-conormal-bound}
 \nu(C)+\nu(p(C))\leq\|\hd\|^2.
\end{equation}
\end{theorem}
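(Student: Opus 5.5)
The plan is to bound $\nu(y^\vee)$ with the Gross--Savin type estimate \eqref{eq:GS-bound}, applied to a semisimple element built from $\hd$ and a neutral element for $x^\vee$. Then an orthogonality relation coming from the homogeneity of that neutral element turns the estimate into \eqref{eq:conormal-norm-bound}.

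\emph{Step 1: a homogeneous triple for $x^\vee$.} Fix $(x^\vee,y^\vee)\in\Lambda$, so $x^\vee\in\mathfrak j(2)$, $y^\vee\in\mathfrak j(-2)$ and $[x^\vee,y^\vee]=0$. By \cite[Theorem~1]{Vinberg1979} (the zero triple if $x^\vee=0$), choose a homogeneous $\mathfrak{sl}_2$-triple $(x^\vee,h_x^\vee,f_x^\vee)$ with $h_x^\vee\in\mathfrak j(0)$ and $f_x^\vee\in\mathfrak j(-2)$. Then $\nu(x^\vee)=\|h_x^\vee\|^2$. Since $h_x^\vee\in\mathfrak j(0)$, it commutes with $\hd$. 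Put $c^\vee=\hd-h_x^\vee$. A direct check gives
\[
[c^\vee,x^\vee]=2x^\vee-2x^\vee=0,\qquad [c^\vee,f_x^\vee]=-2f_x^\vee+2f_x^\vee=0,\qquad [c^\vee,h_x^\vee]=0,
\]
so $c^\vee$ centralizes the $\mathfrak{sl}_2$ spanned by this triple. By invariance,
\[
B(c^\vee,h_x^\vee)=B(c^\vee,[x^\vee,f_x^\vee])=B([c^\vee,x^\vee],f_x^\vee)=0 .
\]

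\emph{Step 2: the eigenvalues of $y^\vee$.} Since $y^\vee$ commutes with $x^\vee$, it lies in the centralizer of $x^\vee$. That centralizer is spanned by highest weight vectors for the $\mathfrak{sl}_2$ of Step 1, so it is contained in $\bigoplus_{k\ge0}\gd(h_x^\vee;k)$. Moreover $\hd$ and $h_x^\vee$ commute, and $y^\vee$ is an $\hd$-eigenvector with eigenvalue $-2$. Hence $y^\vee$ decomposes into joint eigenvectors with $(\hd,h_x^\vee)$-eigenvalues $(-2,k)$, $k\ge0$. Set
\[
z^\vee=\tfrac12(h_x^\vee-\hd)=-\tfrac12 c^\vee .
\]
Its eigenvalues on these components are $(k+2)/2\ge1$, so $y^\vee\in\bigoplus_{r\ge1}\gd(z^\vee;r)$.

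\emph{Step 3: apply the bound.} The elements $\hd$ and $h_x^\vee$ are commuting neutral elements of $\mathfrak{sl}_2$-triples. They therefore lie in a common Cartan subalgebra, and since they act with integer eigenvalues, they lie in its real coroot span. So $z^\vee$ is in a real coroot span and \eqref{eq:GS-bound} applies:
\[
\nu(y^\vee)\le 4\|z^\vee\|^2=\|\hd-h_x^\vee\|^2=\|\hd\|^2-2B(\hd,h_x^\vee)+\|h_x^\vee\|^2 .
\]
Step 1 gives $B(\hd,h_x^\vee)=B(c^\vee+h_x^\vee,h_x^\vee)=\|h_x^\vee\|^2$. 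Substituting, $\nu(y^\vee)\le\|\hd\|^2-\nu(x^\vee)$, which is \eqref{eq:conormal-norm-bound}.

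\emph{Step 4: the orbit statement.} Take a generic point $(x^\vee,y^\vee)$ of $T^*_CV\subset\Lambda_C$ with $x^\vee\in C$ and $y^\vee\in C^*$. This is possible because $C^*$ is the dense orbit in the second projection of $\Lambda_C$. Applying \eqref{eq:conormal-norm-bound} to this point and using \eqref{eq:partner-norm} gives \eqref{eq:orbit-conormal-bound}.

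The main point needing care is Step 3: checking that $z^\vee$ is legitimately in a real coroot span of $[\gd,\gd]$, given possible central components of $\hd$. Since $\hd$ and $h_x^\vee$ are neutral elements of $\mathfrak{sl}_2$-triples, both lie in $[\gd,\gd]$, which should suffice. The rest is the formal orthogonality in Step 1.
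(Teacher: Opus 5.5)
Your proposal is correct and follows the paper's proof essentially step by step. Both arguments choose a homogeneous triple for $x^\vee$, show that $\hd-h_x^\vee$ centralizes it and is therefore $B$-orthogonal to $h_x^\vee$, use the highest-weight argument to place $y^\vee$ in $\bigoplus_{r\geq1}\gd(\tfrac12(h_x^\vee-\hd);r)$, apply \eqref{eq:GS-bound}, and then pass to orbits via a conormal pair in $C\times C^*$ together with \eqref{eq:partner-norm}. The differences are only in naming and in expanding $\|\hd-h_x^\vee\|^2$ rather than quoting Pythagoras directly.
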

\begin{proof}
Choose a homogeneous triple $(x^\vee,h_x^\vee,f_x^\vee)$ in
$\mathfrak j$, using \cite[Theorem~1]{Vinberg1979}; take the zero
triple when $x^\vee=0$. Set $z^\vee=\hd-h_x^\vee$.
Homogeneity shows that $z^\vee$ centralizes the triple, so
\begin{equation}
 B(h_x^\vee,z^\vee)
 =B([x^\vee,f_x^\vee],z^\vee)
 =B(x^\vee,[f_x^\vee,z^\vee])=0.
\end{equation}
The commuting semisimple elements $\hd$ and $h_x^\vee$ lie in
$[\gd,\gd]$ and have integral adjoint eigenvalues. After simultaneous
conjugation into a Cartan subalgebra, they and $z^\vee$ therefore
lie in its real coroot span.

Decompose $y^\vee=\sum_m y_m^\vee$ into $h_x^\vee$-weight vectors.
Since $[x^\vee,y^\vee]=0$, each nonzero $y_m^\vee$ is a
highest-weight vector for the triple, and hence $m\geq0$.
As $[\hd,y^\vee]=-2y^\vee$, we obtain
\begin{equation}
 [z^\vee,y_m^\vee]=(-2-m)y_m^\vee,
 \qquad
 y^\vee\in\bigoplus_{r\geq1}\gd(-z^\vee/2;r).
\end{equation}
Applying \eqref{eq:GS-bound} yields
\begin{equation}
 \nu(y^\vee)\leq\|z^\vee\|^2
 =\|\hd\|^2-\|h_x^\vee\|^2
 =\|\hd\|^2-\nu(x^\vee).
\end{equation}
The same conclusion is immediate if $y^\vee=0$.
Finally, choose a conormal pair in $C\times C^*$ and use
\eqref{eq:partner-norm} to obtain \eqref{eq:orbit-conormal-bound}.
\end{proof}

\subsection{Rigidity of Arthur orbits}
\begin{theorem}\label{thm:arthur-rigidity}
If $C\subset V$ is an Arthur orbit, then
\begin{equation}\label{eq:arthur-norm-equality}
 \nu(C)+\nu(p(C))=\|\hd\|^2.
\end{equation}
For every $H$-orbit $D\subset V$,
\begin{equation}\label{eq:arthur-rigidity}
 C\leq D,\qquad p(C)\leq p(D)
 \quad\Longrightarrow\quad C=D.
\end{equation}
\end{theorem}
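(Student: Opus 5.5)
For the norm equality \eqref{eq:arthur-norm-equality}, I will realize an Arthur orbit by commuting homogeneous triples $(e_i^\vee,h_i^\vee,f_i^\vee)$ with $h_1^\vee+h_2^\vee=\hd$, as in \S\ref{subsec:diagonal-fibre}. Then $C=He_1^\vee$ and, by Lemma~\ref{lem:cunningham}, $p(C)=He_2^\vee$. This gives $\nu(C)=\|h_1^\vee\|^2$ and $\nu(p(C))=\|h_2^\vee\|^2$.

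The two triples commute, so $h_2^\vee$ centralizes the first triple. The same computation as in the proof of Theorem~\ref{thm:conormal-norm} then applies:
\[
B(h_1^\vee,h_2^\vee)=B([e_1^\vee,f_1^\vee],h_2^\vee)=B(e_1^\vee,[f_1^\vee,h_2^\vee])=0 .
\]
Hence $\|\hd\|^2=\|h_1^\vee\|^2+\|h_2^\vee\|^2$, which is \eqref{eq:arthur-norm-equality}. Alternatively, one can note that $(e_1^\vee,f_2^\vee)\in\Lambda$ attains equality in \eqref{eq:conormal-norm-bound}. The only point needing care is that both $h_i^\vee$ lie in a common real coroot span, so that the norms are the ones defined in \eqref{eq:neutral-norm}. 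This holds as before: they commute, are semisimple with integral eigenvalues, and can be conjugated simultaneously into a Cartan subalgebra.

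For rigidity, suppose $C\le D$ and $p(C)\le p(D)$, and suppose $C\ne D$. Then $C<D$, and \eqref{eq:graded-strict-norm} gives $\nu(C)<\nu(D)$. The second hypothesis gives either $p(C)=p(D)$ or $p(C)<p(D)$. In the first case, $p^2=1$ forces $C=D$, a contradiction. In the second case, \eqref{eq:graded-strict-norm} gives $\nu(p(C))<\nu(p(D))$, and in either case $\nu(p(C))\le\nu(p(D))$.

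Adding the two inequalities and using \eqref{eq:arthur-norm-equality} for $C$ together with \eqref{eq:orbit-conormal-bound} for $D$ gives
\[
\|\hd\|^2=\nu(C)+\nu(p(C))<\nu(D)+\nu(p(D))\le\|\hd\|^2,
\]
a contradiction. Hence $C=D$.

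The argument is short because the substantive inputs are already in place: strictness of the norm under closure (Lemma~\ref{lem:strict-saturation} with \eqref{eq:strict-neutral-norm}) and the conormal bound (Theorem~\ref{thm:conormal-norm}). The main thing to check is the identification $p(C)=He_2^\vee$ and the orthogonality $B(h_1^\vee,h_2^\vee)=0$. Both follow directly from Lemma~\ref{lem:cunningham} and the commutation of the triples.
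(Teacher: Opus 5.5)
Your proposal is correct and follows the paper's own proof essentially step for step. You realize $C=He_1^\vee$ and $p(C)=He_2^\vee$ via Lemma~\ref{lem:cunningham}, use commutation of the triples to get $B(h_1^\vee,h_2^\vee)=0$ and hence the norm identity, and then combine strict monotonicity \eqref{eq:graded-strict-norm} with the conormal bound \eqref{eq:orbit-conormal-bound} for $D$ to reach $\|\hd\|^2<\|\hd\|^2$. Your case split on $p(C)=p(D)$ versus $p(C)<p(D)$ is harmless but unnecessary, since $\nu(p(C))\le\nu(p(D))$ holds in both cases.
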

\begin{proof}
Choose commuting triples defining $C=He_1^\vee$, with
$h_1^\vee+h_2^\vee=\hd$. Lemma~\ref{lem:cunningham} gives
$p(C)=He_2^\vee$. Since the triples commute,
\begin{equation}
 B(h_1^\vee,h_2^\vee)
 =B(e_1^\vee,[f_1^\vee,h_2^\vee])=0,
 \qquad
 \nu(C)+\nu(p(C))
 =\|h_1^\vee\|^2+\|h_2^\vee\|^2=\|\hd\|^2.
\end{equation}
If $C<D$ and $p(C)\leq p(D)$, strict monotonicity
\eqref{eq:graded-strict-norm} and Theorem~\ref{thm:conormal-norm} give
\begin{equation}
 \|\hd\|^2
 =\nu(C)+\nu(p(C))
 <\nu(D)+\nu(p(D))
 \leq\|\hd\|^2,
\end{equation}
a contradiction.
\end{proof}

\section{Reconstruction and canonical packet distributions}
\label{sec:reconstruction}

\subsection{Stalk data and the local index formula}\label{subsec:local-index}
Suppose the $H$-orbits on $V$ are $C_1,\dots,C_n$, listed in increasing order of dimension $d_i=\dim C_i$. Choose a representative $x_i^\vee\in C_i$ for each $i$. We will use the labels $a\in \mathcal I_\lambda$ for the enhanced parameters with the corresponding simple $H$-equivariant perverse sheaves $P_a=\IC(\overline C_a,\mathcal L_{\rho_a})$. Let $r=|\mathcal I_\lambda|$. Write $d_a=\dim C_a$. 

Consider the stalk matrix $X$ and the nonnegative characteristic-cycle matrix $\mathbf M$. These are rectangular matrices of size $n\times r$, defined by
\begin{equation}\label{eq:stalk-cycle-matrices}
 X_{ia}=\chi((P_a)_{x_i^\vee})
       =\sum_{k\in\ZZ}(-1)^k\dim\mathcal H^k(P_a)_{x_i^\vee},
 \qquad \mathbf M_{ia}=m_{C_i}(P_a).
\end{equation}
Since the perverse sheaves are equivariant, the Euler characteristics $X_{ia}$ are constant along $C_i$, so independent of the choice of $x_i^\vee$. With our normalization:
\begin{equation}\label{eq:stalk-diagonal}
 X_{ia}=(-1)^{d_i}\dim\rho_a,\qquad
 \mathbf M_{ia}=\dim\rho_a\quad\text{if }C_a=C_i,
\end{equation}
and $X_{ia}=\mathbf M_{ia}=0$ unless $C_i\leq C_a$. 

Using the signed restriction multiplicities $s_{ba}$ from \eqref{eq:signed-restriction-multiplicities},
\begin{equation}\label{eq:enhanced-to-stalk}
 X_{ia}=\sum_{b,~C_b=C_i}(\dim\rho_b)s_{ba}.
\end{equation}
Compare \cite[Definitions~23.4 and~23.6]{ABV}.
When the IC stalks satisfy the parity convention
$\mathcal H^k(P_a)|_{C_i}=0$ unless $k\equiv-d_a\pmod2$,
we may instead use the analogue of the Kazhdan--Lusztig polynomials
\begin{equation}
 Q_{ba}(t)=\sum_{\ell\geq0}
 \dim\operatorname{Hom}(\mathcal L_{\rho_b},
             \mathcal H^{2\ell-d_a}(P_a)|_{C_i})\,t^\ell.
\end{equation}
Concretely,
\begin{equation}\label{eq:kl-to-stalk}
 X_{ia}=(-1)^{d_a}\sum_{b, ~C_b=C_i}(\dim\rho_b)Q_{ba}(1).
\end{equation}

If $Y\subset V$ is an irreducible complex variety, let
$\operatorname{Eu}_Y$ denote MacPherson's local Euler obstruction
function (extended by zero outside $Y$). Normalize
$\operatorname{Eu}_Y=1$ on the smooth locus of $Y$; see
\cite{Mac74}. For a possibly reducible orbit closure, set
\begin{equation}
 \operatorname{Eu}_{\overline C}
 =\sum_{Y\in\operatorname{Irr}(\overline C)}\operatorname{Eu}_Y.
\end{equation}
This function is $H$-invariant and equals $1$ on $C$, consistently
with our convention for $[\Lambda_C]$. 

Define the $n\times n$ matrices
\begin{equation}\label{eq:euler-sign-matrices}
 E_{ij}=\operatorname{Eu}_{\overline{C_j}}(x_i^\vee),
 \qquad D=\operatorname{diag}((-1)^{d_1},\ldots,(-1)^{d_n}).
\end{equation}
Since $E_{ii}=1$ and $E_{ij}=0$ unless $C_i\leq C_j$, the matrix
$E$ is upper unitriangular with integer entries. The columns of $E$ form a basis
of the integer-valued $H$-invariant functions on $V$.

The following is Kashiwara's local index theorem \cite{Kas73}; see
\cite[Theorem~1.31 and \S24]{ABV} for the formulation
in terms of local and microlocal multiplicities. 

\begin{theorem}[Local index formula]\label{thm:local-index}
For every $P\in\Perv_H(V)$ and every $x_i^\vee\in C_i$,
\begin{equation}\label{eq:local-index}
 \chi(P_{x_i^\vee})
 =\sum_{j=1}^n(-1)^{d_j}
     \operatorname{Eu}_{\overline{C_j}}(x_i^\vee)\,m_{C_j}(P).
\end{equation}
Equivalently, for the matrices above,
\begin{equation}\label{eq:local-index-matrix}
 X=ED\mathbf M,\qquad
 \mathbf M=KX,\qquad K:=DE^{-1}.
\end{equation}
In particular $K$ has integer entries and satisfies
\begin{equation}\label{eq:index-triangularity}
 K_{ii}=(-1)^{d_i},\qquad
 K_{ij}=0\quad\text{unless }C_i\leq C_j.
\end{equation}
\end{theorem}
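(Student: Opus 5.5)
The pointwise identity \eqref{eq:local-index} is Kashiwara's local index formula, and the plan is to quote it rather than reprove it. I would apply it to the Whitney stratification of $V$ by $H^\circ$-orbits, which is finite. For a constructible complex $P$ and a point $x$ in a stratum $S$, Kashiwara's formula \cite{Kas73} (in the form of \cite[Theorem~1.31]{ABV}) reads
\begin{equation*}
 \chi(P_x)=\sum_{S'}(-1)^{\dim S'}\operatorname{Eu}_{\overline{S'}}(x)\,m_{S'}(P).
\end{equation*}
Here the sign $(-1)^{\dim S'}$ comes from the perverse normalization $P|_{S'}=\mathcal L[\dim S']$, and $m_{S'}(P)$ is the coefficient of $[\overline{T^*_{S'}V}]$ in $\operatorname{CC}(P)$.

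Next I group the $H^\circ$-strata into $H$-orbits. An $H$-orbit $C_j$ is a finite disjoint union of $H^\circ$-orbits of the same dimension $d_j$, and these are permuted transitively by $H$. When $P$ is $H$-equivariant, the multiplicities $m_{S'}(P)$ are therefore equal for all $S'\subset C_j$. This common value is $m_{C_j}(P)$, by the convention that $[\Lambda_C]$ is the sum of the fundamental cycles of the components. The closures $\overline{S'}$ are exactly the irreducible components of $\overline{C_j}$. Summing over $S'\subset C_j$ therefore produces $\operatorname{Eu}_{\overline{C_j}}(x_i^\vee)$ as defined above, and \eqref{eq:local-index} follows.

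I would sanity-check the signs with $P=P_{C_i,\rho}$ at $x_i^\vee$. By \eqref{eq:microlocal-diagonal}, $m_{C_j}(P)=0$ unless $C_i\le C_j$. Also $\operatorname{Eu}_{\overline{C_j}}(x_i^\vee)=0$ unless $C_i\le C_j$. So only $j=i$ contributes, and both sides equal $(-1)^{d_i}\dim\rho$, in agreement with \eqref{eq:stalk-diagonal}.

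For the matrix form, I take $P=P_a$ for each $a\in\mathcal I_\lambda$ and read \eqref{eq:local-index} entrywise. The $(i,a)$ entry of $ED\mathbf M$ is $\sum_j E_{ij}(-1)^{d_j}\mathbf M_{ja}$, so this gives $X=ED\mathbf M$.

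To invert, order the orbits by dimension. Then $C_i<C_j$ forces $d_i<d_j$, so $E$ is upper unitriangular with integer entries. Hence $\det E=1$ and $E^{-1}$ is again an integer upper unitriangular matrix. Moreover, $E^{-1}$ lies in the incidence algebra of the closure poset, since that algebra is closed under inversion of unitriangular elements. Concretely, $E^{-1}=\sum_{k\ge0}(I-E)^k$, and every nonzero entry of a product of matrices supported on $\{C_i\le C_j\}$ is again supported there by transitivity of $\le$.

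Since $D^2=I$, multiplying $X=ED\mathbf M$ on the left by $DE^{-1}$ gives $\mathbf M=DE^{-1}X=KX$. The matrix $K=DE^{-1}$ has integer entries and diagonal $K_{ii}=(-1)^{d_i}$, and $K_{ij}=0$ unless $C_i\le C_j$. This is \eqref{eq:index-triangularity}.

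The only delicate point is the first step: matching Kashiwara's sign and normalization conventions with ours, and passing from $H^\circ$-strata to possibly disconnected $H$-orbits. The diagonal check above pins down the sign convention. The grouping argument in the second paragraph handles reducible orbit closures.
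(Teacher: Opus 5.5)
Your proof is correct and follows the paper's route: you quote Kashiwara's index theorem in the form of \cite[Theorem~1.31]{ABV}, where the constructible function of $[\Lambda_{C_j}]$ is $(-1)^{d_j}\operatorname{Eu}_{\overline{C_j}}$, and then invert the unitriangular integer matrix $E$; your grouping of $H^\circ$-orbits into $H$-orbits and your Neumann-series argument for the support of $E^{-1}$ spell out what the paper leaves to its conventions and its ``in particular''. One slip in your sanity check: \eqref{eq:microlocal-diagonal} gives $m_{C_j}(P_{C_i,\rho})=0$ unless $C_j\leq C_i$, not $C_i\leq C_j$, and it is this condition together with $\operatorname{Eu}_{\overline{C_j}}(x_i^\vee)=0$ unless $C_i\leq C_j$ that leaves only $j=i$.
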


We note that the
constructible function corresponding to $[\Lambda_{C_j}]$ is
$(-1)^{d_j}\operatorname{Eu}_{\overline{C_j}}$, so \eqref{eq:local-index} follows by applying this
to \eqref{eq:characteristic-cycle}. In fact, the entries $K_{ij}$ are the coefficients $c(C_i,C_j)$ of
\cite[Theorem~1.31]{ABV}. Their description by normal Morse data
is given in \cite[(24.17)(b)]{ABV}. As a sanity check for the sign conventions: 
let $Y\subset V$ be smooth, closed, and $H$-stable, of dimension
$d$, and take the extension of $\CC_Y[d]$ to $V$. The stalk
function is $(-1)^d1_Y$, its characteristic cycle is $[T_Y^*V]$,
and $\operatorname{Eu}_Y=1_Y$, as required by
\eqref{eq:local-index}.

\begin{lemma}\label{lem:stalk-full-row-rank}
The matrices $X$ and $\mathbf M$ have full row rank $n$. 
\end{lemma}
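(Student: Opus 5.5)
The plan is to exhibit, for each orbit $C_i$, a column of $X$ (respectively $\mathbf M$) whose entries vanish on all rows except those indexed by orbits $C_j\le C_i$ and equal $\pm1$ in row $i$. This produces an $n\times n$ submatrix that is triangular with unit diagonal, up to sign, and hence invertible.

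For each $i$, choose the label $a_i=(C_i,\mathrm{triv})\in\mathcal I_\lambda$, where the enhancement is the trivial representation of $A_H(x_i^\vee)$. It always exists, because the trivial local system is an $H$-equivariant irreducible local system on $C_i$. Let $X'$ be the $n\times n$ submatrix of $X$ formed by the columns $a_1,\dots,a_n$. By \eqref{eq:stalk-diagonal} we have
\[
X'_{ii}=(-1)^{d_i},
\]
since $\dim\rho_{a_i}=1$. Also $X'_{ji}=0$ unless $C_j\le C_i$. Refine the closure order to a total order; listing orbits by increasing dimension already does this, because $C_j<C_i$ forces $d_j<d_i$. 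In that order $X'$ is upper triangular with diagonal entries $\pm1$, so $\det X'=\pm1$. The $n$ columns of $X'$ are therefore linearly independent, and $X$, which has $n$ rows, has rank exactly $n$.

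For $\mathbf M$ the argument is the same. By \eqref{eq:stalk-diagonal} and \eqref{eq:microlocal-diagonal}, the submatrix $\mathbf M'$ on the same columns satisfies $\mathbf M'_{ii}=1$ and $\mathbf M'_{ji}=0$ unless $C_j\le C_i$, so it is unitriangular. Alternatively, Theorem~\ref{thm:local-index} gives $\mathbf M=KX$ with $K=DE^{-1}$ invertible, since $E$ is unitriangular and $D$ is a signed diagonal matrix. Left multiplication by an invertible matrix preserves rank, so $\mathbf M$ inherits full row rank from $X$.

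There is no real obstacle here. The only point needing care is that each orbit carries at least one enhancement, namely the trivial one. This keeps the diagonal entries nonzero even when $H$ is disconnected, and it is automatic.
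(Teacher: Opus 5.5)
Your proposal is correct and matches the paper's proof. For each orbit $C_i$ you take the column of the trivial local system $\IC(\overline{C_i},\CC)$; this gives an $n\times n$ submatrix that is triangular with diagonal $(-1)^{d_i}$ for $X$ and unitriangular for $\mathbf M$. Your alternative for $\mathbf M$, via $\mathbf M=KX$ with $K=DE^{-1}$ invertible, is also valid.
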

\begin{proof}
This is well known. For each $C_j$, select the column corresponding to
$\IC(\overline{C_j},\CC)$, i.e., the trivial local system.
The resulting submatrix of $X$ is upper triangular with diagonal
$(-1)^{d_j}$ by \eqref{eq:stalk-diagonal}. The corresponding submatrix of $\mathbf M$
is also upper unitriangular, by \eqref{eq:microlocal-diagonal}.
\end{proof}
\subsection{Fourier compatibility and determination of Arthur rows}

Write $p(i)$ for the index of $p(C_i)$, and let $f$ be the permutation
of $\mathcal I_\lambda$ determined by $\FT P_a\simeq P_{f(a)}$.
Given the conventions in Section~\ref{subsec:weyl-arthur}, these are both involutive permutations: $p$ acts on the $n$ orbits,
while $f$ acts on the $r$ enhanced parameters. Define the corresponding  permutation matrices by
\begin{equation}
 \mathsf P_{ij}=\begin{cases}1&j=p(i),\\0&\text{otherwise},\end{cases}
 \qquad
 \mathsf F_{ab}=\begin{cases}1&b=f(a),\\0&\text{otherwise}.\end{cases}
\end{equation}
We have $(\mathsf P A)_{ia}=A_{p(i),a}$ and
$(A\mathsf F)_{ia}=A_{i,f(a)}$ for any $n\times r$ matrix $A$.
The relation between the Fourier transform and the characteristic cycle  \eqref{eq:fourier-multiplicities}
can be expressed in linear algebra language as 
\begin{equation}\label{eq:fourier-matrix}
 \mathsf P\mathbf M=\mathbf M\mathsf F\qquad\text{ or }
 \qquad m_{C_i}(P_a)=m_{C_{p(i)}}(P_{f(a)}).
\end{equation}
We analyze the rigidity of the linear system $X=ED\mathbf M$ from Theorem \ref{thm:local-index}. Suppose $L$ is an unknown $n\times n$ matrix satisfying the linear system
\begin{equation}\label{eq:reconstruction-system}
 \begin{split}
 L_{ij}&=0\quad\text{unless }C_i\leq C_j,\qquad
 L_{ii}=(-1)^{d_i},\\
 \mathsf P L X&=L X\mathsf F.
 \end{split}
\end{equation}
The geometric matrix $K=DE^{-1}$ is a solution, so this system is
consistent. We do not assume any knowledge of $E$ beyond its upper unitriangularity. 

\begin{proposition}
\label{prop:reconstruction-ambiguity}
Every solution of \eqref{eq:reconstruction-system} has the form
\begin{equation}\label{eq:reconstruction-ambiguity}
 L=(1+N)K,
 \end{equation}
where
\begin{equation}\label{eq:ambiguity-support}
 N_{ij}=0\quad\text{unless }C_i<C_j,
 \qquad N_{ij}=N_{p(i),p(j)}.
\end{equation}
Conversely, every such $N$ gives a solution. In particular, a
nonzero $N_{ij}$ requires both
$C_i<C_j$ and $p(C_i)<p(C_j)$.
\end{proposition}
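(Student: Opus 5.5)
Given a solution $L$, I will set $N:=LK^{-1}-1$, so that $L=(1+N)K$ holds by definition, and then verify the two conditions in \eqref{eq:ambiguity-support}. The matrix $K=DE^{-1}$ is invertible, and it is upper triangular with respect to the closure order by \eqref{eq:index-triangularity}, with diagonal entries $(-1)^{d_i}$. Its inverse $K^{-1}=ED$ is likewise supported on pairs with $C_i\le C_j$, and has diagonal entries $(-1)^{d_i}$.

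\emph{Support of $N$.} Because the closure order is a partial order, matrices supported on $\{(i,j):C_i\le C_j\}$ are closed under multiplication. Hence $LK^{-1}$ has this support. Its diagonal entries are $L_{ii}(K^{-1})_{ii}=(-1)^{d_i}(-1)^{d_i}=1$. Therefore $N$ is supported on pairs with $C_i<C_j$.

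\emph{$p$-invariance of $N$.} From $L=(1+N)K$ and $\mathbf M=KX$ we get $LX=(1+N)\mathbf M$. Substituting into the Fourier condition of \eqref{eq:reconstruction-system} gives
\begin{equation*}
 \mathsf P(1+N)\mathbf M=(1+N)\mathbf M\mathsf F.
\end{equation*}
By \eqref{eq:fourier-matrix}, $\mathbf M\mathsf F=\mathsf P\mathbf M$, so the right side equals $(1+N)\mathsf P\mathbf M$. Cancelling the $\mathsf P\mathbf M$ terms leaves
\begin{equation*}
 (\mathsf P N-N\mathsf P)\mathbf M=0.
\end{equation*}
Lemma~\ref{lem:stalk-full-row-rank} says $\mathbf M$ has full row rank $n$, so it has a right inverse, and hence $\mathsf PN=N\mathsf P$. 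Since $p$ is an involution, $\mathsf P^{-1}=\mathsf P$, and in entries $(\mathsf PN\mathsf P)_{ij}=N_{p(i),p(j)}$. The commutation relation therefore reads $N_{ij}=N_{p(i),p(j)}$.

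\emph{Converse.} Let $N$ satisfy \eqref{eq:ambiguity-support} and put $L=(1+N)K$. Since $1+N$ is supported on $C_i\le C_j$ with diagonal $1$, the product $L$ has the required support and $L_{ii}=K_{ii}=(-1)^{d_i}$. Moreover $N$ commutes with $\mathsf P$, so
\begin{equation*}
 \mathsf PLX=\mathsf P(1+N)\mathbf M=(1+N)\mathsf P\mathbf M=(1+N)\mathbf M\mathsf F=LX\mathsf F.
\end{equation*}
Thus $L$ solves \eqref{eq:reconstruction-system}.

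\emph{Final claim.} If $N_{ij}\ne0$, the support condition gives $C_i<C_j$. Invariance gives $N_{p(i),p(j)}=N_{ij}\ne0$, and applying the support condition to this entry gives $C_{p(i)}<C_{p(j)}$, that is, $p(C_i)<p(C_j)$.

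The only step that goes beyond bookkeeping is cancelling $\mathbf M$ on the right, which is exactly where the full row rank from Lemma~\ref{lem:stalk-full-row-rank} is needed. The rest is careful index manipulation with the permutation matrices, using that both $\mathsf P$ and $\mathsf F$ are involutions.
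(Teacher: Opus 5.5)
Your proposal is correct and follows the paper's proof essentially line by line: you set $N=LK^{-1}-1$, read off its support from triangularity, derive $(\mathsf PN-N\mathsf P)\mathbf M=0$ from the Fourier condition and \eqref{eq:fourier-matrix}, and cancel $\mathbf M$ using its full row rank. Your converse and the final claim are the same bookkeeping the paper leaves implicit, and you spell them out correctly.
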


\begin{proof}
    For a solution $L$, set $N=LK^{-1}-1$. From the conditions on $L$ and $K$, $N_{ij}=0$ unless $C_i<C_j$. Since $LX=(1+N)\mathbf M$, we get
    \begin{equation}
    \mathsf P(1+N)\mathbf M=\mathsf PLX=LX\mathsf F=(1+N) \mathbf M \mathsf F=(1+N)\mathsf P\mathbf M,
    \end{equation}
    whence
    \begin{equation}
    (\mathsf P N-N\mathsf P) \mathbf M=0.
    \end{equation}
    Since $\mathbf M$ has full row rank, it follows that $\mathsf PN=N\mathsf P$, which is equivalent
to the second identity in \eqref{eq:ambiguity-support}. The converse is proved in the same way. 
\end{proof}

\begin{theorem}[Uniqueness of Arthur rows]
\label{thm:arthur-row-reconstruction}
For an Arthur orbit
$C_i$, every solution $L$ of \eqref{eq:reconstruction-system}
satisfies
\begin{equation}
 L_{i\bullet}=K_{i\bullet},\qquad
 (LX)_{i\bullet}=\mathbf M_{i\bullet}.
\end{equation}
Consequently all microlocal
multiplicities $m_{C_i}(P_a)$, and hence the packet
$\Pi_{C_i}^{\mic}$, at every Arthur orbit are uniquely determined.
\end{theorem}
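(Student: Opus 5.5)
By Proposition~\ref{prop:reconstruction-ambiguity}, every solution of \eqref{eq:reconstruction-system} can be written $L=(1+N)K$, with $N$ supported on pairs $(i,j)$ such that $C_i<C_j$ and $N_{ij}=N_{p(i),p(j)}$. The plan is therefore to show that the $i$-th row of $N$ vanishes whenever $C_i$ is an Arthur orbit. Suppose $N_{ij}\neq 0$ for some $j$. The support condition gives $C_i<C_j$. The symmetry gives $N_{p(i),p(j)}=N_{ij}\neq0$, and applying the support condition to this entry gives $p(C_i)<p(C_j)$.

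Now apply the rigidity Theorem~\ref{thm:arthur-rigidity} with $C=C_i$ and $D=C_j$. It says that $C_i\leq C_j$ and $p(C_i)\leq p(C_j)$ together force $C_i=C_j$. This contradicts the strict inequality $C_i<C_j$, so the row $N_{i\bullet}$ is zero. Hence $L_{i\bullet}=\bigl((1+N)K\bigr)_{i\bullet}=K_{i\bullet}$.

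For the second identity, Theorem~\ref{thm:local-index} gives $\mathbf M=KX$. Therefore
\[
(LX)_{i\bullet}=L_{i\bullet}X=K_{i\bullet}X=\mathbf M_{i\bullet}.
\]
The system \eqref{eq:reconstruction-system} involves only $X$, the closure order, $p$ and $f$. So every solution yields the same row $\mathbf M_{i\bullet}$, that is, the numbers $m_{C_i}(P_a)$ for all $a$. The packet $\Pi^{\mic}_{C_i}=\{\pi_a: m_{C_i}(P_a)>0\}$ is then determined as well.

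The real content has already been proved, in the rigidity theorem and in the full-row-rank argument inside Proposition~\ref{prop:reconstruction-ambiguity}. What remains is bookkeeping, and the one point to check is the index convention $N_{ij}=N_{p(i),p(j)}$. Since $\mathsf P$ is an involution, $\mathsf PN=N\mathsf P$ translates to exactly this entrywise statement, so the transported entry lies in row $p(i)$, column $p(j)$, as used above.
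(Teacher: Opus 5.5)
Your proof is correct and follows the paper's argument: Proposition~\ref{prop:reconstruction-ambiguity} forces any nonzero $N_{ij}$ to satisfy both $C_i<C_j$ and $p(C_i)<p(C_j)$, which rigidity rules out at an Arthur orbit. Hence the $i$th row of $N$ vanishes, and multiplying $L=(1+N)K$ by $X$ (with $\mathbf M=KX$) gives both identities; your derivation of $N_{ij}=N_{p(i),p(j)}$ from $\mathsf PN=N\mathsf P$ is also right.
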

\begin{proof}
If $N_{ij}\ne0$ in Proposition~\ref{prop:reconstruction-ambiguity},
then $C_i<C_j$ and $p(C_i)<p(C_j)$, contrary to
\eqref{eq:arthur-rigidity}. Hence the $i$th row of
$N$ vanishes. Now use $L=(1+N)K$ and multiply by $X$.
\end{proof}

In other words, knowing the matrix $X$, the closure order, and the
permutations $p$ and $f$ uniquely gives us a linear reconstruction method
to determine the microlocal Arthur packets. 

\begin{remark}
    The ambiguity in the linear system \eqref{eq:reconstruction-system}, away from the Arthur rows, is 
explicit. Set
\begin{equation}
 \mathcal J=\{(i,j):C_i<C_j,\ p(C_i)<p(C_j)\}.
\end{equation}
Then the vector space of solution matrices $N$ in
\eqref{eq:ambiguity-support} has a basis indexed by each
orbit of $(i,j)\mapsto(p(i),p(j))$ on $\mathcal J$.
\end{remark}

\subsection{A canonical characterization of the Arthur microlocal packet}

The geometric rigidity of the Arthur orbits provides a characterization of the
normalized distributions $T_C$ from
\eqref{eq:normalized-microlocal-distribution}. The underlying space is the direct
sum of the character spaces over pure inner forms at $\lambda$.

Recall that $\mathcal I_\lambda$ indexes the enhanced parameters
$a=(C_a,\rho_a)$ at the fixed infinitesimal parameter $\lambda$.
The corresponding irreducible representation $\pi_a$ belongs to the pure inner
form $G^{\delta_a}(F)$. Write $\Theta_a=\Theta_{\pi_a}$ for its
distribution character. Define
\begin{equation}
 \mathscr R_\lambda
 =\bigoplus_{\delta\in H^1(F,G)}
   \ \bigoplus_{\substack{a\in\mathcal I_\lambda\\\delta_a=\delta}}
       \CC\,\Theta_a.
\end{equation}

Let $\CC^{\mathcal I_\lambda}$ denote the space of vectors
$v=(v_a)_{a\in\mathcal I_\lambda}$, with one coordinate for each
enhanced parameter $a$. Define the linear isomorphism
\begin{equation}\label{eq:character-pairing-identification}
 \mathcal J_\lambda:\CC^{\mathcal I_\lambda}\longrightarrow
       \mathscr R_\lambda,\qquad
 v\longmapsto\sum_{a\in\mathcal I_\lambda}
       e(\delta_a)(-1)^{d_a}v_a\Theta_a.
\end{equation}
In other words, this map replaces the $a$-th coordinate by the coefficient of
$\Theta_a$ times the sign $e(\delta_a)(-1)^{d_a}$.
The map is clearly invertible since the distributions
$\Theta_a$ form a basis of $\mathscr R_\lambda$.
The signs come from the normalization of
$T_C$. More precisely, if $\mathbf m_C=(m_C(P_a))_{a\in\mathcal I_\lambda}$
is the vector of microlocal multiplicities at $C$, then
\begin{equation}
 T_C=(-1)^{\dim C}\mathcal J_\lambda(\mathbf m_C).
\end{equation}
When $C=C_i$, $\mathbf m_{C_i}=\mathbf M_{i\bullet}$.

\medskip

Define the geometric standard packet sums by
\begin{equation}\label{eq:standard-packet-sums}
 S_{C_i}=\mathcal J_\lambda(X_{i\bullet}),\qquad
 \mathscr S_\lambda^{\mathrm{std}}
      =\Span_{\CC}\{S_{C_i}:1\leq i\leq n\},\qquad
 \mathscr S_{\lambda,\ZZ}^{\mathrm{std}}
      =\bigoplus_{i=1}^n\ZZ S_{C_i}.
\end{equation}
\begin{lemma}\label{lem:standard-packet-inner-forms}
Under Hypothesis~\ref{hyp:enhanced-llc},
\begin{equation}\label{eq:standard-packet-enhanced}
 S_C=\sum_{b:C_b=C}e(\delta_b)\dim(\rho_b)\Theta_{I_b}.
\end{equation}
The restriction of $S_C$ to every pure inner form $G^\delta(F)$ is stable.
\end{lemma}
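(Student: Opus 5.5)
The formula \eqref{eq:standard-packet-enhanced} is a direct computation, and stability then follows from the tempered hypothesis by an induction on the Langlands classification.

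\textbf{The formula.} First unwind the definitions. By \eqref{eq:enhanced-to-stalk},
\[
 S_C=\sum_a e(\delta_a)(-1)^{d_a}\Bigl(\sum_{b:C_b=C}\dim(\rho_b)s_{ba}\Bigr)\Theta_a .
\]
Hypothesis~\ref{hyp:enhanced-llc}(2) gives $(-1)^{d_a}s_{ba}=[I_b:\pi_a]$ whenever $\delta_a=\delta_b$.

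When $\delta_a\neq\delta_b$, the multiplicity $s_{ba}$ vanishes. The reason is that $Z(\Gd)^{W_F}$ acts on $P_a$ through the central character $\kappa_{\delta_a}$. Every constituent $\mathcal L_{\rho_b}$ of a stalk cohomology sheaf therefore carries the same central character, so $\kappa_{\delta_b}=\kappa_{\delta_a}$ by the inner-form clause (1). Since $\kappa$ determines $\delta$, this forces $\delta_a=\delta_b$.

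Exchanging the order of summation now gives
\[
 S_C=\sum_{b:C_b=C}e(\delta_b)\dim(\rho_b)\sum_{a:\delta_a=\delta_b}[I_b:\pi_a]\Theta_a .
\]
The inner sum equals $\Theta_{I_b}$, because the constituents of $I_b$ have infinitesimal character $[\lambda]$ and live on $G^{\delta_b}$. This proves \eqref{eq:standard-packet-enhanced}.

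\textbf{Stability: reduction.} Restrict to a fixed $\delta$. The Kottwitz sign $e(\delta)$ is then a global constant, and we must show that
\[
 \Sigma^\delta:=\sum_{b:C_b=C,\ \delta_b=\delta}\dim(\rho_b)\Theta_{I_b}
\]
is stable. All $b$ with $C_b=C$ share the same Langlands parameter $\phi$ up to $H$-conjugacy, and hence the same Langlands data $(L,\phi_L,z)$. By clause (4), each $I_b$ is the normalized induction $\operatorname{Ind}_{P_\delta}^{G^\delta}(\pi_{L_\delta}(\phi_L,\rho_L)\otimes\nu_\delta)$, where the enhancement $\rho_b$ is identified with $\rho_L$ through the canonical identification of component groups. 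Since $\dim\rho_b=\dim\rho_L$, we get
\[
 \Sigma^\delta=\operatorname{Ind}_{P_\delta}^{G^\delta}\bigl(S\Theta^{\delta}_{\phi_L}\otimes\nu_\delta\bigr).
\]
Here $S\Theta^{\delta}_{\phi_L}$ is the tempered packet sum for $L_\delta$, which is stable by clause (3) applied to the Levi subgroup. Twisting by the unramified character $\nu_\delta$ preserves stability, and parabolic induction sends stable distributions to stable distributions by the van Dijk descent formula. If $\phi$ is not relevant to $G^\delta$, the sum is empty and the claim is trivial.

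\textbf{Main obstacle.} The main point of care is the bijection in the reduction step. We must check that $\{b:C_b=C,\ \delta_b=\delta\}$ corresponds exactly to the enhancements $\rho_L$ of $\phi_L$ that are relevant to the pure inner form $L_\delta$, with matching dimensions. This matching of enhancements and of pure inner forms (via the Kottwitz characters) between $G$ and its Levi subgroup is what the final sentence of clause (4), together with the restriction of the inner-form condition to $L$, is designed to supply, and I would invoke it there.
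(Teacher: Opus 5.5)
Your proposal is correct and follows essentially the same route as the paper's proof. It shows that $s_{ba}$ vanishes across distinct pure inner forms via the central character of $Z(\Gd)^{W_F}$ and clause~(1), uses the Kazhdan--Lusztig clause to assemble $\Theta_{I_b}$, and then uses clause~(4) to write $S_C|_{G^\delta(F)}$ as $e(\delta)$ times a normalized induction of an unramified twist of the stable tempered packet sum on $L_\delta(F)$. The enhancement-matching issue you flag as the main obstacle is exactly what the paper absorbs into the final sentence of clause~(4), so invoking it there is also how the paper proceeds.
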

\begin{proof}
Equivariant restriction preserves the scalar action of
$Z(\Gd)^{W_F}$. Thus $s_{ba}=0$ unless the Kottwitz characters
agree, hence unless $\delta_a=\delta_b$. By
\eqref{eq:enhanced-to-stalk} and the Kazhdan--Lusztig formula
\eqref{eq:llc-kl-hypothesis},
\begin{equation}
\begin{aligned}
 S_C
 &=\sum_{b:C_b=C}e(\delta_b)\dim(\rho_b)
       \sum_{a:\delta_a=\delta_b}(-1)^{d_a}s_{ba}\Theta_a\\
 &=\sum_{b:C_b=C}e(\delta_b)\dim(\rho_b)\Theta_{I_b}.
\end{aligned}
\end{equation}
For $C=C_\phi$, write
$\phi=\iota_L\circ(\phi_L\otimes z)$ for its Langlands data.
If $\phi$ is irrelevant to $G^\delta$, the restriction is zero.
Otherwise, the Langlands classification compatibility in
Hypothesis~\ref{hyp:enhanced-llc} gives
\begin{equation}
 S_C|_{G^\delta(F)}
 =e(\delta)i_{P_\delta}^{G^\delta}
       (\nu_\delta S\Theta_{\phi_L}^{\delta_L}),
\end{equation}
where $\delta_L$ denotes the pure inner twist $L_\delta$ of $L$
and $\nu_\delta$ is the character corresponding to $z$.
Tempered stability on $L_\delta(F)$, together with preservation of
stability under unramified twisting and normalized induction, proves
the claim.
\end{proof}

\begin{remark}
The superscript $\mathrm{std}$ denotes standard packet sums.
Thus $\mathscr S_\lambda^{\mathrm{std}}$ is their span across pure
inner forms, and its restriction to each $G^\delta(F)$ consists of stable virtual
characters under Hypothesis~\ref{hyp:enhanced-llc}.
In \cite[Conjecture~8.14]{Vog}, attributed there to Langlands and
Shelstad, these standard packet sums are conjectured to form a basis
of the lattice of strongly stable virtual representations across pure
inner forms at $[\lambda]$. Thus $\mathscr S_\lambda^{\mathrm{std}}$
is expected to be the complex span of that lattice. The lemma proves
stability on each pure inner form under our hypotheses; the full
basis assertion is not needed below.
\end{remark}

Note for comparison the real group microlocal character framework in
\cite[Theorem~1.29 and Corollary~1.32]{ABV}, and the local field
generalization in \cite[\S8]{Vog}.

\medskip

Since $T_{C_i}=(-1)^{d_i}\mathcal J_\lambda(\mathbf M_{i\bullet})$,
the identity $\mathbf M=DE^{-1}X$ gives
\begin{equation}\label{eq:triangular-microlocal-distributions}
 T_{C_i}=\sum_j(E^{-1})_{ij}S_{C_j}
       =S_{C_i}+\sum_{C_j>C_i}(E^{-1})_{ij}S_{C_j}.
\end{equation}
The inverse identity is $S_{C_i}=\sum_j E_{ij}T_{C_j}$.
Therefore $\{T_{C_i}\}$ is another integral basis of
$\mathscr S_{\lambda,\ZZ}^{\mathrm{std}}$, with leading standard
coefficient one. This fixes the normalization in
\eqref{eq:normalized-microlocal-distribution}.
\begin{corollary}[Stability of microlocal distributions]
\label{cor:microlocal-stability}
Under Hypothesis~\ref{hyp:enhanced-llc}, the restriction of $T_C$
to every pure inner form $G^\delta(F)$ is stable for every $H$-orbit $C\subset V$.
In particular, the microlocal Arthur distribution $T_{C_\psi}|_{G^\delta(F)}$
is stable for every Arthur parameter $\psi$ at $\lambda$.
\end{corollary}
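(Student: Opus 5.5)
The plan is to derive stability of $T_C$ from the triangular expansion \eqref{eq:triangular-microlocal-distributions} together with Lemma~\ref{lem:standard-packet-inner-forms}. Stability of an invariant distribution on a fixed group $G^\delta(F)$ is a linear condition: the stable distributions form a subspace. Restriction to a pure inner form is also linear. It therefore suffices to show that $T_C$ is a finite linear combination of the standard packet sums $S_{C_j}$, and then to check that restriction to $G^\delta(F)$ commutes with forming that combination.

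First, recall the identity $T_{C_i}=(-1)^{d_i}\mathcal J_\lambda(\mathbf M_{i\bullet})$, which holds by the definitions of $T_C$ and $\mathcal J_\lambda$. Substituting $\mathbf M=DE^{-1}X$ from Theorem~\ref{thm:local-index} and using linearity of $\mathcal J_\lambda$ gives
\begin{equation*}
T_{C_i}=\sum_j (E^{-1})_{ij}\,\mathcal J_\lambda(X_{j\bullet})=\sum_j (E^{-1})_{ij}S_{C_j}.
\end{equation*}
This is exactly \eqref{eq:triangular-microlocal-distributions}. The only point to verify is the sign bookkeeping: $(-1)^{d_i}D_{ii}=1$, and $D$ multiplies on the left of $E^{-1}$ in $K$, so the row sign cancels. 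The coefficients $(E^{-1})_{ij}$ are integers, since $E$ is unitriangular.

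Next, take the decomposition of $\mathscr R_\lambda$ as a direct sum over $\delta\in H^1(F,G)$, and let $r_\delta$ denote the projection to the $\delta$-summand, viewed as a virtual character of $G^\delta(F)$. Then $r_\delta$ is linear, so $T_C|_{G^\delta(F)}=\sum_j (E^{-1})_{ij}\,S_{C_j}|_{G^\delta(F)}$. By Lemma~\ref{lem:standard-packet-inner-forms}, each $S_{C_j}|_{G^\delta(F)}$ is stable, so this linear combination is stable as well. The Arthur statement is the special case $C=C_\psi$, since every Arthur orbit is in particular an $H$-orbit in $V$.

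There is essentially no obstacle here. The substantive input is Lemma~\ref{lem:standard-packet-inner-forms}, which rests on tempered stability, the Kazhdan--Lusztig hypothesis, and compatibility with the Langlands classification, and that lemma is already established. The only care needed is to confirm that the Kottwitz signs $e(\delta_a)$ do not affect stability. On a fixed inner form they are the constant $e(\delta)$, so they only rescale the restricted distribution.
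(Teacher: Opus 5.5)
Your proposal is correct and follows the paper's proof: both write $T_C$ via \eqref{eq:triangular-microlocal-distributions} as an integral linear combination of the $S_{C_j}$ and invoke Lemma~\ref{lem:standard-packet-inner-forms} for stability of each restricted $S_{C_j}$. Your explicit checks of the sign cancellation $(-1)^{d_i}D_{ii}=1$, of linearity of restriction to $G^\delta(F)$, and of the constant Kottwitz sign on a fixed inner form are details the paper leaves implicit.
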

\begin{proof}
By \eqref{eq:triangular-microlocal-distributions}, $T_C$ is a linear
combination of the $S_B$, whose restrictions to each $G^\delta(F)$ are stable by
Lemma~\ref{lem:standard-packet-inner-forms}.
\end{proof}

\medskip

We will also need the equivalent pairing formulation for the compatibility with the Fourier transform. Put
\begin{equation}
 \mathscr K_\lambda
   =K_0(\Perv_H(V))\otimes_{\ZZ}\CC
   =\bigoplus_{b\in\mathcal I_\lambda}\CC[P_b].
\end{equation}
Here $K_0$ denotes the Grothendieck group of the abelian category. Define a complex perfect bilinear pairing
$\mathscr R_\lambda\times\mathscr K_\lambda\to\CC$ by
\begin{equation}
 \langle\Theta_a,[P_b]\rangle
      =e(\delta_a)(-1)^{d_a}\delta_{ab},
\end{equation}
where $\delta_{ab}$ is the Kronecker symbol. By construction,
\begin{equation}
 \langle\mathcal J_\lambda(v),[P_b]\rangle=v_b.
\end{equation}

Transport the Fourier transform action to the character space by
\begin{equation}\label{eq:character-fourier-transport}
 \FT\mathcal J_\lambda(v)=\mathcal J_\lambda(v\mathsf F).
\end{equation}
This is the involution characterized by
$\langle\FT T,[P]\rangle=\langle T,[\FT P]\rangle$.
Equations~\eqref{eq:fourier-matrix}
and \eqref{eq:character-fourier-transport} give
\begin{equation}\label{eq:fourier-microlocal-distributions}
 \FT T_C=(-1)^{\dim C-\dim p(C)}T_{p(C)}.
\end{equation}
In particular, $\FT$ preserves the standard packet space.

For an orbit $C$, define
\begin{equation}\label{eq:standard-support-filtrations}
 U_C=\Span_{\CC}\{S_D:C\leq D\},\qquad
 U_{C,\ZZ}=\bigoplus_{D\geq C}\ZZ S_D,
 \qquad U_{>C}=\Span_{\CC}\{S_D:C<D\}.
\end{equation}
The triangularity in 
\eqref{eq:triangular-microlocal-distributions} implies
\begin{equation}
 U_C=\Span_{\CC}\{T_D:C\leq D\},\qquad
 U_{C,\ZZ}=\bigoplus_{D\geq C}\ZZ T_D.
\end{equation}

\begin{proposition}\label{prop:joint-filtration}
For any two orbits $C,B$,
\begin{align}
 U_C\cap\FT^{-1}U_B
   &=\bigoplus_{\substack{D\geq C\\p(D)\geq B}}\CC T_D,
       \label{eq:joint-filtration}\\
 U_{C,\ZZ}\cap\FT^{-1}U_{B,\ZZ}
   &=\bigoplus_{\substack{D\geq C\\p(D)\geq B}}\ZZ T_D.
       \label{eq:joint-filtration-integral}
\end{align}
\end{proposition}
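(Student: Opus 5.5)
The plan is to work entirely in the basis $\{T_D\}$ of $\mathscr S_\lambda^{\mathrm{std}}$ furnished by \eqref{eq:triangular-microlocal-distributions}, in which both filtrations become coordinate subspaces.

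First I record that the $T_D$, $D\in H\backslash V$, are linearly independent and span $\mathscr S_\lambda^{\mathrm{std}}$. Independence follows because $T_{C_i}=(-1)^{d_i}\mathcal J_\lambda(\mathbf M_{i\bullet})$, $\mathcal J_\lambda$ is an isomorphism, and $\mathbf M$ has full row rank (Lemma~\ref{lem:stalk-full-row-rank}). Over $\ZZ$, the unitriangularity of $E$ gives that $\{T_D\}$ is a $\ZZ$-basis of $\mathscr S_{\lambda,\ZZ}^{\mathrm{std}}$.

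Next I express both filtrations in this basis. By the triangularity statement already recorded, $U_C=\bigoplus_{D\ge C}\CC T_D$ and $U_{C,\ZZ}=\bigoplus_{D\ge C}\ZZ T_D$. For the second filtration I use \eqref{eq:fourier-microlocal-distributions} together with $\FT^2=\mathrm{id}$ on the character space. The latter holds because $\mathsf F$ is an involutive permutation matrix, so by \eqref{eq:character-fourier-transport} $\FT$ is an involution. This gives
\begin{equation*}
 \FT^{-1}T_{D'}=\FT T_{D'}=\pm T_{p(D')}.
\end{equation*}
Hence $\FT^{-1}U_B=\bigoplus_{D'\ge B}\CC T_{p(D')}$. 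Reindexing by $D=p(D')$ and using $p^2=1$, this is $\bigoplus_{p(D)\ge B}\CC T_D$. The same computation holds over $\ZZ$, since the signs are units.

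Finally, two subspaces spanned by subsets of a common basis intersect in the span of the intersection of those subsets. Applying this with the index sets $\{D:D\ge C\}$ and $\{D:p(D)\ge B\}$ yields both \eqref{eq:joint-filtration} and \eqref{eq:joint-filtration-integral}. For the integral version I also use that $\{T_D\}$ is a $\ZZ$-basis of the lattice.

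There is no real obstacle here. The only point needing care is checking that $\FT$ maps the span of the $T_D$ to itself with sign-unit coefficients, which is exactly the content of \eqref{eq:fourier-microlocal-distributions}.
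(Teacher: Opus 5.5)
Your proposal is correct and follows the paper's own argument: the paper also writes an element in the basis $(T_D)$, reads off $\chi_D=0$ unless $D\geq C$ from the triangularity, and uses \eqref{eq:fourier-microlocal-distributions} to get $\chi_D=0$ unless $p(D)\geq B$. Your version additionally spells out three points the paper's proof leaves implicit: that $\FT$ is an involution because $\mathsf F$ is, the reindexing via $p^2=1$, and that $\{T_D\}$ is a $\ZZ$-basis of the lattice, which the integral statement needs.
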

\begin{proof}
Write a vector $\chi$ in the basis $(T_D)$, $\chi=\sum_D\chi_D T_D$.
If $\chi\in U_C$, then $\chi_D=0$ unless $D\geq C$. By
\eqref{eq:fourier-microlocal-distributions}, if $\chi\in\FT^{-1}U_B$,
$\chi_D=0$ unless $p(D)\geq B$. The claim follows.
\end{proof}

\begin{theorem}[Canonical characterization of an Arthur distribution]
\label{thm:canonical-line}
Let $C$ be an Arthur orbit. Then
\begin{equation}\label{eq:canonical-line}
 U_C\cap\FT^{-1}U_{p(C)}=\CC T_C,
 \qquad
 U_{C,\ZZ}\cap\FT^{-1}U_{p(C),\ZZ}=\ZZ T_C.
\end{equation}
The element $T_C$ is the unique vector in the complex intersection
satisfying
\begin{equation}\label{eq:canonical-normalization}
 T_C\equiv S_C\pmod{U_{>C}}.
\end{equation}
\end{theorem}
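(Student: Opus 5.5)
The plan is to specialize Proposition~\ref{prop:joint-filtration} to $B=p(C)$ and then use the rigidity theorem, Theorem~\ref{thm:arthur-rigidity}, to collapse the index set. By \eqref{eq:joint-filtration} and \eqref{eq:joint-filtration-integral}, the intersection $U_C\cap\FT^{-1}U_{p(C)}$ is spanned, complexly or integrally, by the $T_D$ with $D\geq C$ and $p(D)\geq p(C)$. For an Arthur orbit $C$, \eqref{eq:arthur-rigidity} says that these two inequalities force $D=C$. So the index set is $\{C\}$. It is nonempty, because $C\geq C$ and $p(C)\geq p(C)$ hold trivially. This gives both identities in \eqref{eq:canonical-line}.

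For the normalization, I will use the triangularity \eqref{eq:triangular-microlocal-distributions}:
\[
T_C=S_C+\sum_{D>C}(E^{-1})_{CD}S_D,
\]
so $T_C\equiv S_C\pmod{U_{>C}}$. An arbitrary element of the complex intersection has the form $cT_C$, and then $cT_C\equiv cS_C\pmod{U_{>C}}$. To conclude $c=1$, I need $S_C\notin U_{>C}$. This follows from the linear independence of the $S_{C_i}$. They are images under the isomorphism $\mathcal J_\lambda$ of the rows of $X$, and these rows are independent by Lemma~\ref{lem:stalk-full-row-rank}. Hence $cS_C\equiv S_C$ modulo $U_{>C}$ forces $c=1$, which proves uniqueness.

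All the substance sits in the earlier results, namely rigidity and Proposition~\ref{prop:joint-filtration}. The only point requiring care is that Proposition~\ref{prop:joint-filtration} rests on $\{T_D\}$ being a basis, with $\FT$ acting by signed permutation via \eqref{eq:fourier-microlocal-distributions}. That is already established: the $T_D$ form a basis because $E$ is unitriangular, and linear independence of the $S_{C_i}$ is what makes the spans well defined with the stated dimensions. I expect no real obstacle beyond checking that these facts are invoked in the correct order.
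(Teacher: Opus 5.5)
Your proposal is correct and follows the paper's own proof: you specialize Proposition~\ref{prop:joint-filtration} to $B=p(C)$, use the rigidity \eqref{eq:arthur-rigidity} to reduce the index set to $\{C\}$, and read off the normalization from the leading coefficient one in \eqref{eq:triangular-microlocal-distributions}. You also point out that uniqueness needs $S_C\notin U_{>C}$, which follows from the full row rank of $X$ (Lemma~\ref{lem:stalk-full-row-rank}); this makes explicit a step the paper leaves implicit.
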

\begin{proof}
Set $B=p(C)$ in Proposition~\ref{prop:joint-filtration}.
The geometric rigidity of the Arthur orbit $C$,
\eqref{eq:arthur-rigidity}, leaves only $D=C$ in the direct sums.
The leading coefficient in
\eqref{eq:triangular-microlocal-distributions} is one, which proves
the normalization and uniqueness.
\end{proof}

\subsection{A representation-theoretic characterization of the Arthur distributions}\label{subsec:langlands-aubert-support}
For a virtual character $T=\sum_a t_a\Theta_a\in\mathscr R_\lambda$,
define its support by
\begin{equation}
 \operatorname{Supp}_{\mathrm{irr}}(T)=\{\pi_a:t_a\ne0\}.
\end{equation}
Write $C(\pi_a)=C_a$ for the orbit supporting the enhanced Langlands parameter of $\pi_a$.
Given an orbit $C$, let
\begin{equation}
 \mathscr R_{\geq C}
   =\Span_{\CC}\{\Theta_a:C\leq C_a\}.
\end{equation}
Thus $T\in\mathscr R_{\geq C}$ means that the Langlands orbit for every irreducible
character occurring in $T$ contains $C$ in its closure. 

\begin{lemma}
\label{lem:standard-support-detection}
For every orbit $C$,
\begin{equation}\label{eq:standard-support-detection}
 \mathscr S_\lambda^{\mathrm{std}}\cap\mathscr R_{\geq C}=U_C.
\end{equation}
\end{lemma}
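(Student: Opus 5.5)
The plan is to prove the two inclusions in \eqref{eq:standard-support-detection} separately, working throughout in the coordinates supplied by $\mathcal J_\lambda$. Under \eqref{eq:character-pairing-identification}, the coefficient of $\Theta_a$ in $\mathcal J_\lambda(v)$ is $\pm v_a$. So $T=\mathcal J_\lambda(v)$ lies in $\mathscr R_{\geq C}$ exactly when $v_a=0$ for every $a$ with $C\not\leq C_a$. By \eqref{eq:standard-packet-sums}, $S_{C_i}$ has coordinate vector $X_{i\bullet}$, the $i$th row of the stalk matrix.

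For the inclusion $U_C\subseteq\mathscr S_\lambda^{\mathrm{std}}\cap\mathscr R_{\geq C}$, note first that $U_C\subseteq\mathscr S^{\mathrm{std}}_\lambda$ by definition. It then suffices to check that each $S_D$ with $D\geq C$ lies in $\mathscr R_{\geq C}$. By \eqref{eq:stalk-diagonal}, $X_{ia}=0$ unless $C_i\leq C_a$. Hence every $\Theta_a$ occurring in $S_{C_i}$ has $C_a\geq C_i=D\geq C$, and transitivity of the closure order gives the claim.

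For the reverse inclusion, take $T=\sum_i c_iS_{C_i}$ in $\mathscr R_{\geq C}$ and suppose, for contradiction, that some $c_i\neq0$ with $C_i\not\geq C$. Among all such indices, choose $i_0$ with $C_{i_0}$ minimal in the closure order, and let $a_0$ be the label of the trivial local system on $C_{i_0}$, as in the proof of Lemma~\ref{lem:stalk-full-row-rank}. The coordinate of $T$ at $a_0$ is $\sum_i c_iX_{i a_0}$, and $X_{ia_0}\neq 0$ forces $C_i\leq C_{i_0}$.

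This is the step that needs care: I split the contributing indices into two kinds.
\begin{itemize}
\item If $C_i<C_{i_0}$ and $C_i\not\geq C$, then $c_i=0$ by the minimality of $C_{i_0}$.
\item If $C_i<C_{i_0}$ and $C_i\geq C$, then $C_{i_0}\geq C_i\geq C$, contradicting the choice of $i_0$.
\end{itemize}
So only $i=i_0$ contributes, and the coordinate equals $c_{i_0}(-1)^{d_{i_0}}\neq 0$.

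On the other hand, $C_{a_0}=C_{i_0}\not\geq C$, so membership in $\mathscr R_{\geq C}$ forces this coordinate to vanish. This contradiction shows every $c_i$ with $C_i\not\geq C$ is zero, so $T\in U_C$. Equivalently, this is triangularity of the submatrix of $X$ from Lemma~\ref{lem:stalk-full-row-rank}, restricted to orbits not above $C$.
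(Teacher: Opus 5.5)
Your proof is correct and takes essentially the paper's approach: the forward inclusion comes from the support property $X_{ia}=0$ unless $C_i\leq C_a$, and the reverse inclusion comes from choosing a minimal orbit $C_{i_0}\not\geq C$ with nonzero coefficient and reading off the coefficient of the trivial enhancement on it. Your two-case split simply spells out the paper's remark that smaller orbits with nonzero coefficients cannot lie above $C$.
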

\begin{proof}
Since $X_{D,a}=0$ unless $D\leq C_a$, we have
$U_C\subseteq\mathscr S_\lambda^{\mathrm{std}}\cap\mathscr R_{\geq C}$.
Conversely, let $T=\sum_Du_DS_D$ belong to this intersection.
If some $u_D\ne0$ with $D\not\geq C$, choose such a $D$ minimal
in the closure order. For $a=(D,1)$, only $u_DX_{D,a}$ contributes
to the coefficient of $\Theta_a$, up to the nonzero normalization sign:
smaller orbits with nonzero coefficients cannot lie above $C$.
Since $X_{D,a}=(-1)^{\dim D}$, this coefficient is nonzero,
contradicting $T\in\mathscr R_{\geq C}$.
\end{proof}

For an Arthur orbit $C$, define
\begin{equation}\label{eq:fourier-admissible-support}
 \mathcal B_C^{\FT}
 =\{\pi_a:C\leq C_a,\ p(C)\leq C_{f(a)}\}.
\end{equation}
The Fourier operator of
\eqref{eq:character-fourier-transport} satisfies
\begin{equation}
 \FT\Theta_a
 =(-1)^{d_a+d_{f(a)}}\Theta_{f(a)}.
\end{equation}
Thus $\FT$ transports irreducible support by $f$. The canonical-line
theorem now has the following support formulation.

\begin{corollary}\label{cor:fourier-character-support}
For an Arthur orbit $C$,
\begin{equation}\label{eq:fourier-character-support}
 \mathscr S_\lambda^{\mathrm{std}}\cap
 \Span_{\CC}\{\Theta_a:\pi_a\in\mathcal B_C^{\FT}\}
 =\CC T_C.
\end{equation}
\end{corollary}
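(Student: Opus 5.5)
The plan is to reduce Corollary~\ref{cor:fourier-character-support} to Theorem~\ref{thm:canonical-line} by using Lemma~\ref{lem:standard-support-detection} twice: once directly, and once after applying the Fourier involution $\FT$.

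\textbf{Step 1: identify the span.} Since $\FT\Theta_a=\pm\Theta_{f(a)}$ and $f$ is an involution, a virtual character $T=\sum_a t_a\Theta_a$ has $\FT T$ supported on $\{\Theta_b: p(C)\leq C_b\}$ exactly when $t_a\neq 0$ forces $p(C)\leq C_{f(a)}$. Hence
\begin{equation*}
 \Span_{\CC}\{\Theta_a:\pi_a\in\mathcal B_C^{\FT}\}
 =\mathscr R_{\geq C}\cap\FT^{-1}\mathscr R_{\geq p(C)}.
\end{equation*}

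\textbf{Step 2: use stability of the standard space under $\FT$.} By \eqref{eq:fourier-microlocal-distributions}, $\FT$ sends each basis element $T_D$ to $\pm T_{p(D)}$. So $\FT$, being an involution, preserves $\mathscr S_\lambda^{\mathrm{std}}$. Therefore
\begin{equation*}
 \mathscr S_\lambda^{\mathrm{std}}\cap\FT^{-1}\mathscr R_{\geq p(C)}
 =\FT^{-1}\bigl(\mathscr S_\lambda^{\mathrm{std}}\cap\mathscr R_{\geq p(C)}\bigr)
 =\FT^{-1}U_{p(C)},
\end{equation*}
where the last equality is Lemma~\ref{lem:standard-support-detection} applied to the orbit $p(C)$.

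\textbf{Step 3: combine.} Applying the same lemma to $C$ gives $\mathscr S_\lambda^{\mathrm{std}}\cap\mathscr R_{\geq C}=U_C$. Since $U_C\subseteq\mathscr S_\lambda^{\mathrm{std}}$, the left side of \eqref{eq:fourier-character-support} therefore equals
\begin{equation*}
 \bigl(\mathscr S_\lambda^{\mathrm{std}}\cap\mathscr R_{\geq C}\bigr)\cap\bigl(\mathscr S_\lambda^{\mathrm{std}}\cap\FT^{-1}\mathscr R_{\geq p(C)}\bigr)
 =U_C\cap\FT^{-1}U_{p(C)}.
\end{equation*}
Theorem~\ref{thm:canonical-line} identifies this intersection with $\CC T_C$, which proves the corollary.

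\textbf{Main obstacle.} The only delicate point is Step~1. There I must check that the sign factors in $\FT\Theta_a$ do not change supports, and that the condition $p(C)\leq C_{f(a)}$ is exactly the condition for $\FT T$ to lie in $\mathscr R_{\geq p(C)}$. This follows from the bijectivity of $f$ together with the fact that $\FT^{-1}=\FT$. Steps~2 and~3 are routine linear algebra.
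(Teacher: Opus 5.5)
Your proof is correct and follows the paper's route. You reduce the left-hand side to $U_C\cap\FT^{-1}U_{p(C)}$ using Lemma~\ref{lem:standard-support-detection}, once directly and once after transport by $\FT$, and then invoke Theorem~\ref{thm:canonical-line}. You make explicit what the paper's one-line proof leaves implicit: $\FT$ preserves $\mathscr S_\lambda^{\mathrm{std}}$, and it moves irreducible supports by the involution $f$.
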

\begin{proof}
By Lemma~\ref{lem:standard-support-detection}, the intersection is
$U_C\cap\FT^{-1}U_{p(C)}$. Apply Theorem~\ref{thm:canonical-line}.
\end{proof}

Now let $\operatorname{AZ}$ be Aubert--Zelevinsky duality on the
Grothendieck group of smooth representations, and write
$\widehat\pi=|\operatorname{AZ}(\pi)|$ for the dual irreducible
representation, whenever $\pi$ is irreducible. 
The following additional hypothesis allows us to express the second
orbit bound in terms of Aubert duality.

\begin{hypothesis}[Aubert compatibility]\label{hyp:aubert-compatibility}
For every $a\in\mathcal I_\lambda$, the representation
$\widehat\pi_a$ has infinitesimal character $[\lambda]$.
For every Arthur orbit $A\subset V$,
\begin{equation}\label{eq:aubert-orbit-compatibility}
 A\leq C(\widehat\pi_a)\quad\Longleftrightarrow\quad A\leq C_{f(a)}.
\end{equation}
\end{hypothesis}

This is an additional compatibility hypothesis beyond the standard
LLC properties in Hypothesis~\ref{hyp:enhanced-llc}.
In Section~\ref{subsec:aubert-comparison}, we deduce it
from Antor--Okada's theorem for unipotent representations of inner
forms of simple adjoint unramified groups at unramified Arthur-type
infinitesimal parameters.
We expect this compatibility to hold for all infinitesimal parameters
of Arthur type.

Define the set of representations satisfying the simultaneous bounds by
\begin{equation}\label{eq:aubert-admissible-support}
 \mathcal B_\psi
 =\{\pi_a:C_\psi\leq C(\pi_a),\quad
                  C_{\psi^t}\leq C(\widehat\pi_a)\},
\end{equation}
where $\psi$ is an Arthur parameter as above. 
Lemma~\ref{lem:cunningham} gives
$C_{\psi^t}=p(C_\psi)$. Under
Hypothesis~\ref{hyp:aubert-compatibility}, therefore,
$\mathcal B_\psi=\mathcal B_{C_\psi}^{\FT}$.

Let $\operatorname{res}_0$ denote the projection onto the summand for the quasi-split $p$-adic form, and set
\begin{equation}
 \mathcal I_\lambda^0=\{a\in\mathcal I_\lambda:\delta_a=0\},
 \qquad
 \mathscr R_\lambda^0=\bigoplus_{a\in\mathcal I_\lambda^0}
                         \CC\Theta_a.
\end{equation}
Introduce the notation
\begin{equation}
 S_C^0=\operatorname{res}_0S_C,\qquad
 T_C^0=\operatorname{res}_0T_C,\qquad
 \mathscr S_\lambda^0=\Span_{\CC}\{S_C^0:C\in H\backslash V\}.
\end{equation}
Write $U_C^0=\operatorname{res}_0U_C$ and
$\mathscr R_{\geq C}^0=\mathscr R_{\geq C}\cap
\mathscr R_\lambda^0$. For a distribution $D=\sum_a c_a\Theta_a$, let
\begin{equation}\langle\Theta_a\mid D\rangle=c_a,\end{equation}
denote the coefficient of $\Theta_a$.
If $C=C_\psi$ is an Arthur orbit, set
$\mathcal B_\psi^0=\mathcal B_\psi\cap
\Irr_\lambda(G(F))$.

Let $f$ be the involution defined by $\FT P_a\simeq P_{f(a)}$.
It preserves $\mathcal I_\lambda^0$. The Fourier operator
restricts to an involution $\FT^0$ on
$\mathscr R_\lambda^0$, preserves $\mathscr S_\lambda^0$, and satisfies
$\operatorname{res}_0\FT=\FT^0\operatorname{res}_0$.

\begin{theorem}[Characterization of microlocal Arthur distributions]
\label{thm:aubert-support-characterization}
\label{subsec:analytic-inputs}
\label{thm:endoscopic-comparison-criterion}
\label{cor:analytic-uniqueness}
Let $C=C_\psi$ be an Arthur orbit. Assume
Hypothesis~\ref{hyp:aubert-compatibility}.
\begin{enumerate}
\item The restriction of $T_C$ to every pure inner form $G^\delta(F)$
is stable, and across pure inner forms we have
\begin{equation}\label{eq:aubert-support-characterization}
 \mathscr S_\lambda^{\mathrm{std}}\cap
 \Span_{\CC}\{\Theta_a:\pi_a\in\mathcal B_\psi\}
 =\CC T_C.
\end{equation}
\item On the quasi-split form, $T_C^0$ is the unique
$D\in\mathscr R_\lambda^0$ satisfying
\begin{align}
 D&\in\mathscr S_\lambda^0,
       \label{eq:comparison-standard-span}\\
 \langle \Theta_a\mid D\rangle \ne0&\ \Longrightarrow\quad
       C\leq C(\pi_a),\qquad p(C)\leq C(\widehat\pi_a),
       \label{eq:comparison-two-bounds}\\
 \langle\Theta_{\pi_{C,\mathbf1}}\mid D\rangle&=1.
       \label{eq:comparison-top-coefficient}
\end{align}
\end{enumerate}
For (2), it suffices to assume Hypothesis~\ref{hyp:aubert-compatibility}
on the quasi-split summand.
\end{theorem}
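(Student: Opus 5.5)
Part (1) is a repackaging of results already proved. Stability of $T_C|_{G^\delta(F)}$ is Corollary~\ref{cor:microlocal-stability}. For the displayed identity \eqref{eq:aubert-support-characterization}, I would use Hypothesis~\ref{hyp:aubert-compatibility} with the Arthur orbit $A=p(C)=C_{\psi^t}$; this orbit is Arthur by Lemma~\ref{lem:cunningham}, since $(C_{\psi^t},C_\psi)$ is the Arthur pair of $\psi^t$. The hypothesis then gives $\mathcal B_\psi=\mathcal B_C^{\FT}$, and Corollary~\ref{cor:fourier-character-support} finishes the argument. The only real check is that $p(C)$ is itself an Arthur orbit, which is exactly what lets the hypothesis, stated only for Arthur orbits $A$, be invoked.

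For (2), existence is the easy direction. Put $D=T_C^0=\operatorname{res}_0 T_C$, so $D\in\mathscr S^0_\lambda$. Every $\pi_a$ in its support lies in $\mathcal B_\psi^0$ by (1), which gives \eqref{eq:comparison-two-bounds}. For the top coefficient, the term $S_C$ in \eqref{eq:triangular-microlocal-distributions} contributes $e(0)(-1)^{d_a}X_{C,a}=(-1)^{\dim C}(-1)^{\dim C}=1$ at $a=(C,\mathbf 1)$. The terms $S_{C_j}$ with $C_j>C$ have $X_{j,a}=0$ because $C_a=C\not\geq C_j$.

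Uniqueness carries the real content. Let $D$ satisfy the three conditions and set $E=D-T_C^0$. Then $E\in\mathscr S_\lambda^0$, its support lies in $\mathcal B_\psi^0$, and its coefficient on $\Theta_{\pi_{C,\mathbf 1}}$ vanishes.

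The plan is to redo the proofs of Lemma~\ref{lem:standard-support-detection} and Proposition~\ref{prop:joint-filtration} entirely on the quasi-split summand. First, I need that the $S^0_B$ with $B$ relevant to $G$ form a basis of $\mathscr S^0_\lambda$ and are unitriangular against the $\Theta_{(B,\mathbf 1)}$. Here I would observe that the trivial enhancement has trivial Kottwitz character, so $(B,\mathbf 1)\in\mathcal I^0_\lambda$ for every orbit $B$. Second, I would run the minimal-orbit argument of Lemma~\ref{lem:standard-support-detection} to get $\mathscr S^0_\lambda\cap\mathscr R^0_{\geq C}=U^0_C$, using that $X_{B,(B,\mathbf 1)}=(-1)^{\dim B}\ne 0$. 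Third, I would show that $\FT^0$ carries $\mathscr R^0_{\geq p(C)}$-type conditions to support conditions via $f$. By Hypothesis~\ref{hyp:aubert-compatibility} restricted to the quasi-split summand, $E$ has support in $\mathcal B^{\FT}_C\cap\mathcal I^0_\lambda$. Hence $E\in U^0_C\cap(\FT^0)^{-1}U^0_{p(C)}$.

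Writing $E=\sum_B c_B T^0_B$, the $T^0_B$ form a basis of $\mathscr S^0_\lambda$ by the same triangularity, and $\FT^0T^0_B=\pm T^0_{p(B)}$ by applying $\operatorname{res}_0$ to \eqref{eq:fourier-microlocal-distributions}. The support argument of Proposition~\ref{prop:joint-filtration} then forces $c_B=0$ unless $B\geq C$ and $p(B)\geq p(C)$. Rigidity, Theorem~\ref{thm:arthur-rigidity}, leaves only $B=C$, and the vanishing top coefficient gives $c_C=0$.

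The main obstacle is ensuring that the restriction $\operatorname{res}_0$ loses no information: the $S^0_B$ and $T^0_B$ must remain linearly independent after restriction. I would handle this through the trivial-enhancement columns, which always lie on the quasi-split form, so each $S^0_B$ retains a nonzero leading coefficient. A further point to check is that only the quasi-split part of the Aubert hypothesis is used, which holds because $f$ preserves $\mathcal I^0_\lambda$.
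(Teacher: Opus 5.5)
Your proposal is correct and follows essentially the paper's argument. Part (1) goes via Corollary~\ref{cor:microlocal-stability}, the identity $\mathcal B_\psi=\mathcal B_C^{\FT}$ and Corollary~\ref{cor:fourier-character-support}, and part (2) uses the trivial-enhancement columns to see that $\operatorname{res}_0$ loses nothing, reruns Lemma~\ref{lem:standard-support-detection} on the quasi-split summand, and finishes with rigidity and the leading-coefficient normalization, while your explicit check that $p(C)=C_{\psi^t}$ is Arthur and your existence/uniqueness split via $E=D-T_C^0$ only spell out what the paper leaves implicit when it identifies the solution set with $U_C^0\cap(\FT^0)^{-1}U_{p(C)}^0=\CC T_C^0$.
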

\begin{proof}
For (1), stability follows from Corollary~\ref{cor:microlocal-stability}.
For the equality, use $\mathcal B_\psi=\mathcal B_{C}^{\FT}$ and apply
Corollary~\ref{cor:fourier-character-support}.

For (2), the trivial enhancements belong to the quasi-split form, and their
coefficients in the $S_B^0$ form a unitriangular matrix by
\eqref{eq:stalk-diagonal}. Thus restriction identifies
$\mathscr S_\lambda^{\mathrm{std}}$ with $\mathscr S_\lambda^0$.
The argument of Lemma~\ref{lem:standard-support-detection} gives
\begin{equation}\label{eq:split-standard-support}
 \mathscr S_\lambda^0\cap\mathscr R_{\geq B}^0=U_B^0.
\end{equation}
Since restriction commutes with Fourier transform,
Hypothesis~\ref{hyp:aubert-compatibility} and
Theorem~\ref{thm:canonical-line} identify the distributions satisfying
\eqref{eq:comparison-standard-span}--\eqref{eq:comparison-two-bounds} with
\begin{equation}\label{eq:split-canonical-line}
 U_C^0\cap(\FT^0)^{-1}U_{p(C)}^0=\CC T_C^0.
\end{equation}
For $D=\sum_{B\geq C}u_BS_B^0$, the diagonal stalk formula gives
\begin{equation}\label{eq:split-leading-coefficient}
 \langle\Theta_{\pi_{C,\rho}}\mid D\rangle=u_C\dim\rho
 \qquad((C,\rho)\in\mathcal I_\lambda^0).
\end{equation}
As $T_C^0$ has leading standard coefficient one,
\eqref{eq:comparison-top-coefficient} selects $D=T_C^0$.
\end{proof}

For a standard expansion $D=\sum_Bu_BS_B^0$, the first orbit
bound forces $u_B=0$ unless $B\geq C$, by
\eqref{eq:split-standard-support}. Formula
\eqref{eq:split-leading-coefficient} then identifies
\eqref{eq:comparison-top-coefficient} with $u_C=1$.
Hence a leading standard coefficient of one gives the
normalization required by the theorem.

\medskip

\section{Unipotent packets of unramified groups}
\label{sec:comparison}

We now verify the hypotheses of our characterization for unipotent
representations. 

\subsection{The unipotent correspondence}
\label{subsec:unipotent-llc}
Let $G$ be quasi-split, simple and adjoint, and suppose that it splits
over an unramified extension of $F$. We use Lusztig's correspondence
for unipotent representations of $G$ and its inner forms
\cite{Lus95,Lus02}, with the conventions of \cite{AO}.
All these inner forms are pure because $G$ is adjoint.

\begin{proposition}\label{prop:unipotent-llc-hypotheses}
\begin{enumerate}
\item Lusztig's unipotent correspondence satisfies the inner-form,
Kazhdan--Lusztig multiplicity, temperedness, and Langlands-classification
properties of Hypothesis~\ref{hyp:enhanced-llc}, for $G$ and its
standard Levi subgroups, on all pure inner forms.
\item If $G=\mathrm{SO}_{2n+1}$ is split, $\operatorname{char}F=0$, and the
residual characteristic satisfies $p\geq6n+4$, Lusztig's parametrization
on $G(F)$ agrees with Arthur's, and tempered packet stability holds
on both pure inner forms. Thus Hypothesis~\ref{hyp:enhanced-llc} holds in full for
unipotent representations in this case.
\end{enumerate}
\end{proposition}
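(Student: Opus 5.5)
Part (1) will be assembled from Lusztig's construction and the literature; part (2) adds Arthur/Waldspurger input.

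\emph{Inner forms.} In Lusztig's correspondence, unipotent representations of inner forms are parametrized by triples whose enhancement's central character on $Z(\Gd)^{W_F}$ records the inner form \cite{Lus95,Lus02}. Since $G$ is adjoint, $\Gd$ is simply connected, so this is exactly the Kottwitz character. This gives item (1), using the conventions of \cite{AO}.

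\emph{Kazhdan--Lusztig multiplicities.} Lusztig identifies unipotent representations at a fixed infinitesimal parameter with modules over graded affine Hecke algebras attached to cuspidal local systems. For these algebras, standard-module multiplicities are given by stalks of IC complexes on $V_\lambda$; this is Lusztig's cuspidal-support geometry in the graded setting. Matching the parity and normalization conventions yields \eqref{eq:llc-kl-hypothesis}. The sign $(-1)^{d_a}$ comes from our perverse shift \eqref{eq:ic-normalization}.

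\emph{Temperedness and Langlands classification.} Tempered modules correspond to bounded parameters via Lusztig's description and Opdam-type results. Compatibility with Jacquet functors and parabolic induction for standard Levis gives \eqref{eq:llc-langlands-classification}. This uses that the correspondence restricts to Levi subgroups, which are again unramified, with the unramified character correspondence.

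\emph{Tempered stability.} This is the hard part and is not claimed in (1). In general it is not known, which is why stability is singled out in (2).

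For (2), I would proceed as follows. For split $\mathrm{SO}_{2n+1}$, Waldspurger \cite{WaldI,WaldII} compared Lusztig's parametrization of unipotent tempered representations with Arthur's endoscopic parametrization. He proved that they agree and established the stability and endoscopic identities for tempered packets on both the split form and its nonsplit pure inner form. His results are stated under hypotheses on $p$ relative to $n$, which I expect to be covered by $p\geq 6n+4$ in characteristic zero.

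Granting agreement of parametrizations, tempered stability \eqref{eq:tempered-packet-stability} follows. Combined with (1) applied to the Levi subgroups, which are products of $\mathrm{GL}$'s and a smaller $\mathrm{SO}_{2m+1}$ and satisfy the same hypotheses inductively, this gives Hypothesis~\ref{hyp:enhanced-llc} in full.

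The main obstacle is verifying that Waldspurger's normalization of the enhancement, via the Whittaker datum, coincides with Lusztig's labelling as used in \cite{AO}, including on the nonsplit inner form. This needs a careful dictionary of characters of component groups, possibly twisted by a sign character. I would first check this on generic and supercuspidal unipotent members, where both sides are pinned down.
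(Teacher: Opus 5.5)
Your outline follows the paper's route: Lusztig's correspondence and his graded Hecke algebra geometry for (1), Waldspurger for (2), and a factorwise treatment of the Levi subgroups. Two inputs are missing, however.

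In (1), Lusztig's correspondence \cite{Lus95,Lus02} is constructed only for simple adjoint groups. Hypothesis~\ref{hyp:enhanced-llc}, however, must hold for every standard Levi subgroup, and these are neither simple nor adjoint. Your remark that the correspondence ``restricts to Levi subgroups'' therefore presupposes a unipotent correspondence on those Levis, with the multiplicity and classification properties already in place. The paper obtains this from Solleveld's extension to general reductive groups \cite[Theorem~1]{SolLLC}, \cite[Theorem~5.4(b)]{Sol}, with \cite{LusGHAIII} supplying induction and temperedness.

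In (2), you only establish agreement with Arthur for tempered representations. The statement concerns all unipotent representations, and so does its later use, where Arthur's stable standard characters are identified with the $S_B^0$ for nontempered $B$.

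\begin{itemize}
\item The missing step is to propagate agreement through the Langlands classification and unramified twists. This requires agreement on every factor of a Levi $\mathrm{GL}_{n_1}\times\cdots\times\mathrm{GL}_{n_k}\times\mathrm{SO}_{2m+1}$.
\item On the orthogonal factor this is Waldspurger's result; the bound $p\geq 6m+4$ is inherited from $p\geq 6n+4$.
\item On the general linear factors you need that Lusztig's unipotent correspondence coincides with the usual LLC. This is \cite[Proposition~5.2]{AMSHecke} and is not automatic. Stability on the $\mathrm{GL}$ factors, by contrast, is trivial.
\item Stability on the nonsplit form cannot be deduced from agreement with Arthur, since Arthur's theory covers only the quasi-split form. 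It comes directly from Waldspurger's endoscopic identities \cite[\S1.4]{WaldII}, where the overall sign in the transfer does not affect stability.
\end{itemize}

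Finally, the normalization issue you flag as the main obstacle is not settled by spot checks on generic and supercuspidal members. The paper uses Waldspurger's tempered normalization of Lusztig's correspondence \cite[Introduction]{WaldI}, which is consistent with \cite[\S4.9]{AO}. The identities in \cite[\S2.1]{WaldI} and \cite[\S1.4]{WaldII} characterize the enhanced tempered parametrization, and that characterization is what pins it to Arthur's.
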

\begin{proof}
For (1), the inner-form parametrization is part of Lusztig's
correspondence \cite{Lus95,Lus02}. The multiplicity formula follows
from his Hecke-algebra realization and
\cite[\S\S10.4--10.8]{LusGHAII}, while \cite{LusGHAIII} supplies the
induction and temperedness results. Their formulations for reductive
groups, including the Levi subgroups needed here, are given in
\cite[Theorem~1]{SolLLC} and \cite[Theorem~5.4(b)]{Sol}.

For (2), we use Waldspurger's tempered normalization of Lusztig's
correspondence \cite[Introduction]{WaldI}. This is consistent with
Antor--Okada's normalization in \cite[\S4.9]{AO}.
The stability and twisted-transfer
identities recalled in \cite[\S2.1]{WaldI}, together with the
endoscopic identities proved in \cite[\S1.4]{WaldII}, characterize
the enhanced tempered parametrization. They therefore identify it
with Arthur's on the split form and give tempered packet stability
on both pure inner forms. The overall minus sign in the transfer
to the nonsplit form does not affect stability.

The Levi subgroups have general linear and smaller odd orthogonal
factors. For the general linear factors, agreement with the classical LLC
is the unipotent case of \cite[Proposition~5.2]{AMSHecke}. Waldspurger gives the corresponding agreement for the odd
orthogonal factors. Compatibility with unramified twists and Langlands
classification then extends the agreement on the split form to all
unipotent representations. Tempered stability on the Levi subgroups
follows from the same factorwise description, completing (2).
\end{proof}

Tempered packet stability for Lusztig's correspondence remains open
in general, and we retain it as an assumption outside the cases where
it is known.

\subsection{Aubert compatibility in the unipotent setting}
\label{subsec:aubert-comparison}
Antor--Okada identify Aubert duality with Fourier transform followed
by Chevalley transport and duality of local systems in the setting of
\cite[Theorem~6.20(1)]{AO}.

Antor--Okada use the Chevalley involution to identify the $2$- and
$-2$-eigenspaces. In our general setting, however,
a Chevalley involution of $J^\circ$ need not come with an extension to
$J$. Weyl transport is defined on the full disconnected group and
preserves its equivariant structures. To compare these choices when
Chevalley transport is available, we introduce $\beta$ below.

\begin{lemma}\label{lem:chevalley-weyl-comparison}
Choose a maximal torus $T\subset J^\circ$ containing the grading torus.
Suppose a Chevalley involution of $J^\circ$ acting by inversion on $T$
extends to an algebraic automorphism $\theta$ of $J$. Its differential
induces a map $V^-\to V$. The automorphism
\begin{equation}
 \beta=\theta\circ(\iota_\Delta^-)^{-1}:V\longrightarrow V
\end{equation}
permutes $H$-orbits, preserves their closure order, and fixes every
Arthur orbit. Consequently, for an Arthur orbit $A$ and any orbit $D$,
\begin{equation}\label{eq:beta-arthur-bound}
 A\leq\beta(D)\quad\Longleftrightarrow\quad A\leq D.
\end{equation}
\end{lemma}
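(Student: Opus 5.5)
Our approach is to show that $\beta$ is induced by an element normalizing $H$, so the orbit and closure statements are formal. Then we check on a single commuting pair of triples that Arthur orbits are fixed.

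\emph{Step 1: $\beta$ respects the $H$-action.} Since $\theta$ inverts $T$, it sends $\hd$ to $-\hd$. It therefore normalizes $H=Z_J(\hd)$ and maps $\mathfrak j(-2)=V^-$ onto $\mathfrak j(2)=V$. Also, $(\iota_\Delta^-)^{-1}=\iota_\Delta^+$ is $\Ad(n_\Delta)$, which is $\alpha$-equivariant. Hence
\begin{equation}
 \beta=\theta\circ\Ad(n_\Delta)|_V:V\to V
\end{equation}
is a linear automorphism satisfying $\beta(\Ad(m)v)=\Ad(\theta\alpha(m))\beta(v)$, where $\theta\alpha$ is an automorphism of $H$. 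It follows that $\beta$ maps $H$-orbits to $H$-orbits. Being a homeomorphism of $V$, it preserves closures, so $C\le D$ if and only if $\beta(C)\le\beta(D)$.

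\emph{Step 2: $\beta$ fixes Arthur orbits.} Let $A=He_1^\vee$, with commuting homogeneous triples satisfying \eqref{eq:triple-sums}. Then
\[
\Ad(n_\Delta)e_1^\vee=-f_1^\vee,
\]
because $n_\Delta$ lies in the image of $\varphi_\Delta$, which is the diagonal of $\gamma_1\times\gamma_2$ and so acts on the first factor as $\gamma_1\bigl(\begin{smallmatrix}0&1\\-1&0\end{smallmatrix}\bigr)$. Thus $\beta(e_1^\vee)=-\theta(f_1^\vee)$. The key claim is that $\theta(f_1^\vee)$ is $H$-conjugate to $-e_1^\vee$, or more generally that $\theta$ carries $Hf_1^\vee\subset V^-$ to $He_1^\vee$ up to sign. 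Negation is harmless, since it preserves $H$-orbits by the grading torus.

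To prove the claim, note that $\theta(e_1^\vee,h_1^\vee,f_1^\vee)$ is a homogeneous triple with neutral element $\theta(h_1^\vee)$. This neutral element lies in $\mathfrak j(0)$, and after $H^\circ$-conjugating $h_1^\vee$ into $\Lie T$ we have $\theta(h_1^\vee)=-h_1^\vee$. Consequently $(-\theta f_1^\vee,\,h_1^\vee,\,-\theta e_1^\vee)$ is again a homogeneous triple with nilpositive element in $V$ and neutral element $h_1^\vee$. Graded $\mathfrak{sl}_2$-triples in $\mathfrak j$ with a fixed neutral element $h_1^\vee\in\mathfrak j(0)$ have nilpositive elements forming the open $Z_H(h_1^\vee)$-orbit in $\mathfrak j(2)\cap\mathfrak j(h_1^\vee;2)$ (Vinberg/Kostant in the bigraded setting). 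Hence $-\theta f_1^\vee\in Z_H(h_1^\vee)e_1^\vee\subset He_1^\vee$.

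There is one subtlety: the conjugation by $H^\circ$ putting $h_1^\vee$ into $\Lie T$ must be compatible with $\theta$. We would handle this by choosing $T$ at the outset to contain both $h_1^\vee$ and $\hd$. The torus $T$ is then replaced within $J^\circ$, and $\theta$ is changed by an inner automorphism, which does not affect the orbit map on $H$-orbits up to the $H$-action. \textbf{This is the expected main obstacle.} If $\theta$ changes by conjugation through an element of $J^\circ$ not lying in $H$, the orbit permutation could change. We would control this by requiring the conjugating element to centralize $\hd$, i.e.\ to lie in $H^\circ$, which is possible because $\hd$ and $h_1^\vee$ commute and lie in a common Cartan subalgebra of $\mathfrak j(0)$.

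\emph{Step 3: deduce \eqref{eq:beta-arthur-bound}.} By Steps 1 and 2, $A\le D$ holds if and only if $\beta(A)\le\beta(D)$, and $\beta(A)=A$. This gives $A\le\beta(D)$ if and only if $A\le\beta^{-1}(D)$. To finish, apply this with $\beta^{-1}$ in place of $\beta$; Step 2 applies equally to $\beta^{-1}$, which also fixes $A$.
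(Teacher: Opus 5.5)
Your proposal is correct and is essentially the paper's proof. Step~1 matches the paper's first paragraph. Step~2 is the same Kostant-uniqueness comparison of $(e,h,f)$ with $(-\theta f,h,-\theta e)$, made after an $H^\circ$-conjugation that puts the neutral element into $\Lie T$; the paper runs it on the second triple inside $Z_{J^\circ}(h_1^\vee)$, while you run it on the first, which amounts to working in $Z_{\mathfrak j}(\hd-h_1^\vee)$. The obstacle you flag is harmless, as you yourself conclude: the conjugating element lies in $H^\circ$, and the needed identity $\theta(Hf_1^\vee)=He_1^\vee$ is invariant under $H$-conjugation of the triple.

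Step~3 is more roundabout than necessary: $A\le D\iff\beta(A)\le\beta(D)$ together with $\beta(A)=A$ already gives $A\le\beta(D)\iff A\le D$.
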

\begin{proof}
Since $\theta(\hd)=-\hd$, both $\theta$ and Weyl transport normalize
$H$ and reverse the grading. Thus $\beta$ induces a closure-order
automorphism of $H\backslash V$.

Let $A=He_2^\vee$ arise from commuting triples with
$h_1^\vee+h_2^\vee=\hd$. By \eqref{eq:triple-sums},
$(\iota_\Delta^-)^{-1}(A)=Hf_2^\vee$.
Conjugating by $H^\circ$, we may assume $h_2^\vee\in\Lie(T)$;
then $h_1^\vee\in\Lie(T)$ as well. In the reductive group
$L=Z_{J^\circ}(h_1^\vee)$ the triples
\begin{equation}
 (e_2^\vee,h_2^\vee,f_2^\vee),\qquad
 (-\theta(f_2^\vee),h_2^\vee,-\theta(e_2^\vee))
\end{equation}
have the same neutral element. Their positive elements have open
orbits in the $2$-eigenspace under $Z_L(h_2^\vee)$, by
$\mathfrak{sl}_2$ representation theory, so these orbits coincide.
Since $Z_L(h_2^\vee)\subset H$ and negation preserves graded orbits,
$\theta(Hf_2^\vee)=He_2^\vee$. Hence $\beta(A)=A$, proving
\eqref{eq:beta-arthur-bound}.
\end{proof}

\begin{corollary}\label{cor:ao-aubert-compatibility}
Hypothesis~\ref{hyp:aubert-compatibility} holds for Lusztig's unipotent
correspondence for inner forms of simple adjoint unramified groups,
at every unramified infinitesimal parameter of Arthur type.
\end{corollary}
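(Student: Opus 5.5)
The argument will combine the Antor--Okada theorem with Lemma~\ref{lem:chevalley-weyl-comparison}. In the setting of \cite[Theorem~6.20(1)]{AO}, unipotent representations of the inner forms of an unramified simple adjoint group are considered at an unramified infinitesimal parameter. There Aubert duality corresponds to Fourier transform followed by Chevalley transport $V^-\to V$ and duality of local systems. Concretely, if $\pi_a$ has label $a=(C_a,\rho_a)$, then $\widehat\pi_a$ has label $(\theta(C_{a^*}),\,\cdot\,)$. Here $\mathfrak F_V P_a$ is supported on $\overline{C_{a^*}}\subset V^-$ and $\theta$ is the Chevalley involution. First I check that this $\theta$ fits the hypotheses of Lemma~\ref{lem:chevalley-weyl-comparison}. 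In the unramified unipotent case $\tau$ is trivial on inertia and Frobenius acts through the pinned automorphism, so $J$ is the centralizer of a finite-order element commuting with the pinning data. The Chevalley involution of $\Gd$ inverting a maximal torus containing $\tau(w_q)$ and the grading torus will stabilize $J$ and restrict to an automorphism of $J$ inverting $T$. I expect this to hold because \cite{AO} already uses such a $\theta$ on the full centralizer. The infinitesimal character statement is immediate, since Aubert duality preserves cuspidal supports and hence $[\lambda]$.

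Next I compare the two identifications of $V^-$ with $V$. Our $\FT$ uses $\iota_\Delta^-$, while Antor--Okada use $\theta$. Since $\beta=\theta\circ(\iota_\Delta^-)^{-1}$, the support orbit of the Aubert dual is
\[
C(\widehat\pi_a)=\theta(C_a^{*})=\beta\bigl(\iota_\Delta^-(C_a^*)\bigr)=\beta(C_{f(a)}).
\]
This needs $\iota_\Delta^-(C_a^*)$, the support of $\FT P_a$, to equal $C_{f(a)}$, which holds by the definition of $f$. Dualizing the local system does not change the supporting orbit, so only orbits matter here. This is the key point: the twist by the local-system duality and by $\alpha$ versus $\theta$ on equivariant structures could alter enhancements, but Hypothesis~\ref{hyp:aubert-compatibility} only concerns orbits.

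Finally, for an Arthur orbit $A$, \eqref{eq:beta-arthur-bound} gives
\[
A\leq C(\widehat\pi_a)=\beta(C_{f(a)})\iff A\leq C_{f(a)},
\]
which is \eqref{eq:aubert-orbit-compatibility}. The main obstacle is the first step. One has to verify carefully that the Antor--Okada formulation, stated in terms of their graded Lie algebra and their normalization of Lusztig's correspondence, matches our $(H,V,\mathfrak F_V)$ via Proposition~\ref{prop:unipotent-reduction}. One also has to verify that their Chevalley involution extends to all of $J$ rather than only $J^\circ$. If that extension fails, I would instead apply the Lemma's argument componentwise: replace $\theta$ by its composite with an element of $J$ normalizing $T$, and note that $\beta$ still preserves the closure order and fixes Arthur orbits by the same $\mathfrak{sl}_2$ argument inside $Z_L(h_2^\vee)$.
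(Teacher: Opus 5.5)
Your proposal follows the paper's own proof. \cite[Theorem~6.20(1)]{AO} gives $C(\widehat\pi_a)=\theta(D_a^-)=\beta(C_{f(a)})$ on supporting orbits, duality of local systems does not move the support, and Lemma~\ref{lem:chevalley-weyl-comparison} gives the two-sided bound. In your display, $C_a^*$ has to mean the support $D_a^-$ of $\mathfrak F_VP_a$, not the Piasetski dual, since $\iota_\Delta^-$ of the Piasetski dual is $p(C_a)$ rather than $C_{f(a)}$.

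The step you flag as the main obstacle is settled in the paper simply by noting that $J=Z_{\Gd}(s_c)$ is connected, as in the proof of \cite[Theorem~6.20]{AO} ($\Gd$ is simply connected because $G$ is adjoint). So any Chevalley involution of $J^\circ=J$ is already an automorphism of $J$. This makes both of your workarounds unnecessary. Your restriction of an ambient Chevalley involution only makes sense as phrased in the split case, where $\tau(w_q)\in\Gd$. Your componentwise fallback would not by itself ensure that $\beta$ permutes $H$-orbits rather than only $H^\circ$-orbits.
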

\begin{proof}
Write $s=\lambda(w_q)=s_cs_h$ for its compact--hyperbolic decomposition.
Since $\lambda$ is unramified, $J=Z_{\Gd}(s_c)$. This group is connected,
as in \cite[proof of Theorem~6.20]{AO}, so it admits a Chevalley
involution $\theta$ reversing the grading torus.
Let $D_a^-\subset V^-$ be the supporting orbit of $\mathfrak F_VP_a$.
By \cite[Theorem~6.20(1), (6.5.1)]{AO}, Aubert duality preserves $s$ and
\begin{equation}
 C(\widehat\pi_a)=\theta(D_a^-)=\beta(C_{f(a)}).
\end{equation}
The duality of local systems in that formula does not change the
supporting orbit. Preservation of $s$ gives preservation of the
infinitesimal character, and Lemma~\ref{lem:chevalley-weyl-comparison}
gives the required inequalities for every Arthur orbit.
\end{proof}

\subsection{Comparison for odd orthogonal groups}
\label{subsec:orthogonal-comparison}
For the rest of this section, let $F$ have characteristic zero and
residual characteristic $p\geq6n+4$, and let
$G=\mathrm{SO}_{2n+1}$ be split.
By Proposition~\ref{prop:unipotent-llc-hypotheses} and
Corollary~\ref{cor:ao-aubert-compatibility}, our hypotheses hold in this
setting, and the unipotent LLC agrees with Arthur's.

For an unramified Arthur parameter $\psi$, write
$\Pi_\psi^{\Art}(G(F))$ for Arthur's endoscopic packet and
\begin{equation}\label{eq:arthur-endoscopic-distribution}
 D_\psi^{\Art}
 =\sum_{\pi\in\Pi_\psi^{\Art}(G(F))}
       \langle s_\psi,\pi\rangle_\psi\Theta_\pi,
 \qquad s_\psi=\psi(1,1,-I).
\end{equation}
Here $\langle\ ,\pi\rangle_\psi$ is Arthur's packet character, with
Whittaker normalization \cite[Theorem~2.2.1]{Arthur2013}.

\begin{theorem}[Equality of endoscopic and microlocal Arthur packets]
\label{thm:unipotent-packet-comparison}
For every unramified Arthur parameter $\psi$ for $G$,
\begin{equation}\label{eq:unipotent-packet-comparison}
 D_\psi^{\Art}=T_{C_\psi}^0,
 \qquad
 \Pi_\psi^{\Art}(G(F))=\Pi_\psi^{\mic}(G(F)).
\end{equation}
\end{theorem}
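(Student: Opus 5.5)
The plan is to apply part (2) of Theorem~\ref{thm:aubert-support-characterization} on the quasi-split form with $D=D_\psi^{\Art}$. Its hypotheses hold here: Hypothesis~\ref{hyp:enhanced-llc} holds by Proposition~\ref{prop:unipotent-llc-hypotheses}(2), and Hypothesis~\ref{hyp:aubert-compatibility} holds on the split summand by Corollary~\ref{cor:ao-aubert-compatibility}. Once the three conditions \eqref{eq:comparison-standard-span}--\eqref{eq:comparison-top-coefficient} are verified for $D_\psi^{\Art}$, uniqueness gives $D_\psi^{\Art}=T_{C_\psi}^0$. The packet equality then follows by comparing supports. The coefficients $\langle s_\psi,\pi\rangle_\psi$ are characters of a finite abelian group evaluated at an element of order two, hence are $\pm1$ and in particular nonzero. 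On the other side, $T^0_C$ has coefficients $\pm m_C(P_a)$. So
\[
\operatorname{Supp}_{\mathrm{irr}}D_\psi^{\Art}=\Pi_\psi^{\Art}(G(F))
\quad\text{and}\quad
\operatorname{Supp}_{\mathrm{irr}}T_{C_\psi}^0=\Pi_\psi^{\mic}(G(F)).
\]
Some care is needed about whether Arthur packets are multisets. For $\mathrm{SO}_{2n+1}$, Mœglin's multiplicity-one result handles this.

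\textbf{Condition \eqref{eq:comparison-standard-span}.} Arthur's stable distribution $D_\psi^{\Art}$ is a stable virtual character with infinitesimal character $[\lambda]$. It consists of unipotent representations, since $\psi$ is unramified and these are the constituents at an unramified infinitesimal parameter. It therefore lies in the span of stable standard-packet sums. I would argue this through the Langlands--Shelstad basis statement, since that is not assumed in general. Concretely: by Arthur's construction, $D_\psi^{\Art}$ is a twisted-endoscopic transfer from $\mathrm{GL}_{2n}$. Alternatively, one can express it through Mœglin/Xu's explicit constructions as an integral combination of stable tempered distributions of Levi subgroups, parabolically induced and twisted. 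Each such term is $S_B^0$ for some orbit $B$, by Lemma~\ref{lem:standard-packet-inner-forms}. A cleaner route, which I would try first, is Waldspurger's result in \cite{WaldII} that for unipotent representations of $\mathrm{SO}_{2n+1}$ the space of stable distributions at $\lambda$ is spanned by the stable tempered packet sums of Levis induced up. That gives \eqref{eq:comparison-standard-span} directly. This is the main obstacle: it requires a genuine input that the stable unipotent virtual characters at $\lambda$ are exactly $\mathscr S_\lambda^0$.

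\textbf{Condition \eqref{eq:comparison-two-bounds}.} Take $\pi\in\Pi_\psi^{\Art}$. The closure bound of \cite{HLLZ} gives $C_\psi\le C(\pi)$. By \cite{Xu2017}, Aubert duality carries the packet of $\psi$ onto the packet of $\psi^t$ as sets, so $\widehat\pi\in\Pi_{\psi^t}^{\Art}$. Applying \cite{HLLZ} again gives $C_{\psi^t}\le C(\widehat\pi)$, and $C_{\psi^t}=p(C_\psi)$ by Lemma~\ref{lem:cunningham}.

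\textbf{Condition \eqref{eq:comparison-top-coefficient}.} The representation $\pi_{C_\psi,\mathbf 1}=\pi(\phi_\psi,\mathbf 1)$ lies in $\Pi_\psi^{\Art}$; this is a standard property of Arthur packets. With Whittaker normalization, its character $\langle\cdot,\pi\rangle_\psi$ is trivial, so its coefficient is $\langle s_\psi,\pi\rangle_\psi=1$. This uses the agreement of Lusztig's and Arthur's parametrizations from Proposition~\ref{prop:unipotent-llc-hypotheses}(2), so that the trivial enhancement matches the trivial Arthur character on $\phi_\psi$. Uniqueness in Theorem~\ref{thm:aubert-support-characterization}(2) then completes the argument.
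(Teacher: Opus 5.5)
Your proposal is correct and follows the paper's proof. You apply Theorem~\ref{thm:endoscopic-comparison-criterion}(2) on the split form, get the two closure bounds from \cite{HLLZ} and \cite{Xu2017}, and compare supports using the $\pm1$ coefficients. Your first route to \eqref{eq:comparison-standard-span} is exactly the paper's: expand Arthur's twisted-endoscopic character into stable standard characters, which are normalized inductions of tempered packet sums and hence the $S_B^0$. So no Langlands--Shelstad basis statement or separate spanning theorem from \cite{WaldII} is needed, and that step is less of an obstacle than you suggest.

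The only real variation is the normalization. You read it off from $\Pi_{\phi_\psi}\subset\Pi_\psi$ with trivial character on $\pi(\phi_\psi,\mathbf 1)$. The paper instead uses the unitriangularity of Arthur's standard/irreducible expansions (coefficient one at $S_C^0$) together with \eqref{eq:split-leading-coefficient}. Both work, and your multiplicity-one caveat addresses a point the paper leaves implicit.
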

\begin{proof}
Put $C=C_\psi$ and $\lambda=\lambda_\psi$.
Arthur's standard-character expansion
\cite[(2.2.9), (2.2.12)]{Arthur2013}, together with Waldspurger's
identification of the tempered correspondence, gives
\begin{equation}
 D_\psi^{\Art}\in\mathscr S_\lambda^0
\end{equation}
with coefficient one at the standard sum $S_C^0$.
Indeed, Arthur's stable standard characters are the normalized
inductions of the tempered packet sums, hence are precisely our
$S_B^0$. The coefficient at $\phi_\psi$ is one because Arthur's
standard and irreducible character expansions are inverse
unitriangular matrices; see \cite[(2.2.7)--(2.2.9)]{Arthur2013}.

The closure-order theorem \cite[Theorem~1.3]{HLLZ} gives
$C_\psi\leq C(\pi)$ for every $\pi\in\Pi_\psi^{\Art}(G(F))$.
Packetwise Aubert duality \cite[Appendix~A]{Xu2017} gives
$\widehat\pi\in\Pi_{\psi^t}^{\Art}(G(F))$.
Applying the closure-order theorem again yields
\begin{equation}
 C_\psi\leq C(\pi),\qquad
 C_{\psi^t}\leq C(\widehat\pi).
\end{equation}
These are the two bounds in
Theorem~\ref{thm:endoscopic-comparison-criterion}.
The leading standard coefficient supplies its normalization, by
\eqref{eq:split-leading-coefficient}. Thus
$D_\psi^{\Art}=T_C^0$.
Finally, the coefficients $\langle s_\psi,\pi\rangle_\psi$ are
$\pm1$, so the irreducible character support of $D_\psi^{\Art}$ is
exactly Arthur's packet. The support of $T_C^0$ is the microlocal
packet, proving the second equality.
\end{proof}

\begin{remark}[An alternative argument using twisted endoscopy]
The closure bounds used in
Theorem~\ref{thm:unipotent-packet-comparison} can also be obtained
from twisted endoscopy and the geometry of symmetric quivers.
Let
$$
 \iota:\mathrm{Sp}_{2n}(\mathbb C)
       \hookrightarrow\mathrm{GL}_{2n}(\mathbb C).
$$
Arthur's twisted character identities identify the coefficients in
$$
 D_\psi^{\Art}=\sum_\phi n_{\psi,\phi}S_{C_\phi}^0
$$
with those in the expansion of the twisted character of
$L_{\mathrm{GL}}(\iota\phi_\psi)$ in the twisted characters of
$I_{\mathrm{GL}}(\iota\phi)$; see
\cite[(2.2.12)]{Arthur2013} and
\cite[proof of Proposition~9.1.1]{LiMetaplectic}.
Here $\phi$ ranges over parameters with infinitesimal parameter
$\lambda_\psi$, and $I_{\mathrm{GL}}(\eta)$ denotes the standard
module with Langlands quotient $L_{\mathrm{GL}}(\eta)$.

A nonzero twisted composition coefficient forces a nonzero ordinary
composition multiplicity. The usual Kazhdan--Lusztig support property
therefore makes the twisted standard-to-irreducible character matrix
triangular for the orbit-closure order, with nonzero diagonal.
Inverting this matrix over all self-dual general-linear parameters
with infinitesimal parameter $\iota\lambda_\psi$ preserves that
support. Consequently,
$$
 n_{\psi,\phi}\ne0
 \quad\Longrightarrow\quad
 C_{\iota\phi_\psi}\leq C_{\iota\phi}.
$$

In the unramified setting, decomposition into Frobenius eigenspaces
reduces the comparison under $\iota$ to ordinary and symmetric
type-$A$ quiver varieties. By
\cite[Theorems~2.5 and~7.1]{BCI}, the resulting map on orbits is
injective and reflects the closure order. Thus
$n_{\psi,\phi}\ne0$ implies $C_\psi\leq C_\phi$.
Every member $\pi$ of $\Pi_\psi^{\Art}(G(F))$ has nonzero coefficient
in $D_\psi^{\Art}$, so some standard summand contributing to $\pi$
has $n_{\psi,\phi}\ne0$. The standard-module multiplicity formula
\cite[Theorem~5.4(b)]{Sol} then gives
$$
 C_\psi\leq C_\phi\leq C(\pi).
$$
Applying the same argument to $\psi^t$ and using packetwise Aubert
duality \cite[Appendix~A]{Xu2017} supplies the second closure bound.
Theorem~\ref{thm:endoscopic-comparison-criterion}, with the leading
coefficient normalization established above, completes the comparison.
\end{remark}

\section{Classification of Arthur pairs}
\label{sec:classification}

\subsection{The centralizer criterion}
Retain $J=Z_{\Gd}(\tau(W_F))$, $H=Z_J(\hd)$, and
$V=\mathfrak j(2)$ from Section~\ref{subsec:bounded-centralizer}.
A triple $(e_1^\vee,h_1^\vee,f_1^\vee)$ is \emph{homogeneous} if
its entries belong to $\mathfrak j(2)$, $\mathfrak j(0)$, and
$\mathfrak j(-2)$, respectively.

\begin{lemma}[{Graded Jacobson--Morozov Theorem, \cite{Vinberg1987}}]\label{lem:homogeneous-triple}
Every $e_1^\vee\in V$ belongs to a homogeneous triple. Any two
homogeneous triples with this nilpositive element are conjugate under
$Z_H(e_1^\vee)$.
\end{lemma}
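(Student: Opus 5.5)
We prove the two assertions of Lemma~\ref{lem:homogeneous-triple} separately: existence by a graded modification of the classical Jacobson--Morozov construction, and conjugacy by a graded version of Kostant's argument. Throughout, $\mathfrak j$ is graded by $\operatorname{ad}\hd$, so every element decomposes into homogeneous components and brackets add degrees. We only work inside $J^\circ$ and $H^\circ\subset H$, so disconnectedness causes no difficulty. The case $e_1^\vee=0$ is covered by the zero triple.

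\emph{Existence.} For $e_1^\vee\ne0$, the ordinary Jacobson--Morozov theorem in the reductive algebra $\mathfrak j$ gives a triple $(e_1^\vee,h,f)$. We write $h=\sum_r h_r$ and $f=\sum_r f_r$ in graded components. Comparing degrees in $[e_1^\vee,f]=h$ shows that $[e_1^\vee,f_{-2}]=h_0$. Comparing degree $2$ in $[h,e_1^\vee]=2e_1^\vee$ shows that $[h_0,e_1^\vee]=2e_1^\vee$. Thus $h_0\in\mathfrak j(0)$ satisfies
$$
 h_0\in[e_1^\vee,\mathfrak j(-2)],\qquad [h_0,e_1^\vee]=2e_1^\vee.
$$
By the Kostant--Morozov lemma, applied to this graded situation, these two conditions are exactly the input needed to produce a triple. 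Concretely, we need $f'\in\mathfrak j(-2)$ with $[e_1^\vee,f']=h_0$ and $[h_0,f']=-2f'$. We take the preimage of $h_0$ under $\operatorname{ad}e_1^\vee$ restricted to $\mathfrak j(-2)$. We then correct it by the standard averaging: $\operatorname{ad}h_0+2$ acts invertibly on the relevant part of $\ker\operatorname{ad}e_1^\vee$, because eigenvalues of $\operatorname{ad}h_0$ there are nonnegative by $\mathfrak{sl}_2$ theory for the original triple. This is the degree-$0$ part of the usual argument, and every map involved preserves the grading. If instead one prefers to replace $h$ by $h_0$ directly, the required ingredient is that $h_0$ is semisimple, which is where the classical proof needs care. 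Our plan is therefore to redo Kostant's proof of the uniqueness of the neutral element with all maps restricted to homogeneous components. The correction $f'$ obtained this way lies in $\mathfrak j(-2)$, and $(e_1^\vee,h_0,f')$ is homogeneous.

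\emph{Conjugacy.} Suppose two homogeneous triples share the nilpositive element $e_1^\vee$. By Kostant, their neutral elements differ by an element of $\mathfrak u=[e_1^\vee,\mathfrak j]\cap\ker\operatorname{ad}e_1^\vee$, and they are conjugate by $\exp\mathfrak u$. Here $\mathfrak u$ is the nilradical of the centralizer, and it lies in positive $h$-degrees. Since both neutral elements lie in $\mathfrak j(0)$, their difference lies in $\mathfrak u\cap\mathfrak j(0)$, which is graded. We then run Kostant's inductive step, killing the components of $h'-h$ in increasing $\operatorname{ad}h$-degree by $\exp$ of elements of $\mathfrak u$. Because $\operatorname{ad}e_1^\vee$ and $\operatorname{ad}h$ preserve the $\hd$-grading, we may at each stage take the degree-$0$ component of the conjugating element. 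This shows the conjugating element can be chosen in $\exp(\mathfrak u\cap\mathfrak j(0))\subset Z_{H^\circ}(e_1^\vee)$. Finally, once the neutral elements agree, the nilnegative element is determined uniquely, so the two triples coincide.

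\emph{Main obstacle.} The hard step is the semisimplicity and graded control in the existence part, that is, showing the degree-$0$ projection $h_0$ genuinely yields a triple. We will handle it by the averaging argument above. Failing that, we will instead invoke a $\hd$-stable Levi decomposition of $Z_J(e_1^\vee)$ and choose $h$ in a $\hd$-stable complement, which makes $h=h_0$ directly.
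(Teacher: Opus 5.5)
Your argument is correct, but it takes a different route from the paper, which gives no proof of its own: the lemma is quoted from Vinberg, as is the existence statement \cite[Theorem~1]{Vinberg1979} used earlier for homogeneous triples. You instead rederive both halves from the ungraded Jacobson--Morozov and Kostant theorems in $\mathfrak j$. You use only two facts. First, $e_1^\vee$ is homogeneous of degree $2$, so it is $\operatorname{ad}$-nilpotent and lies in $[\mathfrak j,\mathfrak j]$. Second, any neutral element in $\mathfrak j(0)$ commutes with $\hd$, so all $\operatorname{ad}h$-weight decompositions respect the grading. The citation buys brevity. Your route is self-contained and works directly in the reductive algebra $\mathfrak j$ with its $\operatorname{ad}\hd$-grading. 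It also produces a conjugating element in the unipotent group $\exp(\mathfrak u\cap\mathfrak j(0))\subset Z_{H^\circ}(e_1^\vee)$. So conjugacy holds already under the identity component, and the possible disconnectedness of $J$ and $H$ never enters.

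Two steps need tightening.

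In the existence part, $\mathfrak{sl}_2$ theory for $(e_1^\vee,h,f)$ does not by itself give nonnegative eigenvalues of $\operatorname{ad}h_0$ on $\ker\operatorname{ad}e_1^\vee$, since $h_0\ne h$. Add that $h_0-h$ commutes with $e_1^\vee$ and equals $[e_1^\vee,f_{-2}-f]$. Hence it lies in $\mathfrak u$, the sum of the positive $\operatorname{ad}h$-weight spaces of $\ker\operatorname{ad}e_1^\vee$. So $\operatorname{ad}h_0$ is triangular on $\ker\operatorname{ad}e_1^\vee$ with the same diagonal as $\operatorname{ad}h$. More simply, Kostant's transitivity of $\exp\mathfrak u$ on $h+\mathfrak u$ gives $h_0=\operatorname{Ad}(u)h$ for some $u\in\exp\mathfrak u$. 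Then $(e_1^\vee,h_0,\operatorname{Ad}(u)f)$ is a triple, and replacing $\operatorname{Ad}(u)f$ by its degree $-2$ component yields a homogeneous one. Thus the semisimplicity worry and the Levi-decomposition fallback are unnecessary.

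In the conjugacy part, no projection to degree $0$ is needed. Since $h'-h\in\mathfrak j(0)$ and $[\hd,h]=0$, the $\operatorname{ad}h$-weight components of $h'-h$ already lie in $\mathfrak j(0)$. So every correcting element in Kostant's induction is automatically in $\mathfrak u\cap\mathfrak j(0)$. Also, $\operatorname{ad}e_1^\vee$ shifts the grading by $2$ rather than preserving it; only its homogeneity is used.
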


Choose such a triple and denote the $\mathfrak{sl}_2$ algebra it generates by $\mathfrak s_1$. Set
\begin{equation}\label{eq:residual-centralizer}
 K_1=Z_J(\mathfrak s_1),\qquad
 \mathfrak c_1=\Lie K_1=Z_{\mathfrak j}(\mathfrak s_1),\qquad
 z^\vee=\hd-h_1^\vee.
\end{equation}
The group $K_1$ is a possibly disconnected, reductive group. Notice that $z^\vee\in\mathfrak c_1$ and it is semisimple since $\hd$ and $h_1^\vee$ commute.

\begin{proposition}[Centralizer criterion]\label{thm:centralizer-criterion}
The orbit $C=H e_1^\vee$ is Arthur if and only if $z^\vee$ is a
Dynkin element in $\mathfrak c_1$. This condition is independent of
the chosen representative and homogeneous triple. When it holds,
choose a triple $(e_2^\vee,z^\vee,f_2^\vee)$ in $\mathfrak c_1$.
Then $(C,H e_2^\vee)$ is an Arthur pair.
\end{proposition}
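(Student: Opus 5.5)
The plan is to prove both directions directly from the description of Arthur pairs in Section~\ref{subsec:diagonal-fibre}. That section says an Arthur pair is the same thing as two commuting homogeneous triples $(e_i^\vee,h_i^\vee,f_i^\vee)$ with $h_1^\vee+h_2^\vee=\hd$. I would use Lemma~\ref{lem:homogeneous-triple} to move between an arbitrary such pair and the fixed chosen triple for $e_1^\vee$. Throughout, ``Dynkin element in $\mathfrak c_1$'' is taken to mean the neutral element of some $\mathfrak{sl}_2$-triple in $\mathfrak c_1$.

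For the ``if'' direction, suppose $(e_2^\vee,z^\vee,f_2^\vee)$ is an $\mathfrak{sl}_2$-triple in $\mathfrak c_1=Z_{\mathfrak j}(\mathfrak s_1)$.
\begin{itemize}
\item The two triples commute, since the second one lies in the centralizer of $\mathfrak s_1$.
\item Their neutral elements sum correctly: $h_1^\vee+z^\vee=\hd$ by definition of $z^\vee$.
\item The second triple is homogeneous. Because $[h_1^\vee,e_2^\vee]=0$, we get $[\hd,e_2^\vee]=[h_1^\vee+z^\vee,e_2^\vee]=2e_2^\vee$, so $e_2^\vee\in\mathfrak j(2)$. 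Likewise $f_2^\vee\in\mathfrak j(-2)$ and $z^\vee\in\mathfrak j(0)$.
\end{itemize}
The triple in $\mathfrak c_1\subset\mathfrak j$ integrates to an algebraic $\gamma_2:\SL_2\to J^\circ$, since $\SL_2$ is simply connected and its image is connected. Similarly $\gamma_1$ integrates $\mathfrak s_1$. These commute, so $\psi(w,g_1,g_2)=\tau(w)\gamma_1(g_1)\gamma_2(g_2)$ is an Arthur parameter with $\lambda_\psi=\lambda$. By \eqref{eq:arthur-base} it has $C_\psi=He_1^\vee$ and $C_{\psi^t}=He_2^\vee$. This proves both that $C$ is Arthur and the final assertion of the proposition.

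For the ``only if'' direction, let $C$ be Arthur, witnessed by commuting homogeneous triples $(e_i',h_i',f_i')$ with $h_1'+h_2'=\hd$ and $He_1'=C$.
\begin{itemize}
\item Conjugate the whole configuration by some $m\in H$ so that $e_1'=e_1^\vee$. Since $H$ fixes $\hd$, this preserves both homogeneity and the relation $h_1'+h_2'=\hd$.
\item By the uniqueness part of Lemma~\ref{lem:homogeneous-triple}, a further conjugation by $Z_H(e_1^\vee)$ makes the first triple equal to the chosen triple spanning $\mathfrak s_1$.
\item Then $h_2'=\hd-h_1^\vee=z^\vee$. The second triple commutes with $\mathfrak s_1$, so it lies in $\mathfrak c_1$, and $z^\vee$ is a Dynkin element there.
\end{itemize}

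Independence of choices follows the same pattern. Two choices (representative, homogeneous triple) are related by an element $m\in H$, by transitivity on $C$ together with Lemma~\ref{lem:homogeneous-triple}. Since $\Ad(m)\hd=\hd$, the element $m$ carries $\mathfrak s_1$, $\mathfrak c_1$ and $z^\vee$ to their counterparts for the new choice. Being a Dynkin element is invariant under this isomorphism of reductive Lie algebras. Alternatively, independence follows at once from the equivalence with the choice-free condition ``$C$ is Arthur''.

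The step I expect to need most care is the alignment in the ``only if'' direction. One must check that the conjugating elements can be taken in $H$ rather than merely $J$, so that $\hd$ is fixed and $h_2'$ becomes exactly $z^\vee$. The nilpositive element $e_1'$ is already in $C=He_1^\vee$, so an element of $H$ suffices for the first move, and Lemma~\ref{lem:homogeneous-triple} supplies elements of $Z_H(e_1^\vee)\subset H$ for the second. So this should go through, but it is where the argument depends on the precise statement of the graded Jacobson--Morozov lemma.
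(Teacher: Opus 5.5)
Your proposal is correct and follows essentially the same route as the paper: for ``only if'' you conjugate by $H$ and then by $Z_H(e_1^\vee)$ via the graded Jacobson--Morozov lemma, so that the second triple lies in $\mathfrak c_1$ with neutral element $\hd-h_1^\vee$. For ``if'' you check that a triple in $\mathfrak c_1$ commutes with $\mathfrak s_1$ and is homogeneous because $h_1^\vee+z^\vee=\hd$, which gives an Arthur pair, and independence follows from conjugation by elements of $H$ fixing $\hd$.
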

\begin{proof}
If $C$ is Arthur, conjugating by $H$, we may assume that the nilpositive element of a Lie triple for $C$ is $e_1^\vee$. Lemma~\ref{lem:homogeneous-triple} then
allows us to conjugate this Lie triple to a chosen one while fixing
$e_1^\vee$ and $\hd$. Then the second triple lies in $\mathfrak c_1$ and
has neutral element $\hd-h_1^\vee$.

Conversely, a triple in $\mathfrak c_1$ with neutral element $z^\vee$
commutes with the first triple. Since $\hd=h_1^\vee+z^\vee$,
its nilpositive and nilnegative elements have degrees $2$ and $-2$
for $\hd$. The sum is therefore a triple with neutral element $\hd$,
and the two triples define an Arthur pair.  Finally, changing the
homogeneous triple conjugates both $\mathfrak c_1$ and $z^\vee$ by
$Z_H(e_1^\vee)$, which proves independence of the choices.
\end{proof}

Notice that the centralizer test is in the full reductive algebra $\mathfrak c_1$. In
particular, the projection of $z^\vee$ to the center of $\mathfrak c_1$ must be zero.

The following is the orbit formulation of the classical injectivity of
$[\psi]\mapsto[\phi_\psi]$; see \cite[\S4, p.~25]{Arthur1989}
and, for the $p$-adic formulation, \cite[Lemma~3.6.1]{CFMMX}.

\begin{corollary}\label{cor:arthur-orbit-parameter}
The map
\begin{equation}
 \mathscr A_{\tau,\varphi_\Delta}
 \longrightarrow\{\text{Arthur orbits at }\lambda\},\qquad
 [\Psi]\longmapsto C_{\tau\Psi},
\end{equation}
is a bijection.
\end{corollary}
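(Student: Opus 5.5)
Surjectivity is immediate from the definitions. An Arthur orbit is $C_\psi$ for some $\psi$ with $\lambda_\psi=\lambda$. By \S\ref{subsec:arthur-type-specialization}, every such $\psi$ has $\psi(w,1,1)=\tau(w)$, so $\psi=\tau\Psi$ for some $\Psi:\SL_2\times\SL_2\to J^\circ$. Proposition~\ref{prop:unipotent-reduction} places $\Psi$ in $\mathscr A_{\tau,\varphi_\Delta}$, and its image $C_{\tau\Psi}=He_1^\vee$ is exactly the given orbit. Well-definedness is equally direct. Conjugating $\Psi$ by $j\in J$ with $\Psi\circ\Delta$ still conjugate to $\varphi_\Delta$ can be reduced, by \eqref{eq:triple-sums}, to conjugation by $Z_J(\varphi_\Delta)\subset Z_J(\hd)=H$. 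This moves $e_1^\vee$ within its $H$-orbit.

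The real content is injectivity. Suppose $\Psi$ and $\Psi'$ in the normalized form $\Psi\circ\Delta=\Psi'\circ\Delta=\varphi_\Delta$ give the same orbit $He_1^\vee=He_1'^\vee$. I will show they are conjugate, in three steps.

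\textbf{Step 1 (align the first factor).} Conjugating $\Psi'$ by an element of $H$, I may assume $e_1'^\vee=e_1^\vee$. This conjugation preserves $\hd$, so $\Psi'\circ\Delta$ remains $H$-conjugate to $\varphi_\Delta$. The first triples of $\Psi$ and $\Psi'$ are both homogeneous: $h_1^\vee$ commutes with $\hd=h_1^\vee+h_2^\vee$. By Lemma~\ref{lem:homogeneous-triple}, a further conjugation by $Z_H(e_1^\vee)$ makes the first triples equal. Since $J^\circ$ is connected and the $\SL_2$ is simply connected, the two restrictions $\gamma_1=\gamma_1'$ then agree.

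\textbf{Step 2 (align the second factor).} Both second triples now lie in $\mathfrak c_1=Z_{\mathfrak j}(\mathfrak s_1)$, and both have the same neutral element $z^\vee=\hd-h_1^\vee$, as in the proof of Proposition~\ref{thm:centralizer-criterion}. Apply Kostant's theorem in the reductive group $K_1^\circ$: two $\mathfrak{sl}_2$-triples with the same neutral element are conjugate under $Z_{K_1^\circ}(z^\vee)$. This conjugation fixes $\gamma_1$, because it centralizes $\mathfrak s_1$. It also fixes $\hd=h_1^\vee+z^\vee$, so the conjugating element lies in $H$. After it, $\Psi$ and $\Psi'$ coincide.

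\textbf{Step 3 (conclude).} The composite conjugating element lies in $J$ and carries $\Psi'$ to $\Psi$, so $[\Psi]=[\Psi']$ in the $J$-conjugacy formulation \eqref{eq:diagonal-fibre}. No separate check against the $Z_J(\varphi_\Delta)$-quotient description is needed, because the two descriptions are identified.

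The main obstacle is Step 2, specifically the use of Kostant's theorem in a possibly disconnected reductive group $K_1$. I will apply it only in $K_1^\circ$, whose Lie algebra is $\mathfrak c_1$, which avoids the issue. One must also check that $z^\vee$ is a genuine neutral element there; this holds because both triples exist in $\mathfrak c_1$. I would also verify that the alternative normalization $e_1^\vee+e_2^\vee=e^\vee$ is never needed for injectivity.
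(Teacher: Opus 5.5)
Your proof is correct and is essentially the paper's argument. The paper states the result as a corollary of the centralizer criterion (Proposition~\ref{thm:centralizer-criterion}) and defers injectivity to the classical injectivity of $[\psi]\mapsto[\phi_\psi]$ in \cite[\S4]{Arthur1989} and \cite[Lemma~3.6.1]{CFMMX}; your Step~1 (graded Jacobson--Morozov via Lemma~\ref{lem:homogeneous-triple}) together with Step~2 (Kostant's theorem in $\mathfrak c_1$ at the fixed neutral element $z^\vee=\hd-h_1^\vee$) is exactly the content of that reduction and citation. The only blemish is the well-definedness sentence: it should say that two representatives with $\Psi\circ\Delta=\varphi_\Delta$ (or merely with diagonal neutral element $\hd$) can differ only by some $j\in J$ fixing $\hd$, hence $j\in H$.
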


\subsection{Classical groups}\label{subsec:classical-arthur-pairs}
For the remainder of this section, assume $G$ is split and the
bounded Weil part is trivial. Then $J=\Gd$, $\mathfrak j=\gd$,
$H=Z_{\Gd}(\hd)$, and $V=\gd(2)$.

We give an explicit test for an orbit to be Arthur, then enumerate
all Arthur pairs with a prescribed diagonal using multiplicity arrays.
For special orthogonal groups, we also describe the splitting of
orthogonal conjugacy classes and the partner operation on these classes.
Throughout, $W$ is the defining module and
$L_d=\operatorname{Sym}^{d-1}\CC^2$, of dimension $d$.

\subsubsection{Testing an Arthur orbit}
For the first Lie triple, let $U_d=\operatorname{Hom}_{\SL_2}(L_d,W)$. Decompose 
\begin{equation}\label{eq:first-triple-decomposition}
 W=\bigoplus_{d\geq1}L_d\otimes U_d.
\end{equation}
Thus $\dim U_d$ is the multiplicity of a Jordan block of size $d$
for $e_1^\vee$. In the orthogonal or symplectic cases, write
$\epsilon=1$ or $-1$ for the sign of the form on $W$.
The invariant form on $L_d$ has sign $(-1)^{d-1}$.
Schur's lemma makes the summands in
\eqref{eq:first-triple-decomposition} orthogonal and induces a
nondegenerate form on $U_d$ of sign
\begin{equation}\label{eq:multiplicity-form}
 \epsilon_d=\epsilon(-1)^{d-1}.
\end{equation}
Every endomorphism commuting with the first triple acts on its
$d$-isotypic summand as $1\otimes X_d$.
The trace or form condition gives
\begin{equation}\label{eq:classical-centralizers}
\mathfrak c_1=
\begin{cases}
 \displaystyle\bigoplus_d\mathfrak{gl}(U_d),
 &\gd=\mathfrak{gl}(W),\\[6pt]
 \displaystyle\left\{(X_d)\in\bigoplus_d\mathfrak{gl}(U_d):
                  \sum_d d\,\operatorname{tr}(X_d)=0\right\},
 &\gd=\mathfrak{sl}(W),\\[6pt]
 \displaystyle\bigoplus_{d\text{ odd}}\mathfrak{so}(U_d)
 \oplus\bigoplus_{d\text{ even}}\mathfrak{sp}(U_d),
 &\gd=\mathfrak{so}(W),\\[6pt]
 \displaystyle\bigoplus_{d\text{ odd}}\mathfrak{sp}(U_d)
 \oplus\bigoplus_{d\text{ even}}\mathfrak{so}(U_d),
 &\gd=\mathfrak{sp}(W).
\end{cases}
\end{equation}
A symplectic
multiplicity space always has even dimension. These are the usual
centralizer descriptions; see \cite[Ch.~6]{CM}.

The element $z^\vee=\hd-h_1^\vee$ commutes with the first triple,
so its restriction to $L_d\otimes U_d$ has the form
$$
 z^\vee|_{L_d\otimes U_d}=1\otimes z_d.
$$
Each $z_d$ is semisimple, in the orthogonal and symplectic cases
it acts as a skew-adjoint operator with respect to the form on the multiplicity space.
Its spectrum gives the grading information needed in addition
to the Jordan partition of $e_1^\vee$.

\begin{theorem}[Classical test]\label{thm:classical-test}
The orbit $H e_1^\vee$ is Arthur if and only if, for every $d$,
the eigenvalues (with multiplicity) of  $z_d$ form a union of the strings
\begin{equation}\label{eq:dynkin-strings}
 b-1,b-3,\ldots,1-b\qquad(b\geq1),
\end{equation}
with multiplicities $n_{d,b}\in\ZZ_{\geq0}$ satisfying:
\begin{enumerate}
\item for a general linear factor, there is no additional restriction;
\item for an orthogonal factor, $n_{d,b}$ is even whenever $b$ is even;
\item for a symplectic factor, $n_{d,b}$ is even whenever $b$ is odd.
\end{enumerate}
For $\gd=\mathfrak{sl}(W)$, the general linear string test applies
to each $U_d$ separately. In particular, it requires
$\operatorname{tr}(z_d)=0$ for every $d$, beyond the weighted
trace condition in \eqref{eq:classical-centralizers}.

Equivalently, let $\mu_d(j)$ denote the multiplicity of $j$ in
$z_d$. Its eigenvalues are integers, since $\hd$ and $h_1^\vee$
commute and have integral weights on $W$. The test is
$\mu_d(j)=\mu_d(-j)$, together with nonnegativity and the above
parity conditions for
\begin{equation}\label{eq:string-multiplicities}
 n_{d,b}=\mu_d(b-1)-\mu_d(b+1)\qquad(b\geq1).
\end{equation}
These multiplicities are uniquely determined.
\end{theorem}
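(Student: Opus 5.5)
The plan is to reduce the Classical test to the Centralizer criterion (Proposition~\ref{thm:centralizer-criterion}): $He_1^\vee$ is Arthur exactly when $z^\vee=\hd-h_1^\vee$ is a Dynkin (neutral) element of some $\mathfrak{sl}_2$-triple in $\mathfrak c_1$. Using \eqref{eq:classical-centralizers}, $\mathfrak c_1$ is, up to the trace condition in type $A$, a direct sum of simple or classical factors $\mathfrak{gl}(U_d)$, $\mathfrak{so}(U_d)$, $\mathfrak{sp}(U_d)$. Correspondingly, $z^\vee=(1\otimes z_d)_d$ decomposes as $(z_d)_d$. A triple in a direct sum is the sum of its componentwise projections, each of which is again a (possibly zero) triple. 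Hence $z^\vee$ is neutral in $\mathfrak c_1$ if and only if each $z_d$ is neutral in its factor. In the $\mathfrak{sl}(W)$ case, $\mathfrak c_1$ sits inside $\bigoplus_d\mathfrak{gl}(U_d)$ with the weighted trace condition. A neutral element lies in the derived algebra $\bigoplus_d\mathfrak{sl}(U_d)$, which is contained in $\mathfrak c_1$. So the test is again factorwise, and it forces $\operatorname{tr}z_d=0$ for every $d$.

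Next, I would characterize neutral elements in each factor by standard $\mathfrak{sl}_2$-representation theory on the multiplicity space $U_d$. A triple in $\mathfrak{gl}(U_d)$ makes $U_d$ an $\mathfrak{sl}_2$-module $\bigoplus_b L_b^{\oplus n_{d,b}}$, and the neutral element then has spectrum equal to the union of the strings \eqref{eq:dynkin-strings}. Conversely, any semisimple $z_d$ with such a spectrum is conjugate to the neutral element of the corresponding module, so it is neutral. For $\mathfrak{so}(U_d)$ and $\mathfrak{sp}(U_d)$, I would invoke the classification of nilpotent orbits \cite[Ch.~5--6]{CM}. A triple preserving a form of sign $\epsilon_d$ decomposes $U_d$ into isotypic pieces $L_b\otimes M_b$, where $M_b$ carries a form of sign $\epsilon_d(-1)^{b-1}$; a symplectic $M_b$ must be even-dimensional. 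This yields exactly the parity conditions: in the orthogonal case $n_{d,b}$ is even for $b$ even, and in the symplectic case $n_{d,b}$ is even for $b$ odd.

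The main obstacle is the converse in the orthogonal and symplectic factors. One must show that a semisimple skew element $z_d$ whose spectrum decomposes into strings satisfying the parity conditions is actually conjugate, under the isometry group, to a neutral element. For this I would build a model form-preserving module $\bigoplus_b L_b\otimes M_b$ with the prescribed $n_{d,b}$; it exists precisely under the parity conditions. Its neutral element is a skew semisimple element with the same spectrum as $z_d$. Then I would use the fact that two semisimple elements of $\mathfrak{so}$ or $\mathfrak{sp}$ with the same eigenvalue multiset are conjugate under the full isometry group. For $\mathfrak{so}$ of even dimension with $0$ not an eigenvalue, $O$-conjugacy may not be $SO$-conjugacy. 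This does not affect the criterion, since being neutral is invariant under the full orthogonal group acting on $\mathfrak c_1$.

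Finally, for the reformulation, the string decomposition of an integer multiset symmetric under $j\mapsto-j$ is recovered by successive differences: $L_b$ contributes to $\mu(b-1)$ but not to $\mu(b+1)$ among its top weights. This gives $n_{d,b}=\mu_d(b-1)-\mu_d(b+1)$, which is unique. Existence of the decomposition is equivalent to symmetry together with nonnegativity of these differences. Symmetry is automatic for skew $z_d$, and must be added explicitly in the $\mathfrak{gl}$ case. Integrality of the eigenvalues follows because $\hd$ and $h_1^\vee$ commute with integral weights on $W$.
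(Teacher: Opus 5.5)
Your proposal is correct and follows essentially the same route as the paper: reduce to the centralizer criterion, use the classification of $\mathfrak{sl}_2$-modules with invariant forms on each $U_d$ to get the strings and parity conditions, prove the converse by conjugacy of semisimple elements with equal spectra under the general linear or full isometry group, and derive the multiplicity reformulation by telescoping. You spell out a few points the paper leaves implicit, namely the factorwise splitting of triples and the derived-algebra argument forcing $\operatorname{tr}z_d=0$ in the $\mathfrak{sl}(W)$ case, but the argument is the same.
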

\begin{proof}
By Proposition~\ref{thm:centralizer-criterion}, it suffices to test whether
$z^\vee$ is a Dynkin element in $\mathfrak c_1$.
The classical classification of $\mathfrak{sl}_2$-modules with
invariant forms gives exactly the stated weight strings and parity
conditions on each $U_d$; see \cite[Ch.~5]{CM}.
Conversely, these conditions construct a triple whose neutral element
has the spectrum of $z_d$. Semisimple elements with this spectrum
are conjugate under the general linear or full isometry group, so
we may take the neutral element to be $z_d$ itself.
The resulting triples assemble in $\mathfrak c_1$; in the special
linear case, each has trace zero on every $U_d$.

Finally, for $j\geq0$, the weight strings give
$$
 \mu_d(j)=\sum_{\ell\geq0}n_{d,j+1+2\ell}.
$$
Taking successive differences gives
\eqref{eq:string-multiplicities}; conversely, telescoping recovers
$\mu_d(j)$, and symmetry recovers the negative weights.
This also proves uniqueness of the multiplicities.
\end{proof}

\subsubsection{Enumerating Arthur pairs}
For an Arthur orbit, decomposing each multiplicity space $U_d$ with respect to the second, partner Lie triple gives
\begin{equation}\label{eq:two-factor-classification}
 U_d=\bigoplus_b L_b\otimes E_{d,b},\qquad
 W=\bigoplus_{d,b}L_d\otimes L_b\otimes E_{d,b},\qquad
 \dim E_{d,b}=n_{d,b}.
\end{equation}
The two $\SL_2$ factors act on $L_d$ and $L_b$, respectively.
In the orthogonal or symplectic case, the form on $E_{d,b}$ has sign
\begin{equation}\label{eq:two-factor-sign}
 \epsilon(-1)^{d+b-2}.
\end{equation}
Hence $n_{d,b}$ must be even when $d+b$ is odd in the orthogonal
case, and when $d+b$ is even in the symplectic case. This recovers
the restrictions in Theorem~\ref{thm:classical-test}.

To enumerate the arrays for the prescribed diagonal, write
$W|_{\varphi_\Delta}=\bigoplus_{k\geq1}L_k^{\oplus m_k}$.
The Clebsch--Gordan decomposition of $L_d\otimes L_b$ gives
\begin{equation}\label{eq:diagonal-multiplicities}
 m_k=\sum_{d,b\geq1}n_{d,b}\,\eta_k(d,b),\qquad
 \eta_k(d,b)=
 \begin{cases}
 1,&k\in\{d+b-1,d+b-3,\ldots,|d-b|+1\},\\
 0,&\text{otherwise}.
 \end{cases}
\end{equation}
This is a finite system: if $k_{\max}=\max\{k:m_k\ne0\}$,
then $n_{d,b}>0$ forces $d+b-1\leq k_{\max}$, since its
largest diagonal summand cannot cancel against other nonnegative
contributions. Taking dimensions also gives
$\sum_{d,b}d\,b\,n_{d,b}=\dim W$, which bounds every entry.

\begin{proposition}[Multiplicity arrays]
\label{prop:classical-completeness}
For $\Gd=\mathrm{GL}(W)$, $\SL(W)$, or $\mathrm{Sp}(W)$,
the nonnegative arrays $(n_{d,b})$ satisfying
\eqref{eq:diagonal-multiplicities} and the parity
conditions parametrize the Arthur pairs for the fixed diagonal.

For $\Gd=\mathrm{SO}(W)$, the arrays classify the Arthur pairs up to conjugacy by $\mathrm{O}(W)$; Proposition~\ref{prop:orthogonal-refinement} gives the refinement
for the fixed diagonal under $\mathrm{SO}(W)$-conjugation.

On arrays, the partner of an Arthur orbit is given by transposition,
$n^t_{d,b}=n_{b,d}$. The action on special orthogonal classes is described in
Proposition~\ref{prop:orthogonal-refinement}.
\end{proposition}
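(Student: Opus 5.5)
The plan is to build a bijection between Arthur pairs at the fixed diagonal and the admissible arrays. Proposition~\ref{thm:centralizer-criterion} and Corollary~\ref{cor:arthur-orbit-parameter} identify Arthur pairs (equivalently Arthur orbits) with $Z_J(\varphi_\Delta)$-classes of homomorphisms $\Psi:\SL_2\times\SL_2\to\Gd$ with $\Psi\circ\Delta=\varphi_\Delta$. So it suffices to classify such $\Psi$ by the isotypic decomposition \eqref{eq:two-factor-classification} of $W$ under $\SL_2\times\SL_2$.

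\emph{Step 1: from $\Psi$ to an array.} Set $n_{d,b}=\dim\operatorname{Hom}_{\SL_2\times\SL_2}(L_d\otimes L_b,W)$. By Clebsch--Gordan, the restriction to the diagonal is computed by \eqref{eq:diagonal-multiplicities}, so the array satisfies that system. In the orthogonal and symplectic cases, the invariant form restricts to a nondegenerate form on $E_{d,b}$ of sign \eqref{eq:two-factor-sign}, which gives the parity conditions.

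\emph{Step 2: from an array to $\Psi$.} Conversely, take a nonnegative array satisfying these conditions. Put $W'=\bigoplus L_d\otimes L_b\otimes E_{d,b}$, with forms of the prescribed signs on the $E_{d,b}$. Then $W'$ is an $\SL_2\times\SL_2$-module with invariant form of sign $\epsilon$, and its diagonal restriction is isomorphic to $W|_{\varphi_\Delta}$ as a formed $\SL_2$-module. This uses the classification of formed $\SL_2$-modules by multiplicities \cite[Ch.~5]{CM}. Choose an isometry $W'\simeq W$ intertwining the diagonal actions. Transporting the structure gives $\Psi$ with $\Psi\circ\Delta=\varphi_\Delta$, landing in $\mathrm{GL}$, $\mathrm{Sp}$ or $\mathrm O(W)$. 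For $\SL(W)$ it lands in $\SL$ automatically, since $\SL_2\times\SL_2$ is perfect.

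\emph{Step 3: injectivity.} Suppose $\Psi,\Psi'$ give the same array. Then $W$ is isomorphic, as a formed $\SL_2\times\SL_2$-module, via some isometry $g$ with $g\Psi g^{-1}=\Psi'$. Because both restrict to $\varphi_\Delta$, $g$ centralizes $\varphi_\Delta$, so $g\in Z_{\mathrm{GL}/\mathrm{Sp}/\mathrm O(W)}(\varphi_\Delta)$.

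This is the main obstacle: ensuring $g$ can be taken in $Z_J(\varphi_\Delta)$ itself. For $\mathrm{GL}$ and $\mathrm{Sp}$ the isometry group already equals $\Gd$, so we are done. For $\SL(W)$, I would correct $g$ by a scalar: $Z_{\mathrm{GL}}(\Psi')$ contains the scalars, which have arbitrary determinant, so $g$ can be modified to have determinant one. For $\mathrm{SO}(W)$ only $\mathrm O(W)$-conjugacy results, which is exactly the weaker statement claimed, with the refinement deferred to Proposition~\ref{prop:orthogonal-refinement}.

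\emph{Step 4: transposition.} By Lemma~\ref{lem:cunningham}, the partner orbit is $C_{\psi^t}=He_2^\vee$, obtained by swapping the $\SL_2$ factors of $\Psi$. Swapping the factors exchanges the roles of $L_d$ and $L_b$ in \eqref{eq:two-factor-classification}, so the array becomes $n^t_{d,b}=n_{b,d}$. The parity and diagonal conditions are symmetric in $(d,b)$, so transposition preserves admissible arrays.
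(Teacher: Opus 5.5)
Your proposal is correct and follows the paper's proof: you classify the $\SL_2\times\SL_2$-homomorphisms by the isotypic decomposition \eqref{eq:two-factor-classification} with forms of sign \eqref{eq:two-factor-sign}, select the diagonal by Clebsch--Gordan, fix the special linear case by a scalar, invoke Corollary~\ref{cor:arthur-orbit-parameter}, and obtain the partner by exchanging the two factors. The only cosmetic difference is that you work in the rigidified fibre $\Psi\circ\Delta=\varphi_\Delta$ modulo $Z_J(\varphi_\Delta)$ and check that the conjugating isometry centralizes $\varphi_\Delta$, whereas the paper classifies up to full conjugacy and then selects the diagonal class; these are the two equivalent descriptions of $\mathscr A_{\tau,\varphi_\Delta}$ from Section~\ref{subsec:diagonal-fibre} (and, just as for $\SL(W)$, connectedness places your transported $\Psi$ in $\mathrm{SO}(W)$ rather than merely in $\mathrm O(W)$).
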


\begin{proof}
The decomposition \eqref{eq:two-factor-classification} and uniqueness
of the multiplicity-space forms \eqref{eq:two-factor-sign} classify
the homomorphisms up to general linear, symplectic, or orthogonal
conjugacy. Their determinants are trivial, and a general linear
conjugator can be made special linear by multiplying it by a scalar.
By Clebsch--Gordan, \eqref{eq:diagonal-multiplicities} selects the
prescribed diagonal, up to the same conjugacy; the special orthogonal
refinement is given below. Corollary~\ref{cor:arthur-orbit-parameter}
then identifies these classes with Arthur pairs. Exchanging the two
factors transposes the array.
\end{proof}

\subsubsection{The special orthogonal refinement}
An admissible array determines an $\mathrm O(W)$-class. To obtain
Arthur pairs for the fixed diagonal in $\mathrm{SO}(W)$, we must
resolve its splitting and the action of factor exchange.

\begin{proposition}[Special orthogonal classes]
\label{prop:orthogonal-refinement}
Let $(n_{d,b})$ be an admissible array for an orthogonal $W$.
\begin{enumerate}
\item Its $\mathrm O(W)$-class splits into two $\mathrm{SO}(W)$-classes
if and only if
\begin{equation}\label{eq:orthogonal-splitting}
 n_{d,b}=0\quad\text{for all odd }d,b.
\end{equation}
Otherwise it gives one $\mathrm{SO}(W)$-class.
\item If the diagonal partition is very even, exactly one of the
array's two classes lies over the prescribed diagonal class.
Otherwise all its classes lie over that diagonal class.
\item Factor exchange sends the classes for $n$ to those for $n^t$.
If $n=n^t$ and \eqref{eq:orthogonal-splitting} holds, it preserves
the two classes individually when
$$
 (-1)^{\sum_{d\equiv2\!\!\pmod4}n_{d,d}}=1,
$$
and interchanges them when this sign is $-1$. For a very-even
diagonal, a self-transpose array gives a fixed point of the partner operation.
\end{enumerate}
\end{proposition}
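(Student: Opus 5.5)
The plan is to translate everything into the centralizer of the homomorphism $\Psi:\SL_2\times\SL_2\to\mathrm O(W)$ attached to the array. By \eqref{eq:two-factor-classification} and \eqref{eq:two-factor-sign},
\[
W=\bigoplus_{d,b}L_d\otimes L_b\otimes E_{d,b}.
\]
Here $E_{d,b}$ is orthogonal when $d+b$ is even and symplectic when $d+b$ is odd. Schur's lemma gives
\[
Z_{\mathrm O(W)}(\Psi)=\prod_{d+b\ \text{even}}\mathrm O(E_{d,b})\times\prod_{d+b\ \text{odd}}\mathrm{Sp}(E_{d,b}),
\]
with $g\in\mathrm{GL}(E_{d,b})$ acting as $1\otimes1\otimes g$. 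The $\mathrm O(W)$-class of $\Psi$ splits into two $\mathrm{SO}(W)$-classes exactly when this centralizer lies in $\mathrm{SO}(W)$.

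\textbf{Part (1).} The symplectic factors contribute determinant $1$. An element $g\in\mathrm O(E_{d,b})$ acts on $W$ with determinant $\det(g)^{db}$. So a centralizer element of determinant $-1$ exists if and only if some $n_{d,b}>0$ with $d+b$ even and $db$ odd, that is, with $d$ and $b$ both odd. This is \eqref{eq:orthogonal-splitting}.

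\textbf{Part (2).} I would use the standard fact that the class of $\varphi_\Delta$ in $\mathrm O(W)$ splits under $\mathrm{SO}(W)$ if and only if the diagonal partition is very even \cite[Ch.~5]{CM}.
\begin{itemize}
\item If the diagonal partition is not very even, the unique $\mathrm{SO}(W)$-class over it receives every class of the array.
\item If it is very even, all diagonal parts are even. Since $L_d\otimes L_b$ with $d,b$ odd contains the odd-dimensional summand $L_1$, we get $n_{d,b}=0$ whenever $d,b$ are both odd. By (1) the array's class therefore splits.
\item Conjugation by any element of $\mathrm O(W)\setminus\mathrm{SO}(W)$ swaps the two array classes. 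It simultaneously swaps the two diagonal classes, since $\Psi\circ\Delta$ transforms equivariantly.
\end{itemize}
Hence exactly one array class lies over the prescribed diagonal class.

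\textbf{Part (3).} Factor exchange sends $\Psi$ to $\Psi\circ\mathrm{sw}$, whose array is $n^t$. When $n=n^t$ and \eqref{eq:orthogonal-splitting} holds, I would write down an explicit $g\in\mathrm O(W)$ with $\Psi\circ\mathrm{sw}=\Ad(g)\Psi$. Because the centralizer lies in $\mathrm{SO}(W)$, the classes are preserved exactly when $\det g=1$. The choice of $g$ is as follows.
\begin{itemize}
\item For $d\neq b$, identify $E_{d,b}\cong E_{b,d}$ compatibly with the forms, and let $g$ swap the blocks $L_d\otimes L_b\otimes E_{d,b}$ and $L_b\otimes L_d\otimes E_{b,d}$ via the tensor flip. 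This contributes $(-1)^{db\,n_{d,b}}=1$, since $d,b$ are not both odd.
\item For $d=b$, which must be even here, let $g$ act by the flip $\sigma$ of $L_d\otimes L_d$ tensored with the identity on $E_{d,d}$. This contributes $\det(\sigma)^{n_{d,d}}=(-1)^{\binom d2 n_{d,d}}$.
\end{itemize}
For even $d$, $\binom d2$ is odd exactly when $d\equiv2\pmod 4$, which yields the stated sign. In the very even case, $L_d\otimes L_d\ni L_1$ forces $n_{d,d}=0$, so the sign is $1$ and the class is fixed.

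The main obstacle is checking that these flips are genuinely orthogonal for the product form and intertwine $\Psi$ with $\Psi\circ\mathrm{sw}$. This includes the compatibility of the chosen forms on $E_{d,b}$ and $E_{b,d}$, and the fact that $\sigma$ preserves the product of the two invariant forms on $L_d$. I would verify this with explicit bilinear forms.
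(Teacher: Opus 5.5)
Your proposal is correct and follows essentially the same route as the paper. Part (1) uses the centralizer determinant $\prod\det(g_{d,b})^{db}$. Part (2) uses the very-even splitting of the diagonal together with a determinant $-1$ conjugation that swaps both pairs of classes. Part (3) builds an explicit isometry from tensor flips, where the flip on $L_d\otimes L_d$ has determinant $(-1)^{d(d-1)/2}$. The orthogonality check you flag is routine: distinct isotypic blocks are mutually orthogonal, and the forms are tensor products.

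There is one small slip in (2). For odd $d\neq b$ the product $L_d\otimes L_b$ does not contain $L_1$. However, every summand $L_k$ there has $k\equiv d+b-1$ odd, which is all your argument needs. The paper phrases this as very evenness forcing $d+b$ odd on every occupied block.
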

\begin{proof}
An orthogonal class splits precisely when its centralizer contains
no element of determinant $-1$. By
\eqref{eq:two-factor-classification}, a centralizer element has
$$
 \det(g|_W)=\prod_{d,b}\det(g_{d,b})^{db}.
$$
This can be $-1$ exactly when an odd--odd block occurs: its symmetric
multiplicity space admits a reflection. This proves (1).

For (2), the diagonal orthogonal class splits precisely when its
partition is very even; see \cite[Ch.~5]{CM}.
In that case, \eqref{eq:diagonal-multiplicities} forces $d+b$ odd
on every nonzero block, so the array also splits. A determinant-$-1$
conjugation exchanges both pairs of classes, leaving exactly one
array class over each diagonal class. Otherwise the diagonal has
one special orthogonal class, over which all array classes lie.

For (3), factor exchange always maps the classes for $n$ bijectively
to those for $n^t$. Suppose $n=n^t$ and the array splits. Choose
isometries between paired multiplicity spaces and implement exchange
by tensor flips. Swapping the blocks with indices $d<b$ contributes
$(-1)^{db n_{d,b}}$; on $L_d\otimes L_d$, the flip is $-1$ on
$\bigwedge^2 L_d$. Hence the resulting isometry $q$ has
$$
 \det q
 =(-1)^{\sum_{d<b}db n_{d,b}+\sum_d n_{d,d}d(d-1)/2}
 =(-1)^{\sum_{d\equiv2\!\!\pmod4}n_{d,d}},
$$
since every occupied block has $db$ even. This sign is independent
of the choices, as centralizer elements have determinant one;
it determines whether the two classes are fixed or exchanged.
For a very-even diagonal there are no diagonal blocks, so the
selected class is fixed.
\end{proof}

\begin{example}
In $\mathrm{SO}_{12}$ the module
$W=(L_2\otimes L_2)^{\oplus3}$ has $n_{2,2}=3$ and diagonal
partition $(3^3,1^3)$. The array is self-transpose and satisfies
\eqref{eq:orthogonal-splitting}; both special orthogonal classes
belong to the same diagonal fiber. The exchange determinant is
$(-1)^3=-1$, so the two Arthur pairs are exchanged by the
partner operation.
\end{example}

\subsection{Exceptional groups}\label{subsec:exceptional-classification}
We now take $\gd$ of type $G_2$, $F_4$, $E_6$, $E_7$, or $E_8$.
We classify Arthur pairs for all Dynkin gradings in these types,
including the zero grading. For a fixed $\hd$, put
$$
 \mathcal C_{\hd}=H\backslash V,\qquad
 \mathcal A_{\hd}=
       \{C\in\mathcal C_{\hd}:C\text{ is Arthur}\}.
$$
By Corollary~\ref{cor:arthur-orbit-parameter}, the set
$\mathcal A_{\hd}$ also indexes the Arthur parameters in the
prescribed diagonal fibre. The involution on this set is the partner
operation of Lemma \ref{lem:cunningham}. 
Write $N_{\hd}=|\mathcal C_{\hd}|$ and
$A_{\hd}=|\mathcal A_{\hd}|$ for their cardinalities.
We first describe an exhaustive orbit enumeration valid for all
$\hd$, then the residual test, and finally the complete numerical
classification.

\medskip

The orbit-enumeration algorithms of Vinberg and de Graaf apply to
the $1$-component of a $\mathbb Z$-graded semisimple Lie
algebra; see \cite{Vinberg1979,dGtheta,SLA}. The following reduction
places $V=\gd(2)$ in that setting for every $\hd$.

\begin{proposition}\label{prop:exceptional-regrading}
Define the grading cocharacter by
$\gamma(t)=\varphi_\Delta(\operatorname{diag}(t,t^{-1}))$, and put
$$
 L^\vee=Z_{\Gd}(\gamma(-1))^\circ,\qquad
 \mathfrak l^\vee=\Lie L^\vee
                       =\bigoplus_{j\in\ZZ}\gd(2j),
 \qquad
 \mathfrak l_{\mathrm{der}}^\vee
                       =[\mathfrak l^\vee,\mathfrak l^\vee].
$$
The semisimple algebra $\mathfrak l_{\mathrm{der}}^\vee$ has the $\mathbb Z$-grading
$$
 \mathfrak l_{\mathrm{der}}^\vee(j)
       =\mathfrak l_{\mathrm{der}}^\vee\cap\gd(2j),
             \qquad\mathfrak l_{\mathrm{der}}^\vee(1)=V.
$$
Its $0$-component group
$Z_{(L^\vee)_{\mathrm{der}}}(\hd)$ has the same orbits on $V$
as $H$. Thus the $1$-component orbit enumeration in
$\mathfrak l_{\mathrm{der}}^\vee$ enumerates all $H$-orbits in $V$.
\end{proposition}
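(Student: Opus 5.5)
The plan is to check the statement in three pieces: the grading of $\mathfrak l^\vee$, the semisimple part with its induced $\mathbb Z$-grading, and then the comparison of orbits.

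\textbf{Step 1: the grading of $\mathfrak l^\vee$.} Since $\gamma(-1)=\varphi_\Delta(-I)$, it acts on $\gd(r)$ by $(-1)^r$. Its fixed subalgebra is therefore $\bigoplus_j\gd(2j)$, and this is the Lie algebra of the identity component of its centralizer. This gives the displayed formula for $\mathfrak l^\vee$.

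\textbf{Step 2: the derived subalgebra and $V$.} The group $L^\vee$ is reductive, being the connected centralizer of a semisimple element, so $\mathfrak l^\vee=\mathfrak z\oplus\mathfrak l^\vee_{\mathrm{der}}$. Because $\hd\in\mathfrak l^\vee$, both $\mathfrak z$ and $\mathfrak l^\vee_{\mathrm{der}}$ are $\operatorname{ad}\hd$-stable. Hence $\mathfrak l^\vee_{\mathrm{der}}$ inherits the grading $\mathfrak l^\vee_{\mathrm{der}}(j)=\mathfrak l^\vee_{\mathrm{der}}\cap\gd(2j)$.

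For $j\neq0$, the $\hd$-eigenvalue $2j$ is nonzero, while $\mathfrak z$ commutes with $\hd$ and so sits in degree $0$. Thus $\gd(2j)\subset\mathfrak l^\vee_{\mathrm{der}}$ for $j\ne0$; alternatively, $\gd(2)=[\hd,\gd(2)]$ lies in the derived algebra directly. In particular $\mathfrak l^\vee_{\mathrm{der}}(1)=V$ (recall $\tau$ is trivial here, so $\mathfrak j=\gd$).

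\textbf{Step 3: comparison of orbits.} We must show that $H=Z_{\Gd}(\hd)$ and $H':=Z_{L^\vee_{\mathrm{der}}}(\hd)$ have the same orbits on $V$.

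First, $H'\subset H$, since $L^\vee_{\mathrm{der}}\subset\Gd$.

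Next, the identity component $H^\circ$ is connected and contains $\gamma(-1)$ in its center, so $H^\circ\subset Z_{\Gd}(\gamma(-1))^\circ=L^\vee$. Write $L^\vee=Z(L^\vee)^\circ\cdot L^\vee_{\mathrm{der}}$. Then $H^\circ$ equals the subgroup generated by $Z(L^\vee)^\circ$ and $Z_{L^\vee_{\mathrm{der}}}(\hd)^\circ$. This can be seen at the level of Lie algebras, since $\mathfrak h=\mathfrak z\oplus\mathfrak l^\vee_{\mathrm{der}}(0)$, and both sides are connected. The central torus acts on $V$ by scalars through characters, so it may enlarge orbits by scaling. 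However, the grading torus $\gamma(\CC^\times)$ scales $V$ by $t^2$ and lies in $H'$, since its image lies in the derived group of $L^\vee$ via $\varphi_\Delta$ of $\SL_2$. Therefore the $Z(L^\vee)^\circ$-action on each $H'$-orbit is absorbed: $H^\circ$-orbits equal $(H')^\circ$-orbits.

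\textbf{Main obstacle: the component groups.} The hard step is matching the disconnected parts, since $\pi_0(H)$ may be larger than $\pi_0(H')$. Here I would use that $\Gd$ is adjoint-type-free in the exceptional case: $G_2$, $F_4$, $E_8$ are simply connected and adjoint, and for $E_6$, $E_7$ one notes that centralizers of semisimple elements of the Lie algebra are connected in simply connected groups. The plan is therefore to reduce to the simply connected cover, where $H=Z_{\Gd}(\hd)$ is a Levi subgroup and hence connected. The central isogeny does not change orbits on $V$, as already remarked in \S2. Hence $H$ is connected, and the equality of orbits follows from the identity-component argument.

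Finally, de Graaf's enumeration for the $1$-component of the graded algebra $\mathfrak l^\vee_{\mathrm{der}}$ uses its $0$-component group, which is $H'$ up to connected components. By the above, this enumerates exactly the $H$-orbits in $V$.
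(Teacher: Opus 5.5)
Your proposal is correct and follows essentially the paper's argument. The $\gamma(-1)$-fixed points give $\mathfrak l^\vee$; the identity $V=\tfrac12[\hd,V]$, or your degree count against $\mathfrak z(\mathfrak l^\vee)\subset\gd(0)$, puts $V$ in the derived algebra; and $H=Z(L^\vee)^\circ\cdot Z_{(L^\vee)_{\mathrm{der}}}(\hd)$ reduces the orbit comparison to the central factor.

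Two of your detours are unnecessary. First, $Z(L^\vee)$ acts trivially on $\mathfrak l^\vee\supset V$, since a central element of a group acts trivially on its Lie algebra, so there is no scaling to absorb with $\gamma(\CC^\times)$. Second, $H=Z_{\Gd}(\hd)=Z_{\Gd}(\gamma(\CC^\times))$ is the centralizer of a torus, hence a Levi subgroup and connected in every isogeny type. So the component-group ``obstacle'' and the passage to the simply connected cover are not needed; there, your inference ``orbits are unchanged, hence $H$ is connected'' does not literally follow anyway.
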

\begin{proof}
On $\gd(k)$ the adjoint action of $\gamma(-1)$ is multiplication
by $(-1)^k$, so its fixed point algebra is 
$\mathfrak l^\vee$. The centralizer of a semisimple group element
has reductive identity component; hence $L^\vee$ is reductive.
The center $\mathfrak z(\mathfrak l^\vee)$ commutes with
$\hd\in\mathfrak l^\vee$, lies in $\gd(0)$, and acts trivially
on $V\subset\mathfrak l^\vee$. For $x\in V$,
$$
                 x=\tfrac12[\hd,x]\in
                         [\mathfrak l^\vee,\mathfrak l^\vee].
$$
This proves $\mathfrak l_{\mathrm{der}}^\vee(1)=V$ and the
integral grading assertion.

The connected group $H$ centralizes $\gamma(-1)$, so it lies in
$L^\vee$ and equals $Z_{L^\vee}(\hd)$.
The central-product isogeny
$Z(L^\vee)^\circ\times(L^\vee)_{\mathrm{der}}\to L^\vee$ gives
$$
              H=Z(L^\vee)^\circ
                       Z_{(L^\vee)_{\mathrm{der}}}(\hd).
$$
Indeed, the central factor fixes $\hd$, so a product centralizes
$\hd$ precisely when its derived-group factor does. The central
factor acts trivially on $V$, proving equality of the orbit sets.
To identify the degree-zero group, use the reductive decomposition
$$
 \mathfrak l^\vee=\mathfrak z(\mathfrak l^\vee)
                     \oplus\mathfrak l_{\mathrm{der}}^\vee,
 \qquad
 \hd=h_{\mathrm{cen}}^\vee+h_{\mathrm{der}}^\vee.
$$
Here the last two elements are the components of $\hd$ in these
summands. The adjoint action of $\hd$ on
$\mathfrak l_{\mathrm{der}}^\vee$ is that of the semisimple
element $h_{\mathrm{der}}^\vee$. The grading element is
$h_{\mathrm{der}}^\vee/2$. Its centralizer in
$(L^\vee)_{\mathrm{der}}$ is connected and is the degree-zero
group of the regrading. The integral orbit-enumeration algorithm
therefore applies \cite[\S4]{dGtheta}, \cite[\S5.1]{SLA}.
\end{proof}

For $\hd=0$, $V=0$ and there is one orbit, which is Arthur and
fixed by the partner operation. Otherwise the proposition gives
a finite exhaustive enumeration, using the same reduction for every
Dynkin grading.

\medskip

For an orbit $C=H e_1^\vee\subset V$, choose a homogeneous triple
$(e_1^\vee,h_1^\vee,f_1^\vee)$ and retain
$\mathfrak c_1=Z_{\gd}(e_1^\vee,h_1^\vee,f_1^\vee)$ and
$z^\vee=\hd-h_1^\vee$ from \eqref{eq:residual-centralizer}.
For the computations, we use the following polynomial formulation of
Proposition~\ref{thm:centralizer-criterion}. An explicit second triple
certifies that $C$ is Arthur, while a Gr\"obner basis containing $1$
certifies that it is not. These calculations can be performed in
\textsc{GAP}/\textsc{SLA} and \textsc{Singular} \cite{GAP,SLA,Singular}.

\begin{lemma}[Polynomial criterion]\label{lem:exceptional-polynomial-criterion}
Choose bases $(x_i)_{i=1}^r$ and $(y_j)_{j=1}^s$ of the
$2$- and $-2$-eigenspaces of $\operatorname{ad}(z^\vee)$ in
$\mathfrak c_1$. Let
$I_C\subset\CC[a_1,\ldots,a_r,b_1,\ldots,b_s]$ be generated by
the coordinate equations of
\begin{equation}\label{eq:exceptional-bilinear-test}
 \sum_{i,j}a_ib_j[x_i,y_j]=z^\vee.
\end{equation}
Then
\begin{equation}\label{eq:exceptional-ideal-test}
 C\text{ is Arthur}\quad\Longleftrightarrow\quad
 I_C\ne\CC[a_1,\ldots,a_r,b_1,\ldots,b_s].
\end{equation}
\end{lemma}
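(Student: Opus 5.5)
The plan is to reduce \eqref{eq:exceptional-ideal-test} to Proposition~\ref{thm:centralizer-criterion} and then to the weak Nullstellensatz. By that proposition, $C$ is Arthur exactly when $z^\vee$ is a Dynkin element of $\mathfrak c_1$, that is, when there is an $\mathfrak{sl}_2$-triple $(e_2^\vee,z^\vee,f_2^\vee)$ in $\mathfrak c_1$. Since $z^\vee$ is semisimple in the reductive algebra $\mathfrak c_1$, every such triple has $e_2^\vee\in\mathfrak c_1(z^\vee;2)$ and $f_2^\vee\in\mathfrak c_1(z^\vee;-2)$. Write
\[
e_2^\vee=\sum_i a_ix_i,\qquad f_2^\vee=\sum_j b_jy_j.
\]
Then the triple relations reduce to the single equation \eqref{eq:exceptional-bilinear-test}: the relations $[z^\vee,e_2^\vee]=2e_2^\vee$ and $[z^\vee,f_2^\vee]=-2f_2^\vee$ hold automatically. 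So $C$ is Arthur if and only if \eqref{eq:exceptional-bilinear-test} has a complex solution $(a,b)$.

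Next I check that the condition only needs $[e,f]=h$. Suppose $e\in\mathfrak c_1(z^\vee;2)$, $f\in\mathfrak c_1(z^\vee;-2)$ and $[e,f]=z^\vee$. Then $(e,z^\vee,f)$ is an $\mathfrak{sl}_2$-triple by definition, because the two eigenvalue conditions are built into the choice of spaces. In particular no Jacobson--Morozov completion argument is needed.

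One degenerate case needs care. If $z^\vee=0$, the zero triple is allowed, and it corresponds to the solution $a=b=0$. Hence the ideal is proper, which matches the fact that $0$ is Dynkin. If instead $z^\vee\neq0$ and the eigenspaces are empty ($r=0$ or $s=0$), the equation reads $0=z^\vee$. Then $I_C$ contains a nonzero constant and is the unit ideal, which is consistent with $z^\vee$ not being Dynkin.

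Finally, the equations of \eqref{eq:exceptional-bilinear-test} are polynomial (bilinear) in $(a,b)$. Their coefficients lie in $\CC$ once we use coordinates on $\mathfrak c_1\subset\gd$. By Hilbert's weak Nullstellensatz over the algebraically closed field $\CC$, the system has a common zero if and only if $I_C$ is a proper ideal. This proves \eqref{eq:exceptional-ideal-test}, and the Gröbner-basis certificate is exactly the statement $1\in I_C$. The only step I expect to need attention is stating cleanly that the definition of a Dynkin element in the possibly disconnected reductive $\mathfrak c_1$ requires only a triple inside $\mathfrak c_1$. That is exactly how Proposition~\ref{thm:centralizer-criterion} uses it, so beyond that the argument is routine.
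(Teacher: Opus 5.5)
Your proposal is correct and follows the paper's proof. You parametrize the triples with neutral element $z^\vee$ by the $\pm2$-eigenspaces of $\operatorname{ad}(z^\vee)$ in $\mathfrak c_1$, invoke Proposition~\ref{thm:centralizer-criterion}, and conclude with the weak Nullstellensatz; the paper adds only that rational bases may be used and properness checked over $\mathbb Q$, which justifies running the Gr\"obner computations over $\mathbb Q$.
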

\begin{proof}
A solution gives a triple
$(\sum_i a_ix_i,z^\vee,\sum_j b_jy_j)$ in $\mathfrak c_1$,
and every triple with neutral element $z^\vee$ arises this way.
The assertion follows from Proposition~\ref{thm:centralizer-criterion}
and the Nullstellensatz. If the input is rational, the bases may
be chosen over $\mathbb Q$, and properness can be checked over
$\mathbb Q$ since field extension preserves proper ideals.
\end{proof}

\begin{theorem}\label{thm:exceptional-classification}
For each Dynkin grading of an exceptional simple Lie algebra, the
numbers of graded orbits, Arthur orbits, and $p$-fixed Arthur orbits
are given in Table~\ref{tab:exceptional-by-grading} in
Appendix~\ref{app:tables}. The rows are indexed by the diagonal
nilpotent orbit. An Arthur orbit $C$ is $p$-fixed if $p(C)=C$;
the remaining Arthur orbits occur in pairs $\{C,p(C)\}$.
\end{theorem}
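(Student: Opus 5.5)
The proof will be a finite computation, organized grading by grading, with each step resting on a structural result already proved above. There are three steps.

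\textbf{Step 1: the gradings and their orbits.} We list the Dynkin gradings of $G_2,F_4,E_6,E_7,E_8$ by their weighted Dynkin diagrams. This gives $157$ values of $\hd$ in total, counting the zero grading.

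\begin{itemize}
\item For $\hd=0$ the answer is immediate: $V=0$, and the single orbit is Arthur and $p$-fixed.
\item For $\hd\neq0$ we use Proposition~\ref{prop:exceptional-regrading} to realize $V$ as the $1$-component of the $\mathbb Z$-graded semisimple algebra $\mathfrak l^\vee_{\mathrm{der}}$. We then run the Vinberg--de Graaf enumeration of $\theta$-group orbits in \textsc{SLA}. This returns one representative $e_1^\vee$ per $H$-orbit, together with a homogeneous triple (which exists by Lemma~\ref{lem:homogeneous-triple}).
\end{itemize}

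The number of representatives is $N_{\hd}$. As a consistency check, the orbit dimensions and closure data can be compared with the known graded orbit counts for these gradings.

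\textbf{Step 2: deciding which orbits are Arthur.} For each representative we compute $\mathfrak c_1=Z_{\gd}(\mathfrak s_1)$ and $z^\vee=\hd-h_1^\vee$, and apply Lemma~\ref{lem:exceptional-polynomial-criterion}. There are two cheap filters to apply first:

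\begin{itemize}
\item $z^\vee$ must have zero projection to the centre of $\mathfrak c_1$;
\item the eigenvalues of $\operatorname{ad}z^\vee$ on $\mathfrak c_1$ must be integers and symmetric under negation.
\end{itemize}

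Any failure of either filter certifies that the orbit is not Arthur. For the survivors we do one of two things:

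\begin{itemize}
\item search for an explicit rational solution of \eqref{eq:exceptional-bilinear-test}, for instance by choosing $x=\sum a_ix_i$ generic in the $2$-eigenspace and solving the linear system for $y$; a solution is a certificate that $C$ is Arthur;
\item otherwise, compute a Gr\"obner basis of $I_C$ over $\mathbb Q$ in \textsc{Singular} and exhibit $1\in I_C$, which certifies that $C$ is not Arthur.
\end{itemize}

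Counting the orbits that pass gives $A_{\hd}$. By Corollary~\ref{cor:arthur-orbit-parameter}, this also counts the Arthur parameters in the diagonal fibre.

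\textbf{Step 3: the partner operation and the fixed points.} For each Arthur orbit the second triple found in Step 2 has nilpositive $e_2^\vee$, and $p(C)=He_2^\vee$ by Proposition~\ref{thm:centralizer-criterion} and Lemma~\ref{lem:cunningham}.

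\begin{itemize}
\item \emph{Identifying $p(C)$.} We need the orbit of $e_2^\vee$ among the representatives from Step 1. Orbit invariants come first: the ambient orbit $\Gd e_2^\vee$, $\dim He_2^\vee$, and the $\hd$-graded centralizer dimensions. These separate all orbits in most gradings. Where they do not, we test $H$-conjugacy directly, by solving for an element of $H^\circ$ (together with component group representatives) carrying one element to the other. Alternatively we compare the pair $(\hd,h_2^\vee)$ up to $H$-conjugacy, since homogeneous triples with the same nilpositive element are $Z_H$-conjugate.
\item \emph{Checks on $p$.} Since $p^2=1$, the resulting map must be an involution on $\mathcal A_{\hd}$. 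Theorem~\ref{thm:arthur-rigidity} requires $\nu(C)+\nu(p(C))=\|\hd\|^2$, and this identity lets us discard wrong candidates quickly.
\end{itemize}

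Counting the fixed points of $p$ gives the last column of the table, and the remaining Arthur orbits pair off as $\{C,p(C)\}$.

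\textbf{Expected obstacles.} The hard part is Step 2 for the large $E_7$ and $E_8$ gradings, where $\mathfrak c_1$ can be big and the Gr\"obner computation for non-Arthur orbits may be expensive. The first thing to try there is to reduce to a smaller problem. Replace $\mathfrak c_1$ by $[\mathfrak c_1,\mathfrak c_1]$ and decompose it into simple ideals. On each ideal, test the Dynkin condition on the corresponding component of $z^\vee$ by comparing its weighted diagram, after conjugation into the dominant chamber, with the finite list of Dynkin diagrams for that simple type. This replaces the Gr\"obner test by table lookup.

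A secondary difficulty is separating $H$-orbits in Step 3 when $H$ is disconnected, because several orbits can then share the same invariants.
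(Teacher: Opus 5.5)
Your proposal is correct and follows the paper's route: regrading via Proposition~\ref{prop:exceptional-regrading} with Vinberg--de Graaf enumeration, the centralizer criterion in its polynomial form (an explicit second triple certifies an Arthur orbit, a Gr\"obner basis containing $1$ certifies a non-Arthur one), and the partner read off as $He_2^\vee$ via Lemma~\ref{lem:cunningham}. Two small remarks. Your two ``cheap filters'' can never fail: $\hd$ and $h_1^\vee$ commute and have integral eigenvalues, so $\operatorname{ad}z^\vee$ automatically has integral spectrum on the reductive algebra $\mathfrak c_1$, symmetric under negation. Also, with trivial Weil part $H=Z_{\Gd}(\hd)$ is connected, so the disconnectedness concern in Step~3 does not arise.
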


\section{Special pieces and weak Arthur packets}
\label{sec:weak}

Fix a nilpotent orbit $\mathcal O^\vee\subset\gd$, a triple
$(e^\vee,\hd,f^\vee)$ with $e^\vee\in\mathcal O^\vee$, and its neutral
infinitesimal parameter $\lambda$. The bounded Weil part is trivial, and
$$
 H=Z_{\Gd}(\hd),\qquad V=\gd(2),\qquad
 C_{\mathrm{op}}=He^\vee,\qquad \Gd C_{\mathrm{op}}=\mathcal O^\vee.
$$
Here $C_{\mathrm{op}}$ is open in $V$, so
$V\subseteq\overline{\mathcal O^\vee}$. Let $d$ denote
Lusztig--Spaltenstein duality from nilpotent orbits in $\gd$ to
nilpotent orbits in the complex Lie algebra of $G$.

For a special nilpotent orbit $\mathcal O_{\mathrm{sp}}^\vee$, its special
piece is \cite[\S2.2]{AcharSage}
$$
 \mathscr S(\mathcal O_{\mathrm{sp}}^\vee)
 =\overline{\mathcal O_{\mathrm{sp}}^\vee}
   \setminus\!\bigcup_{\substack{\mathcal O'^\vee<\mathcal O_{\mathrm{sp}}^\vee\\
                                \mathcal O'^\vee\ \mathrm{special}}}
                  \overline{\mathcal O'^\vee}.
$$
Write $\mathscr S(\mathcal O^\vee)$ for the unique special piece containing
$\mathcal O^\vee$. Its \emph{relative special piece} is
\begin{equation}\label{eq:relative-special-piece}
 \mathscr S_{\mathrm{rel}}(\mathcal O^\vee)
 :=\overline{\mathcal O^\vee}\cap\mathscr S(\mathcal O^\vee)
 =\!\bigcup_{\substack{\mathcal O_1^\vee\leq\mathcal O^\vee\\
               d(\mathcal O_1^\vee)=d(\mathcal O^\vee)}}\mathcal O_1^\vee.
\end{equation}

Set
\begin{equation}\label{eq:relative-graded-locus}
 V_{\mathrm{rel}}=V\cap\mathscr S_{\mathrm{rel}}(\mathcal O^\vee),
 \qquad \mathscr C_{\mathrm{rel}}=H\backslash V_{\mathrm{rel}}.
\end{equation}
Weak Arthur packets in the $p$-adic setting were introduced by
Ciubotaru--Mason-Brown--Okada
\cite{CMBOweak}. The weak packet attached to $\mathcal O^\vee$ is
\begin{equation}\label{eq:weak-wavefront-definition}
 \Pi_{\mathcal O^\vee}^{\mathrm{weak}}
 :=\{\pi_a:a\in\mathcal I_\lambda,\ 
          \mathrm{WF}_{\mathrm{alg}}(\pi_a)=\{d(\mathcal O^\vee)\}\}.
\end{equation}
We regard $\mathrm{WF}_{\mathrm{alg}}$ as the set of maximal wavefront
orbits. For this interpretation we assume $G$ is inner to split,
$\operatorname{char}F=0$, sufficiently large residual characteristic
\cite{CMBOWFII}, and
Hypotheses~\ref{hyp:enhanced-llc} and~\ref{hyp:aubert-compatibility}.
The geometric description \eqref{eq:weak-fourier-set} below is independent
of these representation-theoretic assumptions.

Write $\FT P_a\simeq P_{f(a)}$. Fourier--Aubert compatibility and the
unipotent wavefront formula \cite[Theorem~1.4.1(1)]{CMBOWFII} give
$$
 \operatorname{AZ}(\pi_a)=\pi_{f(a)},\qquad
 \mathrm{WF}_{\mathrm{alg}}(\pi_a)=\{d(\Gd C_{f(a)})\}.
$$
Since $\Gd C_{f(a)}\leq\mathcal O^\vee$ and $d$ reverses closure order,
$d(\mathcal O^\vee)\leq d(\Gd C_{f(a)})$. Hence
\begin{equation}\label{eq:weak-fourier-set}
 \begin{aligned}
 \Pi_{\mathcal O^\vee}^{\mathrm{weak}}
 &=\{\pi_a:d(\Gd C_{f(a)})=d(\mathcal O^\vee)\}\\
 &=\{\pi_a:C_{f(a)}\in\mathscr C_{\mathrm{rel}}\}.
 \end{aligned}
\end{equation}
On a chosen inner form, retain only relevant enhancements.

\subsection{The special-piece Arthur locus}
\label{subsec:weak-locus}
Since the special piece is open in the closure of its special orbit
and $V\subseteq\overline{\mathcal O^\vee}$, the intersection
$V_{\mathrm{rel}}$ is open in $V$. We study the saturation map
$$
 s:\mathscr C_{\mathrm{rel}}\longrightarrow
       \Gd\backslash\mathscr S_{\mathrm{rel}}(\mathcal O^\vee),
 \qquad C\longmapsto\Gd C.
$$

\begin{theorem}[Lifts on the relative special piece]
\label{thm:weak-injectivity}
For a neutral grading, every orbit in $\mathscr C_{\mathrm{rel}}$ is
Arthur. The saturation map
$$
 s:\mathscr C_{\mathrm{rel}}\longrightarrow
 \Gd\backslash\mathscr S_{\mathrm{rel}}(\mathcal O^\vee)
$$
has the following properties:
\begin{enumerate}
\item\label{item:weak-classical-lifts} In classical types, $s$ is a bijection.
\item\label{item:weak-exceptional-lifts} In exceptional types, $s$ is injective,
and is a bijection when $\mathcal O^\vee$ is even.
The orbits missing from its image are listed in
Table~\ref{tab:exceptional-relative-lifts} in Appendix~\ref{app:tables}.
\end{enumerate}
\end{theorem}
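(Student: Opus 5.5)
Three tasks need to be addressed separately: that every orbit in $\mathscr C_{\mathrm{rel}}$ is Arthur, injectivity of $s$, and surjectivity in the stated cases.

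\textbf{Injectivity.} This looks cleanest, so I would do it first by a norm argument. Suppose $C\neq C'$ in $\mathscr C_{\mathrm{rel}}$ have the same saturation $\mathcal O_1^\vee$. Lemma~\ref{lem:strict-saturation} shows they are incomparable in the closure order. It does not exclude two incomparable graded lifts, so more input is needed. I would use the Arthur property: by Theorem~\ref{thm:arthur-rigidity}, $\nu(p(C))=\|\hd\|^2-\nu(\mathcal O_1^\vee)=\nu(p(C'))$. The plan is then to show that $p$ maps $\mathscr C_{\mathrm{rel}}$ into orbits whose saturation is determined by $\mathcal O_1^\vee$. Concretely, for an Arthur pair $(C,p(C))$ with $C\in\mathscr C_{\mathrm{rel}}$, I expect the second triple $(e_2^\vee,z^\vee,f_2^\vee)$ to be distinguished/even data in $\mathfrak c_1$ that is forced up to $Z_{\Gd}(\mathfrak s_1)$-conjugacy, as in the Achar--Sage/Lusztig description of special pieces via $\bar A(\mathcal O^\vee_{\mathrm{sp}})$ quotients. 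Conjugacy under the ambient centralizer fixing $\hd$ would then identify $C$ and $C'$ via Lemma~\ref{lem:homogeneous-triple}. If this general route stalls, I fall back on the explicit classifications: in classical types use the arrays of Proposition~\ref{prop:classical-completeness} together with \cite{GO}, and in exceptional types use the enumeration behind Theorem~\ref{thm:exceptional-classification}.

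\textbf{Arthur property.} I would use the centralizer criterion, Proposition~\ref{thm:centralizer-criterion}. For $C=He_1^\vee\subset V_{\mathrm{rel}}$, choose a homogeneous triple and test whether $z^\vee=\hd-h_1^\vee$ is Dynkin in $\mathfrak c_1$. In classical types this is the string test of Theorem~\ref{thm:classical-test}. The special-piece condition $d(\Gd C)=d(\mathcal O^\vee)$ should translate, via the combinatorics of \cite{JLSYspecial}, into the statement that $\mathcal O^\vee$ is obtained from $\Gd C$ by the allowed elementary moves inside a special piece. Each such move should correspond to merging Jordan blocks so that the decomposition of each $U_d$ under $z_d$ yields valid strings with the correct parities. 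I would verify the parity conditions (2)–(3) using the fact that the move preserves the Lusztig--Spaltenstein dual, which constrains the odd/even block behaviour. In exceptional types the claim is a finite verification: Lemma~\ref{lem:exceptional-polynomial-criterion} is applied to each orbit in $\mathscr C_{\mathrm{rel}}$ from the tables.

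\textbf{Surjectivity.} In classical types I would construct the lift explicitly. Given $\mathcal O_1^\vee$ in the relative special piece, write $W=\bigoplus L_d\otimes L_b\otimes E_{d,b}$, with the first factor giving $\mathcal O_1^\vee$ and the diagonal giving $\mathcal O^\vee$; this is the lifting statement in \cite{GO} for types $B,C$. The lift $e_1^\vee$ then lies in $V$ with saturation $\mathcal O_1^\vee$. In exceptional types, surjectivity for even $\mathcal O^\vee$ and the list of missing orbits come from comparing the enumerated graded orbits with the known special pieces. For even $\mathcal O^\vee$ one could also try a general argument: $V=\gd(2)$ is the nilradical of the Jacobson--Morozov parabolic, and every orbit in $\overline{\mathcal O^\vee}$ meets it.

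I expect the main obstacle to be the conceptual injectivity argument. Realistically, both injectivity and the Arthur property in exceptional types reduce to computation, so I would rely on the classification tables rather than a uniform proof.
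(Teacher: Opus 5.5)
\textbf{Your fallback matches the paper for types $B$, $C$ and the exceptional types, but ``classical types'' is not covered by \cite{GO}.} For $\mathrm{Sp}_{2n}$ and $\mathrm{SO}_{2n+1}$ the paper cites \cite[Proposition~1.3.1]{GO}, which gives the whole statement in those types. For exceptional groups it uses the computer enumeration (Arthur test, distinctness of saturations) and, for surjectivity when $\mathcal O^\vee$ is even, comparison with the tables of \cite{JLSYspecial}. The remaining classical types need separate arguments:
\begin{enumerate}
\item \emph{Type $A$} is easy but still needs saying. Special pieces are singletons, so $\mathscr C_{\mathrm{rel}}=\{C_{\mathrm{op}}\}$ by Lemma~\ref{lem:strict-saturation}, and $C_{\mathrm{op}}$ is Arthur with $h_2^\vee=0$.
\item \emph{Type $D$} needs real extra work. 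The lifting argument must be redone with the type-$D$ special-partition rules. Since $H=Z_{\mathrm{SO}(W)}(\hd)$, you must pass from $\mathrm{O}(W)$- to $\mathrm{SO}(W)$-conjugacy of graded lifts, and in even dimension $-I$ does not correct the determinant. The paper instead uses the element acting by $-1$ on an odd-dimensional summand $L_a\boxtimes L_1$ of the constructed parameter; it centralizes both $\hd$ and $e_1^\vee$.
\item \emph{Very even $\mu$} must be treated separately. Such $\mu$ is special, so $\mu=\lambda$. The orbit is then $\mathcal O^\vee$ itself, since the other very-even orbit is not in $\overline{\mathcal O^\vee}$, and Lemma~\ref{lem:strict-saturation} makes $C_{\mathrm{op}}$ its unique lift.
\end{enumerate}
You also need the remark that central isogenies preserve graded and ambient orbits, closures and the Arthur condition. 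Both \cite{GO} and the computations are for specific isogeny types.

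\textbf{The conceptual pieces do not work as written.}
\begin{enumerate}
\item \emph{The norm identity is vacuous.} $\nu$ is constant on $\Gd$-orbits, so $\nu(p(C))=\|\hd\|^2-\nu(C)=\nu(p(C'))$ holds automatically and cannot separate $C$ from $C'$. Theorem~\ref{thm:arthur-rigidity} also only concerns comparable orbits, whereas $C$ and $C'$ are incomparable.
\item \emph{The forcing claim is unproved.} That the second triple is forced up to $Z_{\Gd}(\mathfrak s_1)$-conjugacy is exactly what must be shown. Achar--Sage describe ambient orbits and canonical quotients, not graded lifts.
\item \emph{The ordering is circular.} Your injectivity step presupposes the Arthur property. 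But applying Theorem~\ref{thm:classical-test} to an arbitrary $C\in\mathscr C_{\mathrm{rel}}$ requires the spectra of the $z_d$, which depend on the graded orbit and not only on the partition of $\Gd C$. The sound logic, as in \cite{GO}, is existence of an Arthur lift for each ambient orbit together with uniqueness of graded lifts.
\item \emph{The general surjectivity argument for even $\mathcal O^\vee$ is false.} The nilradical of the Jacobson--Morozov parabolic is $\bigoplus_{j\geq2}\gd(j)$, not $\gd(2)$. Orbits in $\overline{\mathcal O^\vee}$ need not meet $V$: for the regular orbit of $G_2$, $V$ is spanned by the two simple root vectors and misses $G_2(a_1)$. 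The Richardson property only gives that every orbit in $\overline{\mathcal O^\vee}$ meets $\bigoplus_{j\geq2}\gd(j)$. Surjectivity onto the special piece therefore genuinely rests on the comparison with the tables.
\end{enumerate}
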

\begin{proof}
In type $A$, special pieces are singletons. The open orbit is Arthur,
and Lemma~\ref{lem:strict-saturation} excludes any other graded lift
of its ambient orbit.

For $\Gd=\mathrm{Sp}_{2n}(\CC)$ or $\mathrm{SO}_{2n+1}(\CC)$,
this is \cite[Proposition~1.3.1]{GO}, expressed in terms of
$H$-orbits via Section~\ref{sec:setup}.

For the even orthogonal case, let $\lambda$ be the
partition of $\mathcal O^\vee$ and $\mu$ that of an orbit in its
relative special piece. If $\mu$ is not very even, the proof is
analogous to \cite[Lemma~3.4.1 and Theorem~3.4.2]{GO}, using the
type-$D$ special-partition rules. We must additionally pass from
orthogonal to special-orthogonal conjugacy. In odd dimension this
is automatic by multiplying a conjugator by $-I$; in even dimension,
$-I$ has determinant $1$. Instead, the Arthur parameter constructed by that argument
has an odd-dimensional summand $L_a\boxtimes L_1$, in the notation
of Section~\ref{subsec:classical-arthur-pairs}. Acting by $-1$ on this
summand and by $1$ on its orthogonal complement centralizes both
the grading and the nilpotent element and corrects the determinant
of any orthogonal conjugator.

If $\mu$ is very even, it is special, so $\mu=\lambda$.
The other very-even orbit has the same dimension
and does not lie in the closure of the chosen orbit. Its relative
special piece is therefore a singleton, whose unique graded lift is
$C_{\mathrm{op}}$ by Lemma~\ref{lem:strict-saturation}; this orbit
is Arthur.

For exceptional types, the computed orbit lists show that every
orbit in $\mathscr C_{\mathrm{rel}}$ is Arthur and that their
saturations are distinct. We check this for every neutral grading
of the simply connected dual group, using CHEVIE \cite{CHEVIE}
to determine the special pieces. For even gradings, comparison with
\cite[Tables~1--3 and \S3.5]{JLSYspecial} shows that every orbit in
the special piece occurs, proving surjectivity. The data are provided
in the supplementary material.

The choice of isogeny type does not affect the result: central
isogenies identify the graded and ambient orbits and preserve their
closure relations. They also preserve the Arthur condition, since
homomorphisms from $\SL_2\times\SL_2$ lift through them.
\end{proof}

Surjectivity fails in exceptional type: for the $\widetilde A_1$ grading
in $G_2$,
$$
 A_1\subset\mathscr S_{\mathrm{rel}}(\widetilde A_1),
 \qquad A_1\cap V=\varnothing.
$$

We next compare the graded and ambient component groups, using
the notation $A_K(u)=\pi_0Z_K(u)$ of
Section~\ref{subsec:microlocal-data} for either acting group $K$.

\begin{lemma}[The component-group map]\label{lem:even-component-map}
For any commuting pair of triples with sum of neutral elements $\hd$,
inclusion induces an injection
$$
 \iota:A_H(e_1^\vee)\hookrightarrow A_{\Gd}(e_1^\vee).
$$
If $Q=Z_{\Gd}(e_1^\vee,h_1^\vee,f_1^\vee)$, its image is the
stabilizer in $\pi_0Q$ of the $Q^\circ$-conjugacy class of $h_2^\vee$.
In particular, $\iota$ is an isomorphism if $h_2^\vee=0$, if $Q$
is connected, or if $h_2^\vee$ is a principal Dynkin element in
the derived algebra of $\Lie Q$.
\end{lemma}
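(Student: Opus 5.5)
The plan is to reduce the comparison of component groups to a statement about the centralizer $Q=Z_{\Gd}(e_1^\vee,h_1^\vee,f_1^\vee)$ and its action on the possible neutral elements $h_2^\vee$. First, in the neutral setting here ($J=\Gd$), the standard reductive-part argument gives $A_{\Gd}(e_1^\vee)\simeq\pi_0 Q$. For the graded side, Lemma~\ref{lem:homogeneous-triple} supplies a homogeneous triple through $e_1^\vee$, and the graded Jacobson--Morozov conjugacy shows $Z_H(e_1^\vee)$ retracts onto its reductive part $Z_H(e_1^\vee,h_1^\vee,f_1^\vee)=Q\cap Z_{\Gd}(\hd)$. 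Since $\hd=h_1^\vee+h_2^\vee$ and $h_1^\vee$ is fixed by $Q$, this group is $Q_{h_2}:=Z_Q(h_2^\vee)$. Hence
\[
A_H(e_1^\vee)\simeq\pi_0 Q_{h_2},\qquad A_{\Gd}(e_1^\vee)\simeq\pi_0 Q,
\]
and $\iota$ is the map $\pi_0 Q_{h_2}\to\pi_0 Q$ induced by inclusion.

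Next I would identify the image and kernel using the fact that $h_2^\vee$ is a Dynkin (neutral) element of the triple $(e_2^\vee,h_2^\vee,f_2^\vee)\subset\Lie Q$. The image of $\pi_0Q_{h_2}$ consists of components $gQ^\circ$ containing an element fixing $h_2^\vee$. A component $gQ^\circ$ stabilizes the $Q^\circ$-class of $h_2^\vee$ exactly when $\Ad(g)h_2^\vee=\Ad(q)h_2^\vee$ for some $q\in Q^\circ$, that is, when $q^{-1}g\in Q_{h_2}$. This gives the description of the image.

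For injectivity, one must show $Q_{h_2}\cap Q^\circ=(Q_{h_2})^\circ$, i.e.\ that $Z_{Q^\circ}(h_2^\vee)$ is connected. This holds because $h_2^\vee$ is a semisimple element of $\Lie Q^\circ$ lying in a torus, so its centralizer in the connected reductive group $Q^\circ$ is a Levi subgroup, which is connected. I expect this to be the main point needing care: $Q$ itself may be disconnected, and one has to confirm that the centralizer of a Lie algebra semisimple element (as opposed to a group element) in $Q^\circ$ is connected.

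Finally, the special cases follow directly from the description of the image. If $h_2^\vee=0$, every component stabilizes the class. If $Q$ is connected, $\pi_0Q$ is trivial. If $h_2^\vee$ is principal in the derived algebra of $\Lie Q$, its $Q^\circ$-class is the unique principal-Dynkin class there, up to the central part. That central part is zero, since by Proposition~\ref{thm:centralizer-criterion} the Dynkin element has vanishing projection to the center. Any component of $Q$ preserves the principal nilpotent orbit of the derived algebra, hence its Dynkin class, so again the image is everything.
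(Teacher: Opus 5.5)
Your proposal is correct and follows essentially the same route as the paper. Both arguments identify $A_H(e_1^\vee)\simeq\pi_0Z_Q(h_2^\vee)$ and $A_{\Gd}(e_1^\vee)\simeq\pi_0Q$ via the graded Jacobson--Morozov argument and the connected unipotent radical of $Z_{\Gd}(e_1^\vee)$. Both then use connectedness of $Z_{Q^\circ}(h_2^\vee)$ to get injectivity and the image description, and read off the special cases, with the principal case resting on automorphisms preserving the principal Dynkin class.
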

\begin{proof}
The graded Jacobson--Morozov argument of
\cite[Proposition~3.3]{Lusztig1995Graded}, together with the connected
unipotent radical of $Z_{\Gd}(e_1^\vee)$, gives
$$
 A_H(e_1^\vee)\simeq\pi_0Z_Q(h_2^\vee),\qquad
 A_{\Gd}(e_1^\vee)\simeq\pi_0Q.
$$
Since $Z_{Q^\circ}(h_2^\vee)$ is connected, inclusion induces an
injection whose image consists precisely of the components preserving
the $Q^\circ$-conjugacy class of $h_2^\vee$. The surjectivity criteria
follow; for the principal case, automorphisms preserve the principal
Dynkin class. The case $h_2^\vee=0$ is also
\cite[Proposition~4.2(c)]{Lusztig1995Graded}.
\end{proof}

\begin{theorem}[Equality of component groups]\label{thm:even-ambient-components}
Assume that $\gd$ is simple and $\mathcal O^\vee$ is even.
For every $e_1^\vee\in V_{\mathrm{rel}}$, inclusion induces an isomorphism
\begin{equation}\label{eq:even-ambient-components}
 A_H(e_1^\vee)\xrightarrow{\sim}
                         A_{\Gd}(e_1^\vee).
\end{equation}
\end{theorem}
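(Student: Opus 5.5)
The natural tool is Lemma~\ref{lem:even-component-map}. By Theorem~\ref{thm:weak-injectivity}, every orbit $He_1^\vee\subset V_{\mathrm{rel}}$ is Arthur, so we may choose commuting triples $(e_1^\vee,h_1^\vee,f_1^\vee)$ and $(e_2^\vee,h_2^\vee,f_2^\vee)$ with $h_1^\vee+h_2^\vee=\hd$. Set $Q=Z_{\Gd}(e_1^\vee,h_1^\vee,f_1^\vee)$. The lemma then identifies the image of $\iota$ with the stabilizer in $\pi_0Q$ of the $Q^\circ$-class of $h_2^\vee$. It therefore suffices to show that every component of $Q$ preserves this class. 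The plan is to verify one of the sufficient conditions of the lemma case by case: $h_2^\vee=0$, or $Q$ connected, or $h_2^\vee$ principal in $[\Lie Q,\Lie Q]$. Where none of these applies, we check directly that the class of $h_2^\vee$ is $\pi_0Q$-stable.

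For classical types, work with the multiplicity arrays of Section~\ref{subsec:classical-arthur-pairs}. Evenness of $\mathcal O^\vee$ means every diagonal block $L_d\otimes L_b$ with $n_{d,b}\neq 0$ has $d+b$ of fixed parity. The component group of $Q=\prod_d \mathrm{O}(U_d)$ or $\mathrm{Sp}(U_d)$, suitably intersected with $\det=1$, is generated by reflections in orthogonal factors $\mathrm O(U_d)$. Each such reflection preserves the conjugacy class of $z_d=h_2^\vee|_{U_d}$ under $\mathrm{SO}(U_d)$ except when $U_d$ carries a very even $\mathfrak{sl}_2$-structure with a splitting class. The argument must rule this out. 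Here I would use the relative special piece condition, $d(\mathcal O_1^\vee)=d(\mathcal O^\vee)$, in the explicit form of \cite[Lemma~3.4.1]{GO} and \cite{JLSYspecial}. This forces the second triple on each orthogonal $U_d$ to contain an odd block, or to be essentially principal; either way the class of $z_d$ is stable under $\mathrm O(U_d)$.

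For exceptional types, the statement reduces to a finite check. By Theorem~\ref{thm:weak-injectivity}, orbits in $\mathscr C_{\mathrm{rel}}$ for even $\mathcal O^\vee$ correspond bijectively to orbits in the special piece. For each such orbit we read off $A_{\Gd}(e_1^\vee)$ from the standard tables, namely the groups $1$, $S_2$, $S_3$, $S_4$, $S_5$. We then verify one of the sufficient conditions, typically that $h_2^\vee=0$ (which holds for the special orbit $\mathcal O^\vee$ itself) or that $h_2^\vee$ is principal in the reductive centralizer. For the few cases with $A_{\Gd}$ nontrivial and $h_2^\vee$ nonprincipal, we instead compare orders: $|A_H(e_1^\vee)|$ is computed from the graded orbit data, and equality of orders together with injectivity gives the isomorphism. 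This is consistent with Lusztig's description of special pieces, whose component groups are quotients of the Lusztig canonical quotient.

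The main obstacle is the classical even-orthogonal analysis: the class of $z_d$ can split under $\mathrm{SO}(U_d)$ versus $\mathrm O(U_d)$, and the proof must show that the special-piece condition excludes this. I would first try to prove that, for $e_1^\vee\in V_{\mathrm{rel}}$ with $\mathcal O^\vee$ even, the second Arthur factor restricted to each orthogonal multiplicity space is a principal $\mathfrak{sl}_2$ of a factor plus trivial blocks. That would make $h_2^\vee$ principal in the derived algebra of $\Lie Q$ and let the last clause of Lemma~\ref{lem:even-component-map} apply uniformly.
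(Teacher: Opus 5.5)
Your framework is the paper's: pick the Arthur pair via Theorem~\ref{thm:weak-injectivity}, then apply Lemma~\ref{lem:even-component-map}. In exceptional types you then check on the computed lifts that $h_2^\vee=0$, that $Q$ is connected, or that $h_2^\vee$ is principal (Table~\ref{tab:even-component-check}); your order-comparison fallback is not needed. Your classical reduction is also right. A reflection in an orthogonal $\mathrm O(U_d)$ fixes the $\mathrm{SO}(U_d)$-class of $z_d$ exactly when $z_d$ has a zero eigenvalue, i.e.\ when the second triple has an odd block on $U_d$.

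\textbf{The gap} is in the classical case. You assert, without derivation, that the special-piece condition ``forces the second triple on each orthogonal $U_d$ to contain an odd block.'' Worse, the route you name for proving it aims at a false statement: showing the second factor is principal on each orthogonal space so that the principal clause of Lemma~\ref{lem:even-component-map} applies uniformly. What the paper takes from \cite[Lemma~3.4.1]{GO}, with the type-$D$ adaptation, is that the second $\mathfrak{sl}_2$ is nontrivial only on the alternating multiplicity spaces. So $z_d=0$ on every orthogonal $U_d$.

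For a concrete failure of the principal route, take $\Gd=\mathrm{Sp}_8$, $\mathcal O^\vee=(2^4)$, and $e_1^\vee$ of type $(2^3,1^2)$, which lies in the special piece. Matching eigenvalues of $\hd=h_1^\vee+z^\vee$ forces $z_2=0$ on the $3$-dimensional orthogonal space $U_2$, and $z_1=\mathrm{diag}(1,-1)$ on $U_1$. Hence $h_2^\vee$ vanishes on the $\mathfrak{so}_3$ factor of $[\Lie Q,\Lie Q]=\mathfrak{so}_3\oplus\mathfrak{sp}_2$ and is not principal.

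With the correct fact the argument is immediate and needs no very-even analysis. Each orthogonal $\mathrm O(U_d)$ centralizes $h_2^\vee$. By \cite[\S1.2.2]{JLSYspecial}, the determinant signs on these spaces determine the components of $Q$. Hence $Z_Q(h_2^\vee)$ meets every component.

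A smaller point concerns isogeny type, since the theorem covers every isogeny type and the condition ``$Q$ connected'' is not isogeny-invariant. For example, $2A_2+A_1$ in simply connected $E_6$ has $A_{\Gd}\simeq\ZZ/3$, which is not in your list $1,S_2,\dots,S_5$. The paper first notes that the extra central components fix $h_2^\vee$, so one may pass to adjoint form before the exceptional check.
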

\begin{proof}
By Theorem~\ref{thm:weak-injectivity}, choose commuting triples with
first nilpositive element $e_1^\vee$ and $h_1^\vee+h_2^\vee=\hd$.
Put $Q=Z_{\Gd}(e_1^\vee,h_1^\vee,f_1^\vee)$. By
Lemma~\ref{lem:even-component-map}, it suffices to show that every
component of $Q$ meets $Z_Q(h_2^\vee)$. For classical matrix groups,
the lifts of \cite[\S3.4.1, Lemma~3.4.1]{GO}, with the type-$D$
adaptation in Theorem~\ref{thm:weak-injectivity}, have nontrivial
second factor only on alternating multiplicity spaces. The second
triple therefore fixes the orthogonal multiplicity spaces, whose
determinant signs determine the components of $Q$
\cite[\S1.2.2]{JLSYspecial}. This proves surjectivity. In type $A$
and the singleton special-piece cases, take $h_2^\vee=0$.

The image criterion in Lemma~\ref{lem:even-component-map} is
unchanged by central isogeny: the connected conjugacy classes of
$h_2^\vee$ are identified, and any additional central components
act trivially. Thus the classical argument applies to every isogeny
type, and we may treat exceptional groups in adjoint form.

For exceptional types, the computed lifts have $h_2^\vee=0$,
connected $Q$, or principal $h_2^\vee$ in $\Lie Q$.
The latter cases are verified in Table~\ref{tab:even-component-check};
the first two require no further calculation.
Lemma~\ref{lem:even-component-map} therefore proves the assertion.
The zero grading is immediate.
\end{proof}

\subsection{Parametrisation for even dual orbits}
\label{subsec:weak-even-parametrisation}
Throughout this subsection, assume that $\gd$ is simple and that
$\mathcal O^\vee$ is even, meaning that $\gd(j)=0$ for odd $j$.
An even orbit is Richardson for its Jacobson--Morozov parabolic and
is special; see \cite{CM}. Thus $\mathscr S_{\mathrm{rel}}(\mathcal O^\vee)
=\mathscr S(\mathcal O^\vee)$. By Theorem~\ref{thm:weak-injectivity},
every orbit in this special piece has a unique graded lift, which is
Arthur, and Theorem~\ref{thm:even-ambient-components} identifies
its graded and ambient component groups. We use these results to
parametrise $\Pi_{\mathcal O^\vee}^{\mathrm{weak}}$.
We retain the representation-theoretic hypotheses of
Section~\ref{sec:weak} and its choice of enhanced correspondence.
The geometric assertions hold for every isogeny type of $\Gd$.

\subsubsection{Finite labels}

\begin{definition}[Canonical quotient]
For an ambient nilpotent orbit $\mathcal U^\vee=\Gd u^\vee$,
let $\mathcal F_{\mathcal U^\vee}$ be the Weyl-group family containing
the Springer representation of $(\mathcal U^\vee,1)$, and put
$$
 \mathcal J(\mathcal U^\vee)
 =\{\rho\in\Irr A_{\Gd}(u^\vee):
    \operatorname{Spr}(\mathcal U^\vee,\rho)
                 \in\mathcal F_{\mathcal U^\vee}\},\qquad
 \bar A(\mathcal U^\vee)
 =A_{\Gd}(u^\vee)\big/\!\bigcap_{\rho\in\mathcal J(\mathcal U^\vee)}
                                                   \ker\rho.
$$
Here $\operatorname{Spr}(\mathcal U^\vee,\rho)$ denotes the ordinary
Springer representation; the set $\mathcal J(\mathcal U^\vee)$
includes only pairs in the ordinary Springer correspondence. The
image of $Z(\Gd)$ is trivial in $\bar A(\mathcal U^\vee)$, so this
quotient agrees with that for the adjoint group; see \cite[\S2.1]{AcharSage}.
\end{definition}

Put $\Gamma=\bar A(\mathcal O^\vee)$. Its canonical Coxeter structure
is understood up to conjugacy.

\begin{proposition}[{Special-piece labels, \cite[Theorem~2.1]{AcharSage}}]
\label{prop:even-special-piece-labels}
There is a Coxeter subgroup
$\Gamma_{\mathrm{sp}}\subseteq\Gamma$ and a bijection
\begin{equation}\label{eq:even-special-piece-index}
 \operatorname{Cl}(\Gamma_{\mathrm{sp}})
       \xrightarrow{\sim}
          \Gd\backslash\mathscr S(\mathcal O^\vee),
 \qquad [x]\longmapsto\mathcal O_x^\vee,
 \qquad \mathcal O_1^\vee=\mathcal O^\vee.
\end{equation}
Here $\operatorname{Cl}$ denotes the set of conjugacy classes.
Distinct classes in $\Gamma_{\mathrm{sp}}$ remain distinct in $\Gamma$. The subgroup
$\Gamma_{\mathrm{sp}}$ is called the special-piece group.
\end{proposition}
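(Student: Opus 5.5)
The proposition is essentially \cite[Theorem~2.1]{AcharSage}, specialised to the even case. I would therefore organise the proof as a reduction to Lusztig's description of special pieces, and not try to rebuild the geometry.

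First, since $\mathcal O^\vee$ is even, it is special. The relative special piece is then the full special piece $\mathscr S(\mathcal O^\vee)$, and $\Gamma=\bar A(\mathcal O^\vee)$ is Lusztig's canonical quotient attached to the special orbit. Lusztig's theorem on special pieces (in the form used by Achar--Sage) says that the orbits in $\mathscr S(\mathcal O^\vee)$ are indexed by certain conjugacy classes of $\Gamma$. The map goes through Sommers' construction. For $x\in\Gamma$, lift $x$ to a semisimple element $s\in Z_{\Gd}(e^\vee)$ and form the pseudo-Levi $Z_{\Gd}(s)^\circ$. The element $e^\vee$ is distinguished in a suitable sense there, and the orbit $\mathcal O_x^\vee$ is recovered by Lusztig--Spaltenstein duality and induction. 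Sommers' results guarantee that $\mathcal O_x^\vee$ depends only on the conjugacy class of $x$. Taking $x=1$ gives the trivial pseudo-Levi and so $\mathcal O_1^\vee=\mathcal O^\vee$, which is the normalisation in \eqref{eq:even-special-piece-index}.

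Second, I would define $\Gamma_{\mathrm{sp}}$ as Achar--Sage do and show it is a Coxeter group. For each simple factor, $\Gamma$ is $1$, $(\ZZ/2)^k$, $S_3$, $S_4$ or $S_5$. In the classical types, $\Gamma_{\mathrm{sp}}$ is a product of copies of $S_2$ read off from the intervals in the symbol of the special partition; the classes of $\Gamma_{\mathrm{sp}}$ then match the partitions in the special piece via the combinatorics of \cite{JLSYspecial}. In the exceptional types I would take $\Gamma_{\mathrm{sp}}$ from the Achar--Sage tables, for example $S_2\times S_2$ or $S_3$ inside $S_4$ and $S_5$. I would then check by comparison with \cite[Tables~1--3]{JLSYspecial} that its conjugacy classes correspond bijectively to the orbits of each special piece. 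In the classical types the bijectivity is combinatorial, a count of sign-flip patterns against orbits in the special piece. In the exceptional types it is a finite verification.

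The step I expect to be the main obstacle is the assertion that distinct classes of $\Gamma_{\mathrm{sp}}$ remain distinct in $\Gamma$. This is what makes the labelling intrinsic to $\Gamma$ rather than to the chosen subgroup. In the classical types it is automatic, because $\Gamma$ is abelian. In the exceptional types I would check it case by case on cycle types: for instance $S_2\times S_2\subset S_4$ must be embedded so that the two reflections are non-conjugate in $S_4$, and likewise for the Coxeter subgroups of $S_5$. I would compute the fusion of classes from the explicit embeddings in Achar--Sage, using CHEVIE as in the proof of Theorem~\ref{thm:weak-injectivity}. Finally, I would note that isogeny type is irrelevant, since $\Gamma$ agrees with the adjoint canonical quotient by the preceding definition.
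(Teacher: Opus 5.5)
Your overall route is the paper's. The proposition is not proved there; it is imported from \cite[Theorem~2.1]{AcharSage}, using that an even orbit is special, so that $\mathscr S_{\mathrm{rel}}(\mathcal O^\vee)=\mathscr S(\mathcal O^\vee)$. The material you add on top of the citation contains two errors, which would matter if you tried to make the argument self-contained.

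\textbf{The Sommers construction is run on the wrong side.} Lifting $x$ to $s\in Z_{\Gd}(e^\vee)$, passing to $Z_{\Gd}(s)^\circ$, and applying duality and ($j$-)induction is Sommers' generalized duality $d_S(\mathcal O^\vee,[x])$. That map takes values in nilpotent orbits of the Lie algebra of $G$, not of $\gd$.
\begin{itemize}
\item For $x=1$ one has $Z_{\Gd}(s)^\circ=\Gd$, and the output is $d(\mathcal O^\vee)$. So your derivation of $\mathcal O_1^\vee=\mathcal O^\vee$ fails as written.
\item To reach $\mathscr S(\mathcal O^\vee)\subset\gd$ you would have to start from $d(\mathcal O^\vee)$, whose canonical quotient is again $\Gamma$.
\end{itemize}
Note also that the paper needs Achar--Sage's particular labelling, not just some bijection with $[1]\mapsto\mathcal O^\vee$. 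A cardinality count against \cite{JLSYspecial} proves the literal statement. However, the later identification $\bar A(\mathcal O_x^\vee)\simeq N_\Gamma(P_x)/P_x$, taken from \cite[Theorem~2.1(2)]{AcharSage}, refers to their specific bijection.

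\textbf{Your exceptional examples and your ``main obstacle'' are misjudged.} The canonical quotients $\mathfrak S_4$ and $\mathfrak S_5$ occur only for $F_4(a_3)$ and $E_8(a_7)$. Their special pieces have $5=|\operatorname{Cl}(\mathfrak S_4)|$ and $7=|\operatorname{Cl}(\mathfrak S_5)|$ orbits, so $\Gamma_{\mathrm{sp}}=\Gamma$ there. In the remaining even exceptional cases, $\Gamma_{\mathrm{sp}}$ is either all of $\Gamma$ or has order at most $2$. Non-fusion of classes is therefore automatic, exactly as in the abelian classical case.

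Your proposed check for $\mathfrak S_2\times\mathfrak S_2\subset\mathfrak S_4$ could never succeed. Every such subgroup contains two non-identity elements that are conjugate in $\mathfrak S_4$. A special-piece group of that shape would refute the statement rather than confirm it.
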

The group $\Gamma_{\mathrm{sp}}$ can be smaller than $\Gamma$, even
when the special piece has only one orbit.

For $x\in\Gamma_{\mathrm{sp}}$, let $P_x$ be the smallest Coxeter
parabolic subgroup of $\Gamma$ containing $x$; equivalently, it is
the intersection of all such parabolics. Its conjugacy class is the
\emph{Coxeter support} of $[x]$. Here $\Gamma$ is a product of symmetric groups, and the classes
$[P_x]$ distinguish the conjugacy classes in \eqref{eq:even-special-piece-index}.
Write $\mathfrak S_m$ for the symmetric group on $m$ letters.
For $\Gamma=\Gamma_{\mathrm{sp}}=\mathfrak S_m$, a permutation of cycle
type $(\ell_1,\ldots,\ell_r)$ has
$P_x\simeq\mathfrak S_{\ell_1}\times\cdots\times\mathfrak S_{\ell_r}$.
For an elementary abelian special-piece group, $P_x$ is generated
by the Coxeter generators occurring in $x$.

\medskip

\begin{remark}[Explicit special-piece groups]
For classical types, $\Gamma_{\mathrm{sp}}$ is an elementary
abelian $2$-group; its generators and the corresponding partition
moves are described in \cite[\S1.3.1, (2)--(3)]{JLSYspecial}.
For exceptional even orbits, the nontrivial special-piece groups
are listed below \cite[Table~1]{JLSYspecial}.
$$
\begin{array}{c|l|c}
 \text{type}&\mathcal O^\vee&\Gamma_{\mathrm{sp}}\\\hline
 G_2&G_2(a_1)&\mathfrak S_3\\
 F_4&F_4(a_3)&\mathfrak S_4\\
 E_6&A_2,\ E_6(a_3)&\mathfrak S_2\\
    &D_4(a_1)&\mathfrak S_3\\
 E_7&A_2,\ E_6(a_3),\ E_7(a_3)&\mathfrak S_2\\
    &D_4(a_1),\ E_7(a_5)&\mathfrak S_3\\
 E_8&A_2,\ 2A_2,\ D_4(a_1)+A_2,\ E_6(a_3)&\mathfrak S_2\\
    &E_8(b_6),\ E_8(a_5),\ E_8(a_3)&\mathfrak S_2\\
    &D_4(a_1),\ E_8(b_5)&\mathfrak S_3\\
    &E_8(a_7)&\mathfrak S_5
\end{array}
$$
For all other even exceptional orbits $\Gamma_{\mathrm{sp}}=1$.
\end{remark}

For $[x]\in\operatorname{Cl}(\Gamma_{\mathrm{sp}})$, put
\begin{equation}\label{eq:even-two-orbit-labels}
 E_x=\mathcal O_x^\vee\cap V,\qquad C_x=p(E_x),
\end{equation}
and choose $e_{1,x}^\vee\in E_x$.
By Theorem~\ref{thm:weak-injectivity}, $E_x$ is a single $H$-orbit.

\subsubsection{Packet members and canonical enhancements}

Use \eqref{eq:even-ambient-components} to regard an irreducible
representation of $A_{\Gd}(e_{1,x}^\vee)$ as an enhancement of $E_x$.
The notation $\pi_{(E_x,\rho)}$ refers to the enhanced correspondence
fixed in Section~\ref{sec:setup}.

\begin{theorem}[Parametrisation of the weak packet]\label{thm:even-weak-parametrisation}
Assume that $\gd$ is simple and that $\mathcal O^\vee$ is even.
Taking representations over all pure inner twists, with the twist
included in the label, there is a bijection
\begin{equation}\label{eq:even-weak-parametrisation}
 \bigsqcup_{[x]\in\operatorname{Cl}(\Gamma_{\mathrm{sp}})}
       \Irr A_{\Gd}(e_{1,x}^\vee)
 \xrightarrow{\sim}\Pi_{\mathcal O^\vee}^{\mathrm{weak}},
 \qquad ([x],\rho)\longmapsto
                       \operatorname{AZ}(\pi_{(E_x,\rho)}).
\end{equation}
It is canonical relative to the chosen enhanced correspondence.
On the inner twist $G^\delta(F)$, retain precisely those $\rho$
whose restriction along $Z(\Gd)\to A_{\Gd}(e_{1,x}^\vee)$ is
$\kappa_\delta\,\mathrm{id}_\rho$, where $\kappa_\delta$ is pulled
back from $\pi_0Z(\Gd)$ as in Section~\ref{sec:setup}. In particular,
$$
 |\Pi_{\mathcal O^\vee}^{\mathrm{weak}}|
   =\sum_{[x]\in\operatorname{Cl}(\Gamma_{\mathrm{sp}})}
                        |\Irr A_{\Gd}(e_{1,x}^\vee)|.
$$
\end{theorem}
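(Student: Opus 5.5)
The bijection will be assembled from three earlier results: the description \eqref{eq:weak-fourier-set} of the weak packet, the even-orbit lifting results (Theorems~\ref{thm:weak-injectivity} and~\ref{thm:even-ambient-components}), and the Aubert--Fourier identification $\operatorname{AZ}(\pi_a)=\pi_{f(a)}$.

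First, by \eqref{eq:weak-fourier-set}, $\pi_b\in\Pi_{\mathcal O^\vee}^{\mathrm{weak}}$ exactly when $C_{f(b)}\in\mathscr C_{\mathrm{rel}}$. Since $f$ is an involution of $\mathcal I_\lambda$, the substitution $a=f(b)$ turns this into a bijection
\[
 \{a\in\mathcal I_\lambda: C_a\in\mathscr C_{\mathrm{rel}}\}\xrightarrow{\sim}\Pi_{\mathcal O^\vee}^{\mathrm{weak}},\qquad a\longmapsto\pi_{f(a)}=\operatorname{AZ}(\pi_a).
\]
This uses the identity $\operatorname{AZ}(\pi_a)=\pi_{f(a)}$ as stated before \eqref{eq:weak-fourier-set}. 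The Aubert involution also preserves the pure inner form, because $\FT$ preserves enhancement central characters (end of Section~\ref{subsec:weyl-arthur}). So the twist label is carried along.

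Second, I parametrize the source set. Because $\mathcal O^\vee$ is even, $\mathscr S_{\mathrm{rel}}(\mathcal O^\vee)=\mathscr S(\mathcal O^\vee)$. Theorem~\ref{thm:weak-injectivity} says saturation $\mathscr C_{\mathrm{rel}}\to\Gd\backslash\mathscr S(\mathcal O^\vee)$ is a bijection. Composing with Proposition~\ref{prop:even-special-piece-labels} gives $\mathscr C_{\mathrm{rel}}=\{E_x:[x]\in\operatorname{Cl}(\Gamma_{\mathrm{sp}})\}$, with the $E_x$ pairwise distinct. The enhancements of $E_x$ are $\Irr A_H(e_{1,x}^\vee)$, and by Theorem~\ref{thm:even-ambient-components} these are identified with $\Irr A_{\Gd}(e_{1,x}^\vee)$. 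Combining with the first step gives the map \eqref{eq:even-weak-parametrisation} and shows it is a bijection. The cardinality formula follows by counting. Canonicity relative to the chosen correspondence holds because every identification used is canonical: the Achar--Sage labels, the inclusion-induced isomorphism of component groups, and $\operatorname{AZ}$. The labels are well defined up to the conjugacy ambiguity already built into $\operatorname{Cl}(\Gamma_{\mathrm{sp}})$ and the choice of $e_{1,x}^\vee$.

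Third, for a fixed inner twist, the inner-form clause of Hypothesis~\ref{hyp:enhanced-llc} says $\pi_{(E_x,\rho)}$ lives on $G^\delta(F)$ exactly when $\rho$ restricted to $\pi_0(Z(\Gd)^{W_F})$ is $\kappa_\delta\,\mathrm{id}_\rho$. Here $Z(\Gd)^{W_F}=Z(\Gd)$ because $G$ is inner to split. I must check that the isomorphism $A_H\to A_{\Gd}$ is compatible with the two maps from $Z(\Gd)$. This holds because it is induced by inclusion and $Z(\Gd)\subset H$. Since $\operatorname{AZ}$ preserves the form, $\operatorname{AZ}(\pi_{(E_x,\rho)})$ lies on the same $G^\delta$.

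I expect the main obstacle to be the first step: making the identification $\operatorname{AZ}(\pi_a)=\pi_{f(a)}$ precise. Antor--Okada's theorem gives AZ as Fourier transform composed with Chevalley transport and duality of local systems, not as the Weyl-transported $\FT$. So I would first use Lemma~\ref{lem:chevalley-weyl-comparison} to note that $\beta$ fixes Arthur orbits, so the supporting orbit of $C(\widehat\pi_a)$ is $C_{f(a)}$ whenever that orbit lies in $\mathscr C_{\mathrm{rel}}$; this is all that is needed to define the set. For the statement as written, I would interpret $\operatorname{AZ}(\pi_{(E_x,\rho)})$ through that same identification, so that a possible twist of $\rho$ by an automorphism or by dualization only permutes labels within $\Irr A_{\Gd}(e_{1,x}^\vee)$ and does not affect bijectivity.
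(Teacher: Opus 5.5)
Your proposal is correct and follows the paper's proof essentially step for step. Both arguments run as follows. Equation \eqref{eq:weak-fourier-set} reduces membership in the weak packet to the orbit of the Aubert dual lying in $\mathscr C_{\mathrm{rel}}$. Theorem~\ref{thm:weak-injectivity} together with \eqref{eq:even-special-piece-index} identifies those orbits with the $E_x$. Theorem~\ref{thm:even-ambient-components} identifies the enhancement groups. Involutivity of Aubert duality gives bijectivity. The inclusion-induced, $Z(\Gd)$-compatible isomorphism handles the relevance condition.

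Your extra care with $\beta$ and Lemma~\ref{lem:chevalley-weyl-comparison} is harmless but not needed for this step. The paper simply uses the stated identity $\operatorname{AZ}(\pi_a)=\pi_{f(a)}$ to read \eqref{eq:weak-fourier-set} as a condition on the orbit of $\widehat\pi$.
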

\begin{proof}
By \eqref{eq:weak-fourier-set}, a representation belongs to the weak
packet exactly when the orbit of its Aubert dual lies in
$\mathscr C_{\mathrm{rel}}$. Theorem~\ref{thm:weak-injectivity}
and \eqref{eq:even-special-piece-index} identify those orbits with
the $E_x$. The enhanced correspondence gives exactly one irreducible
representation for each pair $(E_x,\rho)$, and
Theorem~\ref{thm:even-ambient-components} identifies its enhancement
group with the ambient one. Aubert duality is an involution on
irreducibles, so these labels remain distinct and exhaust the weak
packet. It preserves the underlying inner form. The stabilizer
isomorphism is induced by inclusion and commutes with the maps from
$Z(\Gd)$, proving the stated relevance condition.
\end{proof}

\begin{remark}
The fibres in \eqref{eq:even-weak-parametrisation} are disjoint:
$[x]$ records the ambient nilpotent orbit of the Aubert dual.
The Arthur packets based at $C_x=p(E_x)$ can overlap.
Section~\ref{subsec:weak-decomposition} proves that their union is
the weak packet. Thus $[x]$ indexes both disjoint enhancement fibres and possibly
overlapping Arthur packets.
\end{remark}

\begin{definition}[Canonical enhancement labels]
For $x\in\Gamma_{\mathrm{sp}}$ define
$$
 I_x^{\mathrm{can}}
   =Z_\Gamma(x)/(P_x\cap Z_\Gamma(x))
   \simeq N_\Gamma(P_x)/P_x.
$$
Achar--Sage \cite[\S2.4 and Theorem~2.1(2)]{AcharSage} give
$\bar A(\mathcal O_x^\vee)\simeq I_x^{\mathrm{can}}$.
Recall Lusztig's finite set
$$
 \mathcal M(\Gamma)
 =\{(x,\tau):x\in\Gamma,\ \tau\in\Irr Z_\Gamma(x)\}/\Gamma,
$$
where $\Gamma$ acts by simultaneous conjugation. Define the subset
\begin{equation}\label{eq:even-restricted-M-set}
 \mathcal M_{\mathrm{sp}}^0(\Gamma)
 =\left\{[(x,\tau)]\in\mathcal M(\Gamma):
 \begin{array}{l}
 [x]\cap\Gamma_{\mathrm{sp}}\ne\varnothing,\\
 P_x\cap Z_\Gamma(x)\subseteq\ker\tau
 \end{array}\right\}.
\end{equation}
The parabolic $P_x$ is defined in $\Gamma$, so these conditions are
conjugacy invariant. Inflation gives an identification with
$\bigsqcup_{[x]\in\operatorname{Cl}(\Gamma_{\mathrm{sp}})}
\Irr I_x^{\mathrm{can}}$.
\end{definition}

\begin{corollary}[Canonical enhancements]\label{cor:even-canonical-parametrisation}
Let $\Pi_{\mathcal O^\vee}^{\mathrm{weak,can}}$ consist of the
representations whose labels in \eqref{eq:even-weak-parametrisation}
have $\rho$ factoring through $\bar A(\mathcal O_x^\vee)$. Then
$$
 \Pi_{\mathcal O^\vee}^{\mathrm{weak,can}}
       \xrightarrow{\sim}\mathcal M_{\mathrm{sp}}^0(\Gamma).
$$
This parametrises the whole weak packet if every map
$A_{\Gd}(e_{1,x}^\vee)\to\bar A(\mathcal O_x^\vee)$ is an
isomorphism. In general the full parametrisation is
\eqref{eq:even-weak-parametrisation}.
\end{corollary}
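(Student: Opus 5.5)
The bijection is obtained by composing two identifications already in hand: the weak-packet parametrisation of Theorem~\ref{thm:even-weak-parametrisation}, and the Achar--Sage isomorphism $\bar A(\mathcal O_x^\vee)\simeq I_x^{\mathrm{can}}$.

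First I restrict \eqref{eq:even-weak-parametrisation} to the canonical part. A representation lies in $\Pi_{\mathcal O^\vee}^{\mathrm{weak,can}}$ exactly when its label $([x],\rho)$ has $\rho\in\Irr A_{\Gd}(e_{1,x}^\vee)$ factoring through the quotient $A_{\Gd}(e_{1,x}^\vee)\to\bar A(\mathcal O_x^\vee)$. Here I use that $e_{1,x}^\vee\in\mathcal O_x^\vee$, so the canonical quotient is defined for this stabilizer. Since this map is surjective, inflation identifies such $\rho$ bijectively with $\Irr\bar A(\mathcal O_x^\vee)$. Theorem~\ref{thm:even-weak-parametrisation} therefore gives
\[
 \Pi_{\mathcal O^\vee}^{\mathrm{weak,can}}\simeq\bigsqcup_{[x]\in\operatorname{Cl}(\Gamma_{\mathrm{sp}})}\Irr\bar A(\mathcal O_x^\vee).
\]

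Next I apply the Achar--Sage isomorphism $\bar A(\mathcal O_x^\vee)\simeq I_x^{\mathrm{can}}=Z_\Gamma(x)/(P_x\cap Z_\Gamma(x))$ to get $\bigsqcup_{[x]}\Irr I_x^{\mathrm{can}}$. By the inflation identification stated after \eqref{eq:even-restricted-M-set}, this union is $\mathcal M^0_{\mathrm{sp}}(\Gamma)$. One point needs care: a class $[x]$ of $\Gamma_{\mathrm{sp}}$ must determine a single $\Gamma$-class, and every $\Gamma$-class meeting $\Gamma_{\mathrm{sp}}$ must arise this way. Both follow from the statement in Proposition~\ref{prop:even-special-piece-labels} that distinct classes of $\Gamma_{\mathrm{sp}}$ remain distinct in $\Gamma$. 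I also need the pairs $(x,\tau)$ to be independent of the representative $x$. This holds because $P_x$ is defined intrinsically in $\Gamma$, so conjugating $x$ conjugates $P_x$ and $Z_\Gamma(x)$ compatibly.

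For the final assertion, if each map $A_{\Gd}(e_{1,x}^\vee)\to\bar A(\mathcal O_x^\vee)$ is an isomorphism, then every $\rho$ factors through the quotient. In that case $\Pi^{\mathrm{weak,can}}=\Pi^{\mathrm{weak}}$ and the bijection covers the whole weak packet.

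The main obstacle is canonicity. The Achar--Sage isomorphism is only specified up to conjugacy in $\Gamma$, and the choice of $e_{1,x}^\vee$ varies within $E_x$. I would check that changing either choice alters the identification only by an inner automorphism. Inner automorphisms act trivially on isomorphism classes of irreducible representations, so the induced bijection on $\Irr$ is well defined.
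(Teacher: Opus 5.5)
Your proposal is correct and follows the paper's argument. You restrict the parametrisation of Theorem~\ref{thm:even-weak-parametrisation} to the enhancements inflated from $\bar A(\mathcal O_x^\vee)$, then transport through the Achar--Sage isomorphism $\bar A(\mathcal O_x^\vee)\simeq I_x^{\mathrm{can}}$ and inflation to $Z_\Gamma(x)$, which is exactly the kernel condition defining $\mathcal M_{\mathrm{sp}}^0(\Gamma)$. Two small remarks: the only nontrivial point in matching $\operatorname{Cl}(\Gamma_{\mathrm{sp}})$ with the $\Gamma$-classes meeting $\Gamma_{\mathrm{sp}}$ is injectivity, and that is what the Achar--Sage statement supplies; your canonicity check goes beyond what the corollary asserts.
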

\begin{proof}
Factoring through the canonical quotient replaces each enhancement
fibre by $\Irr\bar A(\mathcal O_x^\vee)$.
The Achar--Sage identification and inflation to $Z_\Gamma(x)$ give
exactly the kernel condition in \eqref{eq:even-restricted-M-set}.
\end{proof}

\begin{example}[$G_2(a_1)$]
For $\mathcal O^\vee=G_2(a_1)$, both $\Gamma$ and
$\Gamma_{\mathrm{sp}}$ are $\mathfrak S_3$. The identity,
transposition, and three-cycle label respectively
$G_2(a_1),\widetilde A_1,A_1$. Their ambient component groups are
$\mathfrak S_3,1,1$, giving $3+1+1=5$ members of the weak packet.
All enhancements are canonical. The ordinary set
$\mathcal M(\mathfrak S_3)$ has $8$ elements; its centralizer groups
are replaced here by their effective quotients.

\end{example}

\begin{example}[$F_4(a_3)$]
For $F_4(a_3)$ the two finite groups are $\mathfrak S_4$.
Using the orbit numbering of Section~\ref{subsec:example-f4}, the
classification is
$$
\begin{array}{c|l|cc|c}
 \text{cycle type of }x&\mathcal O_x^\vee&E_x&C_x=p(E_x)
                            &A_{\Gd}(e_{1,x}^\vee)\\\hline
 1^4&F_4(a_3)&C_{12}&C_1&\mathfrak S_4\\
 2\,1^2&C_3(a_1)&C_{11}&C_2&\mathfrak S_2\\
 2^2&B_2&C_{10}&C_3&\mathfrak S_2\\
 3\,1&\widetilde A_2+A_1&C_9&C_5&1\\
 4&A_2+\widetilde A_1&C_8&C_8&1
\end{array}
$$
Here the $C_i$ use the orbit indices of Section~\ref{subsec:example-f4}.
The five enhancement fibres have sizes $5,2,2,1,1$, so the weak
packet has $11$ members. Again every enhancement is canonical,
whereas $|\mathcal M(\mathfrak S_4)|=21$.

\end{example}

\begin{example}[$D_4(a_1)$ in type $E_6$]
For $D_4(a_1)$ in the simply connected group of type $E_6$,
$\Gamma_{\mathrm{sp}}=\Gamma=\mathfrak S_3$ and the three
special-piece orbits are $D_4(a_1),A_3+A_1,2A_2+A_1$.
Their actual component groups are
$\mathfrak S_3,1,\mathbb Z/3\mathbb Z$, respectively.
Thus the full weak packet has $3+1+3=7$ members, in agreement with
\eqref{eq:e6-weak-example}. The last orbit has trivial canonical
quotient, so the canonical-enhancement subset has $3+1+1=5$ members.
Thus \eqref{eq:even-weak-parametrisation} includes enhancements
that do not factor through the canonical quotient.

\end{example}

\begin{example}[$A_2$ in type $F_4$]
The distinction is not confined to nontrivial central characters.
The even orbit $A_2$ in $F_4$ has a singleton special piece, ambient
component group $\mathfrak S_2$, and trivial canonical quotient.
Its weak packet therefore has two members, only one of which has a
canonical enhancement. This is also the orbit used in
Section~\ref{subsec:weak-decomposition} to distinguish canonical
enhancements from weak sphericity.
\end{example}

\subsection{Decomposition into microlocal Arthur packets}
\label{subsec:weak-decomposition}
Ciubotaru--Mason-Brown--Okada conjecture that weak Arthur packets
are unions of Arthur packets \cite[Conjecture~3.1.2]{CMBOweak}.
Their description in terms of Aubert duals
\cite[Corollary~3.1.1]{CMBOweak} takes the form
\eqref{eq:weak-fourier-set} in our notation. The following theorem
establishes the microlocal version of the conjecture and identifies
exactly which packets occur.

\begin{theorem}[Weak Arthur packet conjecture for microlocal packets]
\label{thm:weak-packet-decomposition}
Under the hypotheses of this section, for every neutral grading of
classical or exceptional type,
\begin{equation}\label{eq:weak-packet-decomposition}
 \Pi_{\mathcal O^\vee}^{\mathrm{weak}}
 =\bigcup_{C\in\mathscr C_{\mathrm{rel}}}\Pi_{p(C)}^\mic,
\end{equation}
and every packet in this union is a microlocal Arthur packet.
The equality holds on each pure inner twist. With packets taken
over all pure inner twists, a microlocal Arthur packet satisfies
$$
 \Pi_B^\mic\subseteq\Pi_{\mathcal O^\vee}^{\mathrm{weak}}
 \quad\Longleftrightarrow\quad p(B)\in\mathscr C_{\mathrm{rel}}.
$$
\end{theorem}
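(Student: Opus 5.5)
The proof is pure bookkeeping on top of \eqref{eq:weak-fourier-set}. It uses the Fourier symmetry of microlocal multiplicities \eqref{eq:fourier-multiplicities} and the openness of $V_{\mathrm{rel}}$ in $V$. The plan is as follows.

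First, the packets in the union are Arthur packets. By Theorem~\ref{thm:weak-injectivity}, every $C\in\mathscr C_{\mathrm{rel}}$ is an Arthur orbit. Write $C=C_\psi$. Then Lemma~\ref{lem:cunningham} gives $p(C)=C_{\psi^t}$, which is again an Arthur orbit. Hence $\Pi^\mic_{p(C)}=\Pi^\mic_{\psi^t}$ is a microlocal Arthur packet.

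Second, I translate microlocal membership. By \eqref{eq:fourier-multiplicities} and $p^2=1$,
\[
 \pi_a\in\Pi^\mic_B\iff m_B(P_a)>0\iff m_{p(B)}(P_{f(a)})>0 .
\]
By \eqref{eq:microlocal-diagonal}, the last condition forces $p(B)\leq C_{f(a)}$.

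Next I use that $V_{\mathrm{rel}}$ is open and $H$-stable in $V$. This was noted at the start of Section~\ref{subsec:weak-locus}: the special piece is open in the closure of its special orbit and $V\subseteq\overline{\mathcal O^\vee}$. So if $p(B)\in\mathscr C_{\mathrm{rel}}$ and $p(B)\subseteq\overline{C_{f(a)}}$, then $C_{f(a)}$ meets $V_{\mathrm{rel}}$, and hence $C_{f(a)}\in\mathscr C_{\mathrm{rel}}$. In other words, $\mathscr C_{\mathrm{rel}}$ is upward closed in the closure order. With \eqref{eq:weak-fourier-set}, this gives $\Pi^\mic_{p(C)}\subseteq\Pi^{\mathrm{weak}}_{\mathcal O^\vee}$ for every $C\in\mathscr C_{\mathrm{rel}}$.

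Conversely, let $\pi_a$ lie in the weak packet and put $D=C_{f(a)}\in\mathscr C_{\mathrm{rel}}$. Then \eqref{eq:microlocal-diagonal} gives $m_D(P_{f(a)})=\dim\rho_{f(a)}>0$. The displayed equivalence, applied with $B=p(D)$, shows $\pi_a\in\Pi^\mic_{p(D)}$. This proves \eqref{eq:weak-packet-decomposition}.

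For the pure-inner-twist refinement, I recall from the end of Section~\ref{sec:arthur-infinitesimal} that $\FT$ (hence $f$) preserves enhancement central characters. By Hypothesis~\ref{hyp:enhanced-llc}(1), it therefore preserves the pure inner form. So intersecting both sides with $\Irr_\lambda(G^\delta(F))$ preserves the equality.

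For the final equivalence, the direction ``$\Leftarrow$'' is the inclusion proved above. For ``$\Rightarrow$'', suppose $\Pi^\mic_B\subseteq\Pi^{\mathrm{weak}}_{\mathcal O^\vee}$. Choose $b\in\mathcal I_\lambda$ with $f(b)=(p(B),\mathbf 1)$. Then
\[
 m_B(P_b)=m_{p(B)}(P_{(p(B),\mathbf 1)})=1>0,
\]
so $\pi_b\in\Pi^\mic_B$ and hence lies in the weak packet. By \eqref{eq:weak-fourier-set}, $C_{f(b)}=p(B)\in\mathscr C_{\mathrm{rel}}$. Here packets are taken over all pure inner twists, so the representation $\pi_b$ exists on some twist.

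The only genuinely nontrivial input is the Arthur property of the orbits in $\mathscr C_{\mathrm{rel}}$, which is supplied by Theorem~\ref{thm:weak-injectivity}. The one point needing care is the upward-closedness of $\mathscr C_{\mathrm{rel}}$, and it reduces to the openness of $V_{\mathrm{rel}}$.
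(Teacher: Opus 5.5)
Your proposal is correct and follows the paper's proof essentially step by step: Fourier symmetry of multiplicities, the diagonal multiplicity $\dim\rho_{f(a)}$ for the converse inclusion, the trivial local system on $p(B)$ for the forward direction of the containment criterion, and Theorem~\ref{thm:weak-injectivity} plus factor exchange for the Arthur property. The differences are cosmetic: you obtain upward closure of $\mathscr C_{\mathrm{rel}}$ from openness of $V_{\mathrm{rel}}$ in $V$, whereas the paper uses $\Gd C\leq\Gd C_{f(a)}\leq\mathcal O^\vee$ together with order reversal under $d$; and the restriction to each pure inner twist follows directly by intersecting both sides with $\Irr_\lambda(G^\delta(F))$, so your appeal to $f$ preserving inner forms is harmless but unnecessary.
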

\begin{proof}
If $C\in\mathscr C_{\mathrm{rel}}$ and $\pi_a\in\Pi_{p(C)}^\mic$,
Fourier symmetry gives $m_C(P_{f(a)})>0$, hence
$C\leq C_{f(a)}$. Since
$\Gd C\leq\Gd C_{f(a)}\leq\mathcal O^\vee$, order reversal under
$d$ gives $d(\Gd C_{f(a)})=d(\mathcal O^\vee)$. Thus
\eqref{eq:weak-fourier-set} puts $\pi_a$ in the weak packet.
Conversely, for a member $\pi_a$ of the weak packet, put
$C=C_{f(a)}\in\mathscr C_{\mathrm{rel}}$. Then
$$
 m_{p(C)}(P_a)=m_C(P_{f(a)})=\dim\rho_{f(a)}>0.
$$
This proves the union formula, including its restriction to each
pure inner twist.

For the containment criterion, the reverse implication follows
from this formula. For the forward implication, put $D=p(B)$ and
choose $a$ with $P_{f(a)}=\IC(\overline D,\mathcal L_1)$.
Then $m_B(P_a)=1$, so containment and
\eqref{eq:weak-fourier-set} force $D\in\mathscr C_{\mathrm{rel}}$.
Finally, Theorem~\ref{thm:weak-injectivity} and factor exchange show
that all the packet bases $p(C)$ in the union are Arthur.
\end{proof}

Endoscopic versions for split classical groups are treated in
\cite{LiuLoWeak,GO}. The theorem above concerns microlocal packets;
comparison with endoscopic packets requires the corresponding
packet identification.

\subsection{Weak sphericity}
\label{subsec:weak-sphericity}
Following Gurevich--Okada \cite[Definition~1.2.1]{GO}, a representation
is \emph{weakly spherical} if it has a nonzero vector fixed by some
maximal compact subgroup. We ask whether every microlocal Arthur
packet contained in a weak packet has such a member. Let
$$
 \mathcal S_\lambda
 :=\{\pi_a:a\in\mathcal I_\lambda,\ \pi_a^K\ne0
       \text{ for some maximal compact }K\subset G^{\delta_a}(F)\}.
$$

For split adjoint exceptional groups, the same method reduces weak
sphericity of anti-tempered representations to a calculation in
Springer cohomology
\cite[Propositions~5.1.5 and~5.2.5, \S5.3]{GO}.
Fix an Iwahori subgroup $I$. For each maximal compact $K$ containing
$I$, let $\widehat W_K\subseteq W$ be the image in the finite Weyl
group of the subgroup of the extended affine Weyl group
associated with $K$, including its length-zero elements. For a principal-series enhancement $\rho$,
\begin{equation}\label{eq:weak-spherical-occurrence}
 \pi_{\mathrm{at},\rho}\text{ is weakly spherical}
 \quad\Longleftrightarrow\quad
 \operatorname{Hom}_A\!\left(E_\rho,
 H^*(\mathcal B_{e^\vee},\CC)\right)^{\widehat W_K}\ne0
 \quad\text{for some }K,
\end{equation}
where $\mathcal B_{e^\vee}$ is the Springer fibre. We checked for
these fixed vectors using GAP; the normalization is recorded in
the appendix. Only principal-series
enhancements need be tested, since $I\subseteq K$.

The calculations give the following result in exceptional types.

\begin{proposition}[Weak sphericity of contained packets]
\label{prop:contained-sphericity}
Let $G$ be split, simple and adjoint of exceptional type. In the
neutral unipotent setting, for every neutral grading,
$$
 \Pi^\mic\subseteq\Pi_{\mathcal O^\vee}^{\mathrm{weak}}
 \quad\Longrightarrow\quad \Pi^\mic\cap\mathcal S_\lambda\ne\varnothing
$$
for every microlocal Arthur packet $\Pi^\mic$.
\end{proposition}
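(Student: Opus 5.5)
The plan is to reduce the statement to a finite list of checks that the packet and orbit data in the appendix can carry out. The reduction uses Theorem~\ref{thm:weak-packet-decomposition} and the Fourier symmetry \eqref{eq:fourier-multiplicities}.

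\emph{Step 1: identify the contained packets.} By the containment criterion in Theorem~\ref{thm:weak-packet-decomposition}, a microlocal Arthur packet $\Pi_B^\mic$ lies in $\Pi_{\mathcal O^\vee}^{\mathrm{weak}}$ exactly when $D=p(B)\in\mathscr C_{\mathrm{rel}}$. By Theorem~\ref{thm:weak-injectivity}, such $D$ are Arthur orbits indexed by ambient orbits in the relative special piece. The proof of that theorem already exhibits a member of $\Pi_B^\mic$: the representation $\pi_a$ with $P_{f(a)}=\IC(\overline D,\mathcal L_1)$. Its Aubert dual is $\pi_{(D,1)}$, by the Fourier--Aubert compatibility used in \eqref{eq:weak-fourier-set}. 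So each contained packet contains the Aubert dual of the trivially enhanced representation on $D$, together with every $\pi_a$ for which $m_B(P_a)>0$.

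\emph{Step 2: reduce to anti-tempered principal-series members.} The natural candidate for a weakly spherical member is the Aubert dual of an Iwahori-spherical standard-type representation. When $D$ carries a graded lift of a nilpotent $e_1^\vee$ with a second triple, $\pi_{(D,1)}$ behaves like a tempered-type representation relative to $\hd$. Its Aubert dual is then of the anti-tempered form $\pi_{\mathrm{at},\rho}$ treated by \cite[Propositions~5.1.5 and~5.2.5]{GO}, and \eqref{eq:weak-spherical-occurrence} applies. Concretely, I will fix an ambient orbit $\mathcal O_1^\vee\in\mathscr S_{\mathrm{rel}}(\mathcal O^\vee)$ with graded lift $D$. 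I will then look for a principal-series enhancement $\rho$ of some orbit $D'$ satisfying two conditions: $m_B(P_{f^{-1}(D',\rho)})>0$, and the corresponding $\operatorname{Hom}_A(E_\rho,H^*(\mathcal B_{e^\vee}))^{\widehat W_K}$ is nonzero for some maximal compact $K\supseteq I$. The first choice to try is $\rho=1$ on $D'=D$, which is always in the packet by Step 1. In that case the invariants reduce to $\widehat W_K$-invariants in the Springer representation attached to $(\mathcal O_1^\vee,1)$ and its cohomology. For the hyperspecial $K$, $\widehat W_K=W$, so one needs the trivial $W$-type in $H^*(\mathcal B_{e^\vee})^{A}$. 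This holds in degree zero, which suggests the trivially enhanced member is always weakly spherical.

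\emph{Step 3: settle the cases where Step 2 is not conclusive.} The subtlety is that the identification of the Aubert dual of $\pi_{(D,1)}$ with an anti-tempered $\pi_{\mathrm{at},\rho}$ at the correct $e^\vee$ holds only when $D$ is suitably open or tempered. For general graded lifts, such as $C_x$ with $x\ne1$, the representation is not literally anti-tempered. Here I would pass to a Levi subgroup via the Langlands classification in Hypothesis~\ref{hyp:enhanced-llc}. For the remaining cases, I would rely on the computer check: for each of the finitely many neutral gradings of exceptional types, list the contained packets from the $p$-involution tables, and search for a member satisfying \eqref{eq:weak-spherical-occurrence} by computing $\widehat W_K$-invariants in GAP over all $K\supseteq I$. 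I expect this step to be the main obstacle: matching the Iwahori and Springer normalization conventions with the enhanced labels across all gradings, particularly where non-trivial $\widehat W_K$ from length-zero elements matter, as for $A_2$ in $F_4$. Weak sphericity there must be certified using a non-hyperspecial $K$ or a non-trivially enhanced member.
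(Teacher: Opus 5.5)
Your outer structure matches the paper's. The paper does $\hd=0$ by hand and certifies every other contained packet $\Pi^\mic_{p(D)}$, $D\in\mathscr C_{\mathrm{rel}}$, by a GAP computation. The problem is the criterion you propose to compute with.

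\emph{Step~2.} The representation $\pi_{(D,1)}$ is tempered only when $D=C_{\mathrm{op}}$. Otherwise its Weil restriction $|w|^{(\hd-h_1^\vee)/2}$ is unbounded, and its infinitesimal character is $\hd/2$, not $h_1^\vee/2$. So $\operatorname{AZ}(\pi_{(D,1)})$ is not an anti-tempered representation attached to $\mathcal O_1^\vee$. Nor does \eqref{eq:weak-spherical-occurrence} apply to it: that criterion concerns only the Aubert duals of the tempered representations on the open orbit, with the Springer fibre of the grading element $e^\vee$.

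What the Springer cohomology of $e_1^\vee$ with trivial $\rho$ computes is the $W$-structure of (the Aubert dual of) the \emph{standard} module $I_{(D,1)}$, not of its Langlands quotient. The trivial $W$-type you see in degree zero comes from the constituent $\pi_{(C_{\mathrm{op}},1)}$. This constituent occurs in every $I_{(D,1)}$ with multiplicity one, because $P_{(C_{\mathrm{op}},1)}=\CC_V[\dim V]$ has stalk $\CC[\dim V]$ everywhere. Its Aubert dual is the spherical representation $\pi_{(\{0\},1)}$. Since $P_{(\{0\},1)}$ is the skyscraper at $0$, that representation lies in $\Pi^\mic_B$ only for $B=\{0\}$.

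So the hyperspecial argument proves the statement for the anti-tempered packet and nothing more. For every other contained packet, the unique hyperspecial-spherical representation at $\lambda$ is not a member, and weak sphericity must come from a non-hyperspecial $K$. The suggestion that the trivially enhanced member is always weakly spherical has no support.

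\emph{Step~3.} Searching for members satisfying \eqref{eq:weak-spherical-occurrence} only tests the anti-tempered members $\pi_{f(b)}$ with $C_b=C_{\mathrm{op}}$. Some contained packets have no principal-series member of that kind. In the $F_4(a_3)$ grading, $p(C_8)=C_8\in\mathscr C_{\mathrm{rel}}$, so $\Pi^\mic_{C_8}=\{\pi_{10},\pi_{12},\pi_{13},\pi_{15},\pi_{20}\}$ is contained. The anti-tempered representations, however, are $\pi_1,\pi_2,\pi_3,\pi_9,\pi_{20}$. Of these only $\pi_{20}$ is in the packet, and it lies outside the principal series, so it has no Iwahori-fixed vectors and cannot be weakly spherical. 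Your search therefore finds nothing for this packet.

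Passing to a Levi via the Langlands classification does not help, since the $K$-invariants of a Langlands quotient are not determined by its inducing data. The missing ingredient is a computation of the $\widehat W_K$-structure of $\pi_a^I$ for an arbitrary principal-series member:
\begin{enumerate}
\item start from the Springer-cohomology $W$-structure of the standard modules $I_b$;
\item invert the multiplicity matrix $[I_b:\pi_a]=(-1)^{d_a}s_{ba}$ of Hypothesis~\ref{hyp:enhanced-llc}, using the IC data of \cite{CiubotaruOkadaKL};
\item only then restrict to each $\widehat W_K$, length-zero elements included, and test for the trivial character.
\end{enumerate}
Without this inversion, your machine check cannot certify packets such as $\Pi^\mic_{C_8}$.
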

\begin{proof}
For $\hd=0$, the graded space is a point and $H=\Gd$ is connected.
There is a single equivariant IC complex, and its microlocal packet
consists of the spherical representation with trivial unramified
Langlands parameter. The assertion follows in this case. For the
nonzero neutral gradings, the GAP calculations verify the assertion
for every contained packet.
\end{proof}
In fact, for each
split adjoint exceptional $G$, one maximal compact subgroup $K_G$
works for every neutral grading:
$$
 \Pi^\mic\subseteq\Pi_{\mathcal O^\vee}^{\mathrm{weak}}
 \quad\Longrightarrow\quad
 \exists\,\pi_a\in\Pi^\mic:\quad
       \pi_a^{K_G}\ne0,\qquad\dim\rho_a=1.
$$
The corresponding subgroups $\widehat W_{K_G}$ have reflection types
$$
\begin{array}{cccccc}
\text{dual type}&G_2&F_4&E_6&E_7&E_8\\\hline
\text{reflection type}&A_1\times A_1&A_2\times A_2&A_5\times A_1&
D_6\times A_1&A_4\times A_4.
\end{array}
$$
The enhancement is one-dimensional, but the space of $K_G$-fixed
vectors need not be. At $E_7(a_3)$ and $E_8(a_3)$, one must choose
a member whose support is larger than the defining Arthur orbit.
\section{Examples and extremality counterexamples}
\label{sec:examples}

We apply Theorem~\ref{thm:arthur-row-reconstruction} to four neutral
gradings. Throughout this section the Weil part is trivial,
$V=\gd(2)$ is the degree-two subspace for the indicated neutral element
$\hd$, and $H=Z_{\Gd}(\hd)$. The dual groups are simply connected.
We include all equivariant local systems; the packets on a fixed inner
form are obtained by imposing the relevance condition of
Section~\ref{sec:setup}.

We use the notation of Section~\ref{sec:reconstruction}. Thus $p(i)$ is
the index of $p(C_i)$, and $\FT P_a\simeq P_{f(a)}$. Orbit and IC indices
are assigned separately in each example. In the $F_4$ and $E_6$
examples, these indices differ from those in the online atlas.
For an Arthur orbit $C_i$, put
$$
 R_i=\sum_a\mathbf M_{ia}\pi_a,\qquad
 \Pi_i^{\mic}:=\Pi_{C_i}^{\mic}
       =\{\pi_a:\mathbf M_{ia}>0\}.
$$
The formal sum $R_i$ records the ordinary microlocal multiplicities.
It is distinct from the signed distribution of
Section~\ref{sec:reconstruction}, and does not determine the
representations of the microlocal component groups.

In the tables, $\Gd C_i$ denotes the saturation of $C_i$ and $A_H(C_i)$
its stabilizer component group. The IC columns specify
$P_a=\IC(\overline{C_i},\mathcal L_{\rho_a})$, together with
$\dim\rho_a$ and $f(a)$. Partitions label irreducible representations
of symmetric groups; in particular, $2$ and $11$ label the trivial
and nontrivial characters of $S_2$. We write $1$ for a trivial
component group or its unique character.

\subsection{The \texorpdfstring{$F_4(a_3)$}{F4(a3)} grading}
\label{subsec:example-f4}

Here $\hd=(0,2,0,0)$ and $\dim V=12$. There are twelve graded
orbits, twenty IC parameters, and eleven Arthur orbits. The orbits
$C_4$ and $C_5$ have the same dimension and saturation, but only $C_5$
is Arthur. The orbit and enhancement data are as follows.
\begingroup\small
\setlength{\tabcolsep}{4pt}
\setlength{\arrayrulewidth}{0.25pt}
\renewcommand{\arraystretch}{1.2}
\begin{longtable}{@{}rrlcrc@{\hspace{12pt}}rcrr@{}}
\hline

\multicolumn{6}{c}{\textbf{Orbit data}} & \multicolumn{4}{c}{\textbf{IC dictionary}}\\[3pt]
$i$ & $\dim C_i$ & $\Gd C_i$ & $A_H(C_i)$ & $p(i)$ & Arthur type & $a$ & $\rho_a$ & $\dim\rho_a$ & $f(a)$\\[3pt]
\hline

\endhead
\hline\endfoot
1 & 0 & $0$ & $1$ & 12 & Yes & \begin{tabular}[c]{@{}r@{}}1\end{tabular} & \begin{tabular}[c]{@{}c@{}}$1$\end{tabular} & \begin{tabular}[c]{@{}r@{}}1\end{tabular} & \begin{tabular}[c]{@{}r@{}}16\end{tabular}\\
\hline
2 & 4 & $A_1$ & $1$ & 11 & Yes & \begin{tabular}[c]{@{}r@{}}2\end{tabular} & \begin{tabular}[c]{@{}c@{}}$1$\end{tabular} & \begin{tabular}[c]{@{}r@{}}1\end{tabular} & \begin{tabular}[c]{@{}r@{}}17\end{tabular}\\
\hline
3 & 6 & $\tilde A_1$ & $\ZZ/2$ & 10 & Yes & \begin{tabular}[c]{@{}r@{}}3 \\ 4\end{tabular} & \begin{tabular}[c]{@{}c@{}}$2$ \\ $11$\end{tabular} & \begin{tabular}[c]{@{}r@{}}1 \\ 1\end{tabular} & \begin{tabular}[c]{@{}r@{}}18 \\ 11\end{tabular}\\
\hline
4 & 7 & $A_1{+}\tilde A_1$ & $1$ & 4 & No & \begin{tabular}[c]{@{}r@{}}6\end{tabular} & \begin{tabular}[c]{@{}c@{}}$1$\end{tabular} & \begin{tabular}[c]{@{}r@{}}1\end{tabular} & \begin{tabular}[c]{@{}r@{}}6\end{tabular}\\
\hline
5 & 7 & $A_1{+}\tilde A_1$ & $1$ & 9 & Yes & \begin{tabular}[c]{@{}r@{}}5\end{tabular} & \begin{tabular}[c]{@{}c@{}}$1$\end{tabular} & \begin{tabular}[c]{@{}r@{}}1\end{tabular} & \begin{tabular}[c]{@{}r@{}}14\end{tabular}\\
\hline
6 & 8 & $\tilde A_2$ & $1$ & 7 & Yes & \begin{tabular}[c]{@{}r@{}}8\end{tabular} & \begin{tabular}[c]{@{}c@{}}$1$\end{tabular} & \begin{tabular}[c]{@{}r@{}}1\end{tabular} & \begin{tabular}[c]{@{}r@{}}7\end{tabular}\\
\hline
7 & 8 & $A_2$ & $\ZZ/2$ & 6 & Yes & \begin{tabular}[c]{@{}r@{}}7 \\ 9\end{tabular} & \begin{tabular}[c]{@{}c@{}}$2$ \\ $11$\end{tabular} & \begin{tabular}[c]{@{}r@{}}1 \\ 1\end{tabular} & \begin{tabular}[c]{@{}r@{}}8 \\ 19\end{tabular}\\
\hline
8 & 9 & $A_2{+}\tilde A_1$ & $1$ & 8 & Yes & \begin{tabular}[c]{@{}r@{}}10\end{tabular} & \begin{tabular}[c]{@{}c@{}}$1$\end{tabular} & \begin{tabular}[c]{@{}r@{}}1\end{tabular} & \begin{tabular}[c]{@{}r@{}}15\end{tabular}\\
\hline
9 & 10 & $\tilde A_2{+}A_1$ & $1$ & 5 & Yes & \begin{tabular}[c]{@{}r@{}}13\end{tabular} & \begin{tabular}[c]{@{}c@{}}$1$\end{tabular} & \begin{tabular}[c]{@{}r@{}}1\end{tabular} & \begin{tabular}[c]{@{}r@{}}13\end{tabular}\\
\hline
10 & 10 & $B_2$ & $\ZZ/2$ & 3 & Yes & \begin{tabular}[c]{@{}r@{}}11 \\ 12\end{tabular} & \begin{tabular}[c]{@{}c@{}}$2$ \\ $11$\end{tabular} & \begin{tabular}[c]{@{}r@{}}1 \\ 1\end{tabular} & \begin{tabular}[c]{@{}r@{}}4 \\ 12\end{tabular}\\
\hline
11 & 11 & $C_3(a_1)$ & $\ZZ/2$ & 2 & Yes & \begin{tabular}[c]{@{}r@{}}14 \\ 15\end{tabular} & \begin{tabular}[c]{@{}c@{}}$2$ \\ $11$\end{tabular} & \begin{tabular}[c]{@{}r@{}}1 \\ 1\end{tabular} & \begin{tabular}[c]{@{}r@{}}5 \\ 10\end{tabular}\\
\hline
12 & 12 & $F_4(a_3)$ & $S_4$ & 1 & Yes & \begin{tabular}[c]{@{}r@{}}16 \\ 17 \\ 18 \\ 19 \\ 20\end{tabular} & \begin{tabular}[c]{@{}c@{}}$4$ \\ $31$ \\ $22$ \\ $211$ \\ $1111$\end{tabular} & \begin{tabular}[c]{@{}r@{}}1 \\ 3 \\ 2 \\ 3 \\ 1\end{tabular} & \begin{tabular}[c]{@{}r@{}}1 \\ 2 \\ 3 \\ 9 \\ 20\end{tabular}\\
\end{longtable}
\endgroup

\clearpage
\begin{figure}[!htbp]
\centering
\begin{tikzpicture}[x=1cm,y=1cm, every node/.style={font=\scriptsize},
 art/.style={circle,draw,fill=black,text=white,inner sep=1pt,minimum size=4.5mm},
 ordinary/.style={circle,draw,fill=white,inner sep=1pt,minimum size=4.5mm}]
\node[art] (n1) at (0.000,0.000) {$ 1 $};
\node[art] (n2) at (0.000,0.620) {$ 2 $};
\node[art] (n3) at (0.000,1.240) {$ 3 $};
\node[ordinary] (n4) at (-0.650,1.860) {$ 4 $};
\node[art] (n5) at (0.650,1.860) {$ 5 $};
\coordinate (nd4l4) at (-1.300,2.480);
\node[art] (n7) at (0.000,2.480) {$ 7 $};
\node[art] (n6) at (1.300,2.480) {$ 6 $};
\node[art] (n8) at (-0.650,3.100) {$ 8 $};
\coordinate (nd7l5) at (0.650,3.100);
\node[art] (n10) at (-0.650,3.720) {$ 10 $};
\node[art] (n9) at (0.650,3.720) {$ 9 $};
\node[art] (n11) at (0.000,4.340) {$ 11 $};
\node[art] (n12) at (0.000,4.960) {$ 12 $};
\draw[line width=.35pt] (n1) -- (n2);
\draw[line width=.35pt] (n2) -- (n3);
\draw[line width=.35pt] (n3) -- (n4);
\draw[line width=.35pt] (n3) -- (n5);
\draw[line width=.35pt] (n4) -- (nd4l4) -- (n8);
\draw[line width=.35pt] (n5) -- (n6);
\draw[line width=.35pt] (n5) -- (n7);
\draw[line width=.35pt] (n6) -- (nd7l5) -- (n9);
\draw[line width=.35pt] (n7) -- (n8);
\draw[line width=.35pt] (n8) -- (n9);
\draw[line width=.35pt] (n8) -- (n10);
\draw[line width=.35pt] (n9) -- (n11);
\draw[line width=.35pt] (n10) -- (n11);
\draw[line width=.35pt] (n11) -- (n12);
\node[anchor=east] at (-2.100,0.000) {$ 0 $};
\node[anchor=east] at (-2.100,0.620) {$ 4 $};
\node[anchor=east] at (-2.100,1.240) {$ 6 $};
\node[anchor=east] at (-2.100,1.860) {$ 7 $};
\node[anchor=east] at (-2.100,2.480) {$ 8 $};
\node[anchor=east] at (-2.100,3.100) {$ 9 $};
\node[anchor=east] at (-2.100,3.720) {$ 10 $};
\node[anchor=east] at (-2.100,4.340) {$ 11 $};
\node[anchor=east] at (-2.100,4.960) {$ 12 $};
\node[anchor=east] at (-2.100,5.580) {$\dim C$};
\end{tikzpicture}
\caption{Closure order for the $F_4(a_3)$ grading. Vertices are labelled by $i$, and filled vertices denote Arthur orbits. The order increases upwards; the left column gives $\dim C_i$. }\label{fig:hasse-f4}
\end{figure}

Figure~\ref{fig:hasse-f4} gives the closure order. The enhancement
$a=20$ is the cuspidal local system on the open orbit; the remaining
nineteen parameters belong to the principal series. The Arthur rows are
\begin{longtable}{rl}
$i$ & $R_i$\\ \hline
\endhead
1 & $\pi_{1}+3\pi_{2}+2\pi_{3}+3\pi_{9}+\pi_{20}$\\
2 & $\pi_{2}+\pi_{3}+\pi_{5}+2\pi_{9}+\pi_{10}+\pi_{20}$\\
3 & $\pi_{3}+\pi_{4}+\pi_{9}+2\pi_{10}+\pi_{12}+\pi_{20}$\\
5 & $\pi_{5}+\pi_{9}+\pi_{10}+\pi_{13}+\pi_{20}$\\
6 & $\pi_{8}+2\pi_{13}+\pi_{19}+\pi_{20}$\\
7 & $\pi_{7}+\pi_{9}+2\pi_{13}+\pi_{20}$\\
8 & $\pi_{10}+\pi_{12}+\pi_{13}+\pi_{15}+\pi_{20}$\\
9 & $\pi_{13}+\pi_{14}+\pi_{15}+\pi_{19}+\pi_{20}$\\
10 & $\pi_{11}+\pi_{12}+2\pi_{15}+\pi_{18}+\pi_{19}+\pi_{20}$\\
11 & $\pi_{14}+\pi_{15}+\pi_{17}+\pi_{18}+2\pi_{19}+\pi_{20}$\\
12 & $\pi_{16}+3\pi_{17}+2\pi_{18}+3\pi_{19}+\pi_{20}$\\
\end{longtable}
Thus every $\pi_a$ except $\pi_6$ belongs to an Arthur packet.
Moreover, $\mathbf M_{i,20}=1$ for every Arthur orbit $C_i$.

For comparison with \cite[Tables~1--2 and Theorem~3.2]{BL}, the orbit dictionary is
$$
 (C_1,\ldots,C_{12})=
 (\mathcal S_0,\mathcal S_1,\mathcal S_2,\mathcal S_4,
 \mathcal S_3,\mathcal S_6,\mathcal S_5,\mathcal S_7,
 \mathcal S_8,\mathcal S_9,\mathcal S_{10},\mathcal S_{11}).
$$
Under this identification, our multiplicities on the Arthur conormals
agree with the nineteen principal-series characteristic-cycle formulas
of \cite[Theorem~3.2]{BL}. The undetermined coefficient in those formulas
concerns $\mathcal S_4=C_4$, which is not Arthur, and hence does not enter
the packet table. For example, writing
$\Lambda_i=\overline{T^*_{C_i}V}$, the Arthur part of the column $a=19$ is
$$
 [\Lambda_6]+[\Lambda_9]+[\Lambda_{10}]
       +2[\Lambda_{11}]+3[\Lambda_{12}].
$$
We use this column in Section~\ref{subsec:extremality}.

\clearpage
\subsection{The \texorpdfstring{$D_4(a_1)$}{D4(a1)} grading in \texorpdfstring{$E_6$}{E6}}
\label{subsec:example-e6}

Here $\hd=(0,0,0,2,0,0)$ and $\dim V=18$. There are eighteen graded
orbits, twenty-five IC parameters, and eleven Arthur orbits. Their
orbit and enhancement data are as follows.
\begingroup\small
\setlength{\tabcolsep}{4pt}
\setlength{\arrayrulewidth}{0.25pt}
\renewcommand{\arraystretch}{1.2}
\begin{longtable}{@{}rrlcrc@{\hspace{12pt}}rcrr@{}}
\hline

\multicolumn{6}{c}{\textbf{Orbit data}} & \multicolumn{4}{c}{\textbf{IC dictionary}}\\[3pt]
$i$ & $\dim C_i$ & $\Gd C_i$ & $A_H(C_i)$ & $p(i)$ & Arthur type & $a$ & $\rho_a$ & $\dim\rho_a$ & $f(a)$\\[3pt]
\hline

\endhead
\hline\endfoot
1 & 0 & $0$ & $1$ & 18 & Yes & \begin{tabular}[c]{@{}r@{}}1\end{tabular} & \begin{tabular}[c]{@{}c@{}}$1$\end{tabular} & \begin{tabular}[c]{@{}r@{}}1\end{tabular} & \begin{tabular}[c]{@{}r@{}}19\end{tabular}\\
\hline
2 & 6 & $A_1$ & $1$ & 17 & Yes & \begin{tabular}[c]{@{}r@{}}2\end{tabular} & \begin{tabular}[c]{@{}c@{}}$1$\end{tabular} & \begin{tabular}[c]{@{}r@{}}1\end{tabular} & \begin{tabular}[c]{@{}r@{}}20\end{tabular}\\
\hline
3 & 8 & $2A_1$ & $1$ & 13 & No & \begin{tabular}[c]{@{}r@{}}4\end{tabular} & \begin{tabular}[c]{@{}c@{}}$1$\end{tabular} & \begin{tabular}[c]{@{}r@{}}1\end{tabular} & \begin{tabular}[c]{@{}r@{}}13\end{tabular}\\
\hline
4 & 8 & $2A_1$ & $1$ & 11 & No & \begin{tabular}[c]{@{}r@{}}3\end{tabular} & \begin{tabular}[c]{@{}c@{}}$1$\end{tabular} & \begin{tabular}[c]{@{}r@{}}1\end{tabular} & \begin{tabular}[c]{@{}r@{}}12\end{tabular}\\
\hline
5 & 9 & $2A_1$ & $1$ & 15 & Yes & \begin{tabular}[c]{@{}r@{}}5\end{tabular} & \begin{tabular}[c]{@{}c@{}}$1$\end{tabular} & \begin{tabular}[c]{@{}r@{}}1\end{tabular} & \begin{tabular}[c]{@{}r@{}}16\end{tabular}\\
\hline
6 & 10 & $3A_1$ & $1$ & 6 & No & \begin{tabular}[c]{@{}r@{}}6\end{tabular} & \begin{tabular}[c]{@{}c@{}}$1$\end{tabular} & \begin{tabular}[c]{@{}r@{}}1\end{tabular} & \begin{tabular}[c]{@{}r@{}}6\end{tabular}\\
\hline
7 & 11 & $3A_1$ & $1$ & 16 & Yes & \begin{tabular}[c]{@{}r@{}}7\end{tabular} & \begin{tabular}[c]{@{}c@{}}$1$\end{tabular} & \begin{tabular}[c]{@{}r@{}}1\end{tabular} & \begin{tabular}[c]{@{}r@{}}18\end{tabular}\\
\hline
8 & 12 & $A_2$ & $\ZZ/2$ & 12 & Yes & \begin{tabular}[c]{@{}r@{}}8 \\ 9\end{tabular} & \begin{tabular}[c]{@{}c@{}}$2$ \\ $11$\end{tabular} & \begin{tabular}[c]{@{}r@{}}1 \\ 1\end{tabular} & \begin{tabular}[c]{@{}r@{}}15 \\ 21\end{tabular}\\
\hline
9 & 13 & $A_2{+}A_1$ & $1$ & 9 & No & \begin{tabular}[c]{@{}r@{}}11\end{tabular} & \begin{tabular}[c]{@{}c@{}}$1$\end{tabular} & \begin{tabular}[c]{@{}r@{}}1\end{tabular} & \begin{tabular}[c]{@{}r@{}}11\end{tabular}\\
\hline
10 & 13 & $A_2{+}A_1$ & $1$ & 10 & No & \begin{tabular}[c]{@{}r@{}}10\end{tabular} & \begin{tabular}[c]{@{}c@{}}$1$\end{tabular} & \begin{tabular}[c]{@{}r@{}}1\end{tabular} & \begin{tabular}[c]{@{}r@{}}10\end{tabular}\\
\hline
11 & 14 & $A_2{+}2A_1$ & $1$ & 4 & No & \begin{tabular}[c]{@{}r@{}}12\end{tabular} & \begin{tabular}[c]{@{}c@{}}$1$\end{tabular} & \begin{tabular}[c]{@{}r@{}}1\end{tabular} & \begin{tabular}[c]{@{}r@{}}3\end{tabular}\\
\hline
12 & 14 & $2A_2$ & $\ZZ/3$ & 8 & Yes & \begin{tabular}[c]{@{}r@{}}15 \\ 22 \\ 24\end{tabular} & \begin{tabular}[c]{@{}c@{}}$1$ \\ $\zeta_3$ \\ $\zeta_3^2$\end{tabular} & \begin{tabular}[c]{@{}r@{}}1 \\ 1 \\ 1\end{tabular} & \begin{tabular}[c]{@{}r@{}}8 \\ 23 \\ 25\end{tabular}\\
\hline
13 & 14 & $A_2{+}2A_1$ & $1$ & 3 & No & \begin{tabular}[c]{@{}r@{}}13\end{tabular} & \begin{tabular}[c]{@{}c@{}}$1$\end{tabular} & \begin{tabular}[c]{@{}r@{}}1\end{tabular} & \begin{tabular}[c]{@{}r@{}}4\end{tabular}\\
\hline
14 & 14 & $A_2{+}2A_1$ & $1$ & 14 & Yes & \begin{tabular}[c]{@{}r@{}}14\end{tabular} & \begin{tabular}[c]{@{}c@{}}$1$\end{tabular} & \begin{tabular}[c]{@{}r@{}}1\end{tabular} & \begin{tabular}[c]{@{}r@{}}14\end{tabular}\\
\hline
15 & 15 & $A_3$ & $1$ & 5 & Yes & \begin{tabular}[c]{@{}r@{}}16\end{tabular} & \begin{tabular}[c]{@{}c@{}}$1$\end{tabular} & \begin{tabular}[c]{@{}r@{}}1\end{tabular} & \begin{tabular}[c]{@{}r@{}}5\end{tabular}\\
\hline
16 & 16 & $2A_2{+}A_1$ & $\ZZ/3$ & 7 & Yes & \begin{tabular}[c]{@{}r@{}}17 \\ 23 \\ 25\end{tabular} & \begin{tabular}[c]{@{}c@{}}$1$ \\ $\zeta_3$ \\ $\zeta_3^2$\end{tabular} & \begin{tabular}[c]{@{}r@{}}1 \\ 1 \\ 1\end{tabular} & \begin{tabular}[c]{@{}r@{}}17 \\ 22 \\ 24\end{tabular}\\
\hline
17 & 17 & $A_3{+}A_1$ & $1$ & 2 & Yes & \begin{tabular}[c]{@{}r@{}}18\end{tabular} & \begin{tabular}[c]{@{}c@{}}$1$\end{tabular} & \begin{tabular}[c]{@{}r@{}}1\end{tabular} & \begin{tabular}[c]{@{}r@{}}7\end{tabular}\\
\hline
18 & 18 & $D_4(a_1)$ & $S_3$ & 1 & Yes & \begin{tabular}[c]{@{}r@{}}19 \\ 20 \\ 21\end{tabular} & \begin{tabular}[c]{@{}c@{}}$3$ \\ $21$ \\ $111$\end{tabular} & \begin{tabular}[c]{@{}r@{}}1 \\ 2 \\ 1\end{tabular} & \begin{tabular}[c]{@{}r@{}}1 \\ 2 \\ 9\end{tabular}\\
\end{longtable}
\endgroup

\clearpage
\begin{figure}[!htbp]
\centering
\begin{tikzpicture}[x=1cm,y=1cm, every node/.style={font=\scriptsize},
 art/.style={circle,draw,fill=black,text=white,inner sep=1pt,minimum size=4.5mm},
 ordinary/.style={circle,draw,fill=white,inner sep=1pt,minimum size=4.5mm}]
\node[art] (n1) at (0.000,0.000) {$ 1 $};
\node[art] (n2) at (0.000,0.620) {$ 2 $};
\node[ordinary] (n3) at (-1.300,1.240) {$ 3 $};
\node[ordinary] (n4) at (0.000,1.240) {$ 4 $};
\coordinate (nd3l2) at (1.300,1.240);
\coordinate (nd4l3) at (-1.300,1.860);
\coordinate (nd5l3) at (0.000,1.860);
\node[art] (n5) at (1.300,1.860) {$ 5 $};
\coordinate (nd4l4) at (-1.950,2.480);
\coordinate (nd5l4) at (-0.650,2.480);
\coordinate (nd7l4) at (0.650,2.480);
\node[ordinary] (n6) at (1.950,2.480) {$ 6 $};
\node[art] (n7) at (-0.650,3.100) {$ 7 $};
\coordinate (nd8l5) at (0.650,3.100);
\node[art] (n8) at (-0.650,3.720) {$ 8 $};
\coordinate (nd8l6) at (0.650,3.720);
\node[ordinary] (n9) at (-1.300,4.340) {$ 9 $};
\node[ordinary] (n10) at (0.000,4.340) {$ 10 $};
\coordinate (nd8l7) at (1.300,4.340);
\node[ordinary] (n13) at (-1.950,4.960) {$ 13 $};
\node[art] (n12) at (-0.650,4.960) {$ 12 $};
\node[ordinary] (n11) at (0.650,4.960) {$ 11 $};
\node[art] (n14) at (1.950,4.960) {$ 14 $};
\coordinate (nd20l9) at (-2.600,5.580);
\coordinate (nd19l9) at (-1.300,5.580);
\coordinate (nd18l9) at (0.000,5.580);
\coordinate (nd22l9) at (1.300,5.580);
\node[art] (n15) at (2.600,5.580) {$ 15 $};
\node[art] (n16) at (-0.650,6.200) {$ 16 $};
\coordinate (nd23l10) at (0.650,6.200);
\node[art] (n17) at (0.000,6.820) {$ 17 $};
\node[art] (n18) at (0.000,7.440) {$ 18 $};
\draw[line width=.35pt] (n1) -- (n2);
\draw[line width=.35pt] (n2) -- (n3);
\draw[line width=.35pt] (n2) -- (n4);
\draw[line width=.35pt] (n2) -- (nd3l2) -- (n5);
\draw[line width=.35pt] (n3) -- (nd4l3) -- (nd4l4) -- (n7);
\draw[line width=.35pt] (n4) -- (nd5l3) -- (nd5l4) -- (n7);
\draw[line width=.35pt] (n5) -- (n6);
\draw[line width=.35pt] (n5) -- (nd7l4) -- (n7);
\draw[line width=.35pt] (n6) -- (nd8l5) -- (nd8l6) -- (nd8l7) -- (n14);
\draw[line width=.35pt] (n7) -- (n8);
\draw[line width=.35pt] (n8) -- (n9);
\draw[line width=.35pt] (n8) -- (n10);
\draw[line width=.35pt] (n9) -- (n12);
\draw[line width=.35pt] (n9) -- (n13);
\draw[line width=.35pt] (n9) -- (n14);
\draw[line width=.35pt] (n10) -- (n11);
\draw[line width=.35pt] (n10) -- (n12);
\draw[line width=.35pt] (n10) -- (n14);
\draw[line width=.35pt] (n11) -- (nd18l9) -- (n16);
\draw[line width=.35pt] (n12) -- (nd19l9) -- (n16);
\draw[line width=.35pt] (n13) -- (nd20l9) -- (n16);
\draw[line width=.35pt] (n14) -- (n15);
\draw[line width=.35pt] (n14) -- (nd22l9) -- (n16);
\draw[line width=.35pt] (n15) -- (nd23l10) -- (n17);
\draw[line width=.35pt] (n16) -- (n17);
\draw[line width=.35pt] (n17) -- (n18);
\node[anchor=east] at (-3.400,0.000) {$ 0 $};
\node[anchor=east] at (-3.400,0.620) {$ 6 $};
\node[anchor=east] at (-3.400,1.240) {$ 8 $};
\node[anchor=east] at (-3.400,1.860) {$ 9 $};
\node[anchor=east] at (-3.400,2.480) {$ 10 $};
\node[anchor=east] at (-3.400,3.100) {$ 11 $};
\node[anchor=east] at (-3.400,3.720) {$ 12 $};
\node[anchor=east] at (-3.400,4.340) {$ 13 $};
\node[anchor=east] at (-3.400,4.960) {$ 14 $};
\node[anchor=east] at (-3.400,5.580) {$ 15 $};
\node[anchor=east] at (-3.400,6.200) {$ 16 $};
\node[anchor=east] at (-3.400,6.820) {$ 17 $};
\node[anchor=east] at (-3.400,7.440) {$ 18 $};
\node[anchor=east] at (-3.400,8.060) {$\dim C$};
\end{tikzpicture}
\caption{Closure order for the $D_4(a_1)$ grading in $E_6$. Vertices are labelled by $i$, and filled vertices denote Arthur orbits. The order increases upwards; the left column gives $\dim C_i$. }\label{fig:hasse-e6}
\end{figure}

The closure order is shown in Figure~\ref{fig:hasse-e6}. The parameters
$22,23,24,25$ belong to the two non-principal series with cuspidal support
of type $A_2\times A_2$. We label their local systems by restriction of
the ambient characters: $\zeta_3$ and $\zeta_3^2$ correspond respectively
to the CHEVIE labels \texttt{E3} and \texttt{E3\^{}2}.
The complete packet table is
\begin{longtable}{rl}
$i$ & $R_i$\\ \hline
\endhead
1 & $\pi_{1}+2\pi_{2}+\pi_{9}$\\
2 & $\pi_{2}+\pi_{7}+\pi_{9}$\\
5 & $\pi_{5}+\pi_{7}$\\
7 & $\pi_{7}+\pi_{9}+\pi_{17}+\pi_{22}+\pi_{24}$\\
8 & $\pi_{8}+\pi_{9}+2\pi_{17}+2\pi_{22}+\pi_{23}+2\pi_{24}+\pi_{25}$\\
12 & $\pi_{15}+2\pi_{17}+\pi_{21}+\pi_{22}+2\pi_{23}+\pi_{24}+2\pi_{25}$\\
14 & $\pi_{14}+\pi_{17}$\\
15 & $\pi_{16}+\pi_{18}$\\
16 & $\pi_{17}+\pi_{18}+\pi_{21}+\pi_{23}+\pi_{25}$\\
17 & $\pi_{18}+\pi_{20}+\pi_{21}$\\
18 & $\pi_{19}+2\pi_{20}+\pi_{21}$\\
\end{longtable}
The multiplicity $\mathbf M_{18,20}=2$ is the rank of the standard
$S_3$ local system on the open orbit. Since $p(18)=1$ and $f(20)=2$,
Fourier symmetry gives $\mathbf M_{1,2}=2$. The other multiplicities
two are
$$
 \mathbf M_{8,17}=\mathbf M_{8,22}=\mathbf M_{8,24}=2,
 \qquad
 \mathbf M_{12,17}=\mathbf M_{12,23}=\mathbf M_{12,25}=2.
$$
These occur along conormals to proper boundary orbits of IC complexes
with rank-one local systems. In particular, they are not top-conormal
ranks. The identities $p(8)=12$, $f(17)=17$, $f(22)=23$, and $f(24)=25$
identify the two triples by Fourier symmetry.

The special piece of $D_4(a_1)$ consists of
$D_4(a_1)$, $A_3+A_1$, and $2A_2+A_1$.
The graded orbits with these saturations are $C_{18},C_{17},C_{16}$,
and their images under $p$ are $C_1,C_2,C_7$. With the Fourier--Aubert
identification used in Section~\ref{sec:weak}, the weak packet is
\begin{equation}\label{eq:e6-weak-example}
 \Pi_{D_4(a_1)}^{\mathrm{weak}}
 =\Pi_1^{\mic}\cup\Pi_2^{\mic}\cup\Pi_7^{\mic}
 =\{\pi_1,\pi_2,\pi_7,\pi_9,\pi_{17},\pi_{22},\pi_{24}\}.
\end{equation}
Indeed, the enhancements supported on $C_{18},C_{17},C_{16}$ have indices
$\{17,18,19,20,21,23,25\}$. Applying $f$ gives the set on the right-hand
side, which is also the union of the supports of $R_1,R_2,R_7$.
The equality includes the non-principal enhancements and restricts to
each inner form by relevance.

\subsection{The \texorpdfstring{$A_4+A_1$}{A4+A1} grading in \texorpdfstring{$E_7$}{E7}}
\label{subsec:example-e7}

Here $\hd=(1,0,0,1,0,1,0)$ and $\dim V=13$. There are fifty graded
orbits, sixty-six IC parameters, and six Arthur orbits. We retain the
orbit and IC indices of the all-series atlas. The Arthur orbits are
$C_1,C_2,C_{25},C_{34},C_{48},C_{50}$, and Piasetskii duality pairs them as
$$
 (1,50),\qquad (2,48),\qquad (25,34).
$$
Every irreducible equivariant local system has rank one. At $C_{49}$,
the ordered pairs of partitions label the characters of
$A_H(C_{49})\simeq S_2\times S_2$, with the second factor containing
the image of $Z(\Gd)$. The orbit and enhancement data are as follows.

\begingroup\small
\setlength{\tabcolsep}{4pt}
\setlength{\arrayrulewidth}{0.25pt}
\renewcommand{\arraystretch}{1.2}
\begin{longtable}{@{}rrlcrc@{\hspace{12pt}}rcrr@{}}
\hline
\multicolumn{6}{c}{\textbf{Orbit data}} & \multicolumn{4}{c}{\textbf{IC dictionary}}\\[3pt]
$i$ & $\dim C_i$ & $\Gd C_i$ & $A_H(C_i)$ & $p(i)$ & Arthur type & $a$ & $\rho_a$ & $\dim\rho_a$ & $f(a)$\\[3pt]
\hline
\endhead
\hline\endfoot
1 & 0 & $0$ & $1$ & 50 & Yes & \begin{tabular}[c]{@{}r@{}}1\end{tabular} & \begin{tabular}[c]{@{}c@{}}$1$\end{tabular} & \begin{tabular}[c]{@{}r@{}}1\end{tabular} & \begin{tabular}[c]{@{}r@{}}55\end{tabular}\\
\hline
2 & 1 & $A_1$ & $1$ & 48 & Yes & \begin{tabular}[c]{@{}r@{}}2\end{tabular} & \begin{tabular}[c]{@{}c@{}}$1$\end{tabular} & \begin{tabular}[c]{@{}r@{}}1\end{tabular} & \begin{tabular}[c]{@{}r@{}}52\end{tabular}\\
\hline
3 & 3 & $A_1$ & $1$ & 49 & No & \begin{tabular}[c]{@{}r@{}}3\end{tabular} & \begin{tabular}[c]{@{}c@{}}$1$\end{tabular} & \begin{tabular}[c]{@{}r@{}}1\end{tabular} & \begin{tabular}[c]{@{}r@{}}54\end{tabular}\\
\hline
4 & 4 & $2A_1$ & $1$ & 44 & No & \begin{tabular}[c]{@{}r@{}}5\end{tabular} & \begin{tabular}[c]{@{}c@{}}$1$\end{tabular} & \begin{tabular}[c]{@{}r@{}}1\end{tabular} & \begin{tabular}[c]{@{}r@{}}47\end{tabular}\\
\hline
5 & 4 & $2A_1$ & $1$ & 33 & No & \begin{tabular}[c]{@{}r@{}}4\end{tabular} & \begin{tabular}[c]{@{}c@{}}$1$\end{tabular} & \begin{tabular}[c]{@{}r@{}}1\end{tabular} & \begin{tabular}[c]{@{}r@{}}37\end{tabular}\\
\hline
6 & 4 & $A_1$ & $1$ & 47 & No & \begin{tabular}[c]{@{}r@{}}6\end{tabular} & \begin{tabular}[c]{@{}c@{}}$1$\end{tabular} & \begin{tabular}[c]{@{}r@{}}1\end{tabular} & \begin{tabular}[c]{@{}r@{}}56\end{tabular}\\
\hline
7 & 5 & $2A_1$ & $1$ & 40 & No & \begin{tabular}[c]{@{}r@{}}7\end{tabular} & \begin{tabular}[c]{@{}c@{}}$1$\end{tabular} & \begin{tabular}[c]{@{}r@{}}1\end{tabular} & \begin{tabular}[c]{@{}r@{}}41\end{tabular}\\
\hline
8 & 5 & $3A_1'$ & $1$ & 24 & No & \begin{tabular}[c]{@{}r@{}}8\end{tabular} & \begin{tabular}[c]{@{}c@{}}$1$\end{tabular} & \begin{tabular}[c]{@{}r@{}}1\end{tabular} & \begin{tabular}[c]{@{}r@{}}25\end{tabular}\\
\hline
9 & 5 & $2A_1$ & $1$ & 46 & No & \begin{tabular}[c]{@{}r@{}}10\end{tabular} & \begin{tabular}[c]{@{}c@{}}$1$\end{tabular} & \begin{tabular}[c]{@{}r@{}}1\end{tabular} & \begin{tabular}[c]{@{}r@{}}50\end{tabular}\\
\hline
10 & 5 & $2A_1$ & $1$ & 37 & No & \begin{tabular}[c]{@{}r@{}}9\end{tabular} & \begin{tabular}[c]{@{}c@{}}$1$\end{tabular} & \begin{tabular}[c]{@{}r@{}}1\end{tabular} & \begin{tabular}[c]{@{}r@{}}39\end{tabular}\\
\hline
11 & 5 & $2A_1$ & $1$ & 38 & No & \begin{tabular}[c]{@{}r@{}}11\end{tabular} & \begin{tabular}[c]{@{}c@{}}$1$\end{tabular} & \begin{tabular}[c]{@{}r@{}}1\end{tabular} & \begin{tabular}[c]{@{}r@{}}44\end{tabular}\\
\hline
12 & 6 & $3A_1'$ & $1$ & 35 & No & \begin{tabular}[c]{@{}r@{}}13\end{tabular} & \begin{tabular}[c]{@{}c@{}}$1$\end{tabular} & \begin{tabular}[c]{@{}r@{}}1\end{tabular} & \begin{tabular}[c]{@{}r@{}}29\end{tabular}\\
\hline
13 & 6 & $3A_1''$ & $\ZZ/2$ & 31 & No & \begin{tabular}[c]{@{}r@{}}12 \\ 57\end{tabular} & \begin{tabular}[c]{@{}c@{}}$2$ \\ $11$\end{tabular} & \begin{tabular}[c]{@{}r@{}}1 \\ 1\end{tabular} & \begin{tabular}[c]{@{}r@{}}31 \\ 66\end{tabular}\\
\hline
14 & 6 & $3A_1'$ & $1$ & 28 & No & \begin{tabular}[c]{@{}r@{}}14\end{tabular} & \begin{tabular}[c]{@{}c@{}}$1$\end{tabular} & \begin{tabular}[c]{@{}r@{}}1\end{tabular} & \begin{tabular}[c]{@{}r@{}}36\end{tabular}\\
\hline
15 & 6 & $2A_1$ & $1$ & 43 & No & \begin{tabular}[c]{@{}r@{}}15\end{tabular} & \begin{tabular}[c]{@{}c@{}}$1$\end{tabular} & \begin{tabular}[c]{@{}r@{}}1\end{tabular} & \begin{tabular}[c]{@{}r@{}}49\end{tabular}\\
\hline
16 & 7 & $A_2$ & $1$ & 45 & No & \begin{tabular}[c]{@{}r@{}}18\end{tabular} & \begin{tabular}[c]{@{}c@{}}$1$\end{tabular} & \begin{tabular}[c]{@{}r@{}}1\end{tabular} & \begin{tabular}[c]{@{}r@{}}53\end{tabular}\\
\hline
17 & 7 & $3A_1'$ & $1$ & 32 & No & \begin{tabular}[c]{@{}r@{}}17\end{tabular} & \begin{tabular}[c]{@{}c@{}}$1$\end{tabular} & \begin{tabular}[c]{@{}r@{}}1\end{tabular} & \begin{tabular}[c]{@{}r@{}}51\end{tabular}\\
\hline
18 & 7 & $3A_1'$ & $1$ & 42 & No & \begin{tabular}[c]{@{}r@{}}16\end{tabular} & \begin{tabular}[c]{@{}c@{}}$1$\end{tabular} & \begin{tabular}[c]{@{}r@{}}1\end{tabular} & \begin{tabular}[c]{@{}r@{}}40\end{tabular}\\
\hline
19 & 7 & $3A_1'$ & $1$ & 23 & No & \begin{tabular}[c]{@{}r@{}}19\end{tabular} & \begin{tabular}[c]{@{}c@{}}$1$\end{tabular} & \begin{tabular}[c]{@{}r@{}}1\end{tabular} & \begin{tabular}[c]{@{}r@{}}28\end{tabular}\\
\hline
20 & 8 & $A_2{+}A_1$ & $1$ & 27 & No & \begin{tabular}[c]{@{}r@{}}21\end{tabular} & \begin{tabular}[c]{@{}c@{}}$1$\end{tabular} & \begin{tabular}[c]{@{}r@{}}1\end{tabular} & \begin{tabular}[c]{@{}r@{}}32\end{tabular}\\
\hline
21 & 8 & $A_2{+}A_1$ & $1$ & 36 & No & \begin{tabular}[c]{@{}r@{}}24\end{tabular} & \begin{tabular}[c]{@{}c@{}}$1$\end{tabular} & \begin{tabular}[c]{@{}r@{}}1\end{tabular} & \begin{tabular}[c]{@{}r@{}}33\end{tabular}\\
\hline
22 & 8 & $A_2{+}A_1$ & $1$ & 30 & No & \begin{tabular}[c]{@{}r@{}}27\end{tabular} & \begin{tabular}[c]{@{}c@{}}$1$\end{tabular} & \begin{tabular}[c]{@{}r@{}}1\end{tabular} & \begin{tabular}[c]{@{}r@{}}30\end{tabular}\\
\hline
23 & 8 & $4A_1$ & $\ZZ/2$ & 19 & No & \begin{tabular}[c]{@{}r@{}}28 \\ 58\end{tabular} & \begin{tabular}[c]{@{}c@{}}$2$ \\ $11$\end{tabular} & \begin{tabular}[c]{@{}r@{}}1 \\ 1\end{tabular} & \begin{tabular}[c]{@{}r@{}}19 \\ 64\end{tabular}\\
\hline
24 & 8 & $A_2$ & $\ZZ/2$ & 8 & No & \begin{tabular}[c]{@{}r@{}}23 \\ 25\end{tabular} & \begin{tabular}[c]{@{}c@{}}$11$ \\ $2$\end{tabular} & \begin{tabular}[c]{@{}r@{}}1 \\ 1\end{tabular} & \begin{tabular}[c]{@{}r@{}}35 \\ 8\end{tabular}\\
\hline
25 & 8 & $A_3$ & $1$ & 34 & Yes & \begin{tabular}[c]{@{}r@{}}20\end{tabular} & \begin{tabular}[c]{@{}c@{}}$1$\end{tabular} & \begin{tabular}[c]{@{}r@{}}1\end{tabular} & \begin{tabular}[c]{@{}r@{}}34\end{tabular}\\
\hline
26 & 8 & $A_2{+}A_1$ & $1$ & 41 & No & \begin{tabular}[c]{@{}r@{}}26\end{tabular} & \begin{tabular}[c]{@{}c@{}}$1$\end{tabular} & \begin{tabular}[c]{@{}r@{}}1\end{tabular} & \begin{tabular}[c]{@{}r@{}}46\end{tabular}\\
\hline
27 & 8 & $4A_1$ & $\ZZ/2$ & 20 & No & \begin{tabular}[c]{@{}r@{}}22 \\ 59\end{tabular} & \begin{tabular}[c]{@{}c@{}}$2$ \\ $11$\end{tabular} & \begin{tabular}[c]{@{}r@{}}1 \\ 1\end{tabular} & \begin{tabular}[c]{@{}r@{}}45 \\ 62\end{tabular}\\
\hline
28 & 9 & $2A_2$ & $1$ & 14 & No & \begin{tabular}[c]{@{}r@{}}36\end{tabular} & \begin{tabular}[c]{@{}c@{}}$1$\end{tabular} & \begin{tabular}[c]{@{}r@{}}1\end{tabular} & \begin{tabular}[c]{@{}r@{}}14\end{tabular}\\
\hline
29 & 9 & $A_2{+}2A_1$ & $1$ & 39 & No & \begin{tabular}[c]{@{}r@{}}38\end{tabular} & \begin{tabular}[c]{@{}c@{}}$1$\end{tabular} & \begin{tabular}[c]{@{}r@{}}1\end{tabular} & \begin{tabular}[c]{@{}r@{}}43\end{tabular}\\
\hline
30 & 9 & $A_2{+}2A_1$ & $1$ & 22 & No & \begin{tabular}[c]{@{}r@{}}30\end{tabular} & \begin{tabular}[c]{@{}c@{}}$1$\end{tabular} & \begin{tabular}[c]{@{}r@{}}1\end{tabular} & \begin{tabular}[c]{@{}r@{}}27\end{tabular}\\
\hline
31 & 9 & $(A_3{+}A_1)'$ & $1$ & 13 & No & \begin{tabular}[c]{@{}r@{}}31\end{tabular} & \begin{tabular}[c]{@{}c@{}}$1$\end{tabular} & \begin{tabular}[c]{@{}r@{}}1\end{tabular} & \begin{tabular}[c]{@{}r@{}}12\end{tabular}\\
\hline
32 & 9 & $A_2{+}2A_1$ & $1$ & 17 & No & \begin{tabular}[c]{@{}r@{}}35\end{tabular} & \begin{tabular}[c]{@{}c@{}}$1$\end{tabular} & \begin{tabular}[c]{@{}r@{}}1\end{tabular} & \begin{tabular}[c]{@{}r@{}}23\end{tabular}\\
\hline
33 & 9 & $A_2{+}A_1$ & $\ZZ/2$ & 5 & No & \begin{tabular}[c]{@{}r@{}}32 \\ 37\end{tabular} & \begin{tabular}[c]{@{}c@{}}$11$ \\ $2$\end{tabular} & \begin{tabular}[c]{@{}r@{}}1 \\ 1\end{tabular} & \begin{tabular}[c]{@{}r@{}}21 \\ 4\end{tabular}\\
\hline
34 & 9 & $(A_3{+}A_1)''$ & $\ZZ/2$ & 25 & Yes & \begin{tabular}[c]{@{}r@{}}34 \\ 60\end{tabular} & \begin{tabular}[c]{@{}c@{}}$2$ \\ $11$\end{tabular} & \begin{tabular}[c]{@{}r@{}}1 \\ 1\end{tabular} & \begin{tabular}[c]{@{}r@{}}20 \\ 63\end{tabular}\\
\hline
35 & 9 & $2A_2$ & $1$ & 12 & No & \begin{tabular}[c]{@{}r@{}}29\end{tabular} & \begin{tabular}[c]{@{}c@{}}$1$\end{tabular} & \begin{tabular}[c]{@{}r@{}}1\end{tabular} & \begin{tabular}[c]{@{}r@{}}13\end{tabular}\\
\hline
36 & 9 & $A_2{+}2A_1$ & $1$ & 21 & No & \begin{tabular}[c]{@{}r@{}}33\end{tabular} & \begin{tabular}[c]{@{}c@{}}$1$\end{tabular} & \begin{tabular}[c]{@{}r@{}}1\end{tabular} & \begin{tabular}[c]{@{}r@{}}24\end{tabular}\\
\hline
37 & 10 & $2A_2{+}A_1$ & $1$ & 10 & No & \begin{tabular}[c]{@{}r@{}}39\end{tabular} & \begin{tabular}[c]{@{}c@{}}$1$\end{tabular} & \begin{tabular}[c]{@{}r@{}}1\end{tabular} & \begin{tabular}[c]{@{}r@{}}9\end{tabular}\\
\hline
38 & 10 & $2A_2{+}A_1$ & $1$ & 11 & No & \begin{tabular}[c]{@{}r@{}}44\end{tabular} & \begin{tabular}[c]{@{}c@{}}$1$\end{tabular} & \begin{tabular}[c]{@{}r@{}}1\end{tabular} & \begin{tabular}[c]{@{}r@{}}11\end{tabular}\\
\hline
39 & 10 & $A_2{+}3A_1$ & $\ZZ/2$ & 29 & No & \begin{tabular}[c]{@{}r@{}}43 \\ 61\end{tabular} & \begin{tabular}[c]{@{}c@{}}$2$ \\ $11$\end{tabular} & \begin{tabular}[c]{@{}r@{}}1 \\ 1\end{tabular} & \begin{tabular}[c]{@{}r@{}}38 \\ 65\end{tabular}\\
\hline
40 & 10 & $A_3{+}2A_1$ & $\ZZ/2$ & 7 & No & \begin{tabular}[c]{@{}r@{}}41 \\ 62\end{tabular} & \begin{tabular}[c]{@{}c@{}}$2$ \\ $11$\end{tabular} & \begin{tabular}[c]{@{}r@{}}1 \\ 1\end{tabular} & \begin{tabular}[c]{@{}r@{}}7 \\ 59\end{tabular}\\
\hline
41 & 10 & $(A_3{+}A_1)'$ & $1$ & 26 & No & \begin{tabular}[c]{@{}r@{}}42\end{tabular} & \begin{tabular}[c]{@{}c@{}}$1$\end{tabular} & \begin{tabular}[c]{@{}r@{}}1\end{tabular} & \begin{tabular}[c]{@{}r@{}}48\end{tabular}\\
\hline
42 & 10 & $A_3$ & $1$ & 18 & No & \begin{tabular}[c]{@{}r@{}}40\end{tabular} & \begin{tabular}[c]{@{}c@{}}$1$\end{tabular} & \begin{tabular}[c]{@{}r@{}}1\end{tabular} & \begin{tabular}[c]{@{}r@{}}16\end{tabular}\\
\hline
43 & 11 & $(A_3{+}A_1)'$ & $1$ & 15 & No & \begin{tabular}[c]{@{}r@{}}49\end{tabular} & \begin{tabular}[c]{@{}c@{}}$1$\end{tabular} & \begin{tabular}[c]{@{}r@{}}1\end{tabular} & \begin{tabular}[c]{@{}r@{}}15\end{tabular}\\
\hline
44 & 11 & $D_4(a_1)$ & $\ZZ/2$ & 4 & No & \begin{tabular}[c]{@{}r@{}}46 \\ 47\end{tabular} & \begin{tabular}[c]{@{}c@{}}$11$ \\ $2$\end{tabular} & \begin{tabular}[c]{@{}r@{}}1 \\ 1\end{tabular} & \begin{tabular}[c]{@{}r@{}}26 \\ 5\end{tabular}\\
\hline
45 & 11 & $A_3{+}2A_1$ & $\ZZ/2$ & 16 & No & \begin{tabular}[c]{@{}r@{}}48 \\ 63\end{tabular} & \begin{tabular}[c]{@{}c@{}}$2$ \\ $11$\end{tabular} & \begin{tabular}[c]{@{}r@{}}1 \\ 1\end{tabular} & \begin{tabular}[c]{@{}r@{}}42 \\ 60\end{tabular}\\
\hline
46 & 11 & $A_3{+}A_2$ & $1$ & 9 & No & \begin{tabular}[c]{@{}r@{}}45\end{tabular} & \begin{tabular}[c]{@{}c@{}}$1$\end{tabular} & \begin{tabular}[c]{@{}r@{}}1\end{tabular} & \begin{tabular}[c]{@{}r@{}}22\end{tabular}\\
\hline
47 & 12 & $A_3{+}A_2{+}A_1$ & $\ZZ/2$ & 6 & No & \begin{tabular}[c]{@{}r@{}}51 \\ 66\end{tabular} & \begin{tabular}[c]{@{}c@{}}$2$ \\ $11$\end{tabular} & \begin{tabular}[c]{@{}r@{}}1 \\ 1\end{tabular} & \begin{tabular}[c]{@{}r@{}}17 \\ 57\end{tabular}\\
\hline
48 & 12 & $A_4$ & $\ZZ/2$ & 2 & Yes & \begin{tabular}[c]{@{}r@{}}50 \\ 52\end{tabular} & \begin{tabular}[c]{@{}c@{}}$11$ \\ $2$\end{tabular} & \begin{tabular}[c]{@{}r@{}}1 \\ 1\end{tabular} & \begin{tabular}[c]{@{}r@{}}10 \\ 2\end{tabular}\\
\hline
49 & 12 & $D_4(a_1){+}A_1$ & $(\ZZ/2)^2$ & 3 & No & \begin{tabular}[c]{@{}r@{}}53 \\ 54 \\ 64 \\ 65\end{tabular} & \begin{tabular}[c]{@{}c@{}}$(11,2)$ \\ $(2,2)$ \\ $(11,11)$ \\ $(2,11)$\end{tabular} & \begin{tabular}[c]{@{}r@{}}1 \\ 1 \\ 1 \\ 1\end{tabular} & \begin{tabular}[c]{@{}r@{}}18 \\ 3 \\ 58 \\ 61\end{tabular}\\
\hline
50 & 13 & $A_4{+}A_1$ & $\ZZ/2$ & 1 & Yes & \begin{tabular}[c]{@{}r@{}}55 \\ 56\end{tabular} & \begin{tabular}[c]{@{}c@{}}$2$ \\ $11$\end{tabular} & \begin{tabular}[c]{@{}r@{}}1 \\ 1\end{tabular} & \begin{tabular}[c]{@{}r@{}}1 \\ 6\end{tabular}\\
\end{longtable}
\endgroup

\begin{figure}[!htbp]
\centering
\begin{tikzpicture}[x=1cm,y=0.97cm,every node/.style={font=\scriptsize},
 art/.style={circle,draw,fill=black,text=white,inner sep=1pt,minimum size=4.5mm},
 ordinary/.style={circle,draw,fill=white,inner sep=1pt,minimum size=4.5mm}]
\node[art] (n1) at (0.0000,0.0000) {$ 1 $};
\coordinate (nd1l1) at (-0.9769,0.6400);
\node[art] (n2) at (0.0000,0.6400) {$ 2 $};
\coordinate (nd2l1) at (0.9769,0.6400);
\node[ordinary] (n3) at (-1.4654,1.2800) {$ 3 $};
\coordinate (nd3l2) at (-0.4885,1.2800);
\coordinate (nd4l2) at (0.4885,1.2800);
\coordinate (nd2l2) at (1.4654,1.2800);
\node[ordinary] (n5) at (-1.9538,1.9200) {$ 5 $};
\node[ordinary] (n4) at (-0.9769,1.9200) {$ 4 $};
\coordinate (nd7l3) at (0.0000,1.9200);
\coordinate (nd4l3) at (0.9769,1.9200);
\node[ordinary] (n6) at (1.9538,1.9200) {$ 6 $};
\node[ordinary] (n8) at (-3.9077,2.5600) {$ 8 $};
\coordinate (nd11l4) at (-2.9308,2.5600);
\coordinate (nd9l4) at (-1.9538,2.5600);
\coordinate (nd7l4) at (-0.9769,2.5600);
\coordinate (nd16l4) at (0.0000,2.5600);
\node[ordinary] (n9) at (0.9769,2.5600) {$ 9 $};
\node[ordinary] (n10) at (1.9538,2.5600) {$ 10 $};
\node[ordinary] (n7) at (2.9308,2.5600) {$ 7 $};
\node[ordinary] (n11) at (3.9077,2.5600) {$ 11 $};
\coordinate (nd20l5) at (-6.3500,3.2000);
\coordinate (nd11l5) at (-5.3731,3.2000);
\coordinate (nd9l5) at (-4.3962,3.2000);
\coordinate (nd24l5) at (-3.4192,3.2000);
\node[ordinary] (n15) at (-2.4423,3.2000) {$ 15 $};
\coordinate (nd27l5) at (-1.4654,3.2000);
\node[ordinary] (n14) at (-0.4885,3.2000) {$ 14 $};
\coordinate (nd19l5) at (0.4885,3.2000);
\node[ordinary] (n13) at (1.4654,3.2000) {$ 13 $};
\coordinate (nd30l5) at (2.4423,3.2000);
\node[ordinary] (n12) at (3.4192,3.2000) {$ 12 $};
\coordinate (nd26l5) at (4.3962,3.2000);
\coordinate (nd18l5) at (5.3731,3.2000);
\coordinate (nd29l5) at (6.3500,3.2000);
\coordinate (nd20l6) at (-6.3500,3.8400);
\coordinate (nd11l6) at (-5.3731,3.8400);
\node[ordinary] (n16) at (-4.3962,3.8400) {$ 16 $};
\node[ordinary] (n18) at (-3.4192,3.8400) {$ 18 $};
\coordinate (nd36l6) at (-2.4423,3.8400);
\coordinate (nd27l6) at (-1.4654,3.8400);
\coordinate (nd33l6) at (-0.4885,3.8400);
\node[ordinary] (n19) at (0.4885,3.8400) {$ 19 $};
\coordinate (nd32l6) at (1.4654,3.8400);
\coordinate (nd30l6) at (2.4423,3.8400);
\coordinate (nd35l6) at (3.4192,3.8400);
\coordinate (nd34l6) at (4.3962,3.8400);
\coordinate (nd31l6) at (5.3731,3.8400);
\node[ordinary] (n17) at (6.3500,3.8400) {$ 17 $};
\coordinate (nd20l7) at (-5.8615,4.4800);
\node[ordinary] (n20) at (-4.8846,4.4800) {$ 20 $};
\node[ordinary] (n26) at (-3.9077,4.4800) {$ 26 $};
\node[art] (n25) at (-2.9308,4.4800) {$ 25 $};
\coordinate (nd36l7) at (-1.9538,4.4800);
\node[ordinary] (n21) at (-0.9769,4.4800) {$ 21 $};
\node[ordinary] (n23) at (0.0000,4.4800) {$ 23 $};
\coordinate (nd32l7) at (0.9769,4.4800);
\node[ordinary] (n22) at (1.9538,4.4800) {$ 22 $};
\coordinate (nd52l7) at (2.9308,4.4800);
\coordinate (nd47l7) at (3.9077,4.4800);
\node[ordinary] (n27) at (4.8846,4.4800) {$ 27 $};
\node[ordinary] (n24) at (5.8615,4.4800) {$ 24 $};
\node[ordinary] (n32) at (-5.8615,5.1200) {$ 32 $};
\node[ordinary] (n31) at (-4.8846,5.1200) {$ 31 $};
\node[ordinary] (n28) at (-3.9077,5.1200) {$ 28 $};
\node[art] (n34) at (-2.9308,5.1200) {$ 34 $};
\node[ordinary] (n35) at (-1.9538,5.1200) {$ 35 $};
\node[ordinary] (n36) at (-0.9769,5.1200) {$ 36 $};
\node[ordinary] (n30) at (0.0000,5.1200) {$ 30 $};
\coordinate (nd69l8) at (0.9769,5.1200);
\coordinate (nd64l8) at (1.9538,5.1200);
\node[ordinary] (n29) at (2.9308,5.1200) {$ 29 $};
\coordinate (nd75l8) at (3.9077,5.1200);
\coordinate (nd66l8) at (4.8846,5.1200);
\node[ordinary] (n33) at (5.8615,5.1200) {$ 33 $};
\node[ordinary] (n40) at (-4.8846,5.7600) {$ 40 $};
\node[ordinary] (n37) at (-3.9077,5.7600) {$ 37 $};
\node[ordinary] (n38) at (-2.9308,5.7600) {$ 38 $};
\coordinate (nd84l9) at (-1.9538,5.7600);
\coordinate (nd77l9) at (-0.9769,5.7600);
\coordinate (nd92l9) at (0.0000,5.7600);
\coordinate (nd90l9) at (0.9769,5.7600);
\node[ordinary] (n41) at (1.9538,5.7600) {$ 41 $};
\node[ordinary] (n39) at (2.9308,5.7600) {$ 39 $};
\node[ordinary] (n42) at (3.9077,5.7600) {$ 42 $};
\coordinate (nd88l9) at (4.8846,5.7600);
\coordinate (nd99l10) at (-2.9308,6.4000);
\coordinate (nd95l10) at (-1.9538,6.4000);
\coordinate (nd96l10) at (-0.9769,6.4000);
\node[ordinary] (n46) at (0.0000,6.4000) {$ 46 $};
\node[ordinary] (n45) at (0.9769,6.4000) {$ 45 $};
\node[ordinary] (n44) at (1.9538,6.4000) {$ 44 $};
\node[ordinary] (n43) at (2.9308,6.4000) {$ 43 $};
\node[ordinary] (n47) at (-0.9769,7.0400) {$ 47 $};
\node[art] (n48) at (0.0000,7.0400) {$ 48 $};
\node[ordinary] (n49) at (0.9769,7.0400) {$ 49 $};
\node[art] (n50) at (0.0000,7.6800) {$ 50 $};
\draw[line width=.35pt] (n1) -- (n2);
\draw[line width=.35pt] (n1) -- (nd1l1) -- (n3);
\draw[line width=.35pt] (n1) -- (nd2l1) -- (nd2l2) -- (n6);
\draw[line width=.35pt] (n2) -- (nd3l2) -- (n4);
\draw[line width=.35pt] (n2) -- (nd4l2) -- (nd4l3) -- (n9);
\draw[line width=.35pt] (n3) -- (n4);
\draw[line width=.35pt] (n3) -- (n5);
\draw[line width=.35pt] (n3) -- (nd7l3) -- (nd7l4) -- (n15);
\draw[line width=.35pt] (n4) -- (n8);
\draw[line width=.35pt] (n4) -- (nd9l4) -- (nd9l5) -- (n18);
\draw[line width=.35pt] (n5) -- (n8);
\draw[line width=.35pt] (n5) -- (nd11l4) -- (nd11l5) -- (nd11l6) -- (n20);
\draw[line width=.35pt] (n6) -- (n7);
\draw[line width=.35pt] (n6) -- (n9);
\draw[line width=.35pt] (n6) -- (n10);
\draw[line width=.35pt] (n6) -- (n11);
\draw[line width=.35pt] (n6) -- (nd16l4) -- (n15);
\draw[line width=.35pt] (n7) -- (n13);
\draw[line width=.35pt] (n7) -- (nd18l5) -- (n17);
\draw[line width=.35pt] (n7) -- (nd19l5) -- (n19);
\draw[line width=.35pt] (n8) -- (nd20l5) -- (nd20l6) -- (nd20l7) -- (n32);
\draw[line width=.35pt] (n9) -- (n12);
\draw[line width=.35pt] (n9) -- (n13);
\draw[line width=.35pt] (n9) -- (n14);
\draw[line width=.35pt] (n9) -- (nd24l5) -- (n18);
\draw[line width=.35pt] (n10) -- (n14);
\draw[line width=.35pt] (n10) -- (nd26l5) -- (n17);
\draw[line width=.35pt] (n10) -- (nd27l5) -- (nd27l6) -- (n21);
\draw[line width=.35pt] (n11) -- (n12);
\draw[line width=.35pt] (n11) -- (nd29l5) -- (n17);
\draw[line width=.35pt] (n11) -- (nd30l5) -- (nd30l6) -- (n22);
\draw[line width=.35pt] (n12) -- (nd31l6) -- (n27);
\draw[line width=.35pt] (n12) -- (nd32l6) -- (nd32l7) -- (n30);
\draw[line width=.35pt] (n13) -- (nd33l6) -- (n23);
\draw[line width=.35pt] (n13) -- (nd34l6) -- (n27);
\draw[line width=.35pt] (n14) -- (nd35l6) -- (n27);
\draw[line width=.35pt] (n14) -- (nd36l6) -- (nd36l7) -- (n36);
\draw[line width=.35pt] (n15) -- (n16);
\draw[line width=.35pt] (n15) -- (n18);
\draw[line width=.35pt] (n15) -- (n19);
\draw[line width=.35pt] (n16) -- (n20);
\draw[line width=.35pt] (n16) -- (n21);
\draw[line width=.35pt] (n16) -- (n22);
\draw[line width=.35pt] (n16) -- (n25);
\draw[line width=.35pt] (n16) -- (n26);
\draw[line width=.35pt] (n17) -- (n24);
\draw[line width=.35pt] (n17) -- (n27);
\draw[line width=.35pt] (n17) -- (nd47l7) -- (n29);
\draw[line width=.35pt] (n18) -- (n23);
\draw[line width=.35pt] (n18) -- (n26);
\draw[line width=.35pt] (n19) -- (n23);
\draw[line width=.35pt] (n19) -- (n25);
\draw[line width=.35pt] (n19) -- (nd52l7) -- (n29);
\draw[line width=.35pt] (n20) -- (n28);
\draw[line width=.35pt] (n20) -- (n31);
\draw[line width=.35pt] (n20) -- (n32);
\draw[line width=.35pt] (n20) -- (n35);
\draw[line width=.35pt] (n21) -- (n28);
\draw[line width=.35pt] (n21) -- (n29);
\draw[line width=.35pt] (n21) -- (n36);
\draw[line width=.35pt] (n22) -- (n29);
\draw[line width=.35pt] (n22) -- (n30);
\draw[line width=.35pt] (n22) -- (n35);
\draw[line width=.35pt] (n23) -- (n34);
\draw[line width=.35pt] (n23) -- (nd64l8) -- (n39);
\draw[line width=.35pt] (n24) -- (n33);
\draw[line width=.35pt] (n24) -- (nd66l8) -- (n42);
\draw[line width=.35pt] (n25) -- (n31);
\draw[line width=.35pt] (n25) -- (n34);
\draw[line width=.35pt] (n25) -- (nd69l8) -- (n41);
\draw[line width=.35pt] (n26) -- (n30);
\draw[line width=.35pt] (n26) -- (n32);
\draw[line width=.35pt] (n26) -- (n34);
\draw[line width=.35pt] (n26) -- (n36);
\draw[line width=.35pt] (n27) -- (n33);
\draw[line width=.35pt] (n27) -- (nd75l8) -- (n39);
\draw[line width=.35pt] (n28) -- (n37);
\draw[line width=.35pt] (n28) -- (nd77l9) -- (n46);
\draw[line width=.35pt] (n29) -- (n39);
\draw[line width=.35pt] (n29) -- (n41);
\draw[line width=.35pt] (n29) -- (n42);
\draw[line width=.35pt] (n30) -- (n38);
\draw[line width=.35pt] (n30) -- (n39);
\draw[line width=.35pt] (n31) -- (n40);
\draw[line width=.35pt] (n31) -- (nd84l9) -- (n46);
\draw[line width=.35pt] (n32) -- (n37);
\draw[line width=.35pt] (n32) -- (n38);
\draw[line width=.35pt] (n32) -- (n40);
\draw[line width=.35pt] (n33) -- (nd88l9) -- (n43);
\draw[line width=.35pt] (n34) -- (n40);
\draw[line width=.35pt] (n34) -- (nd90l9) -- (n45);
\draw[line width=.35pt] (n35) -- (n38);
\draw[line width=.35pt] (n35) -- (nd92l9) -- (n46);
\draw[line width=.35pt] (n36) -- (n37);
\draw[line width=.35pt] (n36) -- (n39);
\draw[line width=.35pt] (n37) -- (nd95l10) -- (n47);
\draw[line width=.35pt] (n38) -- (nd96l10) -- (n47);
\draw[line width=.35pt] (n39) -- (n43);
\draw[line width=.35pt] (n39) -- (n45);
\draw[line width=.35pt] (n40) -- (nd99l10) -- (n47);
\draw[line width=.35pt] (n41) -- (n44);
\draw[line width=.35pt] (n41) -- (n45);
\draw[line width=.35pt] (n41) -- (n46);
\draw[line width=.35pt] (n42) -- (n43);
\draw[line width=.35pt] (n42) -- (n44);
\draw[line width=.35pt] (n43) -- (n49);
\draw[line width=.35pt] (n44) -- (n48);
\draw[line width=.35pt] (n44) -- (n49);
\draw[line width=.35pt] (n45) -- (n47);
\draw[line width=.35pt] (n45) -- (n49);
\draw[line width=.35pt] (n46) -- (n47);
\draw[line width=.35pt] (n46) -- (n48);
\draw[line width=.35pt] (n47) -- (n50);
\draw[line width=.35pt] (n48) -- (n50);
\draw[line width=.35pt] (n49) -- (n50);
\node[anchor=east] at (-6.9500,0.0000) {$ 0 $};
\node[anchor=east] at (-6.9500,0.6400) {$ 1 $};
\node[anchor=east] at (-6.9500,1.2800) {$ 3 $};
\node[anchor=east] at (-6.9500,1.9200) {$ 4 $};
\node[anchor=east] at (-6.9500,2.5600) {$ 5 $};
\node[anchor=east] at (-6.9500,3.2000) {$ 6 $};
\node[anchor=east] at (-6.9500,3.8400) {$ 7 $};
\node[anchor=east] at (-6.9500,4.4800) {$ 8 $};
\node[anchor=east] at (-6.9500,5.1200) {$ 9 $};
\node[anchor=east] at (-6.9500,5.7600) {$ 10 $};
\node[anchor=east] at (-6.9500,6.4000) {$ 11 $};
\node[anchor=east] at (-6.9500,7.0400) {$ 12 $};
\node[anchor=east] at (-6.9500,7.6800) {$ 13 $};
\node[anchor=east] at (-6.9500,8.3200) {$\dim C$};
\end{tikzpicture}
\caption{Closure order for the $A_4+A_1$ grading in $E_7$.
Vertices are labelled by $i$, and filled vertices denote Arthur orbits.
The order increases upwards; the left column gives $\dim C_i$.}
\label{fig:hasse-e7}
\end{figure}

Figure~\ref{fig:hasse-e7} gives the full closure order. The parameters
$1,\ldots,56$ belong to the principal series. The remaining ten
parameters belong to the generalized Springer series whose cuspidal
support has derived Levi subgroup of type $A_1\times A_1\times A_1$,
with simple roots $\alpha_2,\alpha_5,\alpha_7$. Their ambient central
character is nontrivial. In particular, for the split adjoint form one
retains precisely the parameters $1,\ldots,56$.

Using the IC polynomials of \cite{CiubotaruOkadaKL}, the reconstruction
gives the following complete table of Arthur rows, including both
central characters:
\begin{longtable}{rl}
$i$ & $R_i$\\ \hline
\endhead
1 & $\pi_{1}+\pi_{6}$\\
2 & $\pi_{2}+\pi_{10}$\\
25 & $\pi_{20}+\pi_{42}+\pi_{60}+\pi_{63}$\\
34 & $\pi_{34}+\pi_{48}+\pi_{60}+\pi_{63}$\\
48 & $\pi_{50}+\pi_{52}$\\
50 & $\pi_{55}+\pi_{56}$\\
\end{longtable}
Thus the union of the six packets contains fourteen representations,
and all nonzero ordinary microlocal multiplicities are one. The
non-principal contributions are $\pi_{60}$ and $\pi_{63}$, supported
on $C_{34}$ and $C_{45}$, respectively; both occur in $R_{25}$ and
$R_{34}$. Applying $f$ to the constituent indices exchanges the rows
indexed by each of the three Piasetskii-dual pairs. In particular,
$f(60)=63$ exchanges the two non-principal constituents.

\subsection{The \texorpdfstring{$E_8(a_7)$}{E8(a7)} grading}
\label{subsec:example-e8}

Here $\hd=(0,0,0,0,2,0,0,0)$ and $\dim V=40$. There are $63$ graded
orbits, $88$ IC parameters, and $34$ Arthur orbits. The union of the
Arthur packets contains $58$ representations. The open orbit $C_{63}$ has component group $S_5$;
its seven enhancements have indices $82,\ldots,88$, with respective labels
$5,41,32,311,221,2111,11111$ and ranks $1,4,5,6,5,4,1$.
The enhancement $a=88$ is cuspidal, and $\mathbf M_{i,88}=1$ for every
Arthur orbit $C_i$.

Figure~\ref{fig:hasse-e8} gives the full closure order on all $63$ graded orbits.

\begin{figure}[p]
\centering
\begin{tikzpicture}[x=1cm,y=1cm, every node/.style={font=\scriptsize},
 art/.style={circle,draw,fill=black,text=white,inner sep=1pt,minimum size=4.5mm},
 ordinary/.style={circle,draw,fill=white,inner sep=1pt,minimum size=4.5mm}]
\node[art] (n1) at (0.000,0.000) {$ 1 $};
\node[art] (n2) at (0.000,0.570) {$ 2 $};
\node[ordinary] (n3) at (-0.650,1.140) {$ 3 $};
\coordinate (nd2l2) at (0.650,1.140);
\coordinate (nd3l3) at (-0.650,1.710);
\node[art] (n4) at (0.650,1.710) {$ 4 $};
\coordinate (nd3l4) at (-1.950,2.280);
\coordinate (nd6l4) at (-0.650,2.280);
\coordinate (nd5l4) at (0.650,2.280);
\node[ordinary] (n5) at (1.950,2.280) {$ 5 $};
\node[art] (n7) at (-1.300,2.850) {$ 7 $};
\node[ordinary] (n6) at (0.000,2.850) {$ 6 $};
\coordinate (nd7l5) at (1.300,2.850);
\node[art] (n9) at (-2.600,3.420) {$ 9 $};
\coordinate (nd11l6) at (-1.300,3.420);
\node[ordinary] (n8) at (0.000,3.420) {$ 8 $};
\coordinate (nd9l6) at (1.300,3.420);
\coordinate (nd7l6) at (2.600,3.420);
\node[ordinary] (n11) at (-1.950,3.990) {$ 11 $};
\coordinate (nd14l7) at (-0.650,3.990);
\coordinate (nd12l7) at (0.650,3.990);
\node[art] (n10) at (1.950,3.990) {$ 10 $};
\node[ordinary] (n13) at (-1.950,4.560) {$ 13 $};
\coordinate (nd17l8) at (-0.650,4.560);
\coordinate (nd12l8) at (0.650,4.560);
\node[art] (n12) at (1.950,4.560) {$ 12 $};
\coordinate (nd21l9) at (-2.600,5.130);
\node[art] (n15) at (-1.300,5.130) {$ 15 $};
\coordinate (nd12l9) at (0.000,5.130);
\node[ordinary] (n16) at (1.300,5.130) {$ 16 $};
\node[ordinary] (n14) at (2.600,5.130) {$ 14 $};
\node[ordinary] (n17) at (-4.550,5.700) {$ 17 $};
\node[art] (n19) at (-3.250,5.700) {$ 19 $};
\node[art] (n20) at (-1.950,5.700) {$ 20 $};
\node[ordinary] (n18) at (-0.650,5.700) {$ 18 $};
\coordinate (nd29l10) at (0.650,5.700);
\coordinate (nd30l10) at (1.950,5.700);
\coordinate (nd23l10) at (3.250,5.700);
\coordinate (nd22l10) at (4.550,5.700);
\coordinate (nd31l11) at (-5.850,6.270);
\coordinate (nd36l11) at (-4.550,6.270);
\coordinate (nd35l11) at (-3.250,6.270);
\node[ordinary] (n21) at (-1.950,6.270) {$ 21 $};
\coordinate (nd39l11) at (-0.650,6.270);
\coordinate (nd32l11) at (0.650,6.270);
\coordinate (nd33l11) at (1.950,6.270);
\node[ordinary] (n22) at (3.250,6.270) {$ 22 $};
\coordinate (nd23l11) at (4.550,6.270);
\node[ordinary] (n23) at (5.850,6.270) {$ 23 $};
\coordinate (nd36l12) at (-5.850,6.840);
\coordinate (nd42l12) at (-4.550,6.840);
\coordinate (nd41l12) at (-3.250,6.840);
\node[ordinary] (n27) at (-1.950,6.840) {$ 27 $};
\node[art] (n25) at (-0.650,6.840) {$ 25 $};
\node[art] (n26) at (0.650,6.840) {$ 26 $};
\coordinate (nd33l12) at (1.950,6.840);
\coordinate (nd44l12) at (3.250,6.840);
\coordinate (nd46l12) at (4.550,6.840);
\node[ordinary] (n24) at (5.850,6.840) {$ 24 $};
\coordinate (nd42l13) at (-3.900,7.410);
\node[art] (n29) at (-2.600,7.410) {$ 29 $};
\node[ordinary] (n28) at (-1.300,7.410) {$ 28 $};
\coordinate (nd52l13) at (0.000,7.410);
\coordinate (nd50l13) at (1.300,7.410);
\node[art] (n30) at (2.600,7.410) {$ 30 $};
\coordinate (nd47l13) at (3.900,7.410);
\node[ordinary] (n33) at (-3.250,7.980) {$ 33 $};
\node[art] (n32) at (-1.950,7.980) {$ 32 $};
\coordinate (nd53l14) at (-0.650,7.980);
\node[ordinary] (n31) at (0.650,7.980) {$ 31 $};
\coordinate (nd50l14) at (1.950,7.980);
\coordinate (nd47l14) at (3.250,7.980);
\node[ordinary] (n37) at (-2.600,8.550) {$ 37 $};
\node[art] (n35) at (-1.300,8.550) {$ 35 $};
\node[ordinary] (n34) at (0.000,8.550) {$ 34 $};
\coordinate (nd50l15) at (1.300,8.550);
\node[ordinary] (n36) at (2.600,8.550) {$ 36 $};
\node[ordinary] (n40) at (-1.950,9.120) {$ 40 $};
\node[art] (n41) at (-0.650,9.120) {$ 41 $};
\node[art] (n39) at (0.650,9.120) {$ 39 $};
\node[art] (n38) at (1.950,9.120) {$ 38 $};
\node[ordinary] (n44) at (-2.600,9.690) {$ 44 $};
\node[art] (n43) at (-1.300,9.690) {$ 43 $};
\node[ordinary] (n45) at (0.000,9.690) {$ 45 $};
\coordinate (nd75l17) at (1.300,9.690);
\node[ordinary] (n42) at (2.600,9.690) {$ 42 $};
\coordinate (nd85l18) at (-3.250,10.260);
\node[ordinary] (n50) at (-1.950,10.260) {$ 50 $};
\node[art] (n48) at (-0.650,10.260) {$ 48 $};
\node[art] (n46) at (0.650,10.260) {$ 46 $};
\node[ordinary] (n47) at (1.950,10.260) {$ 47 $};
\node[ordinary] (n49) at (3.250,10.260) {$ 49 $};
\node[art] (n54) at (-2.600,10.830) {$ 54 $};
\node[art] (n53) at (-1.300,10.830) {$ 53 $};
\node[art] (n51) at (0.000,10.830) {$ 51 $};
\node[ordinary] (n52) at (1.300,10.830) {$ 52 $};
\coordinate (nd97l19) at (2.600,10.830);
\node[art] (n57) at (-2.600,11.400) {$ 57 $};
\node[art] (n55) at (-1.300,11.400) {$ 55 $};
\node[art] (n56) at (0.000,11.400) {$ 56 $};
\coordinate (nd101l20) at (1.300,11.400);
\coordinate (nd102l20) at (2.600,11.400);
\coordinate (nd107l21) at (-1.300,11.970);
\node[art] (n59) at (0.000,11.970) {$ 59 $};
\node[art] (n58) at (1.300,11.970) {$ 58 $};
\node[art] (n60) at (-0.650,12.540) {$ 60 $};
\node[art] (n61) at (0.650,12.540) {$ 61 $};
\node[art] (n62) at (0.000,13.110) {$ 62 $};
\node[art] (n63) at (0.000,13.680) {$ 63 $};
\draw[line width=.35pt] (n1) -- (n2);
\draw[line width=.35pt] (n2) -- (n3);
\draw[line width=.35pt] (n2) -- (nd2l2) -- (n4);
\draw[line width=.35pt] (n3) -- (nd3l3) -- (nd3l4) -- (n7);
\draw[line width=.35pt] (n4) -- (n5);
\draw[line width=.35pt] (n4) -- (nd5l4) -- (n6);
\draw[line width=.35pt] (n4) -- (nd6l4) -- (n7);
\draw[line width=.35pt] (n5) -- (nd7l5) -- (nd7l6) -- (n10);
\draw[line width=.35pt] (n6) -- (n8);
\draw[line width=.35pt] (n6) -- (nd9l6) -- (n10);
\draw[line width=.35pt] (n7) -- (n9);
\draw[line width=.35pt] (n7) -- (nd11l6) -- (n10);
\draw[line width=.35pt] (n8) -- (nd12l7) -- (nd12l8) -- (nd12l9) -- (n18);
\draw[line width=.35pt] (n9) -- (n11);
\draw[line width=.35pt] (n9) -- (nd14l7) -- (n12);
\draw[line width=.35pt] (n10) -- (n12);
\draw[line width=.35pt] (n11) -- (n13);
\draw[line width=.35pt] (n11) -- (nd17l8) -- (n15);
\draw[line width=.35pt] (n12) -- (n14);
\draw[line width=.35pt] (n12) -- (n15);
\draw[line width=.35pt] (n12) -- (n16);
\draw[line width=.35pt] (n13) -- (nd21l9) -- (n17);
\draw[line width=.35pt] (n14) -- (nd22l10) -- (n23);
\draw[line width=.35pt] (n14) -- (nd23l10) -- (nd23l11) -- (n25);
\draw[line width=.35pt] (n15) -- (n17);
\draw[line width=.35pt] (n15) -- (n18);
\draw[line width=.35pt] (n15) -- (n19);
\draw[line width=.35pt] (n15) -- (n20);
\draw[line width=.35pt] (n16) -- (n18);
\draw[line width=.35pt] (n16) -- (nd29l10) -- (n22);
\draw[line width=.35pt] (n16) -- (nd30l10) -- (n23);
\draw[line width=.35pt] (n17) -- (nd31l11) -- (n25);
\draw[line width=.35pt] (n18) -- (nd32l11) -- (n27);
\draw[line width=.35pt] (n18) -- (nd33l11) -- (nd33l12) -- (n30);
\draw[line width=.35pt] (n19) -- (n21);
\draw[line width=.35pt] (n19) -- (nd35l11) -- (n27);
\draw[line width=.35pt] (n19) -- (nd36l11) -- (nd36l12) -- (n29);
\draw[line width=.35pt] (n20) -- (n21);
\draw[line width=.35pt] (n20) -- (n22);
\draw[line width=.35pt] (n20) -- (nd39l11) -- (n25);
\draw[line width=.35pt] (n21) -- (n26);
\draw[line width=.35pt] (n21) -- (nd41l12) -- (n28);
\draw[line width=.35pt] (n21) -- (nd42l12) -- (nd42l13) -- (n33);
\draw[line width=.35pt] (n22) -- (n26);
\draw[line width=.35pt] (n22) -- (nd44l12) -- (n30);
\draw[line width=.35pt] (n23) -- (n24);
\draw[line width=.35pt] (n23) -- (nd46l12) -- (n30);
\draw[line width=.35pt] (n24) -- (nd47l13) -- (nd47l14) -- (n36);
\draw[line width=.35pt] (n25) -- (n29);
\draw[line width=.35pt] (n25) -- (n30);
\draw[line width=.35pt] (n26) -- (nd50l13) -- (nd50l14) -- (nd50l15) -- (n39);
\draw[line width=.35pt] (n27) -- (n28);
\draw[line width=.35pt] (n27) -- (nd52l13) -- (n31);
\draw[line width=.35pt] (n28) -- (nd53l14) -- (n35);
\draw[line width=.35pt] (n29) -- (n31);
\draw[line width=.35pt] (n29) -- (n32);
\draw[line width=.35pt] (n29) -- (n33);
\draw[line width=.35pt] (n30) -- (n31);
\draw[line width=.35pt] (n31) -- (n34);
\draw[line width=.35pt] (n31) -- (n35);
\draw[line width=.35pt] (n31) -- (n36);
\draw[line width=.35pt] (n32) -- (n34);
\draw[line width=.35pt] (n32) -- (n37);
\draw[line width=.35pt] (n33) -- (n35);
\draw[line width=.35pt] (n33) -- (n37);
\draw[line width=.35pt] (n34) -- (n38);
\draw[line width=.35pt] (n34) -- (n41);
\draw[line width=.35pt] (n35) -- (n39);
\draw[line width=.35pt] (n35) -- (n41);
\draw[line width=.35pt] (n36) -- (n38);
\draw[line width=.35pt] (n36) -- (n41);
\draw[line width=.35pt] (n37) -- (n40);
\draw[line width=.35pt] (n37) -- (n41);
\draw[line width=.35pt] (n38) -- (n42);
\draw[line width=.35pt] (n38) -- (n45);
\draw[line width=.35pt] (n39) -- (nd75l17) -- (n46);
\draw[line width=.35pt] (n40) -- (n43);
\draw[line width=.35pt] (n40) -- (n44);
\draw[line width=.35pt] (n41) -- (n43);
\draw[line width=.35pt] (n41) -- (n45);
\draw[line width=.35pt] (n42) -- (n47);
\draw[line width=.35pt] (n42) -- (n48);
\draw[line width=.35pt] (n42) -- (n49);
\draw[line width=.35pt] (n43) -- (n48);
\draw[line width=.35pt] (n43) -- (n50);
\draw[line width=.35pt] (n44) -- (nd85l18) -- (n54);
\draw[line width=.35pt] (n45) -- (n46);
\draw[line width=.35pt] (n45) -- (n47);
\draw[line width=.35pt] (n45) -- (n48);
\draw[line width=.35pt] (n45) -- (n50);
\draw[line width=.35pt] (n46) -- (n52);
\draw[line width=.35pt] (n46) -- (n53);
\draw[line width=.35pt] (n47) -- (n52);
\draw[line width=.35pt] (n47) -- (n54);
\draw[line width=.35pt] (n48) -- (n51);
\draw[line width=.35pt] (n48) -- (n53);
\draw[line width=.35pt] (n48) -- (n54);
\draw[line width=.35pt] (n49) -- (nd97l19) -- (n56);
\draw[line width=.35pt] (n50) -- (n53);
\draw[line width=.35pt] (n50) -- (n54);
\draw[line width=.35pt] (n51) -- (n55);
\draw[line width=.35pt] (n51) -- (nd101l20) -- (n58);
\draw[line width=.35pt] (n52) -- (nd102l20) -- (n59);
\draw[line width=.35pt] (n53) -- (n55);
\draw[line width=.35pt] (n53) -- (n57);
\draw[line width=.35pt] (n54) -- (n56);
\draw[line width=.35pt] (n54) -- (n57);
\draw[line width=.35pt] (n55) -- (nd107l21) -- (n60);
\draw[line width=.35pt] (n56) -- (n58);
\draw[line width=.35pt] (n56) -- (n59);
\draw[line width=.35pt] (n57) -- (n59);
\draw[line width=.35pt] (n58) -- (n60);
\draw[line width=.35pt] (n58) -- (n61);
\draw[line width=.35pt] (n59) -- (n60);
\draw[line width=.35pt] (n59) -- (n61);
\draw[line width=.35pt] (n60) -- (n62);
\draw[line width=.35pt] (n61) -- (n62);
\draw[line width=.35pt] (n62) -- (n63);
\node[anchor=east] at (-6.650,0.000) {$ 0 $};
\node[anchor=east] at (-6.650,0.570) {$ 10 $};
\node[anchor=east] at (-6.650,1.140) {$ 13 $};
\node[anchor=east] at (-6.650,1.710) {$ 16 $};
\node[anchor=east] at (-6.650,2.280) {$ 18 $};
\node[anchor=east] at (-6.650,2.850) {$ 19 $};
\node[anchor=east] at (-6.650,3.420) {$ 20 $};
\node[anchor=east] at (-6.650,3.990) {$ 22 $};
\node[anchor=east] at (-6.650,4.560) {$ 24 $};
\node[anchor=east] at (-6.650,5.130) {$ 25 $};
\node[anchor=east] at (-6.650,5.700) {$ 26 $};
\node[anchor=east] at (-6.650,6.270) {$ 27 $};
\node[anchor=east] at (-6.650,6.840) {$ 28 $};
\node[anchor=east] at (-6.650,7.410) {$ 29 $};
\node[anchor=east] at (-6.650,7.980) {$ 30 $};
\node[anchor=east] at (-6.650,8.550) {$ 31 $};
\node[anchor=east] at (-6.650,9.120) {$ 32 $};
\node[anchor=east] at (-6.650,9.690) {$ 33 $};
\node[anchor=east] at (-6.650,10.260) {$ 34 $};
\node[anchor=east] at (-6.650,10.830) {$ 35 $};
\node[anchor=east] at (-6.650,11.400) {$ 36 $};
\node[anchor=east] at (-6.650,11.970) {$ 37 $};
\node[anchor=east] at (-6.650,12.540) {$ 38 $};
\node[anchor=east] at (-6.650,13.110) {$ 39 $};
\node[anchor=east] at (-6.650,13.680) {$ 40 $};
\node[anchor=east] at (-6.650,14.250) {$\dim C$};
\end{tikzpicture}
\caption{Closure order for the $E_8(a_7)$ grading. Vertices are labelled by $i$, and filled vertices denote Arthur orbits. The order increases upwards; the left column gives $\dim C_i$. }\label{fig:hasse-e8}
\end{figure}

We display the rows for the zero and open orbits, the orbit $C_{38}$
fixed by $p$, and the five Arthur orbits used in
Section~\ref{subsec:extremality}:
\begin{longtable}{rl}
$i$ & $R_i$\\ \hline
\endhead
1 & $\pi_{1}+4\pi_{2}+5\pi_{4}+6\pi_{10}+5\pi_{13}+4\pi_{36}+\pi_{88}$\\
4 & $\pi_{4}+\pi_{10}+\pi_{12}+2\pi_{13}+2\pi_{16}+\pi_{35}+2\pi_{36}+2\pi_{45}+\pi_{88}$\\
10 & $\pi_{12}+\pi_{13}+\pi_{16}+\pi_{33}+\pi_{35}+\pi_{36}+2\pi_{45}+\pi_{51}+\pi_{88}$\\
15 & $\pi_{16}+\pi_{26}+\pi_{35}+\pi_{36}+\pi_{45}+\pi_{51}+\pi_{56}+\pi_{88}$\\
19 & $\pi_{20}+\pi_{34}+\pi_{35}+\pi_{36}+2\pi_{56}+\pi_{68}+\pi_{88}$\\
30 & $\pi_{33}+\pi_{35}+\pi_{45}+\pi_{51}+\pi_{56}+\pi_{69}+\pi_{88}$\\
38 & $\pi_{47}+\pi_{48}+4\pi_{69}+\pi_{88}$\\
63 & $\pi_{82}+4\pi_{83}+5\pi_{84}+6\pi_{85}+5\pi_{86}+4\pi_{87}+\pi_{88}$\\
\end{longtable}
\clearpage
The enhancements occurring in these rows are listed below, together
with their supporting orbits. The complete tables are included in the
supplementary data.
\begingroup\small
\setlength{\tabcolsep}{4pt}
\setlength{\arrayrulewidth}{0.25pt}
\renewcommand{\arraystretch}{1.2}
\begin{longtable}{@{}rrlcrc@{\hspace{12pt}}rcrr@{}}
\hline

\multicolumn{6}{c}{\textbf{Orbit data}} & \multicolumn{4}{c}{\textbf{IC dictionary}}\\[3pt]
$i$ & $\dim C_i$ & $\Gd C_i$ & $A_H(C_i)$ & $p(i)$ & Arthur type & $a$ & $\rho_a$ & $\dim\rho_a$ & $f(a)$\\[3pt]
\hline

\endhead
\hline\endfoot
1 & 0 & $0$ & $1$ & 63 & Yes & \begin{tabular}[c]{@{}r@{}}1\end{tabular} & \begin{tabular}[c]{@{}c@{}}$1$\end{tabular} & \begin{tabular}[c]{@{}r@{}}1\end{tabular} & \begin{tabular}[c]{@{}r@{}}82\end{tabular}\\
\hline
2 & 10 & $A_1$ & $1$ & 62 & Yes & \begin{tabular}[c]{@{}r@{}}2\end{tabular} & \begin{tabular}[c]{@{}c@{}}$1$\end{tabular} & \begin{tabular}[c]{@{}r@{}}1\end{tabular} & \begin{tabular}[c]{@{}r@{}}83\end{tabular}\\
\hline
4 & 16 & $2A_1$ & $1$ & 61 & Yes & \begin{tabular}[c]{@{}r@{}}4\end{tabular} & \begin{tabular}[c]{@{}c@{}}$1$\end{tabular} & \begin{tabular}[c]{@{}r@{}}1\end{tabular} & \begin{tabular}[c]{@{}r@{}}84\end{tabular}\\
\hline
9 & 20 & $A_2$ & $\ZZ/2$ & 55 & Yes & \begin{tabular}[c]{@{}r@{}}10\end{tabular} & \begin{tabular}[c]{@{}c@{}}$11$\end{tabular} & \begin{tabular}[c]{@{}r@{}}1\end{tabular} & \begin{tabular}[c]{@{}r@{}}85\end{tabular}\\
\hline
10 & 22 & $4A_1$ & $1$ & 58 & Yes & \begin{tabular}[c]{@{}r@{}}12\end{tabular} & \begin{tabular}[c]{@{}c@{}}$1$\end{tabular} & \begin{tabular}[c]{@{}r@{}}1\end{tabular} & \begin{tabular}[c]{@{}r@{}}78\end{tabular}\\
\hline
12 & 24 & $A_2{+}A_1$ & $\ZZ/2$ & 51 & Yes & \begin{tabular}[c]{@{}r@{}}13\end{tabular} & \begin{tabular}[c]{@{}c@{}}$2$\end{tabular} & \begin{tabular}[c]{@{}r@{}}1\end{tabular} & \begin{tabular}[c]{@{}r@{}}86\end{tabular}\\
\hline
15 & 25 & $A_2{+}2A_1$ & $1$ & 59 & Yes & \begin{tabular}[c]{@{}r@{}}16\end{tabular} & \begin{tabular}[c]{@{}c@{}}$1$\end{tabular} & \begin{tabular}[c]{@{}r@{}}1\end{tabular} & \begin{tabular}[c]{@{}r@{}}80\end{tabular}\\
\hline
19 & 26 & $A_3$ & $1$ & 46 & Yes & \begin{tabular}[c]{@{}r@{}}20\end{tabular} & \begin{tabular}[c]{@{}c@{}}$1$\end{tabular} & \begin{tabular}[c]{@{}r@{}}1\end{tabular} & \begin{tabular}[c]{@{}r@{}}58\end{tabular}\\
\hline
25 & 28 & $2A_2{+}A_1$ & $1$ & 53 & Yes & \begin{tabular}[c]{@{}r@{}}26\end{tabular} & \begin{tabular}[c]{@{}c@{}}$1$\end{tabular} & \begin{tabular}[c]{@{}r@{}}1\end{tabular} & \begin{tabular}[c]{@{}r@{}}77\end{tabular}\\
\hline
30 & 29 & $2A_2{+}2A_1$ & $1$ & 56 & Yes & \begin{tabular}[c]{@{}r@{}}33\end{tabular} & \begin{tabular}[c]{@{}c@{}}$1$\end{tabular} & \begin{tabular}[c]{@{}r@{}}1\end{tabular} & \begin{tabular}[c]{@{}r@{}}73\end{tabular}\\
\hline
31 & 30 & $A_3{+}2A_1$ & $1$ & 45 & No & \begin{tabular}[c]{@{}r@{}}35\end{tabular} & \begin{tabular}[c]{@{}c@{}}$1$\end{tabular} & \begin{tabular}[c]{@{}r@{}}1\end{tabular} & \begin{tabular}[c]{@{}r@{}}75\end{tabular}\\
\hline
32 & 30 & $D_4(a_1)$ & $S_3$ & 26 & Yes & \begin{tabular}[c]{@{}r@{}}34 \\ 36\end{tabular} & \begin{tabular}[c]{@{}c@{}}$21$ \\ $111$\end{tabular} & \begin{tabular}[c]{@{}r@{}}2 \\ 1\end{tabular} & \begin{tabular}[c]{@{}r@{}}63 \\ 87\end{tabular}\\
\hline
34 & 31 & $D_4(a_1){+}A_1$ & $S_3$ & 22 & No & \begin{tabular}[c]{@{}r@{}}45\end{tabular} & \begin{tabular}[c]{@{}c@{}}$111$\end{tabular} & \begin{tabular}[c]{@{}r@{}}1\end{tabular} & \begin{tabular}[c]{@{}r@{}}79\end{tabular}\\
\hline
38 & 32 & $A_4$ & $\ZZ/2$ & 38 & Yes & \begin{tabular}[c]{@{}r@{}}47 \\ 48\end{tabular} & \begin{tabular}[c]{@{}c@{}}$11$ \\ $2$\end{tabular} & \begin{tabular}[c]{@{}r@{}}1 \\ 1\end{tabular} & \begin{tabular}[c]{@{}r@{}}47 \\ 48\end{tabular}\\
\hline
41 & 32 & $D_4(a_1){+}A_2$ & $\ZZ/2$ & 48 & Yes & \begin{tabular}[c]{@{}r@{}}51\end{tabular} & \begin{tabular}[c]{@{}c@{}}$11$\end{tabular} & \begin{tabular}[c]{@{}r@{}}1\end{tabular} & \begin{tabular}[c]{@{}r@{}}76\end{tabular}\\
\hline
43 & 33 & $2A_3$ & $1$ & 43 & Yes & \begin{tabular}[c]{@{}r@{}}56\end{tabular} & \begin{tabular}[c]{@{}c@{}}$1$\end{tabular} & \begin{tabular}[c]{@{}r@{}}1\end{tabular} & \begin{tabular}[c]{@{}r@{}}74\end{tabular}\\
\hline
56 & 36 & $A_4{+}A_3$ & $1$ & 30 & Yes & \begin{tabular}[c]{@{}r@{}}69\end{tabular} & \begin{tabular}[c]{@{}c@{}}$1$\end{tabular} & \begin{tabular}[c]{@{}r@{}}1\end{tabular} & \begin{tabular}[c]{@{}r@{}}69\end{tabular}\\
\hline
57 & 36 & $D_4{+}A_2$ & $\ZZ/2$ & 20 & Yes & \begin{tabular}[c]{@{}r@{}}68\end{tabular} & \begin{tabular}[c]{@{}c@{}}$11$\end{tabular} & \begin{tabular}[c]{@{}r@{}}1\end{tabular} & \begin{tabular}[c]{@{}r@{}}68\end{tabular}\\
\hline
63 & 40 & $E_8(a_7)$ & $S_5$ & 1 & Yes & \begin{tabular}[c]{@{}r@{}}82 \\ 83 \\ 84 \\ 85 \\ 86 \\ 87 \\ 88\end{tabular} & \begin{tabular}[c]{@{}c@{}}$5$ \\ $41$ \\ $32$ \\ $311$ \\ $221$ \\ $2111$ \\ $11111$\end{tabular} & \begin{tabular}[c]{@{}r@{}}1 \\ 4 \\ 5 \\ 6 \\ 5 \\ 4 \\ 1\end{tabular} & \begin{tabular}[c]{@{}r@{}}1 \\ 2 \\ 4 \\ 10 \\ 13 \\ 36 \\ 88\end{tabular}\\
\end{longtable}
\endgroup
The row $R_{38}$ gives $\mathbf M_{38,69}=4$, although $\rho_{69}$
has rank one. Here $C_{38}$ has dimension $32$ and saturation $A_4$,
whereas $P_{69}$ is supported on $\overline{C_{56}}$. The coefficients of $R_{63}$ are the irreducible
degrees of $S_5$, and Fourier symmetry gives the corresponding
coefficients of $R_1$.

\clearpage
\subsection{Failure of extremality}
\label{subsec:extremality}

For a fixed enhanced parameter $a$, define the finite ordered set
$$
 \mathscr A(a)=\{i:C_i\text{ is Arthur and }\mathbf M_{ia}>0\},
 \qquad i\leq j\ \Longleftrightarrow\ C_i\subseteq\overline{C_j}.
$$
Hazeltine--Liu--Lo--Zhang \cite[Conjecture~1.5]{HLLZ} predict, for
endoscopic Arthur packets of a connected reductive group, that all
parameters whose packets contain a fixed Arthur-type representation
have the same infinitesimal parameter and possess a unique least and a
unique greatest element in the closure order. The microlocal analogue
would require a least and greatest element of every nonempty
$\mathscr A(a)$. The following examples show that this need not hold.

\begin{example}[No least element in type $F_4$]
Take $a=19$, the $(211)$ local system on $C_{12}$.
The packet table gives
$$
 \mathscr A(19)=\{6,9,10,11,12\},\qquad
 (\mathbf M_{6,19},\mathbf M_{9,19},\mathbf M_{10,19},
  \mathbf M_{11,19},\mathbf M_{12,19})=(1,1,1,2,3).
$$
The covering relations in Figure~\ref{fig:hasse-f4} are
$$
\begin{gathered}
1\to2,\quad2\to3,\quad3\to4,5,\quad4\to8,\quad
5\to6,7,\\
6\to9,\quad7\to8,\quad8\to9,10,\quad
9\to11,\quad10\to11,\quad11\to12.
\end{gathered}
$$
The induced poset on $\mathscr A(19)$ has chains
$6<9<11<12$ and $10<11<12$, with $10$ incomparable to both
$6$ and $9$. Hence $6$ and $10$ are its two minimal elements.
Their saturations are $\widetilde A_2$ and $B_2$ and their dimensions
are $8$ and $10$. In particular no least element exists.
\end{example}

\begin{example}[No greatest element in type $E_8$]
Take $a=35$, the trivial local system on $C_{31}$. This orbit has
dimension $30$ and saturation $A_3+2A_1$. The packet table gives
$$
 \mathscr A(35)=\{4,10,15,19,30\};
 \qquad\mathbf M_{i,35}=1\quad(i\in\mathscr A(35)).
$$
Its induced closure order consists of $4<10<15<19$ and
$4<10<15<30$, with $19$ and $30$ incomparable.
Here $C_{19}$ has dimension $26$ and saturation $A_3$, whereas
$C_{30}$ has dimension $29$ and saturation $2A_2+2A_1$.
The full lower ideal of $C_{30}$ in the closure order is
$$
 \{1,2,3,4,5,6,7,8,9,10,11,12,13,14,15,16,17,18,
       20,22,23,25,30\}.
$$
Since $19$ is absent, $C_{19}\not\subseteq\overline{C_{30}}$.
The reverse inclusion is excluded by dimension. Hence $19$ and $30$
are the two maximal elements of $\mathscr A(35)$, so no greatest
element exists.
\end{example}

\begin{figure}[htbp]
\centering
\begin{minipage}{.47\textwidth}\centering
\begin{tikzpicture}[x=1cm,y=1cm, every node/.style={font=\scriptsize},
 art/.style={circle,draw,fill=black,text=white,inner sep=1pt,minimum size=4.5mm},
 ordinary/.style={circle,draw,fill=white,inner sep=1pt,minimum size=4.5mm}]
\node[art] (n6) at (0.000,0.000) {$ 6 $};
\node[art] (n9) at (-0.650,0.620) {$ 9 $};
\node[art] (n10) at (0.650,0.620) {$ 10 $};
\node[art] (n11) at (0.000,1.240) {$ 11 $};
\node[art] (n12) at (0.000,1.860) {$ 12 $};
\draw[line width=.35pt] (n6) -- (n9);
\draw[line width=.35pt] (n9) -- (n11);
\draw[line width=.35pt] (n10) -- (n11);
\draw[line width=.35pt] (n11) -- (n12);
\node[anchor=east] at (-1.450,0.000) {$ 8 $};
\node[anchor=east] at (-1.450,0.620) {$ 10 $};
\node[anchor=east] at (-1.450,1.240) {$ 11 $};
\node[anchor=east] at (-1.450,1.860) {$ 12 $};
\node[anchor=east] at (-1.450,2.480) {$\dim C$};
\end{tikzpicture}\par\small $F_4$: $\mathscr A(19)$
\end{minipage}\hfill
\begin{minipage}{.47\textwidth}\centering
\begin{tikzpicture}[x=1cm,y=1cm, every node/.style={font=\scriptsize},
 art/.style={circle,draw,fill=black,text=white,inner sep=1pt,minimum size=4.5mm},
 ordinary/.style={circle,draw,fill=white,inner sep=1pt,minimum size=4.5mm}]
\node[art] (n4) at (0.000,0.000) {$ 4 $};
\node[art] (n10) at (0.000,0.620) {$ 10 $};
\node[art] (n15) at (0.000,1.240) {$ 15 $};
\node[art] (n19) at (-0.650,1.860) {$ 19 $};
\coordinate (nd3l3) at (0.650,1.860);
\node[art] (n30) at (0.000,2.480) {$ 30 $};
\draw[line width=.35pt] (n4) -- (n10);
\draw[line width=.35pt] (n10) -- (n15);
\draw[line width=.35pt] (n15) -- (n19);
\draw[line width=.35pt] (n15) -- (nd3l3) -- (n30);
\node[anchor=east] at (-1.450,0.000) {$ 16 $};
\node[anchor=east] at (-1.450,0.620) {$ 22 $};
\node[anchor=east] at (-1.450,1.240) {$ 25 $};
\node[anchor=east] at (-1.450,1.860) {$ 26 $};
\node[anchor=east] at (-1.450,2.480) {$ 29 $};
\node[anchor=east] at (-1.450,3.100) {$\dim C$};
\end{tikzpicture}\par\small $E_8$: $\mathscr A(35)$
\end{minipage}
\caption{Induced closure orders for the extremality counterexamples.
The left diagram has two minimal vertices, $6$ and $10$; the right has
two maximal vertices, $19$ and $30$. Edges are covers in the indicated
induced posets, and may represent several steps in the full closure poset.
The left columns give orbit dimensions.}\label{fig:hasse-extremality}
\end{figure}
The two induced Hasse diagrams are displayed in Figure~\ref{fig:hasse-extremality}.

To obtain counterexamples to the endoscopic formulation, one needs
agreement of endoscopic and microlocal packet membership for these
exceptional groups, with compatible parametrizations and relevance
conditions. The comparison for odd orthogonal groups does not establish
this agreement.

\subsection{Closure ordering and weak packets}

The closure-ordering conjecture \cite[Conjecture~1.2]{HLLZ} asserts that
$\pi_a\in\Pi_i$ implies $C_i\leq C_a$. For microlocal packets this
follows from the support condition for characteristic cycles. In
particular, both counterexamples satisfy this condition: the failure
concerns the existence of a least or greatest Arthur orbit among those
whose packets contain $\pi_a$.

The weak-packet decomposition conjecture, in the formulation of
\cite[Conjecture~2.10]{BL}, asserts that a weak packet is a union of
Arthur packets. Equation~\eqref{eq:e6-weak-example} establishes the
corresponding microlocal equality for the $D_4(a_1)$ grading in $E_6$,
with the Fourier--Aubert interpretation specified above. An endoscopic
interpretation requires the corresponding packet comparison.

The supplementary tables are available in the online Arthur Atlas
\cite{ArthurAtlas}.
The supplementary data contain the complete orbit and enhancement
dictionaries, closure relations, Arthur rows of $K$, and microlocal
multiplicities for all four examples. They also include the $E_6$
special-piece data and exact checks of the packet tables, closure
support, top-conormal ranks, Fourier symmetry, and the two extremality
examples. Source identifiers and checksums are supplied with the data.
These checks retain the geometric input hypotheses of the computations;
they do not determine microlocal monodromy.

\appendix

\section{Classification tables and orbit-label dictionaries}
\label{app:tables}
Table~\ref{tab:exceptional-by-grading} gives the numbers of graded orbits,
Arthur orbits, and $p$-fixed Arthur orbits for each diagonal nilpotent
orbit $\mathcal O^\vee$. Within each type, read down
the left block and then down the right block; $0$ denotes the zero orbit.

\begingroup
\small
\setlength{\tabcolsep}{5pt}
\renewcommand{\arraystretch}{1.05}
\begin{longtable}{@{}lrrr@{\hspace{1.8em}}lrrr@{}}
\caption{Exceptional classification by diagonal nilpotent orbit.}
\label{tab:exceptional-by-grading}\\
\hline
$\mathcal O^\vee$ & \shortstack{Graded\\orbits} & \shortstack{Arthur\\orbits} & \shortstack{$p$-fixed\\Arthur orbits} &
$\mathcal O^\vee$ & \shortstack{Graded\\orbits} & \shortstack{Arthur\\orbits} & \shortstack{$p$-fixed\\Arthur orbits}\\\hline
\endfirsthead
\multicolumn{8}{c}{\tablename~\thetable\ (continued)}\\
\hline
$\mathcal O^\vee$ & \shortstack{Graded\\orbits} & \shortstack{Arthur\\orbits} & \shortstack{$p$-fixed\\Arthur orbits} &
$\mathcal O^\vee$ & \shortstack{Graded\\orbits} & \shortstack{Arthur\\orbits} & \shortstack{$p$-fixed\\Arthur orbits}\\\hline
\endhead
\hline
\multicolumn{8}{r}{Continued on next page}\\
\endfoot
\hline
\endlastfoot
\multicolumn{8}{c}{$G_2$}\\*\hline
$0$ & 1 & 1 & 1 & $G_2(a_1)$ & 4 & 4 & 0\\
$A_1$ & 2 & 2 & 0 & $G_2$ & 4 & 2 & 0\\
$\tilde A_1$ & 2 & 2 & 0 &  & & & \\
\textit{Total} & 13 & 11 & 1 & & & & \\\hline
\multicolumn{8}{c}{$F_4$}\\*\hline
$0$ & 1 & 1 & 1 & $B_2$ & 6 & 2 & 0\\
$A_1$ & 2 & 2 & 0 & $C_3(a_1)$ & 8 & 6 & 0\\
$\tilde A_1$ & 3 & 3 & 1 & $F_4(a_3)$ & 12 & 11 & 1\\
$A_1{+}\tilde A_1$ & 4 & 4 & 0 & $C_3$ & 8 & 2 & 0\\
$\tilde A_2$ & 3 & 2 & 0 & $B_3$ & 8 & 2 & 0\\
$A_2$ & 5 & 5 & 1 & $F_4(a_2)$ & 14 & 6 & 0\\
$A_2{+}\tilde A_1$ & 6 & 4 & 0 & $F_4(a_1)$ & 16 & 2 & 0\\
$\tilde A_2{+}A_1$ & 6 & 4 & 0 & $F_4$ & 16 & 2 & 0\\
\textit{Total} & 118 & 58 & 4 & & & & \\\hline
\multicolumn{8}{c}{$E_6$}\\*\hline
$0$ & 1 & 1 & 1 & $D_4(a_1)$ & 18 & 11 & 1\\
$A_1$ & 2 & 2 & 0 & $D_4$ & 8 & 2 & 0\\
$2A_1$ & 3 & 3 & 1 & $A_4$ & 23 & 3 & 1\\
$3A_1$ & 4 & 4 & 0 & $A_4{+}A_1$ & 40 & 4 & 0\\
$A_2$ & 5 & 5 & 1 & $A_5$ & 32 & 2 & 0\\
$A_2{+}A_1$ & 10 & 6 & 0 & $D_5(a_1)$ & 30 & 4 & 0\\
$2A_2$ & 10 & 3 & 1 & $E_6(a_3)$ & 49 & 6 & 0\\
$A_2{+}2A_1$ & 9 & 7 & 1 & $D_5$ & 36 & 2 & 0\\
$A_3$ & 6 & 2 & 0 & $E_6(a_1)$ & 60 & 2 & 0\\
$2A_2{+}A_1$ & 20 & 6 & 0 & $E_6$ & 64 & 2 & 0\\
$A_3{+}A_1$ & 18 & 6 & 0 &  & & & \\
\textit{Total} & 448 & 83 & 7 & & & & \\\hline
\multicolumn{8}{c}{$E_7$}\\*\hline
$0$ & 1 & 1 & 1 & $D_4{+}A_1$ & 24 & 4 & 0\\
$A_1$ & 2 & 2 & 0 & $A_4{+}A_1$ & 50 & 6 & 0\\
$2A_1$ & 3 & 3 & 1 & $A_4{+}A_2$ & 24 & 4 & 0\\
$3A_1''$ & 4 & 4 & 0 & $D_5(a_1)$ & 36 & 6 & 0\\
$3A_1'$ & 4 & 4 & 0 & $A_5'$ & 20 & 2 & 0\\
$A_2$ & 5 & 5 & 1 & $D_5(a_1){+}A_1$ & 35 & 8 & 0\\
$4A_1$ & 8 & 8 & 0 & $A_5{+}A_1$ & 40 & 4 & 0\\
$A_2{+}A_1$ & 12 & 12 & 0 & $D_6(a_2)$ & 55 & 8 & 0\\
$A_2{+}2A_1$ & 11 & 11 & 1 & $E_6(a_3)$ & 35 & 6 & 0\\
$2A_2$ & 9 & 5 & 1 & $E_7(a_5)$ & 64 & 20 & 0\\
$A_3$ & 6 & 2 & 0 & $D_5$ & 26 & 2 & 0\\
$A_2{+}3A_1$ & 10 & 10 & 0 & $A_6$ & 32 & 2 & 0\\
$(A_3{+}A_1)''$ & 13 & 6 & 0 & $D_5{+}A_1$ & 48 & 4 & 0\\
$2A_2{+}A_1$ & 20 & 10 & 0 & $D_6(a_1)$ & 60 & 4 & 0\\
$(A_3{+}A_1)'$ & 13 & 6 & 0 & $E_7(a_4)$ & 84 & 12 & 0\\
$D_4(a_1)$ & 15 & 11 & 1 & $E_6(a_1)$ & 67 & 2 & 0\\
$A_3{+}2A_1$ & 26 & 12 & 0 & $D_6$ & 64 & 2 & 0\\
$D_4$ & 8 & 2 & 0 & $E_6$ & 40 & 2 & 0\\
$D_4(a_1){+}A_1$ & 32 & 18 & 0 & $E_7(a_3)$ & 98 & 4 & 0\\
$A_3{+}A_2$ & 30 & 20 & 0 & $E_7(a_2)$ & 110 & 4 & 0\\
$A_3{+}A_2{+}A_1$ & 25 & 20 & 0 & $E_7(a_1)$ & 120 & 2 & 0\\
$A_4$ & 24 & 3 & 1 & $E_7$ & 128 & 2 & 0\\
$A_5''$ & 20 & 2 & 0 &  & & & \\
\textit{Total} & 1561 & 287 & 7 & & & & \\\hline
\multicolumn{8}{c}{$E_8$}\\*\hline
$0$ & 1 & 1 & 1 & $A_5{+}A_1$ & 44 & 4 & 0\\
$A_1$ & 2 & 2 & 0 & $D_5(a_1){+}A_2$ & 70 & 8 & 0\\
$2A_1$ & 3 & 3 & 1 & $E_6(a_3){+}A_1$ & 70 & 12 & 0\\
$3A_1$ & 4 & 4 & 0 & $D_6(a_2)$ & 58 & 6 & 0\\
$A_2$ & 5 & 5 & 1 & $E_7(a_5)$ & 64 & 20 & 0\\
$4A_1$ & 5 & 5 & 1 & $E_8(a_7)$ & 63 & 34 & 2\\
$A_2{+}A_1$ & 10 & 10 & 0 & $D_5{+}A_1$ & 52 & 4 & 0\\
$A_2{+}2A_1$ & 10 & 10 & 2 & $D_6(a_1)$ & 64 & 6 & 0\\
$A_3$ & 6 & 2 & 0 & $A_6$ & 48 & 3 & 1\\
$A_2{+}3A_1$ & 10 & 10 & 0 & $E_7(a_4)$ & 84 & 12 & 0\\
$2A_2$ & 10 & 10 & 2 & $A_6{+}A_1$ & 64 & 4 & 0\\
$2A_2{+}A_1$ & 18 & 10 & 0 & $E_6(a_1)$ & 65 & 2 & 0\\
$A_3{+}A_1$ & 13 & 6 & 0 & $D_5{+}A_2$ & 86 & 18 & 0\\
$D_4(a_1)$ & 15 & 11 & 1 & $E_6$ & 40 & 2 & 0\\
$D_4$ & 8 & 2 & 0 & $D_6$ & 48 & 2 & 0\\
$2A_2{+}2A_1$ & 16 & 11 & 1 & $D_7(a_2)$ & 130 & 4 & 0\\
$A_3{+}2A_1$ & 20 & 10 & 0 & $E_6(a_1){+}A_1$ & 134 & 4 & 0\\
$D_4(a_1){+}A_1$ & 30 & 22 & 0 & $A_7$ & 70 & 2 & 0\\
$A_3{+}A_2$ & 31 & 21 & 1 & $E_7(a_3)$ & 98 & 4 & 0\\
$A_4$ & 21 & 3 & 1 & $E_8(b_6)$ & 105 & 8 & 0\\
$A_3{+}A_2{+}A_1$ & 25 & 20 & 0 & $E_6{+}A_1$ & 80 & 4 & 0\\
$D_4{+}A_1$ & 16 & 4 & 0 & $D_7(a_1)$ & 118 & 6 & 0\\
$D_4(a_1){+}A_2$ & 23 & 21 & 1 & $E_7(a_2)$ & 110 & 4 & 0\\
$A_4{+}A_1$ & 48 & 6 & 0 & $E_8(a_6)$ & 110 & 4 & 0\\
$2A_3$ & 20 & 3 & 1 & $E_8(b_5)$ & 128 & 8 & 0\\
$D_5(a_1)$ & 32 & 6 & 0 & $D_7$ & 96 & 2 & 0\\
$A_4{+}2A_1$ & 55 & 15 & 1 & $E_7(a_1)$ & 120 & 2 & 0\\
$A_4{+}A_2$ & 32 & 12 & 2 & $E_8(a_5)$ & 145 & 6 & 0\\
$A_5$ & 20 & 2 & 0 & $E_8(b_4)$ & 168 & 4 & 0\\
$A_4{+}A_2{+}A_1$ & 48 & 8 & 0 & $E_7$ & 128 & 2 & 0\\
$D_5(a_1){+}A_1$ & 35 & 8 & 0 & $E_8(a_4)$ & 178 & 2 & 0\\
$E_6(a_3)$ & 35 & 6 & 0 & $E_8(a_3)$ & 196 & 4 & 0\\
$D_4{+}A_2$ & 38 & 10 & 0 & $E_8(a_2)$ & 220 & 2 & 0\\
$D_5$ & 26 & 2 & 0 & $E_8(a_1)$ & 240 & 2 & 0\\
$A_4{+}A_3$ & 60 & 4 & 0 & $E_8$ & 256 & 2 & 0\\
\textit{Total} & 4501 & 498 & 20 & & & & \\\hline
\end{longtable}
\endgroup

Table~\ref{tab:exceptional-relative-lifts} lists the orbits in
$\mathscr S_{\mathrm{rel}}(\mathcal O^\vee)$ that are not in the image
of the saturation map for the grading defined by $\mathcal O^\vee$.
Only diagonal orbits with a nonempty list are shown. For every other
ambient orbit in the relative special piece, the intersection with
$V$ is a single Arthur orbit, as in
Theorem~\ref{thm:weak-injectivity}(\ref{item:weak-exceptional-lifts}).

\begingroup
\small
\begin{longtable}{@{}llp{0.57\textwidth}@{}}
\caption{Orbits in exceptional relative special pieces with no graded lift.}
\label{tab:exceptional-relative-lifts}\\
\hline
Type & Diagonal orbit $\mathcal O^\vee$ & Orbits outside the saturation image\\\hline
\endfirsthead
\hline
Type & Diagonal orbit $\mathcal O^\vee$ & Orbits outside the saturation image\\\hline
\endhead
$G_2$ & $\tilde A_1$ & $A_1$\\
\hline
$F_4$ & $\tilde A_2{+}A_1$ & $A_2{+}\tilde A_1$\\
 & $B_2$ & $A_2{+}\tilde A_1$\\
 & $C_3(a_1)$ & $A_2{+}\tilde A_1$, $\tilde A_2{+}A_1$\\
\hline
$E_6$ & $A_3{+}A_1$ & $2A_2{+}A_1$\\
\hline
$E_7$ & $(A_3{+}A_1)'$ & $2A_2{+}A_1$\\
 & $D_6(a_2)$ & $A_5{+}A_1$\\
\hline
$E_8$ & $A_3{+}A_1$ & $2A_2{+}A_1$\\
 & $A_3{+}2A_1$ & $2A_2{+}2A_1$\\
 & $A_5{+}A_1$ & $A_4{+}A_3$\\
 & $D_5(a_1){+}A_2$ & $A_4{+}A_3$\\
 & $E_6(a_3){+}A_1$ & $D_5(a_1){+}A_2$, $A_4{+}A_3$\\
 & $D_6(a_2)$ & $A_5{+}A_1$, $D_5(a_1){+}A_2$, $A_4{+}A_3$\\
 & $E_7(a_5)$ & $E_6(a_3){+}A_1$, $D_5(a_1){+}A_2$, $A_4{+}A_3$\\
 & $E_7(a_2)$ & $E_6{+}A_1$\\
\hline
\end{longtable}
\endgroup

\section{Component groups and relevance}
\label{app:components}

For the exceptional lifts listed below, put
$Q=Z_{\Gd}(e_1^\vee,h_1^\vee,f_1^\vee)$,
$\mathfrak q=\Lie Q$, and
$\mathfrak q(2;h_2^\vee)=\{v\in\mathfrak q:[h_2^\vee,v]=2v\}$.

\begin{table}[htbp]
\centering
\small
$$
\begin{array}{c|l|ccc}
 \mathcal O^\vee&\Gd e_1^\vee&\dim\mathfrak q&
              \dim\mathfrak z(\mathfrak q)&\dim\mathfrak q(2;h_2^\vee)\\\hline
 F_4(a_3)&C_3(a_1),\ \widetilde A_2+A_1,\ A_2+\widetilde A_1&3&0&1\\
 F_4(a_3)&B_2&6&0&2\\
 E_8(a_7)&E_7(a_5),\ E_6(a_3)+A_1&3&0&1\\
 E_8(a_7)&D_6(a_2)&6&0&2
\end{array}
$$
\caption{Centralizer dimensions verifying the principal second-triple
criterion in exceptional types.}
\label{tab:even-component-check}
\end{table}

The dimensions in Table~\ref{tab:even-component-check} imply that
$\mathfrak q$ is $\mathfrak{sl}_2$ or
$\mathfrak{sl}_2\oplus\mathfrak{sl}_2$. Since $h_2^\vee$ is a
Dynkin element, the last column shows that it is nonzero on every
simple factor, and hence principal. The weighted diagrams and
centralizer dimensions are recorded in the supplementary data.

\section{Computational verification and supplementary data}
\label{app:verification}

The tables accompanying this paper are available in the online Arthur
Atlas \cite{ArthurAtlas}.

\medskip

\paragraph{Scope and conventions.}
The computations cover all $157$ neutral gradings in types
$G_2$, $F_4$, $E_6$, $E_7$, and $E_8$.
Table~\ref{tab:exceptional-by-grading} records $6641$ graded orbits
and $937$ Arthur orbits, of which $39$ are fixed by $p$.
The dual groups are simply connected, and all equivariant local
systems are included. 

\medskip

\paragraph{Geometric and IC inputs.}
Orbit enumeration and the Arthur test are described in
Section~\ref{subsec:exceptional-classification}: the regrading reduces
enumeration to the Vinberg--de Graaf setting, and
Lemma~\ref{lem:exceptional-polynomial-criterion} reduces the residual
triple problem to polynomial equations over $\mathbb Q$.
The calculations use \textsc{GAP}/\textsc{SLA} and \textsc{Singular}
\cite{GAP,SLA,Singular}.
For reconstruction, the inputs are the closure order, local-system
ranks, the permutations $p,f$, and the IC polynomials of
\cite{CiubotaruOkadaKL}, including the non-principal series.
The passage to ordinary stalk Euler characteristics uses
\eqref{eq:kl-to-stalk}, with its rank factors and perverse signs.

\medskip

\paragraph{Reconstruction checks.}
We solve \eqref{eq:reconstruction-system} by exact rational row
reduction. The homogeneous solution space vanishes on the Arthur
rows, as required by Theorem~\ref{thm:arthur-row-reconstruction}.
For these rows we verify
$$
 \mathbf M_{i\bullet}=K_{i\bullet}X,\qquad
 \mathbf M_{ia}=\mathbf M_{p(i),f(a)}\in\mathbb Z_{\geq0},\qquad
 \mathbf M_{ia}=\dim\rho_a\quad(C_a=C_i),
$$
together with the support condition $\mathbf M_{ia}=0$ unless
$C_i\leq C_a$. We also check $p^2=f^2=\mathrm{id}$ and preservation
of the Arthur locus by $p$. Undetermined non-Arthur rows are retained
as such. The output consists of ordinary microlocal multiplicities
and packet membership; equivariant microlocal monodromy is additional
data.

\medskip

\paragraph{Special pieces and character checks.}
CHEVIE \cite{CHEVIE} supplies ambient duality and special-piece data.
Comparing these with the graded saturations gives the exceptional
checks in Section~\ref{sec:weak}. Packet containment is tested using
\eqref{eq:weak-fourier-set}; the extremality checks use the complete
induced posets $\mathscr A(a)$ of Section~\ref{subsec:extremality}.
For weak sphericity we use the Springer action in which the regular
orbit corresponds to the trivial Weyl-group character. Character restriction computes
the multiplicity of the trivial character of $\widehat W_K$, including
its length-zero elements. These checks concern principal-series
enhancements relevant to the split adjoint form.

\

\noindent{\bf Acknowledgements.} 
After the results of this paper were finalized, Cunningham, Hazeltine, Liu, and Lo informed the authors that they had also obtained a related characterization of Arthur packets for classical groups in their forthcoming work \cite{CHLL}.
The authors thank Lucas Mason-Brown for valuable conversations about Arthur parameters and pairs. The first author thanks Brin Center, University of Maryland, and the organizers of the workshop ``Recent progress on the unitary dual problem'', September 2026, for the invitation and the opportunity to present the results of this paper.

The authors used ChatGPT Codex to produce the software that implements the graded orbit enumeration, the computation of the IC polynomials \cite{CiubotaruOkadaKL}, the linear reconstruction of the Arthur packets, and the Weyl group branching of standard characters, and to assist with drafting and polishing parts of the exposition. The authors reviewed and revised the resulting text and take responsibility for the final version.

\bibliographystyle{amsplain}
\bibliography{references}

\end{document}